\numberwithin{equation}{section}
\numberwithin{figure}{section}
\theoremstyle{plain}
\newtheorem{thm}{\protect\theoremname}[section]
  \theoremstyle{remark}
  \newtheorem{rem}[thm]{\protect\remarkname}
  \theoremstyle{plain}
  \newtheorem{lem}[thm]{\protect\lemmaname}
  \theoremstyle{plain}
  \newtheorem{prop}[thm]{\protect\propositionname}
  \providecommand{\lemmaname}{Lemma}
  \providecommand{\propositionname}{Proposition}
  \providecommand{\remarkname}{Remark}
\providecommand{\theoremname}{Theorem}
\begin{document}

\title{Quantization of time-like energy for wave maps into spheres}

\author{Roland Grinis}

\address{Mathematical Institute, University of Oxford, Andrew Wiles Building,
Radcliffe Observatory Quarter, Woodstock Road, Oxford, OX2 6GG, U.K.}

\email{roland.grinis@maths.ox.ac.uk}

\date{9 May 2016}

\subjclass[2000]{35L70}

\keywords{Large data critical wave maps, bubbling analysis, compensations.}
\begin{abstract}
In this article we consider large energy wave maps in dimension 2+1,
as in the resolution of the threshold conjecture by Sterbenz and Tataru
\citep{TataruSterbenzWave,TataruSterbenzWaveReg}, but more specifically
into the unit Euclidean sphere $\mathbb{S}^{n-1}\subset\mathbb{R}^{n}$
with $n\geq2$, and study further the dynamics of the sequence of
wave maps that are obtained in \citep{TataruSterbenzWaveReg} at the
final rescaling for a first, finite or infinite, time singularity.
We prove that, on a suitably chosen sequence of time slices at this
scaling, there is a decomposition of the map, up to an error with
asymptotically vanishing energy, into a decoupled sum of rescaled
solitons concentrating in the interior of the light cone and a term
having asymptotically vanishing energy dispersion norm, concentrating
on the null boundary and converging to a constant locally in the interior
of the cone, in the energy space. 

Similar and stronger results have been recently obtained in the equivariant
setting by several authors \citep{CKLS1,CKLS2,Catiara,CatiaraCorrigendum,Kenig},
where better control on the dispersive term concentrating on the null
boundary of the cone is provided and in some cases the asymptotic
decomposition is shown to hold for all time. Here however, we do not
impose any symmetry condition on the map itself and our strategy follows
the one from bubbling analysis of harmonic maps into spheres in the
supercritical regime due to Lin and Rivière \citep{Lin,LiR}, which
we make work here in the hyperbolic context of \citep{TataruSterbenzWaveReg}. 
\end{abstract}
\maketitle
\tableofcontents{}

\section{Introduction}

\subsection{Wave maps into spheres.}

We discuss here some facts, important for our argument, regarding
smooth wave maps with target the Euclidean sphere. For a broad introduction
to the subject we shall refer the reader to the monograph of Shatah
and Struwe \citep{ShatahStruweGeomWave}.

Wave maps are smooth maps $\phi:I\times\mathbb{R}^{2}\rightarrow\mathbb{R}^{n}$,
defined on some time interval $I\subset\mathbb{R}$, taking values
in the sphere $\mathbb{S}^{n-1}\subset\mathbb{R}^{n}$, which concretely
means: 
\begin{equation}
\phi^{\dagger}\phi=1,\,\,\,\phi^{\dagger}\nabla_{t,x}\phi=0,\label{eq:GeometricId}
\end{equation}
with the evolution $\phi[t]:=(\phi(t),\partial_{t}\phi(t))\in T(\mathbb{S}^{n-1})$,
taking values in the tangent bundle and belonging to the space $C_{t}^{0}(I\,;\dot{H}_{x}^{1})\cap C_{t}^{1}(I\,;L_{x}^{2})$,
governed by the equation:
\begin{equation}
\square\phi=-\phi\partial_{\alpha}\phi^{\dagger}\partial^{\alpha}\phi,\label{eq:WMeq}
\end{equation}
where the D'Alembertian is given by $\square:=\partial_{\alpha}\partial^{\alpha}=-\partial_{t}^{2}+\Delta_{x}$.
Note our convention here is that we are summing over repeating indices,
where $\alpha$ is running from $0$ to $2$, with $\partial_{0}=\partial_{t}$
and $\partial^{0}=-\partial_{t}$ as we will be always raising the
indices with respect to the Minkowski metric $\mu=-dt^{\otimes2}+dx_{1}^{\otimes2}+dx_{2}^{\otimes2}$
on $\mathbb{R}^{2+1}$ unless clearly stated otherwise. We recall
that equation (\ref{eq:WMeq}) is invariant with respect to the scaling:
\[
\phi(t,x)\longmapsto\phi(\lambda t,\lambda x),
\]
for any $\lambda>0$, and also any space-time translation.

Let us mention a few important conservation laws associated to the
above evolution. Firstly, recall that the energy of a wave map at
time $t_{0}\in I$, scale invariant in dimension 2+1, is given by:
\[
\mathcal{E}[\phi](t_{0}):=\frac{1}{2}\int_{\mathbb{R}^{2}}\left|\partial_{t}\phi(t_{0})\right|^{2}+\left|\nabla_{x}\phi(t_{0})\right|^{2}dx=\frac{1}{2}\left\Vert \nabla_{t,x}\phi(t_{0})\right\Vert _{L_{x}^{2}}^{2},
\]
and a conservation of energy law holds:
\begin{equation}
\mathcal{E}[\phi](t_{0})=\mathcal{E}[\phi](t_{1}),\label{eq:EnergyConservation}
\end{equation}
for any $t_{0},t_{1}\in I$. Secondly, as the target is the Euclidean
sphere $\mathbb{S}^{n-1}$, equation (\ref{eq:WMeq}) is equivalent
to the conservation law:
\begin{equation}
\partial^{\alpha}(\phi\partial_{\alpha}\phi^{\dagger}-\partial_{\alpha}\phi\phi^{\dagger})=0,\label{eq:ConservationLaw}
\end{equation}
which is a consequence of Noether's theorem and the symmetries of
the sphere (and similarly for other homogeneous Riemannian manifolds
but we shall focus on the sphere here for simplicity), recalling that
wave maps are formally critical points of the Lagrangian:
\begin{equation}
\mathcal{L}(\phi):=\int_{\mathbb{R}^{2+1}}\partial^{\alpha}\phi^{\dagger}\partial_{\alpha}\phi dtdx,\label{eq:Lagrangian}
\end{equation}
of which (\ref{eq:WMeq}) is the Euler-Lagrange equation. The use
of (\ref{eq:ConservationLaw}) means however, that some of our arguments
do not directly generalize to the case when one has an arbitrary closed
Riemannian manifold as a target.

Another consequence of the variational point of view and Noether's
theorem, is that smooth wave maps enjoy the stress energy tensor:
\begin{equation}
T_{\alpha\beta}[\phi]:=\partial_{\alpha}\phi^{\dagger}\partial_{\beta}\phi-\frac{1}{2}\mu_{\alpha\beta}\partial^{\gamma}\phi^{\dagger}\partial_{\gamma}\phi,\label{eq:StressEnergyTensor}
\end{equation}
being divergence free:
\begin{equation}
\partial^{\alpha}T_{\alpha\beta}[\phi]=0,\label{eq:StressTensorDiv0}
\end{equation}
and the energy conservation law (\ref{eq:EnergyConservation}) is
in fact obtained by contracting $T[\phi]$ with $\partial_{t}$ and
using (\ref{eq:StressTensorDiv0}) with Stokes' theorem in $[t_{0},t_{1}]\times\mathbb{R}^{2}$.
As we shall see later, many other monotonicity and Morawetz type estimates,
very important in the blow-up analysis of large energy wave maps,
are obtained in this way.

Finally, closing our presentation of wave maps, we remark that the
Lagrangian $\mathcal{L}$ is Lorentz invariant which implies that,
after composition with Lorentz transformations, the map still solves
equation (\ref{eq:WMeq}) and in particular the conservation law (\ref{eq:ConservationLaw})
also stays true.

\subsection{Statement of the main result. }

Before presenting our main result, let us set up some notation. As
usual, for two positive quantities $A$ and $B$ we will be writing
$A\lesssim B$ if $A\leq C\cdot B$ for some implicit constant $C>0$
whose dependence should be clarified when necessary. We also write
$A\sim B$ whenever the additional estimate $B\lesssim A$ holds.
Similarly, for the $O$-notation, we set $A=O(B)$ with $A$ not necessarily
positive this time, if $\left|A\right|\leq C\cdot B$. 

Regarding the asymptotic notation, arising in various statements of
the soliton decomposition below, we write $o_{X}(A)$, as $\nu\rightarrow+\infty$
in the background with $X$ some Banach space (typically a Sobolev
space), for a sequence of elements $f_{\nu}\in X$ with $\left\Vert f_{\nu}\right\Vert _{X}\leq c_{\nu}\cdot A$
where $c_{\nu}\downarrow0$. In the same spirit, we will write $A_{\nu}\ll B_{\nu}$
whenever $A_{\nu}/B_{\nu}\rightarrow0$ holds.

By $B_{r_{0}}(x_{0})\subset\mathbb{R}^{2}$, we will be always referring
to a spatial open ball of radius $r_{0}>0$ and center $x_{0}\in\mathbb{R}^{2}$,
whereas in space-time our basic domains should be light cones. We
denote the forward light cone by:
\[
C:=\left\{ (t,x)\,:\,0\leq t,\, r\leq t\right\} ,\,\,\, r:=\left|x\right|,
\]
and the restriction to some time interval $I$, as well as time sections,
by:
\[
C_{I}:=C\cap(I\times\mathbb{R}^{2}),\,\,\, S_{t_{0}}:=C\cap(\left\{ t_{0}\right\} \times\mathbb{R}^{2}),
\]
respectively, with $\partial C_{I}:=\left\{ (t,x)\,:\, t\in I,\, r=t\right\} $
standing for the \emph{lateral }boundary, to which we usually refer
as the \emph{null boundary}. Given some $\delta>0$, it will be convenient
also to set $C^{\delta}:=(\delta,0)+C$, with the convention that
$C^{0}$ stays for $\cup_{\delta>0}C^{\delta}$, the open interior
of $C$. Accordingly, we have $C_{I}^{\delta}:=C_{I}\cap C^{\delta}$,
$S_{t_{0}}^{\delta}:=S_{t_{0}}\cap C^{\delta}$ and if $\delta>0$,
$\partial C_{I}^{\delta}$ for the lateral boundary of $C_{I}^{\delta}$.

We recall now the set-up from \citep{TataruSterbenzWaveReg} (which
of course holds for any closed Riemannian manifold as target, but
we restrict ourselves to the case of $\mathbb{S}^{n-1}$ for the sake
of consistency). By the finite speed of propagation, translation and
scaling invariance properties, we shall restrict ourselves to the
forward light cone $C$ on which it is convenient to study at the
same time both scenarios: the finite time blow-up at the tip of the
cone, as well as the problem of scattering as $t\rightarrow+\infty$.
Hence, we can assume that we are given a wave map $\phi$ on $C$,
smooth up to but not necessarily including the origin $(0,0)$, and
satisfying the energy bound:
\begin{equation}
\mathcal{E}_{S_{t_{0}}}[\phi]:=\frac{1}{2}\left\Vert \nabla_{t,x}\phi\right\Vert _{L^{2}(S_{t_{0}})}^{2}\leq\mathcal{E},\,\,\,\forall t_{0}\in[0,\infty),\label{eq:GlobalEnergyE}
\end{equation}
where $\mathcal{E}$ is an arbitrarily large but fixed for the rest
of the paper bound on which most of our constants will depend. Let
us introduce here the notation for the energy of the wave map $\phi$
over some domain $U\subset\mathbb{R}^{2+1}$ at the time slice $\{t=t_{0}\}$
setting: 
\[
\mathcal{E}_{U}[\phi](t_{0}):=\frac{1}{2}\int_{U\cap\{t=t_{0}\}}\left|\nabla_{t,x}\phi(t_{0})\right|^{2}dx=\frac{1}{2}\left\Vert \nabla_{t,x}\phi(t_{0})\right\Vert _{L^{2}(U\cap\{t=t_{0}\})}^{2},
\]
or simply $\mathcal{E}_{U}[\phi]$ when there is no ambiguity, as
for example with $\mathcal{E}_{S_{t_{0}}}[\phi]$ above. For the latter
quantity, we recall the important monotonicity property:
\[
\mathcal{E}_{S_{t_{0}}}[\phi]\leq\mathcal{E}_{S_{t_{1}}}[\phi]\,\,\,\mathrm{for}\,\,\, t_{0}\leq t_{1},
\]
which is obtained, as the conservation of energy law (\ref{eq:EnergyConservation}),
contracting the stress energy tensor $T[\phi]$ with $\partial_{t}$
and using (\ref{eq:StressTensorDiv0}) with Stokes' theorem, this
time however applied in $C_{[t_{0},t_{1}]}$, giving:
\begin{equation}
\mathcal{E}_{S_{t_{1}}}[\phi]=\mathcal{F}_{[t_{0},t_{1}]}[\phi]+\mathcal{E}_{S_{t_{0}}}[\phi],\,\,\,\mathcal{F}_{[t_{0},t_{1}]}[\phi]:=\int_{\partial C_{[t_{0},t_{1}]}}\left(\frac{1}{4}\left|L\phi\right|^{2}+\frac{1}{2}\left|r^{-1}\partial_{\theta}\phi\right|^{2}\right)dA,\label{eq:Energy-FluxIdentity}
\end{equation}
where $\mathcal{F}_{[t_{0},t_{1}]}[\phi]$ is called the \emph{flux}
of the wave map from $t_{1}$ to $t_{0}$, and $L$ is part of the
null frame:
\[
L:=\partial_{t}+\partial_{r},\,\,\,\underline{L}:=\partial_{t}-\partial_{r}.
\]

The monotonicity property and the global bound (\ref{eq:GlobalEnergyE})
enable us to define the limits:
\[
\mathcal{E}_{0}:=\lim_{t\downarrow0}\mathcal{E}_{S_{t}}[\phi],\,\,\,\mathcal{E}_{\infty}:=\lim_{t\uparrow\infty}\mathcal{E}_{S_{t}}[\phi],
\]
and imply that $\mathcal{F}_{[t_{0},t_{1}]}[\phi]\downarrow0$ as
$t_{0}$, $t_{1}$ both tend to zero or infinity. The latter can be
used, together with the angular part of $\mathcal{F}_{[t_{0},t_{1}]}[\phi]$
from (\ref{eq:Energy-FluxIdentity}), to construct, given any $\varepsilon>0$,
an extension of $\phi$ outside the cone $C$ on $(0,t_{0}]$ for
$t_{0}=t_{0}(\varepsilon)$ small enough, and on $[t_{\infty},\infty)$
for $t_{\infty}=t_{\infty}(\varepsilon)$ large enough, solving the
wave maps equation (which is possible by finite speed of propagation,
hence we shall slightly abuse notation denoting those extensions by
$\phi$) such that:
\[
\mathcal{E}[\phi](t)-\mathcal{E}_{S_{t}}[\phi]\leq\varepsilon\mathcal{E},\,\,\,\forall t\in(0,t_{0}]\cup[t_{\infty},\infty),
\]
see sections 6.1 and 6.2 in \citep{TataruSterbenzWaveReg}. By the
small energy theorem of Tao \citep{TaoWaveII}, if $\mathcal{E}[\phi](t_{0})$
can be chosen small enough, then $\mathcal{E}_{0}=0$ and $\phi$
can be extended to a smooth wave map for all time (this guarantees
also that the above extensions are smooth everywhere except possibly
$(0,0)$, even if $\mathcal{E}_{S_{t}}[\phi]$ is large, provided
$\varepsilon>0$ was chosen small enough initially). 

Moreover, via a continuity-iteration-renormalization argument, $\phi$
is proved in \citep{TaoWaveII} to belong to a space $S\subset C_{t}^{0}(I\,;\dot{H}_{x}^{1})\cap C_{t}^{1}(I\,;L_{x}^{2})$,
implying control in all the Strichartz spaces amongst others, in which
well-posedness for the Cauchy problem (\ref{eq:WMeq}) can be established.
We discuss this more precisely with further references later in Section
\ref{sub:Regularity-theory-for}. Here, we should mention that, following
the terminology of Sterbenz and Tataru \citep{TataruSterbenzWaveReg},
we will say that \emph{scattering }holds if:
\[
\phi\in S,
\]
noting that, strictly speaking, this means that $\phi$ behaves like
a linear wave as $t\rightarrow\pm\infty$ after applying the microlocal
gauge (if small energy, see \citep{TaoWaveII}) or the diffusion gauge
(necessary if large energy, see \citep{TataruSterbenzWave}). We refer
the reader to the structure theorem of Sterbenz and Tataru in \citep{TataruSterbenzWave},
Proposition 3.9 there, for further information. Let us take the opportunity
here to remark that, if the target manifold is a hyperbolic Riemann
surface, then scattering in the classical sense was established by
Krieger and Schlag \citep{KriegerSchlag} for wave maps in the Coulomb
gauge. For the hyperbolic spaces, this was achieved by Tao \citep{TaoWaveMapProgram}
using the caloric gauge. Therefore, if $\mathcal{E}[\phi](t_{\infty})$
could be chosen small enough for some extension we consider the scattering
problem for $\phi$ as $t\rightarrow+\infty$ resolved.

Once energy gets large, blow-up can occur and the first examples of
finite time singularity for equivariant wave maps into $\mathbb{S}^{2}$
were constructed by Krieger, Schlag and Tataru \citep{KriegerSchlagTataru},
as well as Rodnianski and Sterbenz \citep{RodSterbenz} and also Raphaël
and Rodnianski \citep{RaphaelRod}, where, as for the harmonic map
heat flow, the mechanism behind the singular behavior was concentration
of a non-trivial harmonic map. More generally, the wave map $\phi$
could have concentrated at the origin at least one \emph{soliton}:
these are defined to be finite energy smooth maps $\omega:\mathbb{R}^{2+1}\rightarrow\mathbb{S}^{n-1}$
solving the wave maps equation (\ref{eq:WMeq}) and satisfying:
\[
X\omega=0,
\]
for some constant time-like vector field $X$ on $\mathbb{R}^{2+1}$.
In particular, precomposing $\omega$ with a Lorentz transformation
$\Psi$ that takes $\partial_{t}$ to $X$, we obtain a finite energy
harmonic map from $\mathbb{R}^{2}$ steady in the time direction which,
upon extending over spatial infinity using the removable singularity
theorem of Sacks and Uhlenbeck \citep{SacksUhlenbeck}, gives a harmonic
two-sphere $\omega\circ\Psi:\mathbb{R}\times\mathbb{S}^{2}\rightarrow\mathbb{S}^{n-1}$
familiar from the bubbling analysis of harmonic maps and heat flows.
Let us note here that this last point of view enables us to set $\omega(\infty):=\lim_{|x|\rightarrow\infty}\omega(t,x)$,
which is well-defined and independent of time $t$ chosen.

The \emph{threshold conjecture}, resolved by Sterbenz and Tataru \citep{TataruSterbenzWave,TataruSterbenzWaveReg}
(for closed Riemannian manifolds), Krieger and Schlag \citep{KriegerSchlag}
(for hyperbolic surfaces) and Tao \citep{TaoWaveMapProgram} (for
hyperbolic spaces of any dimension), predicts that concentration of
solitons is the essential mechanism behind blow-up. That is if $\mathcal{E}_{0}$,
$\mathcal{E}_{\infty}$ are less than the energy threshold below which
every harmonic two-sphere is constant, then one has regularity at
$t=0$ and scattering as $t\rightarrow+\infty$. 

One of the central difficulties in establishing this conjecture, in
the general non-symmetric situation, was that relying only on standard
Morawetz type estimates obtained from the stress energy tensor, it
was not possible to get a non-trivial amount of energy concentrating
within the light cone required to produce a non-constant soliton.
As far as the program of Sterbenz and Tataru is concerned, the breakthrough
was made in \citep{TataruSterbenzWave}, where they obtain that, on
top of concentrating energy, the map must concentrate a non-trivial
amount $\epsilon(\mathcal{E})>0$ of the BMO type \emph{energy dispersion
norm}. That is if:
\begin{equation}
\sup_{k}\left\Vert P_{k}\phi\right\Vert _{L_{t,x}^{\infty}((0,t_{0}]\cup[t_{\infty},\infty))}<\epsilon(\mathcal{E}),\label{eq:EnergyDispSmall}
\end{equation}
where $P_{k}$ stands for the Littlewood-Paley projection, then:
\[
\phi\in S((0,t_{0}]\cup[t_{\infty},\infty)),
\]
and the map extends smoothly to a neighborhood of $t=0$ (we shall
state a slightly more precise version of this theorem in Section \ref{sub:Regularity-theory-for}).
This is a large data result and is proved in \citep{TataruSterbenzWave}
via an induction on energy argument.

Let us note here, as an aside, that the program of Krieger and Schlag
\citep{KriegerSchlag}, as well as the one of Tao \citep{TaoWaveMapProgram},
proceeded via a different induction on energy argument and without
any smallness assumption as (\ref{eq:EnergyDispSmall}). As there
are no non-constant solitons for the targets considered there, one
obtains global regularity and scattering for arbitrarily large data
in those cases. We point out on the other hand, that the concentration-compactness
techniques used in \citep{KriegerSchlag} can also lead to a fruitful
study of the formation of solitons, as was demonstrated so far for
equivariant wave maps in \citep{CKLS1,CKLS2,Catiara,CatiaraCorrigendum,Kenig}.
In the present work however, we shall adopt a more direct approach
staying closer to \citep{TataruSterbenzWave,TataruSterbenzWaveReg},
see Section \ref{sub:Strategy} for a detailed summary of our strategy.

In Section \ref{sub:Dispersive-property}, we will briefly discuss
results from \citep{TataruSterbenzWaveReg} that convert concentration
of energy dispersion into concentration of a non-trivial amount of
time-like energy, as this is how, arguing by contradiction, we get
the energy dispersion norm of the term concentrating on the null boundary
asymptotically vanishing. On the other hand, the fact that arguments
in \citep{TataruSterbenzWaveReg} give that only \emph{some} energy
is prevented from escaping into the null boundary at a finite time
singularity, is a serious obstacle to controlling null concentration
further. In fact, techniques dealing with this phenomenon would have
to strengthen \citep{TataruSterbenzWave,TataruSterbenzWaveReg} considerably
in this situation, if not giving a wholly alternative proof to the
threshold conjecture (which we shall not attempt in this paper).
\begin{thm}
\emph{\label{thm:ThresholdConjecture}(Sterbenz and Tataru \citep{TataruSterbenzWave,TataruSterbenzWaveReg}).}
Suppose that the wave map $\phi$ is singular at $(0,0)$, respectively
$\phi\notin S[t_{\infty},\infty)$ for any extension as discussed
above, then there exists a sequence $\lambda_{\nu}^{0}\downarrow0$,
respectively $\lambda_{\nu}^{\infty}\uparrow\infty$, the so-called
\emph{final rescaling}, such that setting:
\[
\phi_{\nu}(\cdot):=\phi(\lambda_{\nu}^{0}\cdot),\,\,\,\mathit{\mathit{respectively}}\,\,\,\phi(\lambda_{\nu}^{\infty}\cdot),
\]
we can find a sequence of concentration points $(t_{\nu},x_{\nu})\in C_{[1,O(1)]}^{\frac{1}{2}}$
and scales $r_{\nu}\downarrow0$, for which:
\[
\phi_{\nu}(t_{\nu}+r_{\nu}t,x_{\nu}+r_{\nu}x)\longrightarrow\omega(t,x)\,\,\,\mathrm{\mathit{in}}\,\,\,(H_{t,x}^{1})_{loc}\left([-\frac{1}{2},\frac{1}{2}]\times\mathbb{R}^{2}\right),
\]
for some non-constant soliton $\omega$.
\end{thm}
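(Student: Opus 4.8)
The plan is a direct construction following the selection-of-scales and bubbling scheme: from the singularity one extracts, in turn, persistent concentration of the energy dispersion norm, then concentration of genuine time-like energy, and finally a soliton as a bubbling limit. Since $\phi$ is singular at $(0,0)$ (the scattering case $\phi\notin S[t_{\infty},\infty)$ being symmetric), the contrapositive of the large-data criterion recalled around (\ref{eq:EnergyDispSmall}) shows that $\sup_{k}\|P_{k}\phi\|_{L_{t,x}^{\infty}}$ cannot be small on a full neighbourhood of any cone segment $C_{[\lambda,2\lambda]}$ with $\lambda\downarrow0$, since otherwise $\phi\in S$ there and, by finite speed of propagation, $\phi$ would extend smoothly past the tip. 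I would then choose $\lambda_{\nu}^{0}\downarrow0$ to be the \emph{final rescaling}: the largest scales below which the energy dispersion of $\phi$ has become small --- so the evolution is regular at all smaller scales --- while at scale $\lambda_{\nu}^{0}$ it is still $\gtrsim\epsilon(\mathcal{E})$. Invoking the results of Section~\ref{sub:Dispersive-property}, which convert this persistent energy dispersion into a lower bound on time-like energy, one obtains concentration points $(t_{\nu},x_{\nu})\in C_{[1,O(1)]}^{1/2}$ and scales $r_{\nu}\downarrow0$ with $\mathcal{E}_{B_{r_{\nu}}(x_{\nu})}[\phi_{\nu}](t_{\nu})\gtrsim\epsilon(\mathcal{E})$ but only $o_{R}(1)$ energy in $B_{r_{\nu}/R}(x_{\nu})$ as $R\to\infty$, so that $r_{\nu}$ is a genuine concentration scale; moreover (\ref{eq:Energy-FluxIdentity}) together with (\ref{eq:GlobalEnergyE}) forces the flux $\mathcal{F}_{[\lambda_{\nu}^{0}/R,\,R\lambda_{\nu}^{0}]}[\phi]\to0$, i.e. in rescaled coordinates no energy of $\phi_{\nu}$ escapes through the null boundary of a fixed truncated cone.

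Next I would carry out the interior rescaling $\psi_{\nu}(t,x):=\phi_{\nu}(t_{\nu}+r_{\nu}t,\,x_{\nu}+r_{\nu}x)$ on $[-\frac{1}{2},\frac{1}{2}]\times\mathbb{R}^{2}$. These are wave maps with energy $\le\mathcal{E}$ and, by the choice of $r_{\nu}$, with no further energy concentration at smaller scales, so along a subsequence $\psi_{\nu}\rightharpoonup\omega$ weakly in $(H_{t,x}^{1})_{loc}$. To upgrade this to strong $(H_{t,x}^{1})_{loc}$ convergence and to identify $\omega$ as a finite-energy wave map I would follow Lin and Rivière \citep{Lin,LiR}: because the target is the Euclidean sphere, (\ref{eq:WMeq}) is equivalent to the conservation law (\ref{eq:ConservationLaw}) and the nonlinearity $\phi\,\partial_{\alpha}\phi^{\dagger}\partial^{\alpha}\phi$ carries a null/Jacobian structure amenable to compensated compactness; combined with the local energy flux bound --- which prevents energy from leaking to the boundary or concentrating on lower-dimensional sets at the bubble scale --- this promotes the convergence and passes the equation to the limit. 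Since a fixed amount of energy sits at unit scale, $\omega$ is non-constant.

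It remains to see that $\omega$ is in fact a soliton, that is $X\omega=0$ for some constant time-like vector field $X$. The vanishing flux descends to the limit, so contracting the stress-energy tensor (\ref{eq:StressEnergyTensor}) with $\partial_{t}$ over truncated cones and using (\ref{eq:StressTensorDiv0}) makes $\mathcal{E}_{S_{t}}[\omega]$ independent of $t$, and a Morawetz/virial identity --- obtained by contracting $T[\omega]$ with the scaling vector field --- together with the exclusion of non-constant finite-energy self-similar wave maps (see \citep{ShatahStruweGeomWave}) forces the rigidity $X\omega=0$ with $X$ time-like; this is the rigidity at the heart of the threshold argument of \citep{TataruSterbenzWave,TataruSterbenzWaveReg}, which I would invoke rather than reprove. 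By the removable singularity theorem of Sacks and Uhlenbeck \citep{SacksUhlenbeck}, $\omega$ then extends to a non-constant harmonic two-sphere, completing the construction.

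I expect two points to be genuinely hard. The first is the passage turning persistent energy dispersion into a lower bound on \emph{time-like} (interior, non-escaping) energy: Morawetz estimates from $T[\phi]$ alone yield only energy concentration, not interior concentration, and this conversion is the core contribution of \citep{TataruSterbenzWave,TataruSterbenzWaveReg}. The second --- and this is where the real work lies --- is upgrading the weak bubbling limit to strong $(H_{t,x}^{1})_{loc}$ convergence and passing (\ref{eq:WMeq}) to the limit: weak $H^{1}$ convergence does not control $|\nabla\phi_{\nu}|^{2}$ inside the nonlinearity, and one must simultaneously rule out energy concentrating on the lateral null boundary and on lower-dimensional subsets at the bubble scale. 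This is exactly where the compensated-compactness structure of (\ref{eq:ConservationLaw}) special to the sphere, together with the local flux decay, becomes indispensable, and adapting the elliptic bubbling machinery of \citep{Lin,LiR} to the hyperbolic framework of \citep{TataruSterbenzWaveReg} is the technical heart of the matter.
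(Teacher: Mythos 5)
Your overall scaffolding (energy dispersion cannot vanish near the singularity, hence concentration points exist; rescale and extract a bubble) matches the Sterbenz--Tataru scheme that Theorem \ref{thm:ThresholdConjecture} is quoted from, but the two steps you yourself flag as hard are exactly where the proposal has a genuine gap, and the substitutes you offer do not close it. The decisive ingredient in \citep{TataruSterbenzWaveReg} — and the one your argument never produces — is the decay along the scaling vector field at the final rescaling, i.e.\ the estimate (\ref{eq:AsymptoticSelfSim}) of Lemma \ref{lem:FinalRescaling}, obtained from a Morawetz identity plus a pigeonhole over the exponentially many dyadic scales between $\epsilon_{\nu}^{1/2}t_{\nu}$ and $\epsilon_{\nu}^{1/4}t_{\nu}$. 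It is this smallness of $\|\partial_{\rho}\phi_{\nu}\|_{L^{2}}$ (equivalently $\|X\phi_{\nu}\|_{L^{2}}\to0$ for a constant time-like $X$ after localizing at the concentration point and scale) that feeds the compactness result of Lemma \ref{prop:Simple-compactness-result.}: small energy \emph{together with} asymptotic constancy in a time-like direction gives strong $C_{t}^{0}(H_{x}^{1})\cap C_{t}^{1}(L_{x}^{2})$ convergence, and the limit automatically satisfies $X\omega=0$, i.e.\ it is a soliton. Your replacement for the compactness step — compensated compactness from the conservation law (\ref{eq:ConservationLaw}) — is not what drives this (the Sterbenz--Tataru compactness argument uses the elliptic character induced by the time-like decay plus the Fourier restriction component of $DS$, and works for any compact target, consistent with the theorem being stated for general targets); in this paper the sphere-specific compensation estimates enter only later, in the neck/energy-quantization analysis of Section \ref{sub:Asymptotic-decomposition.}, not in the soliton extraction.

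The rigidity step is where the proposal actually fails rather than merely deviates. Vanishing flux for the limit, conservation of $\mathcal{E}_{S_{t}}[\omega]$, and a virial identity do not force $X\omega=0$ for some time-like $X$: a generic finite-energy wave map limit satisfies all of these without being steady in any direction, and Proposition \ref{prop:SelfSimilar} (nonexistence of finite-energy self-similar wave maps) is not applicable because nothing in your construction makes $\omega$ self-similar — in the actual argument the self-similar exclusion is used for the \emph{slowly} concentrating limit in Lemma \ref{lem:ConcentrationCompactness}, after the $\partial_{\rho}$-decay has already forced $\partial_{\rho}\phi=0$, which is a different role. Without selecting the concentration points and scales $(t_{\nu},x_{\nu},r_{\nu})$ so that the time-like $L^{2}$ decay holds there (this is the point of the pigeonhole construction summarized after Lemma \ref{lem:FinalRescaling}, and it is why the interval $[1,O(1)]$ with $O(1)=N_{\nu}\to\infty$ appears), you get neither the strong convergence nor the steadiness of the bubble, so the limit cannot be identified as a non-constant soliton by the route you describe.
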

We shall describe in detail the final rescaling $\phi_{\nu}$ at the
beginning of Section \ref{sec:Bubbling-analysis}, see Lemma \ref{lem:FinalRescaling}.
In our main theorem, we study this sequence further, carrying out
a blow-up analysis for it and establishing an analogue of the energy
identity from the bubbling analysis of harmonic maps and heat flows
(and many other geometric variational problems), see for example the
works \citep{DingTian,WindingTopping,LauRiv} and the references therein
for the critical regime, and for a supercritical situation the papers
of Lin and Rivière \citep{Lin,LiR}, which are of closer flavor to
the arguments presented in this paper. 
\begin{thm}
\label{thm:Main}Upon passing to a subsequence for the wave maps $\left\{ \phi_{\nu}\right\} _{\nu\in\mathbb{N}}$
obtained in Theorem \ref{thm:ThresholdConjecture}, or abstractly
those satisfying the conclusions of Lemma \ref{lem:FinalRescaling},
we have:

$\bullet$ \emph{Blow-up analysis for asymptotically self-similar
sequences of wave maps:} there exists a non-trivial finite collection
of time-like geodesics $\varrho_{1},\ldots,\varrho_{I}$, emanating
from the origin in Minkowski space $\mathbb{R}^{2+1}$, along which
the maps concentrate some threshold $\epsilon_{s}>0$ of energy:
\[
\liminf_{\nu\rightarrow\infty}\mathcal{E}_{B_{r}(\varrho_{i}(t))}[\phi_{\nu}]>\epsilon_{s}\,\,\,\forall t\in[1,2],\,\,\,\forall r>0,\,\,\, i=1,\ldots,I,
\]
where we are writing $\varrho_{i}(t):=\varrho_{i}\cap S_{t}$, and
the maps converge locally to a constant away from $\varrho_{i}$ in
the interior of the light cone: 
\[
\phi_{\nu}\longrightarrow\mathrm{const.}\,\,\,\mathrm{\mathit{on}}\,\,\, C_{[1,2]}^{0}\setminus\cup_{i}\varrho_{i},
\]
locally in $C_{t}^{0}(H_{x}^{1})\cap C_{t}^{1}(L_{x}^{2})$.

$\bullet$ \emph{Dispersive property for null-concentration:} the
parts of the maps $\phi_{\nu}$ that get concentrated on the null
boundary $\partial C$ have asymptotically vanishing energy dispersion
norm, that is fixing the constant:
\[
\delta_{0}:=\frac{1}{10}\mathrm{dist}(\cup_{i}\varrho_{i},\partial C_{[1,2]}),
\]
the maps $\phi_{\nu}$ on $C_{[t_{0}-\delta_{0},t_{0}+\delta_{0}]}\setminus C_{[t_{0}-\delta_{0},t_{0}+\delta_{0}]}^{2\delta_{0}}$
admit extensions $\varpi_{t_{0},\nu}$ to $[t_{0}-\delta_{0},t_{0}+\delta_{0}]\times\mathbb{R}^{2}$,
for each $t_{0}\in[1+\delta_{0},2-\delta_{0}]$, solving the wave
maps equation on this short, but independent of $\nu$, time interval
and satisfying: 
\[
\nabla_{t,x}\varpi_{t_{0},\nu}\longrightarrow0\,\,\,\mathrm{\mathit{in}}\,\,\, C_{t}^{0}(L_{x}^{2})_{loc}\left(([t_{0}-\delta_{0},t_{0}+\delta_{0}]\times\mathbb{R}^{2})\setminus\partial C_{[t_{0}-\delta_{0},t_{0}+\delta_{0}]}\right),
\]
\[
\mathit{\mathit{and}}\,\,\,\limsup_{\nu\rightarrow\infty}\sup_{k}\left(2^{-k}\left\Vert P_{k}\nabla_{t,x}\varpi_{t_{0},\nu}\right\Vert _{L_{t,x}^{\infty}[t_{0}-\delta_{0},t_{0}+\delta_{0}]}\right)=0;
\]

$\bullet$ \emph{Asymptotic decomposition:} we can find a sequence
of time slices:
\[
\{t_{\nu}\}_{\nu\in\mathbb{N}}\subset[1+\delta_{0},2-\delta_{0}],
\]
on which there exists a non-trivial collection of $J=J(\{t_{\nu}\}_{\nu\in\mathbb{N}})\lesssim_{\mathcal{E}}1$
sequences of points $a_{\nu}^{j}\in\mathbb{R}^{2}$, $|a_{\nu}^{j}|<t_{\nu}-5\delta_{0}$,
with associated scales $\lambda_{\nu}^{j}\downarrow0$ for $j=1,\ldots,J$,
satisfying:
\[
\frac{\lambda_{\nu}^{i}}{\lambda_{\nu}^{j}}+\frac{\lambda_{\nu}^{j}}{\lambda_{\nu}^{i}}+\frac{|a_{\nu}^{i}-a_{\nu}^{j}|^{2}}{\lambda_{\nu}^{i}\lambda_{\nu}^{j}}\longrightarrow\infty
\]
as $\nu\rightarrow+\infty$ for distinct $i\neq j$, such that:
\[
\phi_{\nu}(t,x)=\sum_{j=1}^{J}\left(\omega_{j}\left(\frac{t-t_{\nu}}{\lambda_{\nu}^{j}},\frac{x-a_{\nu}^{j}}{\lambda_{\nu}^{j}}\right)-\omega_{j}(\infty)\right)+\varpi_{t_{\nu},\nu}(t,x)+o_{\dot{H}_{x}^{1}\times L_{x}^{2}}(1)\,\,\,\mathit{\mathit{on}}\,\,\, S_{t_{\nu}},
\]
where $\omega_{j}:\mathbb{R}^{2+1}\rightarrow\mathbb{S}^{n-1}$ are
solitons for which:
\begin{equation}
\phi_{\nu}(t_{\nu}+\lambda_{\nu}^{j}t,a_{\nu}^{j}+\lambda_{\nu}^{j}x)\longrightarrow\omega_{j}(t,x)\,\,\,\mathit{\mathit{on}}\,\,\,\mathbb{R}^{2+1}\setminus\cup_{q}\varrho_{q}^{j},\label{eq:ConvergenceToSolitons}
\end{equation}
locally in $C_{t}^{0}(H_{x}^{1})\cap C_{t}^{1}(L_{x}^{2})$, for a
finite collection, $q=1,\ldots,q(\omega_{j},\mathcal{E})$, of parallel
time-like geodesics $\varrho_{q}^{j}$.\end{thm}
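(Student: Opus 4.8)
The plan is to run a bubbling analysis for the final-rescaling sequence $\{\phi_{\nu}\}$ of Lemma \ref{lem:FinalRescaling}, modelled on the harmonic-map bubbling analysis of Lin and Rivi{\`e}re \citep{Lin,LiR}, with elliptic regularity and the Pohozaev identity replaced by their hyperbolic counterparts built from the stress-energy tensor (\ref{eq:StressEnergyTensor})--(\ref{eq:StressTensorDiv0}) and the energy-flux identity (\ref{eq:Energy-FluxIdentity}), and with the compensated-compactness structure available for harmonic maps into spheres replaced by the conservation law (\ref{eq:ConservationLaw}). First I would set up a concentration-compactness dichotomy on the time slices $S_{t}$, $t\in[1,2]$: the energy densities $e_{\nu}:=\frac{1}{2}|\nabla_{t,x}\phi_{\nu}|^{2}\,dx$ are bounded by $\mathcal{E}$, so after passing to a subsequence they converge weak-$*$, with a defect measure supported on a closed set $\Sigma\subset C_{[1,2]}$. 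Off $\Sigma$ one has $\phi_{\nu}\rightharpoonup\phi_{\infty}$ locally with $\nabla_{t,x}\phi_{\nu}\to\nabla_{t,x}\phi_{\infty}$ strongly in $L^{2}_{loc}$; this is where (\ref{eq:ConservationLaw}) enters, forcing the quadratic nonlinearity in (\ref{eq:WMeq}) into a null-form / Jacobian form so that a div-curl argument upgrades weak convergence to strong convergence on any region into which no energy escapes. Since $\{\phi_{\nu}\}$ is the final rescaling, $\mathcal{F}_{[t_{0},t_{1}]}[\phi_{\nu}]\to0$ on $[1,2]$, so the (strong, away from $\Sigma$) limit $\phi_{\infty}$ is a self-similar wave map there and hence constant, by the absence of non-constant finite-energy self-similar wave maps in $2+1$; thus $\phi_{\nu}\to\mathrm{const.}$ on $C_{[1,2]}^{0}\setminus\Sigma$, locally in $C_{t}^{0}(H_{x}^{1})\cap C_{t}^{1}(L_{x}^{2})$. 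Scale-invariance of the defect measure (again from asymptotic self-similarity) forces $\Sigma$ to be a union of rays from the origin; these lie in the open cone because the concentration points of Theorem \ref{thm:ThresholdConjecture} sit in $C_{[1,O(1)]}^{\frac{1}{2}}$, i.e.\ they are time-like geodesics $\varrho_{i}$, and each carries at least $\epsilon_{s}$ of energy since at any of its points one can extract a non-constant soliton by a further rescaling, so only $I\lesssim_{\mathcal{E}}1$ of them survive by the total energy bound. This yields the first bullet.

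For the second bullet, with $\delta_{0}$ as in the statement, the decay of the flux lets one extend $\phi_{\nu}$ across the null boundary on $C_{[t_{0}-\delta_{0},t_{0}+\delta_{0}]}\setminus C_{[t_{0}-\delta_{0},t_{0}+\delta_{0}]}^{2\delta_{0}}$ to a wave map $\varpi_{t_{0},\nu}$ on $[t_{0}-\delta_{0},t_{0}+\delta_{0}]\times\mathbb{R}^{2}$, exactly as in Sections 6.1--6.2 of \citep{TataruSterbenzWaveReg}; the convergence $\nabla_{t,x}\varpi_{t_{0},\nu}\to0$ in $C_{t}^{0}(L_{x}^{2})_{loc}$ away from $\partial C$ is then immediate from the first bullet, since this region misses every $\varrho_{i}$. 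For the energy-dispersion statement I would argue by contradiction: were $\limsup_{\nu}\sup_{k}2^{-k}\|P_{k}\nabla_{t,x}\varpi_{t_{0},\nu}\|_{L_{t,x}^{\infty}}>0$, the results recalled in Section \ref{sub:Dispersive-property} would convert this non-vanishing energy dispersion into concentration of a non-trivial amount of time-like energy for $\varpi_{t_{0},\nu}$, hence into a non-constant soliton bubbling off along a time-like geodesic lying in this region --- but that geodesic would then be one of the $\varrho_{i}$, contradicting $\mathrm{dist}(\cup_{i}\varrho_{i},\partial C_{[1,2]})=10\delta_{0}$.

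For the third bullet, I would use the monotonicity of $\mathcal{E}_{S_{t}}[\phi_{\nu}]$ and a pigeonhole in $t$ to select a slice $t_{\nu}\in[1+\delta_{0},2-\delta_{0}]$ regular for the decomposition (small flux across a thin slab about $t_{\nu}$, convergence of the slice energy, no bubble collapsing exactly at $t_{\nu}$). By the first bullet, on $S_{t_{\nu}}$ all the energy not carried by $\varpi_{t_{\nu},\nu}$ is localized in small balls about the finitely many points $\varrho_{i}\cap S_{t_{\nu}}$; at each such point I would run the usual inductive bubble extraction, choosing $(a_{\nu}^{j},\lambda_{\nu}^{j})$ at the first concentration scale so that $\phi_{\nu}(t_{\nu}+\lambda_{\nu}^{j}t,a_{\nu}^{j}+\lambda_{\nu}^{j}x)$ converges --- by the energy-dispersion / time-like-energy mechanism together with the div-curl compactness of the first bullet applied one scale down --- to a non-constant soliton $\omega_{j}$, itself possibly carrying sub-bubbles along parallel time-like geodesics $\varrho_{q}^{j}$, which gives (\ref{eq:ConvergenceToSolitons}); then subtract $\omega_{j}(\,\cdot/\lambda_{\nu}^{j})-\omega_{j}(\infty)$ and repeat. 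Standard arguments give the pairwise decoupling of the parameters $(a_{\nu}^{j},\lambda_{\nu}^{j})$ and termination after $J\lesssim_{\mathcal{E}}1$ steps, each bubble costing at least $\epsilon_{s}$ of energy.

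The remaining --- and, I expect, hardest --- point is that the leftover is $o_{\dot{H}_{x}^{1}\times L_{x}^{2}}(1)$ on $S_{t_{\nu}}$: no energy is lost in the necks joining the bubbles to one another, to the body, and to the null-boundary piece. In the elliptic/parabolic theory this follows from a Pohozaev identity showing that the energy on a long neck annulus is controlled by its (vanishing) boundary flux; here I expect to replace it by a Morawetz / stress-energy computation on truncated light cones and thin slabs whose boundary terms are precisely the flux $\mathcal{F}$ of (\ref{eq:Energy-FluxIdentity}) and the slab-energy increments, both driven to zero by the final-rescaling property and the choice of $t_{\nu}$. Carrying this through while the non-compact null term $\varpi_{t_{\nu},\nu}$ --- which must be split off before, not during, the neck analysis --- is simultaneously present is the delicate part of the argument.
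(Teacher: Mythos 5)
There is a genuine gap, and it sits exactly at the point you yourself flag as hardest: the collapse of the neck energy. A Pohozaev/Morawetz replacement built from the stress--energy tensor cannot close this step. Contractions of $T[\phi_\nu]$ with $\partial_t$, $\partial_\rho$ or boosts only control specific components --- the flux $\mathcal{F}$ of (\ref{eq:Energy-FluxIdentity}) on null boundaries, the quantities $|L\phi|^{2}+|r^{-1}\partial_\theta\phi|^{2}$, or $\partial_\rho\phi$ --- and on the interior neck annuli $[-r/2,r/2]\times(B_{2r}(a_\nu^j)\setminus B_r(a_\nu^j))$ the relevant boundary terms are fluxes across \emph{small} null cones around the concentration points, which are not driven to zero by the final-rescaling property or the slice selection; the inability of such monotonicity estimates to rule out null concentration at small scales is precisely the difficulty emphasized in the introduction. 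Moreover any such argument would be target-independent, whereas the energy identity proved here genuinely uses the sphere: the paper kills the necks with the compensation estimates of Proposition \ref{prop:Compensation-estimate.} and Lemma \ref{lem:HigherOrderTimeLike} (the conservation law (\ref{eq:ConservationLaw}) yields $\chi\nabla_{t,x}\phi=\Theta+\Xi$ with $\Theta$ small in $L^2$ and $\Xi$ bounded in the $\ell^1$-Besov/Hardy sense), proves a weak $\dot B_{\infty}^{1,2}$ decay for the gradient on neck regions (Lemma \ref{lem:Besov-control.}, which itself needs the second-order time-like decomposition and a perturbative $\dot B_{\infty}^{-1,2}$ bound for the quadratic nonlinearity localized to the neck), and then concludes by a Lin--Rivi\`ere type duality pairing $\sup_k\|P_k\Upsilon\|_{L^2}$ against $\sum_k\|P_k\Xi\|_{L^2}$; no flux identity enters. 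The selection of the time slices $t_\nu$ via Hardy--Littlewood maximal functions, so that the time-like decay and the estimates (\ref{eq:GoodSliceProps}) are available at every concentration scale, is also an essential ingredient absent from your sketch.

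Two secondary corrections. First, in your first bullet the strong, locally uniform-in-time convergence in $C_t^0(H_x^1)\cap C_t^1(L_x^2)$ off the concentration set does not come from a div-curl upgrade of weak convergence (a Freire--M\"uller--Struwe argument gives only weak limits, and the paper explicitly does not use the sphere structure here); it comes from Sterbenz--Tataru's small-energy compactness under time-like decay, Lemma \ref{prop:Simple-compactness-result.}, propagated by the monotonicity of Lemma \ref{lem:MonotonicityLemma}. Second, the maps $\varpi_{t_0,\nu}$ are not just the exterior extensions of Sections 6.1--6.2 of \citep{TataruSterbenzWaveReg}: one must also perform surgery in the interior, cutting out the bubbles near each $\varrho_i$ by interpolating to a constant on pigeonholed annuli before solving the Cauchy problem, and the contradiction for the energy dispersion is then that Lemma \ref{lem:NoTimeLikeEnergyImplyDispersive} would force a definite amount of energy of $\varpi_{t_0,\nu}$ into a fixed interior ball, which the construction excludes --- there is no need (and no way, for the surgered maps) to re-extract a soliton and identify its geodesic with one of the $\varrho_i$. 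These points are repairable; the neck-energy argument is where the proposal genuinely fails.
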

\begin{rem}
\label{EquivariantStaffRem}In other words, we have\emph{ energy quantization}
in the interior of the light cone for wave maps into spheres. This
is a little first step towards understanding the \emph{soliton resolution
conjecture} for the (2+1)-dimensional wave maps equation with target
$\mathbb{S}^{n-1}$. It states that in addition, such a decomposition
should be unique holding for all time and that $\varpi_{t_{0},\nu}$
should have asymptotically vanishing energy in the case of finite
time blow-up (we note that this is guaranteed in the equivariant case
by the well-known exterior energy estimate, see \citep{ShatahStruweGeomWave}),
or correspond to the scattering part of the wave map in the case of
global existence. Some further estimates, following directly from
the work of Sterbenz and Tataru \citep{TataruSterbenzWave,TataruSterbenzWaveReg},
regarding the terms $\varpi_{t_{0},\nu}$ can be found in Remark \ref{RemSBoundOnW}
and Section \ref{sub:Dispersive-property} (for example, (\ref{eq:NewWeightedControl})
there gives decay for the angular and the null $L=\partial_{t}+\partial_{r}$
energy). We note in the end though that our techniques do not lead
to any further information.

We mention here that the soliton resolution conjecture has recently
been shown to hold for the 1-equivariant wave maps into $\mathbb{S}^{2}\subset\mathbb{R}^{3}$
with initial data having topological degree one and energy strictly
less than 3 times $4\pi$ (note that $4\pi$ is the energy threshold)
by Côte, Kenig, Lawrie and Schlag at finite time singularity in \citep{CKLS1},
and in \citep{CKLS2} for the case of global existence (more general
surfaces of revolution are also considered). Note that in this situation,
one knows a priori the uniqueness of the possible configurations of
solitons that can be concentrated (in fact there is only one of them
and it is the unique equivariant degree one harmonic map). The conjecture
is also established for the examples constructed by Krieger, Schlag
and Tataru \citep{KriegerSchlagTataru}, as well as Raphaël and Rodnianski
\citep{RaphaelRod}.

Without this restriction on the initial data, the soliton resolution
along a sequence of times was obtained in the 1-equivariant setting
by Côte \citep{Catiara,CatiaraCorrigendum} building upon \citep{CKLS1,CKLS2},
and more generally for the $\ell$-equivariant case for any integer
$\ell\geq1$ by Jia and Kenig \citep{Kenig} relying on a method different
from \citep{CKLS1,CKLS2,Catiara} (in both works, the finite time
singularity and the global existence case have been considered). We
refer the reader to \citep{Kenig} for more references and an overview
with some history of the various beautiful techniques used to tackle
the soliton resolution conjecture in the radial/equivariant cases
for a variety of non-linear wave equations initiated by Duyckaerts,
Kenig and Merle, see for example \citep{DuyckaertsKenigMerle}. We
also note that those techniques have been very recently applied to
prove the sequential soliton resolution conjecture without any symmetry
assumptions for some focusing semi-linear wave equations by Duyckaerts,
Jia, Kenig and Merle \citep{Jia,KenigMerleSolRes,KenigMerleScatProfile}.
The strategy of the present paper will have a very different flavor
though. An outline can be found in Section \ref{sub:Strategy}. 
\end{rem}
Let us say that the techniques we use to establish the above theorem
leave completely open the question of uniqueness of the set of solitons.
In fact, as suggested by an example of Topping \citep{WindingTopping}
for the harmonic map heat flow, this, and therefore the soliton resolution
conjecture, could fail for certain targets (in view of the work of
Simon \citep{SimonAsymptotics} however, such pathologies are believed
to be excluded when working with real analytic targets like $\mathbb{S}^{n-1}$).
Therefore, there is a notoriously difficult and long way from Theorem
\ref{thm:Main} to the full soliton resolution conjecture as one should
expect the former to hold for any closed Riemannian manifold as a
target and the only place where we use the fact that our target is
a sphere is when relying on the conservation law (\ref{eq:ConservationLaw})
in the proof of the compensation estimates in Section \ref{sub:CompensationEstimate}.
Establishing the analogue of those estimates for general targets is
an important open question even in the elliptic theory, see the work
of Rivière \citep{RiviereConj} for a further discussion.

\subsection{\label{sub:Strategy}Discussion of the strategy.}

We should close the introduction by outlining the proof of Theorem
\ref{thm:Main} which is contained in Section \ref{sec:Bubbling-analysis}. 

The first point of Theorem \ref{thm:Main} is obtained in Section
\ref{sub:Concentration-compactness}. For the sequence of wave maps
$\left\{ \phi_{\nu}\right\} _{\nu\in\mathbb{N}}$ at the final rescaling,
Sterbenz and Tataru \citep{TataruSterbenzWaveReg} obtain a decay
estimate along the scaling vector field $\partial_{\rho}=\frac{1}{(t^{2}-r^{2})^{1/2}}(t\partial_{t}+r\partial_{r})$:
\[
\int\int_{C_{[\varsigma_{\nu},\varsigma_{\nu}^{-1}]}^{\epsilon_{\nu}^{\frac{1}{2}}}}\frac{1}{(t^{2}-r^{2})^{\frac{1}{2}}}\left|\partial_{\rho}\phi_{\nu}\right|^{2}dxdt\longrightarrow0,
\]
for some sequences $\varsigma_{\nu}\downarrow0$, $\epsilon_{\nu}^{\frac{1}{2}}\ll\varsigma_{\nu}$,
see Lemma \ref{lem:FinalRescaling}. If one uses a local version of
the latter, by contracting the stress energy tensor (\ref{eq:StressEnergyTensor})
with $\varphi\partial_{\rho}$, for some compactly supported cut-off
$\varphi$ on the unit hyperbolic plane $\mathbb{H}^{2}$, it is possible
to spread a given energy control on some ball $B_{r_{0}}(x_{0})\Subset S_{1}^{0}$,
at the time slice $t=1$ say, along the flow of the vector field $\partial_{\rho}$
for any finite amount of time; in other words the wave maps $\phi_{\nu}$
would have small energy, uniformly in $\nu$, on the whole of:
\[
\left\{ \lambda z\,:\,\lambda\in[1,2],\, z\in\left\{ t=1\right\} \times B_{r_{0}}(x_{0})\right\} ,
\]
provided they did so initially at $t=1$. This is a simple analogue
of the fact, from the blow-up analysis of supercritical harmonic maps,
that one must have the tangent Radon measures monotone under scaling
(see the work of Lin \citep{Lin}, and Lemma \ref{lem:MonotonicityLemma}
here). 

This way, relying as well on concentration-compactness at $t=1$ and
the small energy compactness result under control of a time-like direction
due to Sterbenz and Tataru \citep{TataruSterbenzWaveReg}, see Lemma
\ref{prop:Simple-compactness-result.} here, we are able to obtain
a subsequence for $\left\{ \phi_{\nu}\right\} _{\nu\in\mathbb{N}}$
which converges on $C_{[1,2]}^{0}$, away from a finite set of time-like
rays passing through the origin, to a regular self-similar wave map
$\phi$. By homogeneity and the singularity removable theorem of Sacks
and Uhlenbeck \citep{SacksUhlenbeck}, the map $\phi$ extends to
a smooth wave map on the whole of the open forward light cone $C^{0}$
(the details of this argument are contained in Lemma \ref{lem:ConcentrationCompactness}).
We note that similar arguments give also the convergence to solitons
statement (\ref{eq:ConvergenceToSolitons}) claimed in Theorem \ref{thm:Main}
(see Lemma \ref{lem:CoveringLemma} for this point). We recall, however,
that self-similar wave maps of finite energy must be constant. This
is a well-known result, the proof of which can be found in \citep{TataruSterbenzWaveReg}
(see also Proposition \ref{prop:SelfSimilar} here for a precise statement). 

On the other hand, another crucial property of the wave maps at the
final scaling of Sterbenz and Tataru \citep{TataruSterbenzWaveReg},
is that a non-trivial amount of energy is uniformly held at a fixed
distance away from the null boundary. Hence, our configuration of
time-like rays, along which the wave maps concentrate, must be non-trivial.
At this stage of the proof, this yields the first point of Theorem
\ref{thm:Main}.

Because only some time-like energy is obtained in \citep{TataruSterbenzWaveReg}
(and this should have been so almost surely, if one considers the
non-scattering problem for example), the second point of Theorem \ref{thm:Main},
treated in Section \ref{sub:Dispersive-property}, tries to address
the issue of null concentration. By cutting the parts of the map concentrating
at the time-like geodesics, we are able to solve the wave maps equation
for a uniform amount of time, even though the energy of the initial
data is a priori large (thanks to the finite speed of propagation
property and the fact the configuration of time-like rays was fixed
initially). Running the arguments of Sterbenz and Tataru \citep{TataruSterbenzWaveReg}
backwards, yields then the claimed control for the energy dispersion
norm (see Lemma \ref{lem:NoTimeLikeEnergyImplyDispersive}).

The construction of the asymptotic decomposition and the proof of
the energy quantization, the third point of Theorem \ref{thm:Main},
is contained in Section \ref{sub:Asymptotic-decomposition.}. Upon
choosing a suitable sequence of time slices $\{t_{\nu}^{(1)}\}_{\nu\in\mathbb{N}}\subset(1,2)$
and scales $\delta_{\nu}\downarrow0$, we study the wave maps:
\[
\phi_{i,\nu}(\cdot):=\phi_{\nu}(t_{\nu}^{(1)}+\delta_{\nu}\cdot,\varrho_{i}(t_{\nu}^{(1)})+\delta_{\nu}\cdot)\,\,\,\mathrm{on}\,\,\,[-1,1]\times B_{1},
\]
for each geodesic $\varrho_{i}$, from the first point of Theorem
\ref{thm:Main}. The maps $\phi_{i,\nu}$ converge to the constant
$c_{\phi}$ corresponding to the self-similar wave map $\phi$ mentioned
previously, locally in $L_{t}^{\infty}(H_{x}^{1}\times L_{x}^{2})$
away from $\varrho_{i}$, and in fact strongly in $L_{t}^{\infty}(L_{x}^{2})$.
The time slices $\{t_{\nu}^{(1)}\}_{\nu\in\mathbb{N}}$ have been
chosen such that: 
\[
X_{i}\phi_{i,\nu}\longrightarrow0\,\,\,\mathrm{in}\,\,\, L_{t,x}^{2},
\]
for the constant time-like vector field $X_{i}$ pointing in the direction
of the ray $\varrho_{i}$. The concentration scales $\{\delta_{\nu}\}_{\nu\in\mathbb{N}}$
have been chosen decaying slowly enough, to avoid losing energy in
the process:
\[
\lim_{\nu\rightarrow\infty}\sup_{t\in[1,2]}\mathcal{E}_{S_{t}^{\delta_{\nu}}\setminus\cup_{i}B_{\delta_{\nu}}(\varrho_{i}(t))}[\phi_{\nu}]=0.
\]

From there, we appeal to the compensation type estimates from Section
\ref{sub:CompensationEstimate} (the only place where we use the fact
that our target is the sphere $\mathbb{S}^{n-1}$), decomposing the
gradient as:
\[
\nabla_{t,x}\phi_{i,\nu}=\Theta_{i,\nu}+\Xi_{i,\nu},
\]
\[
\mathrm{with}\,\,\,\Theta_{i,\nu}\longrightarrow0\,\,\,\mathrm{in}\,\,\, L_{t,x}^{2}\,\,\,\mathrm{and}\,\,\,\sum_{k\in\mathbb{Z}}\left\Vert P_{k}\Xi_{i,\nu}\right\Vert _{L_{t}^{1}(L_{x}^{2})}\lesssim1,
\]
which is obtained in Proposition \ref{prop:Compensation-estimate.}.
To construct $\Theta_{i,\nu}$, we rely essentially on the time-like
decay above, and for $\Xi_{i,\nu}$ the div-curl type structure of
the non-linearity:
\[
\Omega_{\alpha}^{i,\nu}\partial^{\alpha}\phi_{i,\nu},\,\,\,\mathrm{where}\,\,\,\Omega_{\alpha}^{i,\nu}:=\phi_{i,\nu}\partial_{\alpha}\phi_{i,\nu}^{\dagger}-\partial_{\alpha}\phi_{i,\nu}\phi_{i,\nu}^{\dagger},
\]
coming from the conservation law (\ref{eq:ConservationLaw}). Furthermore,
we obtain a decomposition for the higher order time-like derivatives
of $\phi_{i,\nu}$:
\[
X_{i}^{2}\phi_{i,\nu}=\Gamma_{i,\nu}+\Pi_{i,\nu},
\]
where the first term is a linear combination of:
\begin{equation}
\sum_{k\in\mathbb{Z}}P_{k}\nabla_{x}[\Omega_{x}^{i,\nu}(P_{>k+10}\phi_{i,\nu})],\,\,\,\sum_{k\in\mathbb{Z}}P_{k}[\Omega_{x}^{i,\nu}(P_{\leq k+10}\nabla_{x}\phi_{i,\nu})],\,\,\,\mathrm{and}\,\,\,\Omega_{t,x}^{i,\nu}\nabla_{t,x}\phi_{i,\nu},\label{eq:NonLinearBulkOfLemma}
\end{equation}
that we note being local in time and quadratic in the gradient, and
the second one satisfies a favorable decay estimate:

\[
\sum_{k\in\mathbb{Z}}2^{-2k}\left\Vert P_{k}\Pi_{i,\nu}\right\Vert _{L_{t,x}^{2}[-1,1]}^{2}\longrightarrow0.
\]
This is obtained in Lemma \ref{lem:HigherOrderTimeLike} of Section
\ref{sub:CompensationEstimate}, relying crucially on the conservation
law (\ref{eq:ConservationLaw}) again, and plays an important role
in the proof of the Besov decay estimate for wave maps on neck domains
of Lemma \ref{lem:Besov-control.} in Section \ref{sub:Asymptotic-decomposition.},
to which we come in few moments here.

We proceed then by constructing the soliton decomposition for the
wave maps $\phi_{i,\nu}$, up to terms called \emph{necks} in the
literature on harmonic maps, which are given by $\phi_{i,\nu}$ restricted
to a finite collection of conformally degenerating annuli:
\[
[-\frac{r_{i,\nu}^{k}}{2},\frac{r_{i,\nu}^{k}}{2}]\times\left(B_{R_{i,\nu}^{k}}(x_{i,\nu}^{k})\setminus B_{r_{i,\nu}^{k}}(x_{i,\nu}^{k})\right)\subset[-1,1]\times B_{1}\,\,\,\mathit{\mathrm{with}}\,\,\, r_{i,\nu}^{k}\ll R_{i,\nu}^{k}
\]
and $k=1,\ldots,K_{i}(\mathcal{E})$, satisfying the local energy
decay estimate:
\begin{equation}
\sup_{2^{-\ell}r_{i,\nu}^{k}\leq r\leq2^{\ell}R_{i,\nu}^{k}}\sup_{t\in[-\frac{r}{2},\frac{r}{2}]}\mathcal{E}_{B_{2r}(x_{i,\nu}^{k})\setminus B_{r}(x_{i,\nu}^{k})}[\phi_{i,\nu}](t)\longrightarrow0,\label{eq:LocalEnergyIntro}
\end{equation}
for any positive integer $\ell\in\mathbb{N}$. This is the content
of Lemma \ref{lem:CoveringLemma}, and represents essentially a standard
argument of concentration-compactness. The whole of Theorem \ref{thm:Main}
is then reduced to showing that those necks have asymptotically vanishing
energy.

In doing so, upon picking up suitable time slices $\{t_{\nu}^{(2)}\}_{\nu\in\mathbb{N}}\subset(-\frac{1}{2},\frac{1}{2})$
before applying Lemma \ref{lem:CoveringLemma}, and taking the fastest
concentrating scale $\lambda_{\mathrm{min},\nu}:=\min_{i}\{\lambda_{\nu}^{i}\}$,
we consider the maps:
\[
\phi_{\nu,x_{i,\nu}^{k}}(t,x):=\phi_{i,\nu}(t_{\nu}^{(2)}+\lambda_{\mathrm{min},\nu}t,x_{i,\nu}^{k}+\lambda_{\mathrm{min},\nu}x)\,\,\,\mathrm{on}\,\,\,[-1,1]\times\mathbb{R}^{2},
\]
together with: 
\begin{align*}
\Theta_{\nu,x_{i,\nu}^{k}}(t,x):= & \lambda_{\mathrm{min},\nu}\Theta_{i,\nu}(t_{\nu}^{(2)}+\lambda_{\mathrm{min},\nu}t,x_{i,\nu}^{k}+\lambda_{\mathrm{min},\nu}x),\\
\Xi_{\nu,x_{i,\nu}^{k}}(t,x):= & \lambda_{\mathrm{min},\nu}\Xi_{i,\nu}(t_{\nu}^{(2)}+\lambda_{\mathrm{min},\nu}t,x_{i,\nu}^{k}+\lambda_{\mathrm{min},\nu}x),\\
\Pi_{\nu,x_{i,\nu}^{k}}(t,x):= & \lambda_{\mathrm{min},\nu}^{2}\Pi_{i,\nu}(t_{\nu}^{(2)}+\lambda_{\mathrm{min},\nu}t,x_{i,\nu}^{k}+\lambda_{\mathrm{min},\nu}x),
\end{align*}
and $\{t_{\nu}^{(2)}\}_{\nu\in\mathbb{N}}$ was chosen in such a way
that:
\[
\left\Vert \Theta_{\nu,x_{i,\nu}^{k}}(0)\right\Vert _{L_{x}^{2}}+\left\Vert X_{i}\phi_{\nu,x_{i,\nu}^{k}}(0)\right\Vert _{L_{x}^{2}}+\sum_{k\in\mathbb{Z}}2^{-2k}\left\Vert P_{k}\Pi_{\nu,x_{i,\nu}^{k}}(0)\right\Vert _{L_{x}^{2}}^{2}\longrightarrow0.
\]

We use then the second and third items of the decay statement above,
to write for the gradient of $\phi_{\nu,x_{i,\nu}^{k}}$ on the neck
domain:
\[
\nabla_{t,x}\phi_{\nu,x_{i,\nu}^{k}}=\Upsilon_{\nu,x_{i,\nu}^{k}}\,\,\,\mathrm{on}\,\,\,[-1,1]\times(B_{\lambda_{\mathrm{min},\nu}^{-1}R_{i,\nu}^{k}}\setminus B_{\lambda_{\mathrm{min},\nu}^{-1}r_{i,\nu}^{k}}),
\]
with the RHS supported on $[-1,1]\times(B_{2\lambda_{\mathrm{min},\nu}^{-1}R_{i,\nu}^{k}}\setminus B_{2^{-1}\lambda_{\mathrm{min},\nu}^{-1}r_{i,\nu}^{k}})$
and satisfying:
\[
\left\Vert \Upsilon_{\nu,x_{i,\nu}^{k}}\right\Vert _{L_{t}^{\infty}(L_{x}^{2})[-1,1]}\lesssim1,\,\,\,\sup_{k\in\mathbb{Z}}\left\Vert P_{k}\Upsilon_{\nu,x_{i,\nu}^{k}}(0)\right\Vert _{L_{x}^{2}}\longrightarrow0.
\]
This is proved in Lemma \ref{lem:Besov-control.} using the decay
for $X_{i}\phi_{\nu,x_{i,\nu}^{k}}$, localizing to the neck region
the already obtained favorable estimate for $\Pi_{\nu,x_{i,\nu}^{k}}$,
and relying on the local energy control (\ref{eq:LocalEnergyIntro})
to get a weak $\dot{B}_{\infty}^{-1,2}$ decay estimate for the non-linear
terms at high frequency, which are quadratic in the gradient of the
map $\phi_{\nu,x_{i,\nu}^{k}}$ such as (\ref{eq:NonLinearBulkOfLemma})
left over from Lemma \ref{lem:HigherOrderTimeLike}.

Finally, we are brought to the following control for the energy of
$\phi_{\nu,x_{i,\nu}^{k}}$ on the neck domain at time $t=0$:
\begin{align*}
 & \left\Vert \nabla_{t,x}\phi_{\nu,x_{i,\nu}^{k}}(0)\right\Vert _{L_{x}^{2}(B_{\lambda_{\mathrm{min},\nu}^{-1}R_{i,\nu}^{k}}\setminus B_{\lambda_{\mathrm{min},\nu}^{-1}r_{i,\nu}^{k}})}^{2}\\
 & \lesssim(\sup_{k\in\mathbb{Z}}\left\Vert P_{k}\Upsilon_{\nu,x_{i,\nu}^{k}}(0)\right\Vert _{L_{x}^{2}})\sum_{k\in\mathbb{Z}}\left\Vert P_{k}\Xi_{\nu,x_{i,\nu}^{k}}(0)\right\Vert _{L_{x}^{2}}\\
 & +\left\Vert \Upsilon_{\nu,x_{i,\nu}^{k}}(0)\right\Vert _{L_{x}^{2}}\left\Vert \Theta_{\nu,x_{i,\nu}^{k}}(0)\right\Vert _{L_{x}^{2}}+o(1),
\end{align*}
and this gives the desired energy collapsing result.

\section{Technical results\label{sec:Notation-and-technical staff}}

In this section we gather some of the technical results, mainly restricted
to the regularity theory of wave maps, that we will be using in Section
\ref{sec:Bubbling-analysis} to establish Theorem \ref{thm:Main}.
The crucial compensation estimate is proved in Section \ref{sub:CompensationEstimate}.

\subsection{\label{sub:Some-harmonic-analysis.}Some harmonic analysis.}

We will be mainly relying on the spatial Fourier transform. For $\phi(t,x)\in\mathcal{S}(\mathbb{R}^{2})$,
a Schwartz function on $\mathbb{R}^{2}$ at some fixed time $t$,
we define: 
\[
\hat{\phi}(t,\xi):=\int_{\mathbb{R}^{2}}e^{-2\pi ix\cdot\xi}\phi(t,x)dx,
\]
together with the inverse transform given by:
\[
\check{\varphi}(t,x)=\int_{\mathbb{R}^{2}}e^{2\pi ix\cdot\xi}\varphi(t,\xi)d\xi,
\]
for a Schwartz function $\varphi(t,\xi)$ on the frequency space.
The space-time Fourier transform:
\[
\mathcal{F}\psi(\tau,\xi)=\int_{\mathbb{R}^{2}}\int_{\mathbb{R}}e^{-2\pi i(t\tau+x\cdot\xi)}\psi(t,x)dtdx,\,\,\,\psi\in\mathcal{S}(\mathbb{R}\times\mathbb{R}^{2}),
\]
with inverse denoted by $\mathcal{F}^{-1}$, should however appear
in Section \ref{sub:CompensationEstimate} while treating high modulations. 

The use of Littlewood-Paley theory will be quite beneficial to our
analysis and general references for it are the monographs of Taylor
\citep{TaylorTools} and Grafakos \citep{Grafakos}. We shall rely
on the discrete version here only: the Littlewood-Paley projection
$P_{\leq k}$, with $k\in\mathbb{Z}$, is defined to be a Fourier
multiplier with symbol $m_{\leq k}(\xi):=m_{\leq0}(2^{-k}\left|\xi\right|)$,
i.e. via the convolution:
\begin{equation}
P_{\leq k}\phi(t,x):=2^{2k}\int_{\mathbb{R}^{2}}\check{m}_{\leq0}\left(2^{k}(x-y)\right)\phi(t,y)dy,\label{eq:ConvolutionLPprojRepresentation}
\end{equation}
for some radial non-negative function $m_{\leq0}(\left|\xi\right|)$
in frequency space, identically $1$ on $\left|\xi\right|\leq1$ and
$0$ for $\left|\xi\right|\geq2$. 

We also set $P_{k}$ to be a multiplier with symbol $m_{k}(\xi):=m_{0}(2^{-k}\left|\xi\right|)$,
where $m_{0}(\left|\xi\right|):=m_{\leq0}(\left|\xi\right|)-m_{\leq0}(2\left|\xi\right|)$,
and the operators $P_{<k}$, $P_{k_{1}\leq\cdot\leq k_{2}}$, $P_{\geq k}$,
etc. are then defined in the usual way. Note that LP-projections make
sense for functions defined only at some given time $t$, or restricted
to any time interval, and more generally commute with time cut-offs.
Furthermore they are \emph{disposable} \emph{multipliers}, i.e. have
the distributional convolution kernels of bounded mass, even when
considered on the whole of space-time which in practice means that
they are bounded on any translation invariant Banach space of functions
on $\mathbb{R}\times\mathbb{R}^{2}$ and therefore can be discarded
from the estimates as one wishes.

Two elementary but important facts about LP-projections that we would
like to mention here are the \emph{finite band property} that states:
\begin{equation}
\left\Vert \nabla_{x}P_{\leq k}\phi\right\Vert _{L_{x}^{p}}\lesssim2^{k}\left\Vert P_{\leq k}\phi\right\Vert _{L_{x}^{p}},\label{eq:FiniteBandLess}
\end{equation}
and further:
\begin{equation}
\left\Vert \nabla_{x}P_{k}\phi\right\Vert _{L_{x}^{p}}\sim2^{k}\left\Vert P_{k}\phi\right\Vert _{L_{x}^{p}},\label{eq:FiniteBandEqual}
\end{equation}
for any $1\leq p\leq\infty$, as well as \emph{Bernstein's inequality}:
\begin{equation}
\left\Vert P_{k}\phi\right\Vert _{L_{x}^{p}}\lesssim2^{2k\left(\frac{1}{q}-\frac{1}{p}\right)}\left\Vert P_{k}\phi\right\Vert _{L_{x}^{q}},\label{eq:Bernstein}
\end{equation}
for any $1\leq q\leq p\leq\infty$. The latter is especially useful
converting integrability into regularity at low frequencies.

We can decompose any Schwartz function using LP-projections, and as
we typically consider maps taking values in the sphere, we will be
considering affinely (i.e. upon adding a constant) Schwartz functions,
obtaining: 
\begin{equation}
\phi=P_{\leq0}\phi+\sum_{k>0}P_{k}\phi=\mathrm{const.}+\sum_{k\in\mathbb{Z}}P_{k}\phi\,\,\,\mathrm{in}\,\,\,\mathcal{S}(\mathbb{R}^{2}).\label{eq:LPDecomposition}
\end{equation}
While working with the gradient $\nabla_{t,x}\phi$, this will make
no difference of course. By duality, the above decompositions hold
also for tempered distributions and are used to define various Besov
and Triebel-Lizorkin spaces, see \citep{Grafakos}. Let us present
here some examples important for our argument.

In this paper, we will be mainly working with the Besov spaces $B_{q}^{s,p}(\mathbb{R}^{2})$,
for $s\in\mathbb{R}$ and $1\leq p,q\leq\infty$, together with the
homogeneous versions $\dot{B}_{q}^{s,p}(\mathbb{R}^{2})$, defined
as completions with respect to the norms:

\[
\left\Vert \phi\right\Vert _{B_{q}^{s,p}}^{q}:=\left\Vert P_{\leq0}\phi\right\Vert _{L_{x}^{p}}^{q}+\sum_{k>0}2^{qsk}\left\Vert P_{k}\phi\right\Vert _{L_{x}^{p}}^{q},\,\,\,\left\Vert \phi\right\Vert _{\dot{B}_{q}^{s,p}}^{q}:=\sum_{k\in\mathbb{Z}}2^{qsk}\left\Vert P_{k}\phi\right\Vert _{L_{x}^{p}}^{q},
\]
and taking the $\ell^{\infty}$ norm if $q=\infty$ instead, of subspaces
of $\mathcal{S}(\mathbb{R}^{2})$ for which those norms are finite.
We remark that the case $p,q=2$ corresponds to the familiar Sobolev
spaces $H_{x}^{s}$, and their homogeneous versions $\dot{H}_{x}^{s}$
respectively. 

We introduce also the local Hardy space $\mathcal{H}_{loc}^{1}(\mathbb{R}^{2})$
with its homogeneous counterpart $\mathcal{H}^{1}(\mathbb{R}^{2})$,
as Triebel-Lizorkin spaces $F_{2}^{0,1}(\mathbb{R}^{2})=\mathcal{H}_{loc}^{1}(\mathbb{R}^{2})$
and $\dot{F}_{2}^{0,1}(\mathbb{R}^{2})=\mathcal{H}^{1}(\mathbb{R}^{2})$
(this characterization is obtained in \citep{Grafakos}), both subspaces
of $L_{x}^{1}$, defined as the completion of Schwartz functions with
respect to the norms:
\[
\left\Vert \phi\right\Vert _{F_{2}^{0,1}}:=\left\Vert P_{\leq0}\phi\right\Vert _{L_{x}^{1}}+\parallel(\sum_{k\geq1}|P_{k}\phi|^{2})^{1/2}\parallel_{L_{x}^{1}},\,\,\,\left\Vert \phi\right\Vert _{\dot{F}_{2}^{0,1}}:=\parallel(\sum_{k\in\mathbb{Z}}|P_{k}\phi|^{2})^{1/2}\parallel_{L_{x}^{1}},
\]
and which admit the local and homogeneous BMO spaces as a duals, $(\mathcal{H}_{loc}^{1})'=bmo$
and $(\mathcal{H}^{1})'=\mathrm{BMO}$ respectively. Although the
latter does not admit a Littlewood-Paley type characterization, the
former does via the Triebel-Lizorkin space $F_{2}^{0,\infty}=bmo$,
which is defined to be the Banach space of all tempered distributions
$\varphi\in\mathcal{S}'(\mathbb{R}^{2})$ having the following norm
finite:
\[
\left\Vert \varphi\right\Vert _{F_{2}^{0,\infty}}:=\inf_{\left\{ \varphi_{k}\right\} \subset L^{\infty}}\{\left\Vert P_{\leq0}\varphi_{0}\right\Vert _{L^{\infty}}+\parallel(\sum_{k\geq1}|P_{k}\varphi_{k}|^{2})^{1/2}\parallel_{L^{\infty}}\,:\,\varphi=P_{\leq0}\varphi_{0}+\sum_{k\geq1}P_{k}\varphi_{k}\},
\]
the series above required to hold in $\mathcal{S}'$, see the monograph
of Taylor \citep{TaylorTools} for further information. Hardy spaces
are especially useful in estimating paraproducts (see below), and
let us mention here, with this in mind, that $\mathcal{H}^{1}$ embeds
into a Besov space with lower regularity but better summability:
\begin{equation}
\dot{F}_{2}^{0,1}(\mathbb{R}^{2})\subset\dot{B}_{1}^{-1,2}(\mathbb{R}^{2}).\label{eq:HardyBesovEmbedding}
\end{equation}
This fact, that we will enjoy exploiting in the proof of Proposition
\ref{prop:Compensation-estimate.} later, is taken from Lemma 7.19
of Krieger and Schlag \citep{KriegerSchlag} (page 250). For a related
result in the Lorentz space setting see the monograph of Hélein \citep{Helein}
(Theorem 3.3.10 and also the references mentioned there).

Littlewood-Paley decompositions are also very useful in studying non-linear
expressions, and one central example is the product $\theta\vartheta$
of two Schwartz functions $\theta$ and $\vartheta\in\mathcal{S}$.
Applying the decomposition (\ref{eq:LPDecomposition}), we can write:
\[
P_{k}\left(\theta\vartheta\right)=P_{k}\sum_{k_{1},k_{2}}(P_{k_{1}}\theta)(P_{k_{2}}\vartheta),
\]
but recalling that the Fourier transform of a product is a convolution
leads to the so-called \emph{Littlewood-Paley trichotomy} decomposition
(also called paraproduct decomposition), which simplifies the above
double sum into:
\begin{align*}
P_{k}\left(\theta\vartheta\right)= & \, P_{k}[\sum_{k_{1},k_{2}\geq k-6:\left|k_{1}-k_{2}\right|\leq O(1)}(P_{k_{1}}\theta)(P_{k_{2}}\vartheta)\\
 & +(P_{\leq k-7}\theta)(P_{k-3\leq\cdot\leq k+3}\vartheta)\\
 & +(P_{k-3\leq\cdot\leq k+3}\theta)(P_{\leq k-7}\vartheta)],
\end{align*}

$\bullet$\emph{the high-high interactions}: both $\theta$ and $\vartheta$
have Fourier support well above the scale $\left|\xi\right|\sim2^{k}$,
but the only way the sum of two annuli at larger scales $\left|\xi\right|\sim2^{k_{1}},2^{k_{2}}$
with $k_{1},k_{2}\geq k+6$ can intersect the small annulus at $\left|\xi\right|\sim2^{k}$,
is if they are approximately at the same scale, we should have $\left|k_{1}-k_{2}\right|\leq3$.

$\bullet$\emph{the low-high interactions}: if $\theta$ has Fourier
support in the ball of radius $2^{k-6}$, it will contribute to the
frequency scale $\left|\xi\right|\sim2^{k}$ if it is multiplied by
$\vartheta$ frequency localized to the annuli $\left|\xi\right|\sim2^{k_{2}}$
with $k-3\leq k_{2}\leq k+3$. The rougher components of $\vartheta$
bring up the low frequency parts of $\theta$. The sum in $k$ of
the low-high interactions is sometimes called a \emph{paraproduct}
in the literature. By symmetry, we have the same picture with the
roles of $\theta$ and $\vartheta$ interchanged: these are \emph{the
high-low interactions}.

We are then left only with the contribution of $\theta_{k_{1}}\vartheta_{k_{2}}$
where both terms are frequency localized at $2^{k_{1}},2^{k_{2}}\sim2^{k}$,
these are \emph{the} \emph{low-low interactions }and in our case it
will be often convenient to incorporate them in the high-high interactions\emph{.}

Finally, let us set up here the notation for some space-time function
spaces and related tools that we use. We define the Sobolev spaces
$H_{t,x}^{s}=H_{t,x}^{s}(\mathbb{R}\times\mathbb{R}^{2})$, for $s\in\mathbb{R}$,
by using the space-time Fourier transform and taking the completion
of $\mathcal{S}(\mathbb{R}\times\mathbb{R}^{2})$ with respect to
the norm: 
\[
\left\Vert \psi\right\Vert _{H_{t,x}^{s}}:=\left\Vert (1+\tau^{2}+\left|\xi\right|^{2})^{\frac{s}{2}}\mathcal{F}\psi(\tau,\xi)\right\Vert _{L_{t,x}^{2}}.
\]
We define the modulation projections $Q_{\leq j}$ and $Q_{j}$ for
$j\in\mathbb{Z}$ to be the Fourier multipliers with symbols:
\[
m_{0}(|\frac{\left|\tau\right|-\left|\xi\right|}{2^{j}}|)\,\,\,\mathrm{and}\,\,\, m(|\frac{\left|\tau\right|-\left|\xi\right|}{2^{j}}|),
\]
respectively (and similarly for $Q_{<j}$, $Q_{j_{1}\leq\cdot\leq j_{2}}$
and $Q_{\geq j}$). We note that those are not disposable so that
one needs to be careful when discarding them off from the estimates
in general, but as their symbols are bounded and smooth, they are
directly seen to be bounded on $L_{t,x}^{2}$ by Plancherel. Otherwise,
we have the following lemma due to Tao (Lemmata 3 and 4 in \citep{TaoWaveII}).
\begin{lem}
\label{lem:Dispo}The operators $P_{k}Q_{j}$, $P_{k}Q_{\leq j}$,
$P_{\leq k}Q_{\leq j}$ and $P_{\leq k}Q_{j}$ are disposable for
any pair of integers $j$ and $k$ with $j\geq k+O(1)$. Moreover,
for any $1\leq p\leq\infty$ and $j,j_{1},j_{2}\in\mathbb{Z}$, the
operators $Q_{\leq j}$, $Q_{j_{1}\leq\cdot\leq j_{2}}$ and $Q_{j}$
are bounded on the spaces $L_{t}^{p}(L_{x}^{2})$.
\end{lem}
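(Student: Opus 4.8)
The plan is to establish both assertions by explicit inspection of the distributional convolution kernels, exploiting the geometry of the Fourier supports. For the first assertion, recall that an operator is disposable if its kernel is a finite (bounded-mass) measure on $\mathbb{R}\times\mathbb{R}^{2}$. The key observation is that when $j\geq k+O(1)$, the symbol of $P_{k}Q_{j}$ (or $P_{k}Q_{\leq j}$, etc.) is a smooth function supported in a region of space-time frequency of essentially rectangular shape: spatial frequency $|\xi|\sim 2^{k}$ and $||\tau|-|\xi||\lesssim 2^{j}$, hence $|\tau|\lesssim 2^{j}$ as well. On this region the hypersurface $|\tau|=|\xi|$ is, after the rescaling $\xi\mapsto 2^{-k}\xi$, $\tau\mapsto 2^{-j}\tau$, a smooth graph with all derivatives bounded uniformly; the essential point is that the ratio $2^{j}/2^{k}\gtrsim 1$ guarantees the modulation cutoff does not force the symbol to concentrate near the light cone singularity at a scale finer than the spatial frequency scale. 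Concretely, I would write the kernel as an anisotropically rescaled version of a fixed Schwartz kernel: after the change of variables the symbol becomes $m_{k}\big(2^{k}(2^{-k}\xi)\big)\cdot m\big(2^{-j}(|2^{-j}\tau\cdot 2^{j}|-\ldots)\big)$, which up to harmless factors is a bump function on a unit-sized region, independent of $j,k$ in shape, whose inverse Fourier transform is Schwartz with $L^{1}$ norm bounded independently of $j,k$. Undoing the rescaling only multiplies the total mass by the Jacobian times its inverse, i.e.\ by $1$, so the kernel mass stays bounded. This handles $P_{k}Q_{j}$, $P_{k}Q_{\leq j}$; for $P_{\leq k}Q_{\leq j}$ and $P_{\leq k}Q_{j}$ one argues identically, replacing the annulus $|\xi|\sim 2^{k}$ by the ball $|\xi|\lesssim 2^{k}$, which only improves matters.

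For the second assertion, that $Q_{\leq j}$, $Q_{j_{1}\leq\cdot\leq j_{2}}$ and $Q_{j}$ are bounded on $L_{t}^{p}(L_{x}^{2})$ for every $1\leq p\leq\infty$, I would take the spatial Fourier transform first and fix $\xi$. For fixed $\xi$, the modulation projection acts as a Fourier multiplier in the time variable $\tau$ with symbol $m_{0}\big((|\tau|-|\xi|)/2^{j}\big)$ (respectively the smooth-cutoff variants). This is a sum of two pieces supported near $\tau=+|\xi|$ and $\tau=-|\xi|$ respectively, each of which is a translate (in $\tau$, by $\pm|\xi|$) of a fixed dilated bump $m_{0}(\cdot/2^{j})$ or $m(\cdot/2^{j})$. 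Translation in frequency is modulation in time and does not affect $L_{t}^{p}$ norms; dilation of the multiplier by $2^{j}$ corresponds to an $L^{1}$-normalized dilation of a fixed Schwartz convolution kernel in $t$, whose $L_{t}^{1}$ norm is therefore independent of $j$. By Young's inequality convolution with an $L_{t}^{1}$ kernel is bounded on $L_{t}^{p}$ for all $p$, with the bound uniform in $\xi$ and $j$; then Minkowski/Plancherel in the $x$-variable (the $L_{x}^{2}$ norm passes through the time-convolution by Minkowski's integral inequality since $p\geq 2$ is not even needed—one uses that $\|\cdot\|_{L_x^2}$ is a norm and convolution in $t$ commutes with integration in $\xi$) gives boundedness on $L_{t}^{p}(L_{x}^{2})$. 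The case $q=\infty$ or the banded version $Q_{j_{1}\leq\cdot\leq j_{2}}$ follows by summing/telescoping, the bound degenerating at worst logarithmically but in fact remaining $O(1)$ because the symbols are uniformly bounded and the associated kernels still have uniformly bounded $L^1_t$ mass.

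The main obstacle is the first assertion in the borderline regime $j=k+O(1)$: here one cannot simply treat $Q_j$ and $P_k$ independently, since the light-cone singularity $|\tau|=|\xi|$ lives exactly at the scale where the two cutoffs interact, and a naive product of the two kernel bounds would only give $L^1$ mass growing with $|j-k|$ rather than bounded. The resolution—and the point where Tao's argument is genuinely used—is that joint Fourier support on $\{|\xi|\sim 2^k,\ ||\tau|-|\xi||\sim 2^j\}$ with $j\gtrsim k$ is a \emph{single} smooth, non-degenerate region (two components near $\tau=\pm|\xi|$, each diffeomorphic after anisotropic rescaling to a fixed unit box), so the combined symbol, not the individual ones, must be rescaled as a unit. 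I would make this precise by choosing the rescaling $(\tau,\xi)\mapsto(2^{-j}\tau,2^{-k}\xi)$ adapted to \emph{both} cutoffs simultaneously and checking that the rescaled symbol and all its derivatives are bounded uniformly in $j\geq k+O(1)$ and in $k$, which yields a Schwartz kernel of bounded mass; for the non-borderline case $j\gg k$ the argument is softer since then $|\tau|\sim 2^j\gg 2^k\sim|\xi|$ forces $||\tau|-|\xi||\sim|\tau|$ and the two cutoffs essentially decouple.
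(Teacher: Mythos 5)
The paper does not actually prove this lemma: it is quoted verbatim from Tao (Lemmata 3 and 4 of \citep{TaoWaveII}), so your task was to reconstruct Tao's argument, and your overall strategy — kernel-mass bounds via an anisotropic rescaling adapted to both cutoffs for the disposability claim, and freezing $\xi$, reducing to a one-dimensional multiplier in $t$ with uniformly $L_{t}^{1}$-bounded kernel, then Young plus Minkowski/Plancherel for the $L_{t}^{p}(L_{x}^{2})$ claim — is indeed the standard route and essentially the cited one.

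Two of your literal claims need repair, though. First, the symbol $||\tau|-|\xi||$ is only Lipschitz at $\tau=0$ and at $\xi=0$, so ``the rescaled symbol is a smooth bump whose inverse Fourier transform is Schwartz'' is not quite true as stated. For the $P_{k}$ variants this is harmless precisely because of the $O(1)$ slack (take $j\geq k+10$, say): then on the support either $|\tau|\sim2^{j}$ is bounded away from $0$ (for $Q_{j}$) or the modulation cutoff is locally constant near $\tau=0$ (for $Q_{\leq j}$), so the corner never meets a region where the symbol varies — this, rather than a delicate interaction in the borderline regime $j=k+O(1)$, is the real point where the hypothesis enters. By contrast, your remark that the ball versions $P_{\leq k}Q_{\leq j}$, $P_{\leq k}Q_{j}$ ``only improve matters'' is the one place that is wrong as written: the ball contains $\xi=0$, and for $|\tau|$ in the transition range $\sim2^{j}$ the symbol has a genuine conical singularity in $\xi$ there, so after rescaling the second and higher $\eta$-derivatives are not uniformly bounded near $\eta=0$; one needs the extra (routine) observation that this Lipschitz cone still produces an integrable kernel contribution (in two spatial dimensions the transform of $|\xi|$ times a bump decays like $|x|^{-3}$), or an equivalent splitting of the symbol near $\xi=0$. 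Second, in the $L_{t}^{p}(L_{x}^{2})$ part, writing the fixed-$\xi$ multiplier as ``a sum of two translates of a dilated bump'' is only legitimate when $|\xi|\gtrsim2^{j}$, where the two bumps at $\tau=\pm|\xi|$ are disjoint; when $|\xi|\lesssim2^{j}$ they overlap across $\tau=0$ (and the symbol may have a corner there), and a sharp splitting would introduce jumps whose kernels are not in $L_{t}^{1}$. The fix is to treat that regime by a single rescaling at scale $2^{j}$, bounding the kernel by $2^{j}\langle2^{j}t\rangle^{-2}$ since the second $\tau$-derivative of the symbol is a measure of mass $O(2^{-j})$. With these amendments your argument is complete and matches the cited proof.
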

Using the modulation projections $Q_{j}$, we define following Tao
\citep{TaoWaveII} the homogeneous $\dot{X}_{k}^{s,b,q}$ spaces associated
to the cone $\left\{ \left|\tau\right|=\left|\xi\right|\right\} $
at the spatial frequency scale $k$, for any fixed integer $k\in\mathbb{Z}$
and some given real $b\in\mathbb{R}$, to be the completion of the
space of Schwartz functions $\psi$ on $\mathbb{R}\times\mathbb{R}^{2}$
with respect to the norm:
\[
\left\Vert \psi\right\Vert _{\dot{X}_{k}^{s,b,q}}:=2^{sk}\left[\sum_{j}2^{qbj}\left\Vert Q_{j}P_{k}\psi\right\Vert _{L_{t,x}^{2}}^{q}\right]^{\frac{1}{q}},
\]
provided the latter is finite for $\psi$, and adopting the usual
convention if $q$ is infinite. For $q=1$ we obtain an atomic space.
As our methods here have more of an elliptic rather than dispersive
character in the end, we shall not use those spaces directly (other
than stating the estimates from regularity theory). However, the distinction
between the \emph{high modulations} regime $P_{k}Q_{>k+10}$, and
the one of \emph{frequency space-like} $P_{k}Q_{\leq k+10}$, is absolutely
crucial for our analysis.

To close this section, let us recall here the convention that function
spaces over domains are defined via minimal extensions. For example,
we shall write $X(I)$, where $X$ is a function space over $\mathbb{R}\times\mathbb{R}^{2}$
and $I$ some time interval, for the Banach space of functions $f$
in $I\times\mathbb{R}^{2}$ admitting an extension $f'$ to the whole
of $\mathbb{R}\times\mathbb{R}^{2}$ and set:
\[
\left\Vert f\right\Vert _{X(I)}:=\inf\left\{ \left\Vert f'\right\Vert _{X}\,:\, f'\in X,\,\,\, f'=f\,\,\,\mathrm{on}\,\,\, I\times\mathbb{R}^{2}\right\} .
\]

\subsection{\label{sub:Regularity-theory-for}Regularity theory for wave maps.}

We shall not give here the full definition of the space $DS$, and
its undifferentiated version $S$, used in the iteration arguments
of the proofs of well-posedness for the wave maps equation, referring
to \citep{TaoWaveII} section 10 or \citep{TataruSterbenzWave} section
5.2, but we will briefly summarize here some characteristic properties. 

At a given frequency scale $k\in\mathbb{Z}$, the space $DS$ is defined
as an intersection of several different spaces and for us it will
be enough to note that we have the control:
\begin{equation}
\left\Vert P_{k}\psi\right\Vert _{L_{t}^{\infty}(L_{x}^{2})}+\left\Vert P_{k}\psi\right\Vert _{\dot{X}_{k}^{0,\frac{1}{2},\infty}}+\sup_{(q,r):\frac{1}{q}+\frac{1}{2r}\leq\frac{1}{4}}2^{(\frac{1}{q}+\frac{2}{r}-1)k}\left\Vert P_{k}\psi\right\Vert _{L_{t}^{q}(L_{x}^{r})}\leq\left\Vert P_{k}\psi\right\Vert _{DS},\label{eq:DefOfDS}
\end{equation}
for any Schwartz function $\psi$ on $\mathbb{R}\times\mathbb{R}^{2}$
(under frequency localization, for the space $S$ we have $P_{k}\phi\in S$
if $\nabla_{t,x}P_{k}\phi\in DS$ for a Schwartz $\phi$). The first
component is the natural energy component on which we should mainly
rely in this work. The second one is the dispersive component to be
used only indirectly here but being important in gaining extra regularity
for the part of the wave map that has Fourier support away from the
light cone. The latter observation is exploited by Sterbenz and Tataru
\citep{TataruSterbenzWaveReg} in their compactness result that we
discuss below. The third component represents the standard Strichartz
spaces. We note that we do obtain the null concentration terms $\varpi_{t_{0},\nu}$
lying in this space, see Remark \ref{RemSBoundOnW}.

We note that, for the regularity theory, the $Q_{0}$-null structure
in the non-linearity of equation (\ref{eq:WMeq}) is crucial and the
components mentioned above are not enough by themselves to exploit
it so that one needs to introduce further suitable null frame Strichartz
spaces. However, as this structure will not play any direct role in
our arguments we should not elaborate more on this point here. Let
us simply remark in the end that $DS$ contains the atomic Fourier
restriction space:
\begin{equation}
\left\Vert P_{k}\psi\right\Vert _{DS}\lesssim\left\Vert P_{k}\psi\right\Vert _{\dot{X}_{k}^{0,\frac{1}{2},1}},\label{eq:XsbControl}
\end{equation}
referring to Lemma 8 in Tao's paper \citep{TaoWaveII} for the proof
of this fact, ideas from which we should actually use later in the
proof of Lemma \ref{lem:HigherOrderTimeLike}.

By default in \citep{TataruSterbenzWave}, the authors define then
the spaces $DS$ and $S$ as completions of Schwartz functions in
$\mathbb{R}\times\mathbb{R}^{2}$ with respect to the norms obtained
by $\ell^{2}$-summing the control on the LP-projections and adding
the $L^{\infty}$ norm for $S$: 
\begin{equation}
\left\Vert \psi\right\Vert _{DS}^{2}:=\sum_{k\in\mathbb{Z}}\left\Vert P_{k}\psi\right\Vert _{DS}^{2},\,\,\,\left\Vert \phi\right\Vert _{S}^{2}:=\left\Vert \phi\right\Vert _{L_{t,x}^{\infty}}^{2}+\sum_{k\in\mathbb{Z}}\left\Vert \nabla_{t,x}P_{k}\phi\right\Vert _{DS}^{2}.\label{eq:L2SummationDS}
\end{equation}
In practice however, it is sometimes convenient to replace the $\ell^{2}$
summation in (\ref{eq:L2SummationDS}) with a control with respect
to a \emph{frequency envelope}. Following Sterbenz and Tataru \citep{TataruSterbenzWave},
we call a sequence $c:=\left\{ c_{k}\right\} _{k\in\mathbb{Z}}\in\ell^{2}$
of positive numbers $c_{k}>0$ a $(\sigma_{0},\sigma_{1})$-admissible
frequency envelope if $0<\sigma_{0}<\sigma_{1}$ and for any $k_{0}<k_{1}$
we have:
\[
2^{-\sigma_{0}(k_{1}-k_{0})}c_{k_{1}}\leq c_{k_{0}}\leq2^{\sigma_{1}(k_{1}-k_{0})}c_{k_{1}}.
\]
Given some smooth initial data $\phi[0]=(\phi(0),\partial_{t}\phi(0))$
we can naturally attach to it an admissible frequency envelope by
setting:
\begin{equation}
c_{k}^{2}=\sum_{k_{0}<k}2^{-2\sigma_{1}(k-k_{0})}\left\Vert P_{k_{0}}\nabla_{t,x}\phi(0)\right\Vert _{L_{x}^{2}}^{2}+\sum_{k_{1}\geq k}2^{-2\sigma_{0}(k_{1}-k)}\left\Vert P_{k_{1}}\nabla_{t,x}\phi(0)\right\Vert _{L_{x}^{2}}^{2},\label{eq:FrequencyEnvelope}
\end{equation}
for which we note that:
\begin{equation}
\left(\sum_{k\in\mathbb{Z}}2^{2\sigma}c_{k}^{2}\right)^{\frac{1}{2}}\sim\left\Vert \nabla_{t,x}\phi(0)\right\Vert _{\dot{H}_{x}^{\sigma}},\,\,\,-\sigma_{0}<\sigma<\sigma_{1},\label{eq:SobolevFrequencyControl}
\end{equation}
so that given any function $\psi$ on $\mathbb{R}^{2}$, $\left\Vert P_{k}\psi\right\Vert _{L_{x}^{2}}\lesssim c_{k}$
implies:
\[
\left\Vert \psi\right\Vert _{\dot{H}_{x}^{\sigma}}\lesssim\left\Vert \nabla_{t,x}\phi(0)\right\Vert _{\dot{H}_{x}^{\sigma}},\,\,\,-\sigma_{0}<\sigma<\sigma_{1},
\]
which is very useful in controlling the regularity of an evolution
like the wave map.

Well-posedness theory for the wave maps equation with small energy
initial data is due to Tao \citep{TaoWaveII} and Tataru \citep{TataruWave},
and also Krieger \citep{KriegerSmallEnergy} who considered the hyperbolic
plane as target. We will be using here a local version that we state
below appearing as Theorem 1.3 in \citep{TataruWave}. Of course,
all of the results stated in this section are true for general closed
Riemannian manifolds as target, but we present them in the case of
spheres for the sake of consistency.
\begin{thm}
\label{thm:Small-data-waves}\emph{(Tao \citep{TaoWaveII}, Tataru
\citep{TataruWave}).} There exists a constant $\epsilon_{0}:=\epsilon_{0}(\mathbb{S}^{n-1})>0$
such that: 

$\bullet$ \emph{Regularity:} given some smooth initial data $\phi[0]\in T(\mathbb{S}^{n-1})$
at time $t=0$ constant outside a compact domain with energy:
\[
\mathcal{E}[\phi](0)<\epsilon_{0},
\]
there exists a unique smooth wave map $\phi$ defined on the whole
of Minkowski space $\mathbb{R}^{2+1}$ such that:
\begin{equation}
\left\Vert P_{k}\phi\right\Vert _{S}\lesssim c_{k},\label{eq:SmallDataRegularity}
\end{equation}
taking the frequency envelope $c$ from (\ref{eq:FrequencyEnvelope})
for $\phi[0]$ and where $\sigma_{0}=\sigma_{0}(\mathbb{S}^{n-1})$
is some fixed small positive constant but $\sigma_{1}$ can be chosen
arbitrarily large;

$\bullet$ \emph{Continuous dependence on initial data and rough solutions:
}given a sequence of smooth tuples $\phi_{\nu}[0]\in T(\mathbb{S}^{n-1})$
of initial data equal to a fixed constant outside some fixed compact
domain, with energy:
\[
\mathcal{E}[\phi_{\nu}](0)<\epsilon_{0},
\]
and converging strongly in $H_{x}^{1}\times L_{x}^{2}$ to some $\phi[0]$,
there exist smooth wave maps $\phi_{\nu}$ with the properties as
stated in the first point above and a map:
\[
\phi\in S,
\]
solving weakly the wave maps equation (\ref{eq:WMeq}), to which $\phi_{\nu}$
converge in $C_{t}^{0}(H_{x}^{1})\cap C_{t}^{1}(L_{x}^{2})$ on bounded
time intervals, and further: 
\[
\nabla_{t,x}\phi_{\nu}\rightarrow\nabla_{t,x}\phi\,\,\,\mathit{in}\,\,\, DS(\mathbb{R}^{2+1}).
\]

\end{thm}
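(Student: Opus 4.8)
The plan is to follow the microlocal-gauge renormalization and bootstrap scheme of Tao \citep{TaoWaveII} and Tataru \citep{TataruWave}. First I would fix $(\sigma_{0},\sigma_{1})$-admissible parameters with $\sigma_{0}=\sigma_{0}(\mathbb{S}^{n-1})$ small and $\sigma_{1}$ as large as the regularity we want, and attach to $\phi[0]$ the frequency envelope $c=\{c_{k}\}$ of (\ref{eq:FrequencyEnvelope}). Writing the equation in the conservation-law form (\ref{eq:ConservationLaw}), the nonlinearity of (\ref{eq:WMeq}) becomes $\Omega_{\alpha}\partial^{\alpha}\phi$ with $\Omega_{\alpha}:=\phi\partial_{\alpha}\phi^{\dagger}-\partial_{\alpha}\phi\phi^{\dagger}$ antisymmetric and divergence-free modulo the equation; this is exactly the structure that lets one introduce, at each dyadic frequency $2^{k}$, a renormalizing gauge $U_{<k}$ approximately solving $\partial_{\alpha}U_{<k}=-U_{<k}(P_{<k}\Omega_{\alpha})$, so that $\square(U_{<k}P_{k}\phi)$ is a sum of terms carrying either an extra $Q_{0}$-null gain or a high-frequency gain. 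The point of this step is to replace the scaling-critical quadratic derivative nonlinearity, which is too rough for a naive Picard iteration in $\dot H_{x}^{1}$, by one that can be closed in $DS$.

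Second I would set up the iteration space: for a map $\psi$ define $\Vert\psi\Vert_{S_{c}}^{2}:=\Vert\psi\Vert_{L_{t,x}^{\infty}}^{2}+\sum_{k}c_{k}^{-2}\Vert\nabla_{t,x}P_{k}\psi\Vert_{DS}^{2}$, a frequency-envelope version of (\ref{eq:L2SummationDS}), and the matching nonlinearity space $N_{c}$ built from the atomic $\dot X_{k}^{0,-1/2,1}$ pieces together with $L_{t}^{1}(L_{x}^{2})$, so that the energy estimate for $\square$ and (\ref{eq:XsbControl}) give the linear bound $\Vert\nabla_{t,x}(\cdot)\Vert_{DS}\lesssim\Vert\nabla_{t,x}(\cdot)(0)\Vert_{L_{x}^{2}}+\Vert\square(\cdot)\Vert_{N}$. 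Third, and this is the technical heart, I would prove the multilinear estimates saying the renormalized nonlinearity obeys, schematically, $\Vert\square(U_{<k}P_{k}\phi)\Vert_{N_{c}}\lesssim\epsilon_{0}^{\gamma}\Vert\phi\Vert_{S_{c}}$ for some $\gamma>0$, plus higher-order corrections. Via the Littlewood–Paley trichotomy recalled above these reduce to bilinear and trilinear bounds exploiting: (i) the $Q_{0}$-null form $\partial_{\alpha}\phi^{\dagger}\partial^{\alpha}\psi$, which in the high-high regime gains a power of the smallest modulation, handled with the projections $Q_{j}$ of Lemma \ref{lem:Dispo} and the $\dot X_{k}^{0,1/2,\infty}$ component of (\ref{eq:DefOfDS}); (ii) the null-frame Strichartz and energy components of $DS$ for the genuinely dispersive interactions; and (iii) the high-frequency gain built into $U_{<k}$ for the low-high paraproduct. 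One also needs companion estimates for $\nabla_{t,x}U_{<k}$ and for the scale difference $U_{<k}-U_{<k'}$ in these spaces; all of this is precisely Tao's construction, which I would invoke for the relevant bilinear estimates.

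With these in hand, a contraction-mapping argument in the ball $\{\Vert\phi\Vert_{S_{c}}\le C\epsilon_{0}\}$ — closed globally in time because conservation of energy (\ref{eq:EnergyConservation}) keeps the data small at every slice, so there is no finite-time breakdown — yields a unique solution of the renormalized system; undoing the gauge recovers a map solving (\ref{eq:WMeq}) with $\Vert P_{k}\phi\Vert_{S}\lesssim c_{k}$, i.e. (\ref{eq:SmallDataRegularity}). Smoothness of $\phi$ is persistence of regularity: since $\sigma_{1}$ may be taken arbitrarily large, (\ref{eq:SobolevFrequencyControl}) promotes the envelope bound to $\nabla_{t,x}\phi\in C_{t}^{0}(\dot H_{x}^{\sigma})$ for all $\sigma<\sigma_{1}$, and bootstrapping on the smooth, compactly-supported-modulo-constant data gives $\phi\in C^{\infty}(\mathbb{R}^{2+1})$; uniqueness in the smooth class follows from uniqueness of the fixed point together with finite speed of propagation. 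Finally, the same multilinear estimates make the solution map Lipschitz from $H_{x}^{1}\times L_{x}^{2}$ data (with induced envelopes, using $\sigma=0,1$ in (\ref{eq:SobolevFrequencyControl})) into $S$: given $\phi_{\nu}[0]\to\phi[0]$ in $H_{x}^{1}\times L_{x}^{2}$, the $\phi_{\nu}$ are Cauchy in $S$, so they converge to some $\phi\in S$ solving (\ref{eq:WMeq}) weakly, with $\nabla_{t,x}\phi_{\nu}\to\nabla_{t,x}\phi$ in $DS(\mathbb{R}^{2+1})$ and hence, by the energy component of (\ref{eq:DefOfDS}) and Sobolev embedding, in $C_{t}^{0}(H_{x}^{1})\cap C_{t}^{1}(L_{x}^{2})$ on bounded intervals. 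I expect the main obstacle to be the third step — the bilinear and trilinear estimates in the null-frame/$\dot X^{s,b,q}$ spaces and the analysis of the gauge $U_{<k}$ — since that is where the critical ($\dot H_{x}^{1}$ in two dimensions) nature of the problem bites and the $Q_{0}$-null structure must be used in an essential, frequency- and modulation-localized way.
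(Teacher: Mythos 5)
This theorem is an external result quoted from Tao \citep{TaoWaveII} and Tataru \citep{TataruWave} — the paper offers no proof of its own — and your outline (microlocal gauge renormalization $U_{<k}$, frequency envelopes, contraction in the envelope-weighted $S$/$N$ spaces, null-form and null-frame bilinear estimates) is precisely the scheme of those cited works, so it is essentially the same approach. One caution: at critical regularity the solution map is not genuinely Lipschitz from $H_{x}^{1}\times L_{x}^{2}$ into $S$, and Tataru's weak stability theory yields only the weaker continuity statements, which are exactly the convergence claims asserted in the second bullet of the theorem.
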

We state now a compactness result due to Sterbenz and Tataru \citep{TataruSterbenzWaveReg}
for a sequence of small energy wave maps which become constant in
the direction of some smooth time-like vector field. The absence of
such a result in the general small energy case is precisely what makes
the study of wave maps near the null boundary of the light cone a
very challenging affair, requiring global non-linear techniques going
beyond the present article. We mention that the arguments in \citep{TataruSterbenzWaveReg}
rely on the elliptic flavor given to the situation by the assumption
that the sequence is asymptotically constant along a time-like vector
field, the use of the Fourier restriction component of $DS$ to gain
compactness and regularity for the limiting map, as well as the small
energy weak stability theory developed by Tataru \citep{TataruWave}
(which we have presented in the second point of Theorem \ref{thm:Small-data-waves}
here).
\begin{lem}
\label{prop:Simple-compactness-result.}\emph{(Sterbenz and Tataru
\citep{TataruSterbenzWaveReg}).} Consider a sequence of smooth wave
maps $\phi_{\nu}$ in $[-3,3]\times B_{3}$ with small energy: 
\begin{equation}
\sup_{t\in[-3,3]}\mathcal{E}_{B_{3}}[\phi_{\nu}](t)\leq\epsilon_{s},\label{eq:EnergyBoundCompactnessLemma}
\end{equation}
where $\epsilon_{s}>0$ depends only on $\epsilon_{0}$ from Theorem
\ref{thm:Small-data-waves}, and such that:
\begin{equation}
\left\Vert X\phi_{\nu}\right\Vert _{L_{t,x}^{2}([-3,3]\times B_{3})}\longrightarrow0,\label{eq:AsymptoticallyTimeLike}
\end{equation}
for some smooth time-like vector field $X$. Then there exists a wave
map:
\begin{equation}
\phi\in H_{t,x}^{\frac{3}{2}-\epsilon}([-1,1]\times B_{1}),\label{eq:LimitRegularity}
\end{equation}
for any $0<\epsilon<\frac{1}{2}$, satisfying:
\[
X\phi=0\,\,\,\mathit{on}\,\,\,[-1,1]\times B_{1},
\]
to which the maps $\phi_{\nu}$ converge in $C_{t}^{0}(H_{x}^{1})\cap C_{t}^{1}(L_{x}^{2})$
after passing to a subsequence, and further:
\begin{equation}
\nabla_{t,x}\phi_{\nu}\longrightarrow\nabla_{t,x}\phi\,\,\,\mathit{in}\,\,\, DS(\left\{ t\in[-1,1],\, r\leq2-\left|t\right|\right\} ).\label{eq:CompactnessConvergence}
\end{equation}
\end{lem}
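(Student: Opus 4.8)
The plan is to exploit the elliptic character forced on the sequence by hypothesis \eqref{eq:AsymptoticallyTimeLike} together with the gain of regularity coming from the Fourier restriction component of $DS$. First I would rotate coordinates by a fixed Lorentz transformation so that the time-like vector field $X$ becomes, after a further smooth change of variables, as close to $\partial_t$ as one likes on the compact set $[-2,2]\times B_2$; since the conclusions are local and the wave maps equation together with its conservation law \eqref{eq:ConservationLaw} is Lorentz covariant, this is harmless. The real work is then to run the weak small-energy theory of Theorem \ref{thm:Small-data-waves} for the sequence $\phi_\nu$ in the presence of the extra information $\|X\phi_\nu\|_{L^2_{t,x}}\to 0$. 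Concretely, I would fix a smooth time cutoff and use the second bullet of Theorem \ref{thm:Small-data-waves} to produce a weak limit $\phi\in S$ with $\nabla_{t,x}\phi_\nu\to\nabla_{t,x}\phi$ in $DS$ on the slightly smaller cone $\{t\in[-1,1],\ r\le 2-|t|\}$; passing to the limit in \eqref{eq:AsymptoticallyTimeLike} gives $X\phi=0$ there, and the $DS\hookrightarrow C^0_t(L^2_x)$ control (the energy component of \eqref{eq:DefOfDS}) upgrades the $DS$ convergence to convergence in $C^0_t(H^1_x)\cap C^1_t(L^2_x)$, which is the stated \eqref{eq:CompactnessConvergence}.

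The delicate point — and I expect this to be the main obstacle — is actually \emph{applying} Theorem \ref{thm:Small-data-waves}, because that theorem is global: it requires data defined on all of $\mathbb{R}^2$, constant outside a compact set, with small global energy. Here $\phi_\nu$ is only defined on $[-3,3]\times B_3$ and the local energy bound \eqref{eq:EnergyBoundCompactnessLemma} is \emph{not} the same as a small global energy bound. The fix is an extension/truncation: on a time slice, say $t=0$, one modifies $\phi_\nu(0)$ outside $B_{5/2}$ so that it becomes constant outside $B_3$ while only increasing the energy by a controlled multiple of the energy already living in an annulus $B_{5/2}\setminus B_2$; choosing $\epsilon_s$ small enough relative to $\epsilon_0$ keeps the extended data below $\epsilon_0$. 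By finite speed of propagation the extended solution agrees with $\phi_\nu$ on the backward/forward domain of dependence of $B_{5/2}$, which contains $\{t\in[-1,1],\ r\le 2-|t|\}$, so all statements are unaffected on that set. One must be slightly careful that the extension procedure is uniform in $\nu$ and compatible with the convergence of the data (a subsequence of the $\phi_\nu(0)$ converges in $H^1_x\times L^2_x$ on $B_{5/2}$ by Rellich, after which a fixed extension operator can be used), so that the hypotheses of the second bullet of Theorem \ref{thm:Small-data-waves} are genuinely met.

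Next I would address the regularity claim \eqref{eq:LimitRegularity}, namely $\phi\in H^{3/2-\epsilon}_{t,x}$. This is exactly where the hypothesis $X\phi=0$ earns its keep: a function that is constant along a time-like direction has space-time Fourier support confined to the region $|\tau|\lesssim|\xi|$ (away from that, the multiplier $X$, whose symbol is $\sim(\tau,\xi)\cdot X$, is elliptic and would force the support to vanish). Meanwhile the limiting map inherits from the $DS$ convergence a uniform bound on the energy component $\|P_k\nabla_{t,x}\phi\|_{L^\infty_t L^2_x}$ and, crucially, on the $\dot X^{0,1/2,\infty}_k$ norm — i.e. $2^{k/2}\|Q_j P_k\nabla_{t,x}\phi\|_{L^2_{t,x}}$ summable in the $\ell^\infty$ sense over $j$. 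Combining the modulation localization $|\tau|-|\xi|=O(2^{k})$ forced by $X\phi=0$ with the energy bound and Plancherel gives, after summing the Littlewood–Paley pieces, control of $\||\tau,\xi|^{3/2-\epsilon}\mathcal F\phi\|_{L^2}$ on the compact set; the $\epsilon$-loss is the usual price for the $\ell^\infty$ (rather than $\ell^2$) modulation summation in $\dot X^{0,1/2,\infty}_k$. I would carry this out frequency-by-frequency, using Lemma \ref{lem:Dispo} to dispose of the relevant $P_kQ_{\le j}$ projections, and then sum. Finally, one checks that $\phi$ solves \eqref{eq:WMeq} weakly: the linear part passes to the limit trivially, and the quadratic null-form nonlinearity passes to the limit because of the $DS$-convergence of $\nabla_{t,x}\phi_\nu$ together with the bilinear estimates built into the $S$-space theory (this is precisely the content of the weak stability statement in the second bullet of Theorem \ref{thm:Small-data-waves}), so no separate argument is needed beyond invoking that theorem.
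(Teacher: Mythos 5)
There is a genuine gap, and it sits exactly at the step your whole first paragraph hinges on: the claim that ``a subsequence of the $\phi_{\nu}(0)$ converges in $H_{x}^{1}\times L_{x}^{2}$ on $B_{5/2}$ by Rellich''. From the bound \eqref{eq:EnergyBoundCompactnessLemma}, Rellich--Kondrachov only gives strong convergence in $L_{x}^{2}\times H_{x}^{-1}$ (or $H_{x}^{s}\times H_{x}^{s-1}$ with $s<1$); strong convergence of the data in the energy space is precisely the compactness assertion that has to be \emph{proved}, and it is exactly here that the hypothesis \eqref{eq:AsymptoticallyTimeLike} and the Fourier restriction component of $DS$ must enter quantitatively. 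Without it you are not entitled to the second bullet of Theorem \ref{thm:Small-data-waves}, so the limit $\phi$, the $DS$ convergence, and the upgrade to $C_{t}^{0}(H_{x}^{1})\cap C_{t}^{1}(L_{x}^{2})$ all collapse. Ironically, you do describe the correct mechanism, but only deploy it for the regularity of the limit: at spatial frequency $2^{k}$ the symbol of the time-like $X$ is elliptic (comparable to $2^{k}$) on a conic neighbourhood of the light cone, so \eqref{eq:AsymptoticallyTimeLike} forces the near-cone modulations of $\nabla_{t,x}\phi_{\nu}$ to vanish in $L_{t,x}^{2}$, while at modulations comparable to or larger than the frequency the $\dot{X}_{k}^{0,\frac{1}{2},\infty}$ component of \eqref{eq:DefOfDS} (available uniformly in $\nu$ once you have extended at a fixed slice and applied the \emph{first} bullet of Theorem \ref{thm:Small-data-waves}) yields an extra half derivative and hence $L_{t,x}^{2}$-precompactness. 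This gives precompactness of $\nabla_{t,x}\phi_{\nu}$ in $L_{t,x}^{2}$ on interior space-time regions, together with the $H_{t,x}^{3/2-\epsilon}$ regularity of the limit by the very modulation argument you sketch; only \emph{then} does one obtain strong $H_{x}^{1}\times L_{x}^{2}$ convergence at almost every fixed time, select a good slice, and run your extension-plus-weak-stability argument. Your proposal has this order inverted.

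For comparison with the paper: the space-time compactness and the regularity \eqref{eq:LimitRegularity} are not reproved there at all, they are quoted from Proposition 5.1 of Sterbenz--Tataru, and what the paper supplies is precisely the upgrade to the uniform-in-time statement and to \eqref{eq:CompactnessConvergence}, carried out in Remark \ref{Rem:UniformityInTime}: pick an a.e.\ time of strong convergence, pigeonhole a radius $\sigma$ so that the boundary energy is $O(\epsilon_{s})$ and (via Morrey) the traces land in a single small chart --- a point your ``controlled multiple of the annulus energy'' extension glosses over, since interpolation to a constant needs the chart condition, not just an energy bound --- then extend by a constant, invoke the continuous-dependence bullet of Theorem \ref{thm:Small-data-waves}, and use finite speed of propagation. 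Your second and fourth steps essentially reproduce this part and the Sterbenz--Tataru regularity mechanism; the flaw is localized to the false Rellich claim and the resulting inversion of the logical order.
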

\begin{rem}
\label{Rem:UniformityInTime}The proof of this lemma can be found
in Proposition 5.1 of \citep{TataruSterbenzWaveReg} and we remark
that convergence in $H_{t,x}^{1}(U)$ for any domain $U\Subset(-3,3)\times B_{3}$
only is claimed there. But the stronger statement (\ref{eq:CompactnessConvergence}),
to be understood in terms of minimal extensions, can be obtained as
follows. Let us fix $U=[-\frac{5}{2},\frac{5}{2}]\times B_{5/2}$,
then upon passing to a further subsequence we would have: 
\begin{equation}
\left\Vert \phi_{\nu}(t)-\phi(t)\right\Vert _{L_{x}^{2}(B_{\frac{5}{2}})}^{2}+\left\Vert \nabla_{t,x}\phi_{\nu}(t)-\nabla_{t,x}\phi(t)\right\Vert _{L_{x}^{2}(B_{\frac{5}{2}})}^{2}\longrightarrow0\,\,\,\mathrm{for}\,\,\,\mathrm{a.e.}\,\,\, t\,,\label{eq:STRONG_CONVERGENCE}
\end{equation}
therefore $\phi_{\nu}$ converge strongly to $\phi$ in $(H_{x}^{1}\times L_{x}^{2})(B_{5/2})$
for almost every $t$ that we can fix as close to $0$ as we wish.
Hence, assuming that $\epsilon_{s}$ was chosen small enough initially,
by the pigeonhole principle we have for $\sigma\in(2,\frac{5}{2})$:
\[
\int_{\partial B_{\sigma}}\left|\nabla_{t,x}\phi(t)\right|^{2}d\theta\lesssim\epsilon_{s},
\]
away from a set of measure $\frac{1}{10}$ say. Fixing such a $\sigma$,
we would have $\phi(t,\partial B_{\sigma})$ contained in a single
chart of $\mathbb{S}^{n-1}$ of diameter $O(\sqrt{\epsilon_{s}})$
around a point $c\in\mathbb{S}^{n-1}$. Moreover, upon passing to
a further subsequence, by the strong convergence (\ref{eq:STRONG_CONVERGENCE})
we can choose $\sigma\in(2,\frac{5}{2})$ such that $\phi_{\nu}(t)|_{\partial B_{\sigma}}\rightarrow\phi(t)|_{\partial B_{\sigma}}$
in the Hölder space $C^{\alpha}(\partial B_{\sigma})$ with $\alpha\in(0,\frac{1}{2})$,
using Morrey's inequality. Hence, we would have $\phi_{\nu}(t,\partial B_{\sigma})$
contained in the chart around $c\in\mathbb{S}^{n-1}$of diameter $O(\sqrt{\epsilon_{s}})$
as well, for all $\nu\in\mathbb{N}$ large enough. Therefore, we can
construct extensions $\phi'_{\nu}[t]\in T(\mathbb{S}^{n-1})$ of $\phi_{\nu}[t]|_{B_{\sigma}}$,
smooth as the latter are, with the energy bound:
\[
\mathcal{E}[\phi'_{\nu}](t)\lesssim\epsilon_{s},
\]
by smoothly interpolating between $\phi_{\nu}[t]|_{\partial B_{\sigma}}$
and $(c,0)\in T(\mathbb{S}^{n-1})$ on $B_{3}\setminus B_{\sigma}$.
By construction, we obtain $\phi'_{\nu}[t]$ strongly convergent in
$H_{x}^{1}\times L_{x}^{2}$ to some map $\phi'[t]$ agreeing with
$\phi[t]$ on $B_{2}$. In the end, setting the constant $\epsilon_{s}>0$
small enough and the time $t$ close enough to $0$, the convergence
statements are justified by the continuous dependence on the initial
data part of Theorem \ref{thm:Small-data-waves} and the finite speed
of propagation property. 

In particular, the assumption (\ref{eq:AsymptoticallyTimeLike}) gets
upgraded to:
\[
X\phi_{\nu}\longrightarrow0\,\,\,\mathrm{in}\,\,\, C_{t}^{0}(L_{x}^{2})\left([-1,1]\times B_{1}\right),
\]
and going further, the regularity theory of Theorem \ref{thm:Small-data-waves}
tells us that in fact we have:
\[
\phi\in C_{t}^{0}([-1,1]\,;H_{x}^{\frac{3}{2}-\epsilon}(B_{1}))\cap C_{t}^{1}([-1,1]\,;H_{x}^{\frac{1}{2}-\epsilon}(B_{1})),
\]
for any $0<\epsilon<\frac{1}{2}$  improving upon (\ref{eq:LimitRegularity}),
although it is unfortunately impossible to obtain convergence in such
a stronger space without further assumptions, especially regarding
the decay (\ref{eq:AsymptoticallyTimeLike}). 
\end{rem}
Let us close this section by mentioning the result of Sterbenz and
Tataru \citep{TataruSterbenzWave}, see both Theorem 1.3 and Proposition
3.9 there, which relaxes the assumption of small energy in the work
of Tao \citep{TaoWaveII} and Tataru \citep{TataruWave} to small
\emph{energy dispersion}. This represents a crucial technical ingredient
in the proof by Sterbenz and Tataru \citep{TataruSterbenzWaveReg}
of the threshold conjecture. Let us consider an open interval $I=(t_{0},t_{1})$,
which can be unbounded. 
\begin{thm}
\label{thm:EnergyDispersedWM}\emph{(Sterbenz and Tataru \citep{TataruSterbenzWave}).}
Given an energy bound $\mathcal{E}>0$, there exist constants $0<\epsilon(\mathbb{S}^{n-1},\mathcal{E})\ll1$
and $1\ll F(\mathbb{S}^{n-1},\mathcal{E})$ such that for any smooth
wave map $\phi$ on $(t_{0},t_{1})$ with energy bounded by $\mathcal{E}$
and $\nabla_{t,x}\phi$ spatially Schwartz, if we have:
\[
\sup_{k}\left\Vert P_{k}\phi\right\Vert _{L_{t,x}^{\infty}(t_{0},t_{1})}\leq\epsilon(\mathbb{S}^{n-1},\mathcal{E}),
\]
then
\[
\left\Vert \phi\right\Vert _{S(t_{0},t_{1})}\leq F(\mathbb{S}^{n-1},\mathcal{E}).
\]
Moreover, considering an admissible frequency envelope $c$ attached
to some $\phi[t]$ for $t_{0}<t<t_{1}$, as in (\ref{eq:FrequencyEnvelope})
and $\sigma_{0}$ as in Theorem \ref{thm:Small-data-waves}, we obtain:
\[
\left\Vert P_{k}\phi\right\Vert _{S(t_{0},t_{1})}\lesssim c_{k},
\]
and the map $\phi$ extends to a smooth wave map on a neighborhood
of the time interval $(t_{0},t_{1})$.\end{thm}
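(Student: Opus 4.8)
The plan is to establish the theorem by a continuity-and-bootstrap argument built on top of a renormalized (gauged) form of the wave maps equation, combined with an induction on the energy $\mathcal{E}$. First, by the local theory of Theorem \ref{thm:Small-data-waves} and finite speed of propagation, a smooth wave map with spatially Schwartz gradient exists on $(t_{0},t_{1})$ and lies in $S$ on every compact subinterval, so the only issue is the \emph{uniform} bound $\left\Vert \phi\right\Vert _{S(t_{0},t_{1})}\leq F(\mathbb{S}^{n-1},\mathcal{E})$. It therefore suffices to prove the a priori estimate: there exist $F=F(\mathbb{S}^{n-1},\mathcal{E})$ and $\epsilon=\epsilon(\mathbb{S}^{n-1},\mathcal{E})$ such that if $\phi$ is a smooth wave map on an interval $I'\subset(t_{0},t_{1})$ with energy $\leq\mathcal{E}$, with $\sup_{k}\left\Vert P_{k}\phi\right\Vert _{L_{t,x}^{\infty}(I')}\leq\epsilon$ and with $\left\Vert \phi\right\Vert _{S(I')}<\infty$, then in fact $\left\Vert \phi\right\Vert _{S(I')}\leq F$. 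Running this over an exhaustion of $(t_{0},t_{1})$ by compact intervals, on each of which $\left\Vert \cdot\right\Vert _{S}$ is finite and varies continuously, closes the argument, with unbounded $I$ included since the exit time cannot be finite.

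For the a priori estimate I would pass to the microlocal/diffusion gauge of Tao and of Sterbenz--Tataru: one rewrites $\nabla_{t,x}\phi$ in a parallel-transported frame so that the gauged field $\psi$, a collection of frequency-localized one-forms $\psi_{k}$, solves $\square\psi_{k}=\mathcal{N}_{k}$, where $\mathcal{N}_{k}$ is a sum of a good $Q_{0}$-null form, paraproduct interactions with the gauge connection, and higher-order terms. In the small-energy theory, the connection is controlled by the energy and $\mathcal{N}$ is perturbative in the atomic Fourier restriction and $DS$ spaces of (\ref{eq:DefOfDS})--(\ref{eq:XsbControl}); for large energy this fails, and the new point is that every bilinear or cubic interaction which is not manifestly small can be arranged to carry at least one low-modulation, comparatively low-frequency factor $P_{k}Q_{\leq k+10}\phi$, whose $L_{t,x}^{\infty}$ size is, after peeling off the disposable multipliers of Lemma \ref{lem:Dispo}, exactly an energy-dispersion quantity $\sup_{k}\left\Vert P_{k}\phi\right\Vert _{L_{t,x}^{\infty}}$. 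Choosing $\epsilon$ small thus makes these terms perturbative again, provided one already controls the remaining ``core'' of the nonlinearity, which is where the energy bound must enter.

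To control that core I would induct on $\mathcal{E}$: assuming the estimate with constants $F(\mathcal{E}-\delta),\epsilon(\mathcal{E}-\delta)$ for a fixed $\delta=\delta(\mathbb{S}^{n-1})>0$, subdivide $I'$ into $N=N(\mathbb{S}^{n-1},\mathcal{E})$ consecutive subintervals on each of which a suitable divisible profile of the solution, built from the $\left\Vert P_{k}\phi\right\Vert _{DS}$-type quantities together with the energy at each dyadic frequency, is split into small pieces; such a subdivision exists because, by (\ref{eq:DefOfDS}), the $S$-norm dominates quantities that are absolutely continuous in time without concentration. On each piece either the energy carried by the relevant frequencies has dropped below $\mathcal{E}-\delta$, so the inductive hypothesis applies after re-gauging on that piece, or the energy-dispersion smallness alone suffices in the manner of the previous paragraph; matching the gauge across the $N-1$ breakpoints and summing the contributions, with the frequency-envelope bookkeeping of (\ref{eq:FrequencyEnvelope})--(\ref{eq:SobolevFrequencyControl}), gives $\left\Vert \phi\right\Vert _{S(I')}\leq F$, and, by re-running the same bootstrap against the slowly varying envelope $c$ rather than against $\ell^{2}$, the refined estimate $\left\Vert P_{k}\phi\right\Vert _{S(t_{0},t_{1})}\lesssim c_{k}$. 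The smooth extension past the endpoints then follows: as $t\downarrow t_{0}$ (resp. $t\uparrow t_{1}$) the data $\phi[t]$ stays in a fixed compact subset of $H_{x}^{1}\times L_{x}^{2}$ with uniformly Schwartz gradient, so Theorem \ref{thm:Small-data-waves} in the microlocal gauge on small light cones, together with finite speed of propagation, produces a smooth wave map on a two-sided neighborhood of $(t_{0},t_{1})$.

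The hard part will be the construction and uniform control of the gauge in the large-energy regime and the precise accounting in the induction on energy; in particular, exhibiting in every nonlinear interaction a factor whose $L_{t,x}^{\infty}$ magnitude is governed by $\sup_{k}\left\Vert P_{k}\phi\right\Vert _{L_{t,x}^{\infty}}$, so that energy-dispersion smallness genuinely renders the non-core part perturbative, is essentially the entire technical content of \citep{TataruSterbenzWave} and is where the $Q_{0}$-null structure of (\ref{eq:WMeq}) and the atomic $\dot{X}_{k}^{0,\frac{1}{2},1}$ spaces of (\ref{eq:XsbControl}) are indispensable.
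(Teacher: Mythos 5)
The paper does not prove this statement at all: it is imported verbatim from Sterbenz and Tataru \citep{TataruSterbenzWave} (their Theorem 1.3 combined with Proposition 3.9) and used as a black box, so there is no internal proof to compare yours against; the question is whether your sketch would stand on its own, and it would not. What you wrote is a reasonable description, at the level of strategy, of how the cited proof goes — renormalization into a gauged equation, a bootstrap in the $S$/$DS$ framework, frequency envelopes, and an induction on energy via a divisibility argument — but every step carrying actual content is deferred, as you concede in your last paragraph. In particular, the assertion that every bilinear or cubic interaction which is not manifestly perturbative ``can be arranged'' to carry a factor whose $L^{\infty}_{t,x}$ size is an energy-dispersion quantity is precisely the theorem's technical core: it is the outcome of the gauge construction and the bi-/trilinear null-form analysis occupying most of \citep{TataruSterbenzWave}, not a reduction available for free, and without it the choice of $\epsilon(\mathbb{S}^{n-1},\mathcal{E})$ has no basis.

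Two further steps are incorrect or incomplete as stated. First, in the induction on energy the energy does not ``drop below $\mathcal{E}-\delta$'' on subintervals — energy is conserved; the actual mechanism is to construct on each subinterval a comparison wave map of strictly smaller energy (splitting off a small-energy low- or high-frequency component of the data), apply the inductive hypothesis to that auxiliary map, and treat the difference perturbatively, and this construction together with the matching across subintervals is a substantial part of the argument you have not supplied. Second, the extension past the endpoints cannot be obtained by asserting that $\phi[t]$ stays ``in a fixed compact subset of $H^{1}_{x}\times L^{2}_{x}$'' and invoking Theorem \ref{thm:Small-data-waves} on small cones: the small-energy theorem needs small local energy, i.e.\ uniform non-concentration up to the endpoint, and that is exactly what the frequency-envelope bound $\left\Vert P_{k}\phi\right\Vert _{S(t_{0},t_{1})}\lesssim c_{k}$ (propagation of higher regularity) must be used to provide; neither compactness of the data in the energy space nor smallness on small balls follows from the energy bound and the qualitative Schwartz assumption alone. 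So, as a blind proposal, this is an accurate roadmap of the known proof, but it does not constitute a proof of the theorem.
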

\begin{rem}
\label{RemSBoundOnW}In this paper, the above theorem will be used
indirectly only, but we can apply it immediately to the wave maps
$\varpi_{t_{0},\nu}$ from Theorem \ref{thm:Main} concentrating on
the null boundary $\partial C$, to obtain the bound:
\[
\left\Vert \varpi_{t_{0},\nu}\right\Vert _{S[t_{0}-\delta_{0},t_{0}+\delta_{0}]}\lesssim1,
\]
for any $t_{0}\in[1+\delta_{0},2-\delta_{0}]$.
\end{rem}

\subsection{\label{sub:CompensationEstimate}Compensation type estimates.}

We prove here two compensation estimates for wave maps into spheres
with a good bound in the direction of some constant time-like vector
field, relying on the conservation law (\ref{eq:ConservationLaw})
to treat high-high frequency interactions (this phenomena goes back
essentially to Wente). These estimates will play a key role in the
proof of no loss of energy in formation of solitons, and as in the
case of higher dimensional harmonic maps considered by Lin and Rivière
\citep{LiR}, this is the only place where we use the fact that our
target manifold is $\mathbb{S}^{n-1}$.
\begin{prop}
\label{prop:Compensation-estimate.} Let $\phi:[-1,1]\times\mathbb{R}^{2}\rightarrow\mathbb{S}^{n-1}$
be a smooth wave map equal to a constant $c$ outside a compact domain
in space, with energy bounded by some positive $\mathcal{E}>0$:
\begin{equation}
\left\Vert \nabla_{t,x}\phi\right\Vert _{L_{t}^{\infty}(L_{x}^{2})[-1,1]}^{2}\leq\mathcal{E},\label{eq:EnergyBounAssumtion}
\end{equation}
and $X$ a constant time-like vector field, that we may take to be:
\begin{equation}
X=\mathrm{cosh}(\zeta)\partial_{t}+\mathrm{sinh}(\zeta)\partial_{x_{1}},\label{eq:ExpressionForX-1}
\end{equation}
for some rapidity constant $\zeta\geq0$. Denote by $\chi=\chi(t)\in C_{0}^{\infty}(-1,1)$
a smooth time cut-off function, then there exists a decomposition
holding in $\mathcal{S}(\mathbb{R}\times\mathbb{R}^{2})$:
\begin{equation}
\chi\nabla_{t,x}\phi=\Theta_{X}+\Xi_{X},\label{eq:CompensatedDecomposition}
\end{equation}
satisfying:
\begin{equation}
\left\Vert \Theta_{X}\right\Vert _{L_{t,x}^{2}}\lesssim\left\Vert X\phi\right\Vert _{L_{t,x}^{2}[-1,1]}+\left\Vert \phi-c\right\Vert _{L_{t}^{\infty}(L_{x}^{2})[-1,1]}\label{eq:TimeLikeTermInDecomp}
\end{equation}
and
\begin{equation}
\sum_{k\in\mathbb{Z}}\left\Vert P_{k}\Xi_{X}\right\Vert _{L_{t}^{1}(L_{x}^{2})}\lesssim1,\label{eq:CompensatedTerm}
\end{equation}
with the implicit constants depending only on $n$ the dimension of
$\mathbb{R}^{n}$, the energy bound $\mathcal{E}$, the rapidity constant
$\zeta$ and the cut-off $\chi$ (most notably on $\left\Vert \partial_{t}\chi\right\Vert _{L_{t}^{\infty}}$). \end{prop}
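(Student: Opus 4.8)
The plan is to decompose $\chi\nabla_{t,x}\phi$ frequency block by frequency block, writing $P_k(\chi\nabla_{t,x}\phi)$ as a ``time-like'' piece that will be summed into $\Theta_X$ in $L^2_{t,x}$ and a ``compensated'' piece going into $\Xi_X$ with an $\ell^1$-summable $L^1_t(L^2_x)$ bound. The starting point is the conservation law (\ref{eq:ConservationLaw}), which after contracting with $\phi^{\dagger}$ and using the geometric identities (\ref{eq:GeometricId}) lets us recover the gradient of $\phi$ from the antisymmetric ``connection form'' $\Omega_\alpha := \phi\partial_\alpha\phi^{\dagger} - \partial_\alpha\phi\,\phi^{\dagger}$; concretely $\partial_\alpha\phi = \Omega_\alpha\,\phi$ (each entry of $\Omega$ is quadratic in $\phi - c$ and its gradient). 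The decoupling of $X$ versus the remaining directions: since $X = \cosh(\zeta)\partial_t + \sinh(\zeta)\partial_{x_1}$ is obtained from $\partial_t$ by a Lorentz boost, I would change to boosted coordinates so that $X$ becomes (a constant multiple of) $\partial_{t'}$; the wave map structure, the conservation law and the Littlewood--Paley machinery are all preserved, and $Xφ$ small in $L^2_{t,x}$ then means the $\partial_{t'}$-derivative is negligible. This reduces matters to controlling the spatial gradient $\nabla_{x'}\phi$ and absorbing time derivatives into $\Theta_X$ up to the error $\|Xφ\|_{L^2}$.

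Next I would run the paraproduct/Littlewood--Paley trichotomy on $\partial_\alpha\phi = \Omega_\alpha\phi$. The low-high (paraproduct) pieces $P_k[(P_{\leq k-7}\Omega_\alpha)(P_{k-3\leq\cdot\leq k+3}\phi)]$ are essentially harmless: by Bernstein and the energy bound (\ref{eq:EnergyBounAssumtion}) they obey an $\ell^2$-in-$k$ bound in $L^\infty_t(L^2_x)$, hence land in $\Theta_X$ after noting $\ell^2 \hookrightarrow$ the required $L^2_{t,x}$ control (or, where better, are simply estimated directly). The genuinely dangerous term is the high-high interaction $\sum_{k}P_k\big[\sum_{k_1\sim k_2\geq k-6}(P_{k_1}\Omega_\alpha)(P_{k_2}\phi)\big]$, which is only borderline in Sobolev counting and is precisely where the Wente-type / div-curl compensation must be used. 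Here I would exploit the antisymmetry of $\Omega_\alpha$ together with the conservation law $\partial^\alpha\Omega_\alpha = 0$ (this is exactly (\ref{eq:ConservationLaw}) rewritten): this div-curl structure upgrades the high-high product from $L^1_x$ to the Hardy space $\mathcal{H}^1(\mathbb{R}^2)$, and then the embedding (\ref{eq:HardyBesovEmbedding}), $\dot F^{0,1}_2 \subset \dot B^{-1,2}_1$, converts this into exactly the $\sum_k \|P_k(\cdot)\|_{L^2_x} \lesssim 1$ control after accounting for the derivative. Integrating in time over the compact interval (the cut-off $\chi$ makes everything compactly supported in $t$, contributing a factor depending on $\|\partial_t\chi\|_{L^\infty_t}$ when derivatives hit $\chi$) promotes $L^2_x$ bounds to $L^1_t(L^2_x)$, giving (\ref{eq:CompensatedTerm}).

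The term $\Theta_X$ is then assembled from: (i) the pieces involving $\partial_{t'}\phi = X\phi/\cosh\zeta$ (giving the $\|X\phi\|_{L^2_{t,x}}$ contribution directly); (ii) the low-frequency/paraproduct remainders estimated in $L^\infty_t(L^2_x)$ via Bernstein, where the number of low frequencies is controlled because $\phi - c$ lies in $L^2_x$ (giving the $\|\phi - c\|_{L^\infty_t(L^2_x)}$ term); and (iii) the commutator terms from $[P_k,\chi]$ which are lower order. One checks these are $\ell^2$-summable and hence finite in $L^2_{t,x}$. I anticipate the main obstacle to be the high-high estimate (ii) above, specifically making the Hardy-space estimate genuinely work in the \emph{space-time} setting rather than the usual elliptic one: unlike the harmonic map case, one here has time derivatives of $\phi$ sitting inside $\Omega$ and one only has $L^\infty_t(L^2_x)$ control (no time regularity a priori), so the div-curl cancellation $\partial^\alpha\Omega_\alpha = 0$ must be used carefully frequency-by-frequency (summing the high-high pieces, integrating by parts in the full space-time gradient, and keeping track of the borderline $j \geq k$ versus $j < k$ modulation split alluded to at the end of Section \ref{sub:Some-harmonic-analysis.}) before invoking the two-dimensional Hardy--Besov embedding (\ref{eq:HardyBesovEmbedding}) on each spatial time-slice. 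A secondary technical point is ensuring the decomposition honestly holds in $\mathcal{S}(\mathbb{R}\times\mathbb{R}^2)$, which follows because $\chi$ compactly supports $t$, $\phi - c$ is spatially Schwartz by hypothesis, and all the operations (LP projections, products, the solved-for identity $\partial_\alpha\phi = \Omega_\alpha\phi$) preserve Schwartz class.
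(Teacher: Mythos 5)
There are genuine gaps. The central one is that you build the decomposition on the first-order identity $\partial_{\alpha}\phi=-\Omega_{\alpha}\phi$, which carries no derivative gain: applying the trichotomy to $\Omega_{x}\phi$ and invoking the embedding (\ref{eq:HardyBesovEmbedding}) "after accounting for the derivative" presupposes an inverse derivative that your set-up never produces. The embedding $\dot{F}_{2}^{0,1}\subset\dot{B}_{1}^{-1,2}$ yields $\sum_{k}2^{-k}\left\Vert P_{k}f\right\Vert _{L_{x}^{2}}\lesssim\left\Vert f\right\Vert _{\mathcal{H}^{1}}$, so to reach the $\ell^{1}$-in-$k$ bound (\ref{eq:CompensatedTerm}) one needs the quadratic expression to sit under an operator of order $-1$. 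This is exactly why the paper does not work with the first-order identity: it splits off the low frequencies and the high modulations $Q_{\geq k+10}P_{k}$ (controlled by $\left\Vert X\phi\right\Vert _{L_{t,x}^{2}}$ and $\left\Vert \phi-c\right\Vert _{L_{t}^{\infty}(L_{x}^{2})}$ via the ellipticity of $X$ there), and for the remaining frequency space-like part rewrites the second-order wave maps equation through the anisotropic operator $\Delta_{x,\beta}=(1-\beta^{2})\partial_{x_{1}}^{2}+\partial_{x_{2}}^{2}$, so that the nonlinearity $\Omega_{\alpha}\partial^{\alpha}\phi$ appears under $\nabla_{x}\Delta_{x,\beta}^{-1}$ with the crucial $2^{-k}$ gain; only then do the div-curl structure (for the high-high cascade, via pulling out $\partial^{\alpha}$, Bernstein and the exponential gain $2^{-(k_{2}-k)}$) and the Hardy/$bmo$ duality with the square function estimate (for the paraproducts) give (\ref{eq:CompensatedTerm}). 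Note also that the roles are the reverse of what you describe: in the paper the high-high term is handled by the div-curl structure and summation, while it is the paraproduct with the gradient at low frequency that requires the Hardy space.

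Two further points would make your argument fail as written. First, you send the low-high paraproducts into $\Theta_{X}$ on the strength of an $\ell^{2}$-in-$k$ energy bound; but those terms are of size $O(\mathcal{E})$, not $O(\left\Vert X\phi\right\Vert _{L_{t,x}^{2}}+\left\Vert \phi-c\right\Vert _{L_{t}^{\infty}(L_{x}^{2})})$, so the smallness estimate (\ref{eq:TimeLikeTermInDecomp}) — which is the whole point of the proposition in its later application, where the right-hand side tends to zero — would be violated; these terms must be absorbed into $\Xi_{X}$, which is precisely what requires the compensation. Second, the reduction "boost coordinates so that $X$ becomes $\partial_{t'}$" is not innocent: the cut-off $\chi(t)$, the mixed norms $L_{t}^{1}(L_{x}^{2})$ and $L_{t}^{\infty}(L_{x}^{2})$, and the spatial projections $P_{k}$ appearing in (\ref{eq:TimeLikeTermInDecomp})--(\ref{eq:CompensatedTerm}) are all tied to the original time foliation and do not transform well under the boost; the paper avoids this by staying in the original frame and encoding the boost in $\Delta_{x,\beta}$ and in the uniform boundedness of multipliers like $X^{-1}\nabla_{x}Q_{\geq k+10}\widetilde{P}_{k}$. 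If you want to salvage your outline, you must replace the first-order identity by the frequency space-like part of the equation $\Delta_{x,\beta}(\chi\phi)=-\chi\Omega_{\alpha}\partial^{\alpha}\phi+(X\text{-terms})$ and rerun your compensation analysis on $\nabla_{x}\Delta_{x,\beta}^{-1}$ of the right-hand side.
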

\begin{proof}
We start by noting that, expressing $\partial_{t}$ as a linear combination
of $X$ and $\partial_{x_{1}}$ via (\ref{eq:ExpressionForX-1}),
it suffices to consider the spatial gradient $\chi\nabla_{x}\phi$.

For low frequencies, we proceed claiming immediately:
\begin{equation}
\left\Vert \chi P_{\leq0}\nabla_{x}\phi\right\Vert _{L_{t,x}^{2}}\lesssim\left\Vert \phi-c\right\Vert _{L_{t}^{\infty}(L_{x}^{2})[-1,1]},\label{eq:LowFrequencies-1}
\end{equation}
which simply follows from the finite band property (\ref{eq:FiniteBandLess}),
passing to $L_{t}^{\infty}(L_{x}^{2})$ as necessary. This is an acceptable
contribution.

For high modulations, we claim:
\begin{equation}
\left\Vert \sum_{k\in\mathbb{Z}}Q_{\geq k+10}P_{k}[\chi\nabla_{x}\phi]\right\Vert _{L_{t,x}^{2}}\lesssim\left\Vert X\phi\right\Vert _{L_{t,x}^{2}[-1,1]}+\left\Vert \phi-c\right\Vert _{L_{t}^{\infty}(L_{x}^{2})[-1,1]},\label{eq:HighModulationsGradient}
\end{equation}
and the idea here, as in \citep{TataruSterbenzWaveReg}, is to note
that the vector field $X$ being time-like, the Fourier multiplier
$X^{-1}\nabla_{x}Q_{\geq k+10}\widetilde{P}_{k}$, where $\widetilde{P}_{k}=P_{k-1\leq\cdot\leq k+1}$,
has symbol smooth and bounded uniformly in $k\in\mathbb{Z}$. By Plancherel
in $L_{t,x}^{2}$, this gives rise to the favorable elliptic estimate:
\begin{align}
\left\Vert Q_{\geq k+10}P_{k}[\chi\nabla_{x}\phi]\right\Vert _{L_{t,x}^{2}}\lesssim & \left\Vert \chi P_{k}X\phi\right\Vert _{L_{t,x}^{2}}+\left\Vert (\partial_{t}\chi)P_{k}\phi\right\Vert _{L_{t,x}^{2}},\label{eq:EllipticX}
\end{align}
and so (\ref{eq:HighModulationsGradient}) follows square-summing
in $k$ the above and dropping the cut-off. This is again acceptable.

The main term to consider is $Q_{<k+10}P_{k}(\chi\nabla_{x}\phi)$
with $k>0$, and for this we rely on the wave maps equation (\ref{eq:WMeq}),
that we trick slightly to make the vector $X$ to appear, introducing
the operator:
\begin{equation}
\Delta_{x,\beta}:=(1-\beta^{2})\partial_{x_{1}}^{2}+\partial_{x_{2}}^{2},\,\,\,\beta:=\mathrm{tanh}(\zeta)\in[0,1),\label{eq:SpatialBetaLaplacian}
\end{equation}
which is elliptic in the frequency region considered. So, using (\ref{eq:ExpressionForX-1}),
together with (\ref{eq:GeometricId}), we rewrite the wave maps equation
(\ref{eq:WMeq}) as:
\begin{align}
\Delta_{x,\beta}(\chi\phi)= & -\chi(\phi\partial_{\alpha}\phi^{\dagger}-\partial_{\alpha}\phi\phi^{\dagger})\partial^{\alpha}\phi\label{eq:WaveMapEqBeta}\\
 & +\mathrm{sech}^{2}(\zeta)(X-2\mathrm{sinh}(\zeta)\partial_{x_{1}})(\chi X\phi)-\mathrm{sech}(\zeta)(\partial_{t}\chi)X\phi,\nonumber 
\end{align}
and inverting $\Delta_{x,\beta}$ we have:
\[
P_{>0}\chi\nabla_{x}\phi=\frac{\nabla_{x}}{\Delta_{x,\beta}}P_{>0}(\Delta_{x,\beta}(\chi\phi)),
\]
holding in $\mathcal{S}(\mathbb{R}\times\mathbb{R}^{2})$, hence let
us treat each term in (\ref{eq:WaveMapEqBeta}) one by one.

Considering second line in (\ref{eq:WaveMapEqBeta}), we control the
first two terms by claiming, for any $k\in\mathbb{Z}$:
\begin{equation}
\left\Vert \nabla_{x}\frac{\nabla_{t,x}}{\Delta_{x,\beta}}Q_{<k+10}P_{k}(\chi X\phi)\right\Vert _{L_{t,x}^{2}}\lesssim\left\Vert P_{k}X\phi\right\Vert _{L_{t,x}^{2}[-1,1]},\label{eq:TimeLikeError(I)}
\end{equation}
which follows immediately discarding, via Plancherel in $L_{t,x}^{2}$,
the Fourier multiplier $\nabla_{x}\nabla_{t,x}\Delta_{x,\beta}^{-1}Q_{<k+10}\widetilde{P}_{k}$
of symbol bounded uniformly in $k\in\mathbb{Z}$, and dropping the
time cut-off $\chi$. For the third term, we have, for any $k\in\mathbb{Z}$:
\begin{equation}
\left\Vert \frac{\nabla_{x}}{\Delta_{x,\beta}}Q_{<k+10}P_{k}[(\partial_{t}\chi)X\phi]\right\Vert _{L_{t,x}^{2}}\lesssim2^{-k}\left\Vert \partial_{t}\chi\right\Vert _{L_{t,x}^{\infty}}\left\Vert P_{k}X\phi\right\Vert _{L_{t,x}^{2}[-1,1]},\label{eq:TimeLikeError(II)}
\end{equation}
where we discarded by Plancherel in $L_{t,x}^{2}$ the Fourier multiplier
$2^{k}\nabla_{x}\Delta_{x,\beta}^{-1}Q_{<k+10}\widetilde{P}_{k}$,
having here again the symbol bounded uniformly in $k\in\mathbb{Z}$.
Therefore, square-summing over $k>0$, both (\ref{eq:TimeLikeError(I)})
and (\ref{eq:TimeLikeError(II)}) lead to acceptable contributions.

We consider now the non-linear term on the first line of (\ref{eq:WaveMapEqBeta}).
Let us introduce some notation for the connection matrices:
\begin{equation}
\Omega_{\alpha}:=\phi\partial_{\alpha}\phi^{\dagger}-\partial_{\alpha}\phi\phi^{\dagger},\,\,\,\mathrm{with}\,\,\,\partial^{\alpha}\Omega_{\alpha}=0\,\,\,\mathrm{and}\,\,\,\left\Vert \Omega_{\alpha}\right\Vert _{L_{t}^{\infty}(L_{x}^{2})[-1,1]}\lesssim1,\label{eq:ConnectionMatrixNotation}
\end{equation}
by (\ref{eq:ConservationLaw}), respectively the global energy bound
(\ref{eq:EnergyBounAssumtion}) and the boundedness of the wave map.
We claim then the following compensation estimate:
\begin{equation}
\sum_{k>0}\left\Vert \frac{\nabla_{x}}{\Delta_{x,\beta}}Q_{<k+10}P_{k}(\chi\Omega_{\alpha}\partial^{\alpha}\phi)\right\Vert _{L_{t}^{1}(L_{x}^{2})}\lesssim1.\label{eq:ClaimDivCurlControl}
\end{equation}

Thanks to the conservation law, the term $\Omega_{\alpha}\partial^{\alpha}\phi$
exhibits and a div-curl type structure, and we should treat this using
the Littlewood-Paley trichotomy in very much the same standard way
as the actual div-curl structure, see Taylor's monograph \citep{TaylorTools}.
We start by writing:
\begin{align}
P_{k}\left(\chi\Omega_{\alpha}\partial^{\alpha}\phi\right)= & \, P_{k}[(\partial_{t}\chi)\sum_{k_{1},k_{2}\geq k-6:\left|k_{1}-k_{2}\right|\leq O(1)}\Omega_{\alpha,k_{1}}\phi_{k_{2}}\label{eq:HighHigh}\\
 & +\partial^{\alpha}\sum_{k_{1},k_{2}\geq k-6:\left|k_{1}-k_{2}\right|\leq O(1)}\chi\Omega_{\alpha,k_{1}}\phi_{k_{2}}\nonumber \\
 & +\chi\Omega_{\alpha,\leq k-7}\partial^{\alpha}\phi_{k-3\leq\cdot\leq k+3}\nonumber \\
 & +\chi\Omega_{\alpha,k-3\leq\cdot\leq k+3}\partial^{\alpha}\phi_{\leq k-7}],\nonumber 
\end{align}
where $\Omega_{\alpha,k_{1}}:=P_{k_{1}}\Omega_{\alpha}$ and similarly
for $\phi_{k_{2}}$, $\Omega_{\alpha,\leq k_{1}}$, etc. We are going
to prove claim (\ref{eq:ClaimDivCurlControl}) for each of the terms
in (\ref{eq:HighHigh}) separately. Note that the Fourier multipliers:
\begin{equation}
\frac{\nabla_{x}\nabla_{t,x}}{\Delta_{x,\beta}}Q_{<k+10}\widetilde{P}_{k}\,\,\,\mathrm{and}\,\,\,\frac{2^{k}\nabla_{x}}{\Delta_{x,\beta}}Q_{<k+10}\widetilde{P}_{k},\label{eq:ShittyMultipliersDispo}
\end{equation}
are disposable, which is essentially contained in Lemma \ref{lem:Dispo}
(precomposing, for example, with the space-time LP-projections to
$\left|\tau\right|+\left|\xi\right|\sim2^{k}$ that we don't use here
otherwise). This justifies the fact that we can work with the space
$L_{t}^{1}(L_{x}^{2})$ instead of $L_{t,x}^{2}$ (on which, of course,
(\ref{eq:ShittyMultipliersDispo}) are bounded by Plancherel).

Let us start with the high-high interactions on the first and second
lines of (\ref{eq:HighHigh}), for which we control (\ref{eq:ClaimDivCurlControl}),
discarding the multipliers (\ref{eq:ShittyMultipliersDispo}) and
dropping $2^{-k}\partial_{t}\chi$ for the first term, by:
\begin{align}
 & \sum_{k>0}\left\Vert P_{k}\sum_{k_{1},k_{2}\geq k-6:\left|k_{1}-k_{2}\right|\leq O(1)}\Omega_{\alpha,k_{1}}\phi_{k_{2}}\right\Vert _{L_{t}^{1}(L_{x}^{2})[-1,1]}\label{eq:HighHighContribution}\\
 & \lesssim\sup_{t\in[-1,1]}\sum_{k>0}2^{k}\sum_{k_{1},k_{2}\geq k-6:\left|k_{1}-k_{2}\right|\leq O(1)}\left\Vert \Omega_{\alpha,k_{1}}(t)\phi_{k_{2}}(t)\right\Vert _{L_{x}^{1}},\nonumber 
\end{align}
where we applied Bernstein's inequality (\ref{eq:Bernstein}), commuted
the sum $\sum_{k>0}$ with $L_{t}^{1}$ and discarded $P_{k}$. Using
Cauchy-Schwarz in $L_{x}^{1}$ and recalling the finite band property
(\ref{eq:FiniteBandEqual}) for $\phi_{k_{2}}$, we can bound the
contribution of (\ref{eq:HighHighContribution}) via:
\[
\sup_{t\in[-1,1]}\sum_{k>0}\sum_{k_{1},k_{2}\geq k-6:\left|k_{1}-k_{2}\right|\leq O(1)}2^{-(k_{2}-k)}\left\Vert \Omega_{\alpha,k_{1}}(t)\right\Vert _{L_{x}^{2}}\left\Vert \nabla_{x}\phi_{k_{2}}(t)\right\Vert _{L_{x}^{2}},
\]
and summing this over $k>0$, letting $i:=k_{1}-k_{2}$ and $j:=k_{2}-k$,
we obtain:
\begin{align*}
 & \sup_{t\in[-1,1]}\sum_{i=O(1)}\sum_{j\geq O(1)}2^{-j}\sum_{k>0}\left\Vert \Omega_{\alpha,k+j+i}(t)\right\Vert _{L_{x}^{2}}\left\Vert \nabla_{x}\phi_{k+j}(t)\right\Vert _{L_{x}^{2}}\\
 & \lesssim\sup_{t\in[-1,1]}\left(\sum_{k_{1}\geq O(1)}\left\Vert \Omega_{\alpha,k_{1}}(t)\right\Vert _{L_{x}^{2}}^{2}\right)^{\frac{1}{2}}\left(\sum_{k_{2}\geq O(1)}\left\Vert \nabla_{x}\phi_{k_{2}}(t)\right\Vert _{L_{x}^{2}}^{2}\right)^{\frac{1}{2}},
\end{align*}
where we have used Cauchy-Schwarz in $k$. By the global energy bound,
we get that high-high interactions make an acceptable contribution
to (\ref{eq:ClaimDivCurlControl}).

Finally, let us consider the contribution of the paraproducts from
lines three and four in (\ref{eq:HighHigh}), and we focus on the
latter as the former is treated in the same way by symmetry (or in
fact, could have already been absorbed in the argument for high-high
interactions). Here, the div-curl structure is not playing any role,
and is actually counter-productive. Hence, discarding the second multiplier
from (\ref{eq:ShittyMultipliersDispo}) and commuting the discrete
sum $\sum_{k>0}$ with $L_{t}^{1}$ as previously, it suffices control:
\[
\sup_{t\in[-1,1]}\sum_{k>0}2^{-k}\left\Vert P_{k}[\Omega_{\alpha,k-3\leq\cdot\leq k+3}(t)\partial^{\alpha}\phi_{\leq k-7}(t)]\right\Vert _{L_{x}^{2}}.
\]
Recalling the embedding (\ref{eq:HardyBesovEmbedding}) we are reduced
to showing:
\[
\sup_{t\in[-1,1]}\left\Vert \sum_{k>0}P_{k}[\Omega_{\alpha,k-3\leq\cdot\leq k+3}(t)\partial^{\alpha}\phi_{\leq k-7}(t)]\right\Vert _{F_{2}^{0,1}(\mathbb{R}^{2})}\lesssim1.
\]

Using the duality $(F_{2}^{0,1})'=F_{2}^{0,\infty}$, as discussed
in section \ref{sub:Some-harmonic-analysis.}, we take an arbitrary
$\varphi\in F_{2}^{0,\infty}$ together with a representation $\varphi=\sum_{k\geq0}\varphi_{k}$
in $\mathcal{S}_{x}'$ such that each $\varphi_{k}$ has Fourier support
in $\left|\xi\right|\sim2^{k}$ ($\left|\xi\right|\lesssim1$ for
$\varphi_{0}$) and: 
\[
\parallel(\sum_{k\geq0}|\varphi_{k}|^{2})^{1/2}\parallel_{L_{x}^{\infty}}\leq2\left\Vert \varphi\right\Vert _{F_{2}^{0,\infty}}.
\]
Then, recalling the fact that LP-projections are self-adjoint, we
must show that:
\[
\sum_{j=O(1)}\sum_{k\geq0}\int\left|\Omega_{\alpha,k-3\leq\cdot\leq k+3}(t)\partial^{\alpha}\phi_{\leq k-7}(t)\varphi_{k+j}\right|dx\lesssim\left\Vert \varphi\right\Vert _{F_{2}^{0,\infty}},
\]
with the convention that $\varphi_{k}$ with $k$ negative simply
stands for $\varphi_{0}$. Using Cauchy-Schwartz we bound this via:
\[
\left\Vert (\sum_{k\geq0}|\Omega_{\alpha,k-3\leq\cdot\leq k+3}(t)|^{2})^{1/2}\right\Vert _{L_{x}^{2}}\left\Vert \sup_{k\in\mathbb{Z}}|P_{\leq k}\nabla_{t,x}\phi(t)|\right\Vert _{L_{x}^{2}}\,\sum_{j=O(1)}\left\Vert (\sum_{k\geq0}|\varphi_{k+j}|^{2})^{1/2}\right\Vert _{L_{x}^{\infty}}.
\]
It is a well-known fact from harmonic analysis, to which we shall
refer as the \emph{Littlewood-Paley square function estimate}, see
e.g. \citep{TaylorTools}, that:
\[
\parallel(\sum_{k\in\mathbb{Z}}|\Omega_{\alpha,k}(t)|^{2})^{1/2}\parallel_{L_{x}^{2}}\lesssim\left\Vert \Omega_{\alpha}(t)\right\Vert _{L_{x}^{2}}\,\,\,\mathrm{and}
\]
\[
\parallel\sup_{k\in\mathbb{Z}}\left|P_{\leq k}\nabla_{t,x}\phi(t)\right|\parallel_{L_{x}^{2}}\lesssim\left\Vert \nabla_{t,x}\phi(t)\right\Vert _{L_{x}^{2}}.
\]
Hence, by the global energy bound, the contribution of the paraproducts
is acceptable. Therefore we have shown the compensation estimate (\ref{eq:ClaimDivCurlControl}). 

Proposition \ref{prop:Compensation-estimate.} is proved.
\end{proof}
We present now a compensation estimate for higher order time-like
derivatives of wave maps as considered in the previous proposition.
It holds up to a non-linear bulk, essentially quadratic in the gradient
and local in time, that we shall consider on neck regions later in
the proof of the weak Besov $\dot{B}_{\infty}^{1,2}$ decay estimate
in Lemma \ref{lem:Besov-control.}. Parts of this estimate are non-linear,
and will be established via a duality argument in the spirit of the
energy collapsing result itself.

As for Proposition \ref{prop:Compensation-estimate.}, the conservation
law (\ref{eq:ConservationLaw}) is absolutely crucial, and so our
arguments do not generalize directly to the case of a general target
beyond the Euclidean sphere $\mathbb{S}^{n-1}$. 
\begin{lem}
\label{lem:HigherOrderTimeLike}Consider a wave map $\phi:[-1,1]\times\mathbb{R}^{2}\rightarrow\mathbb{S}^{n-1}$
with the same set-up as in Proposition \ref{prop:Compensation-estimate.},
then we have the following decomposition holding in $\mathcal{S}(\mathbb{R}\times\mathbb{R}^{2})$,
using notation from (\ref{eq:ConnectionMatrixNotation}):
\begin{align}
\mathrm{sech}^{2}(\zeta)\chi X^{2}\phi= & -\sum_{k\in\mathbb{Z}}P_{k}\left[\chi((1-\beta^{2})\partial_{x_{1}}\Omega_{x_{1}}+\partial_{x_{2}}\Omega_{x_{2}})(P_{>k+10}\phi)\right]\label{eq:HigherOrderWeakDecomp}\\
 & +\mathrm{sech}^{2}(\zeta)\chi\left(-\Omega_{X}X\phi+\mathrm{sinh}(\zeta)(\Omega_{X}\partial_{x_{1}}\phi+\Omega_{x_{1}}X\phi)\right)\nonumber \\
 & +\Pi_{X},\nonumber 
\end{align}
the error term satisfying: 
\begin{align}
 & \sum_{k\in\mathbb{Z}}2^{-2k}\left\Vert P_{k}\Pi_{X}\right\Vert _{L_{t,x}^{2}[-1,1]}^{2}\label{eq:HigherOrderWeakEstimate}\\
 & \lesssim(1+\left\Vert X\phi\right\Vert _{L_{t,x}^{2}[-1,1]}+\left\Vert \phi-c\right\Vert _{L_{t}^{\infty}(L_{x}^{2})[-1,1]})(\left\Vert X\phi\right\Vert _{L_{t,x}^{2}[-1,1]}+\left\Vert \phi-c\right\Vert _{L_{t}^{\infty}(L_{x}^{2})[-1,1]}),\nonumber 
\end{align}
with the same dependence for the implicit constant as in Proposition
\ref{prop:Compensation-estimate.}.\end{lem}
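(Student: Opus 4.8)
The plan is to begin, exactly as in the proof of Proposition~\ref{prop:Compensation-estimate.}, from the form (\ref{eq:WaveMapEqBeta}) of the wave maps equation, which upon solving for the top order time-like term (and using that $-\chi(\phi\partial_{\alpha}\phi^{\dagger}-\partial_{\alpha}\phi\phi^{\dagger})\partial^{\alpha}\phi=-\chi\Omega_{\alpha}\partial^{\alpha}\phi$) reads
\[
\mathrm{sech}^{2}(\zeta)\,\chi X^{2}\phi=\Delta_{x,\beta}(\chi\phi)+\chi\,\Omega_{\alpha}\partial^{\alpha}\phi+2\,\mathrm{sech}^{2}(\zeta)\mathrm{sinh}(\zeta)\,\partial_{x_{1}}(\chi X\phi).
\]
The last term is harmless: by the finite band property (\ref{eq:FiniteBandLess}) its $\dot{B}_{2}^{-1,2}$ norm is $\lesssim\|\chi X\phi\|_{L_{t,x}^{2}}\lesssim\|X\phi\|_{L_{t,x}^{2}[-1,1]}$, so it can be dumped into $\Pi_{X}$. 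For the nonlinearity I would write $\Omega_{\alpha}\partial^{\alpha}\phi=-\Omega_{t}\partial_{t}\phi+\Omega_{x_{1}}\partial_{x_{1}}\phi+\Omega_{x_{2}}\partial_{x_{2}}\phi$ and trade $\partial_{t}$ for $X$ via $\partial_{t}\phi=\mathrm{sech}(\zeta)(X\phi-\mathrm{sinh}(\zeta)\partial_{x_{1}}\phi)$ and $\Omega_{t}=\mathrm{sech}(\zeta)(\Omega_{X}-\mathrm{sinh}(\zeta)\Omega_{x_{1}})$, with $\Omega_{X}:=\phi X\phi^{\dagger}-X\phi\phi^{\dagger}$; using $\mathrm{sech}^{2}(\zeta)\mathrm{sinh}^{2}(\zeta)=\beta^{2}$ this produces precisely the second line of (\ref{eq:HigherOrderWeakDecomp}) plus a remainder $\chi\big((1-\beta^{2})\Omega_{x_{1}}\partial_{x_{1}}\phi+\Omega_{x_{2}}\partial_{x_{2}}\phi\big)$. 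The point now is the algebraic identity $\Omega_{x_{j}}\phi=-\partial_{x_{j}}\phi$, a consequence of (\ref{eq:GeometricId}) ($\phi^{\dagger}\partial_{x_{j}}\phi=0$, $\phi^{\dagger}\phi=1$), which gives $(1-\beta^{2})\partial_{x_{1}}(\Omega_{x_{1}}\phi)+\partial_{x_{2}}(\Omega_{x_{2}}\phi)=-\Delta_{x,\beta}\phi$; hence, writing $\Omega_{x_{j}}\partial_{x_{j}}\phi=\partial_{x_{j}}(\Omega_{x_{j}}\phi)-(\partial_{x_{j}}\Omega_{x_{j}})\phi$, the remainder equals $-\Delta_{x,\beta}(\chi\phi)-\chi G\phi$ with $G:=(1-\beta^{2})\partial_{x_{1}}\Omega_{x_{1}}+\partial_{x_{2}}\Omega_{x_{2}}$, and the full second order term $\Delta_{x,\beta}(\chi\phi)$ cancels. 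One is left with
\[
\mathrm{sech}^{2}(\zeta)\,\chi X^{2}\phi=-\chi\,G\phi+\big(\text{second line of }(\ref{eq:HigherOrderWeakDecomp})\big)+2\,\mathrm{sech}^{2}(\zeta)\mathrm{sinh}(\zeta)\,\partial_{x_{1}}(\chi X\phi),
\]
so the whole lemma is reduced to analysing the "connection term" $-\chi G\phi$.

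\textbf{Littlewood--Paley split and the estimate on $\Pi_{X}$.} Next I would run the trichotomy on the product $-\chi G\phi$. The high--high interaction is, by definition, $-\sum_{k}P_{k}[\chi G\,P_{>k+10}\phi]$ (the trichotomy forces $G$ to comparable high frequency), and this is exactly the first sum in (\ref{eq:HigherOrderWeakDecomp}); note $P_{>k+10}\phi=P_{>k+10}(\phi-c)$, so the constant never enters it. Setting $\Pi_{X}$ to be everything else, one finds $\Pi_{X}=-\sum_{k}P_{k}[\chi G\,P_{\leq k+10}\phi]+2\,\mathrm{sech}^{2}(\zeta)\mathrm{sinh}(\zeta)\,\partial_{x_{1}}(\chi X\phi)$, and it remains to bound $\sum_{k}2^{-2k}\|P_{k}\Pi_{X}\|_{L_{t,x}^{2}[-1,1]}^{2}$. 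After discarding the disposable multipliers $\nabla_{x}\Delta_{x,\beta}^{-1}\widetilde{P}_{k}$ and $2^{k}\nabla_{x}\Delta_{x,\beta}^{-1}\widetilde{P}_{k}$ (Lemma~\ref{lem:Dispo}), with the high-modulation part treated by the ellipticity of $X^{-1}\nabla_{x}$ on $Q_{\geq k+10}$ exactly as in (\ref{eq:EllipticX}), each surviving piece has one factor frequency-localised at or below $2^{k+O(1)}$, and I would integrate by parts once more, now using the conservation law $\partial^{\alpha}\Omega_{\alpha}=0$ from (\ref{eq:ConnectionMatrixNotation}) together with $\Omega_{x_{j}}\phi=-\partial_{x_{j}}\phi$ and $\Omega_{X}\phi=-X\phi$, so that in every remaining term a bare factor of $\phi$ is replaced by $\phi-c$ or a derivative is made to fall on $\chi X\phi$. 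The low-frequency and low--high paraproducts are then estimated crudely by Bernstein's inequality (\ref{eq:Bernstein}) and the finite band property, giving the bound $\|\Omega_{\alpha}\|_{L_{t}^{\infty}(L_{x}^{2})}\big(\|X\phi\|_{L_{t,x}^{2}}+\|\phi-c\|_{L_{t}^{\infty}(L_{x}^{2})}\big)$, acceptable by (\ref{eq:ConnectionMatrixNotation}). The genuinely nonlinear, high-frequency paraproduct is instead handled by a duality argument against $F_{2}^{0,\infty}$, through the embedding (\ref{eq:HardyBesovEmbedding}) and the Littlewood--Paley square function estimates $\|(\sum_{k}|\Omega_{\alpha,k}|^{2})^{1/2}\|_{L_{x}^{2}}\lesssim\|\Omega_{\alpha}\|_{L_{x}^{2}}$ and $\|\sup_{k}|P_{\leq k}\nabla_{t,x}\phi|\|_{L_{x}^{2}}\lesssim\|\nabla_{t,x}\phi\|_{L_{x}^{2}}$, precisely in the spirit of the paraproduct estimate closing the proof of Proposition~\ref{prop:Compensation-estimate.}; combined with one more use of the equation (\ref{eq:WaveMapEqBeta}) this produces the factor $(1+\|X\phi\|_{L_{t,x}^{2}}+\|\phi-c\|_{L_{t}^{\infty}(L_{x}^{2})})(\|X\phi\|_{L_{t,x}^{2}}+\|\phi-c\|_{L_{t}^{\infty}(L_{x}^{2})})$ of (\ref{eq:HigherOrderWeakEstimate}).

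\textbf{Main obstacle.} The hard part is not any single estimate but the bookkeeping. Naively, $-\sum_{k}P_{k}[\chi G\,P_{\leq k+10}\phi]$ still hides the full second order quantity $\chi\Delta_{x,\beta}\phi$ (of size $O_{\mathcal{E}}(1)$, not controlled by $\|X\phi\|_{L_{t,x}^{2}}+\|\phi-c\|_{L_{t}^{\infty}(L_{x}^{2})}$) inside the high--low interaction, exactly where the constant $c$ refuses to disappear; this must be made to cancel against the corresponding contribution of the first sum of (\ref{eq:HigherOrderWeakDecomp}), obtained by unwinding that sum with the same identities $\Omega_{x_{j}}\phi=-\partial_{x_{j}}\phi$, $\sum_{k}P_{k}(\chi\Delta_{x,\beta}\phi)=\chi\Delta_{x,\beta}\phi$. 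Getting this cancellation exactly right — equivalently, arranging that nothing of size $O_{\mathcal{E}}(1)$ survives in $\Pi_{X}$ — is the delicate step, and it is why the cutoff in the statement is $P_{>k+10}$ (leaving room for the $O(1)$ frequency shifts in the trichotomy and in Lemma~\ref{lem:Dispo}) and why a duality argument, rather than a direct Hölder bound, is unavoidable at high frequency, just as in the elliptic bubbling arguments of Lin and Rivière. Everything else is routine frequency-space bookkeeping of the type already carried out in Proposition~\ref{prop:Compensation-estimate.}.
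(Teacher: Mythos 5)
Your opening reduction is correct, and it is in fact a clean repackaging of the identity behind the paper's own auxiliary decomposition (\ref{eq:MAIN_Auxiliary_DECOMP}): using $\Omega_{x_{j}}\phi=-\partial_{x_{j}}\phi$ to cancel $\Delta_{x,\beta}(\chi\phi)$ exactly, you arrive at $\Pi_{X}=-\sum_{k}P_{k}[\chi(\nabla_{x}\cdot\Omega_{x,\beta})P_{\leq k+10}\phi]+2\,\mathrm{sech}^{2}(\zeta)\mathrm{sinh}(\zeta)\chi\partial_{x_{1}}X\phi$, which is indeed the error term the lemma asserts to be controllable. But the lemma lives entirely in the estimate (\ref{eq:HigherOrderWeakEstimate}) for that remaining expression, and your second paragraph does not supply it. The tools you invoke there do not fit the situation: the multipliers $\nabla_{x}\Delta_{x,\beta}^{-1}\widetilde{P}_{k}$, $2^{k}\nabla_{x}\Delta_{x,\beta}^{-1}\widetilde{P}_{k}$ and the elliptic bound (\ref{eq:EllipticX}) belong to the proof of Proposition \ref{prop:Compensation-estimate.} and play no role once the Laplacian has been cancelled. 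What is actually needed — and what constitutes the bulk of the paper's proof — is a modulation splitting of $P_{k}[\chi(\nabla_{x}\cdot\Omega_{x,\beta})P_{\leq k+10}\phi]$, the conservation law to replace $\nabla_{x}\cdot\Omega_{x,\beta}$ by $\mathrm{sech}^{2}(\zeta)(X\Omega_{X}-\mathrm{sinh}(\zeta)(\partial_{x_{1}}\Omega_{X}+X\Omega_{x_{1}}))$ \emph{inside} the product with $X$ kept outside (Leibniz on $X\Omega_{X}$ regenerates $X^{2}\phi$ and is circular), and then, since $2^{-k}X$ is not bounded at high modulation, the skew-adjointness/Calder\'on--Zygmund duality scheme whose closing input is a family of weak product bounds of the type (\ref{eq:WEAK_ONE})--(\ref{eq:WEAK_TWO}): one power of $2^{-k}X$ applied to the quadratic expressions must always produce a factor $\left\Vert X\phi\right\Vert _{L_{t,x}^{2}}+\left\Vert \phi-c\right\Vert _{L_{t}^{\infty}(L_{x}^{2})}$. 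None of this is carried out in the proposal.

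Moreover, your ``main obstacle'' paragraph misdiagnoses the difficulty. The first sum in (\ref{eq:HigherOrderWeakDecomp}) is part of the asserted decomposition; it has already been subtracted in the definition of $\Pi_{X}$ and is not available to cancel anything — if such a cancellation were genuinely required, the lemma as stated would be false. The real problem with your grouping is different: the low-frequency factor $P_{\leq k+10}\phi$ contains the constant $c$, and for that contribution the smallness mechanism $P_{>k+10}\phi=P_{>k+10}(\phi-c)$, which the paper exploits at every step of the estimates for $\psi_{k}^{(i)}$, $\varphi_{k}^{(i)}$ and of (\ref{eq:WEAK_ONE})--(\ref{eq:WEAK_TWO}), is unavailable. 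Indeed, the exact identity $(X\Omega_{X})c=X\bigl(\Omega_{X}(c-\phi)\bigr)-X^{2}\phi$ shows that the constant part of your term, if expanded naively after the conservation law, regenerates the very quantity $\chi X^{2}\phi$ being decomposed; it is only through a more careful regrouping (the paper applies the conservation law only inside products carrying $\phi_{>k+10}$, keeps $X$ outside the full product in the duality step, and recovers low output frequencies via $P_{k}\nabla_{x}\phi=\nabla_{x}P_{k}(\phi-c)$ and Bernstein) that the bound (\ref{eq:HigherOrderWeakEstimate}) emerges. As it stands, your proposal is a correct reduction plus a plan that omits, and at the decisive point mistakes, the mechanism that makes the estimate true.
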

\begin{proof}
Let us start with the frequency space-like region, that we can treat
directly and for which we claim the stronger estimate:
\begin{equation}
2^{-k}\left\Vert P_{k}Q_{<k+10}(\chi X^{2}\phi)\right\Vert _{L_{t,x}^{2}[-1,1]}\lesssim\left\Vert P_{k}X\phi\right\Vert _{L_{t,x}^{2}[-1,1]},\label{eq:LowModulationsWeak}
\end{equation}
for any $k\in\mathbb{Z}$. To see this, we simply commute $X$ with
the time cut-off $\chi$, getting:
\[
2^{-k}\left\Vert P_{k}Q_{<k+10}(\chi X^{2}\phi)\right\Vert _{L_{t,x}^{2}[-1,1]}\lesssim\left\Vert P_{k}(\chi X\phi)\right\Vert _{L_{t,x}^{2}}+2^{-k}\left\Vert P_{k}Q_{<k+10}(\partial_{t}\chi X\phi)\right\Vert _{L_{t,x}^{2}[-1,1]},
\]
where for the first term we discarded the multiplier $2^{-k}X\widetilde{P}_{k}Q_{<k+10}$
using Plancherel in $L_{t,x}^{2}$. Regarding the second one, passing
to $L_{t}^{\infty}(L_{x}^{2})$, which is possible as we are working
over a bounded time interval in (\ref{eq:LowModulationsWeak}), we
can apply the inversion formula for the space-time Fourier transform
$\mathcal{F}$, to get:
\[
2^{-k}\left\Vert P_{k}Q_{<k+10}(\partial_{t}\chi X\phi)\right\Vert _{L_{t}^{\infty}(L_{x}^{2})}\lesssim2^{-k}\left\Vert \mathcal{F}P_{k}Q_{<k+10}(\partial_{t}\chi X\phi)\right\Vert _{L_{\tau}^{1}(L_{\xi}^{2})},
\]
combining Minkowski's inequality and then Plancherel in $L_{x}^{2}$.
But the integrand on the RHS has $\tau$-support of length $O(2^{k})$,
hence we can bound this simply via:
\[
\left\Vert \mathcal{F}P_{k}(\partial_{t}\chi X\phi)\right\Vert _{L_{\tau}^{\infty}(L_{\xi}^{2})}\lesssim\left\Vert P_{k}(\partial_{t}\chi X\phi)\right\Vert _{L_{t}^{1}(L_{x}^{2})}\lesssim\left\Vert \partial_{t}\chi\right\Vert _{L_{t}^{2}(L_{x}^{\infty})}\left\Vert P_{k}X\phi\right\Vert _{L_{t,x}^{2}[-1,1]},
\]
where we applied the inversion formula for $\mathcal{F}^{-1}$ this
time (note that this argument is essentially a manifestation of Bernstein's
one dimensional inequality). This gives claim (\ref{eq:LowModulationsWeak})
as desired.

For high modulations, we use the wave maps equation as in (\ref{eq:WaveMapEqBeta}).
Following the Littlewood-Paley trichotomy (passing to the convention
$\phi_{k}:=P_{k}\phi$, etc. as before), we write:
\begin{align*}
P_{k}Q_{\geq k+10}(\mathrm{sech}^{2}(\zeta)\chi X^{2}\phi)= & \, P_{k}Q_{\geq k+10}\left[\Delta_{x,\beta}(\chi\phi)+2\,\mathrm{sech}^{2}(\zeta)\,\mathrm{sinh}(\zeta)\chi\partial_{x_{1}}X\phi\right.\\
 & +\mathrm{sech}^{2}(\zeta)\chi\left(-\Omega_{X}X\phi+\mathrm{sinh}(\zeta)(\Omega_{X}\partial_{x_{1}}\phi+\Omega_{x_{1}}X\phi)\right)\\
 & +\chi\Omega_{x,\beta}\cdot\nabla_{x}\phi_{\leq k+10}+\chi\nabla_{x}\cdot(\Omega_{x,\beta}\phi_{>k+10})\\
 & -\left.\chi(\nabla_{x}\cdot\Omega_{x,\beta})(\phi_{>k+10})\right],
\end{align*}
where we set: 
\[
\Omega_{X}:=\mathrm{cosh}(\zeta)\Omega_{t}+\mathrm{sinh}(\zeta)\Omega_{x_{1}}\,\,\,\mathrm{and}\,\,\,\Omega_{x,\beta}:=(1-\beta^{2})\Omega_{x_{1}}\partial_{x_{1}}+\Omega_{x_{2}}\partial_{x_{1}},
\]
recalling (\ref{eq:ExpressionForX-1}), with ``$\cdot$'' standing
for the Euclidean inner product. From there, we add and subtract the
frequency space-like part of the terms on second and last lines above,
and use the conservation law (\ref{eq:ConservationLaw}), that we
rewrite as:
\[
\nabla_{x}\cdot\Omega_{x,\beta}=\mathrm{sech}^{2}(\zeta)(X\Omega_{X}-\mathrm{sinh}(\zeta)(\partial_{x_{1}}\Omega_{X}+X\Omega_{x_{1}})).
\]

This yields the following decomposition:
\begin{align}
P_{k}Q_{\geq k+10}(\mathrm{sech}^{2}(\zeta)\chi X^{2}\phi)= & \, P_{k}Q_{\geq k+10}\left[\Delta_{x,\beta}(\chi\phi)+2\,\mathrm{sech}^{2}(\zeta)\,\mathrm{sinh}(\zeta)\chi\partial_{x_{1}}X\phi\right]\label{eq:MAIN_Auxiliary_DECOMP}\\
 & +\mathrm{sech}^{2}(\zeta)\chi P_{k}\left[-\Omega_{X}X\phi+\mathrm{sinh}(\zeta)(\Omega_{X}\partial_{x_{1}}\phi+\Omega_{x_{1}}X\phi)\right]\nonumber \\
 & +\mathrm{sech}^{2}(\zeta)Q_{<k+10}[\psi_{k}^{(1)}+\psi_{k}^{(2)}+\psi_{k}^{(3)}+\psi_{k}^{(4)}]\nonumber \\
 & +Q_{\geq k+10}[\varphi_{k}^{(1)}+\varphi_{k}^{(2)}]-P_{k}\left[\chi(\nabla_{x}\cdot\Omega_{x,\beta})(\phi_{>k+10})\right],\nonumber 
\end{align}
where we define:

\begin{align*}
\psi_{k}^{(1)}:= & \,\chi P_{k}\left[\Omega_{X}X\phi_{\leq k+10}-\mathrm{sinh}(\zeta)(\Omega_{X}\partial_{x_{1}}\phi_{\leq k+10}+\Omega_{x_{1}}X\phi_{\leq k+10})\right],\\
\psi_{k}^{(2)}:= & \, P_{k}\left[(X\chi)(-\Omega_{X}+\mathrm{sinh}(\zeta)\Omega_{x_{1}})\phi_{>k+10}\right]\\
\psi_{k}^{(3)}:= & \, P_{k}\left[[X-\mathrm{sinh}(\zeta)\partial_{x_{1}}](\chi\Omega_{X}\phi_{>k+10})\right],\\
\psi_{k}^{(4)}:= & \, P_{k}\left[-\mathrm{sinh}(\zeta)X(\chi\Omega_{x_{1}}\phi_{>k+10})\right],
\end{align*}
as well as:
\begin{align*}
\varphi_{k}^{(1)}:= & \, P_{k}\left[\chi\Omega_{x,\beta}\cdot\nabla_{x}\phi_{\leq k+10}\right],\\
\varphi_{k}^{(2)}:= & \, P_{k}\left[\chi\nabla_{x}\cdot[\Omega_{x,\beta}\phi_{>k+10}]\right].
\end{align*}

We proceed proving the estimate (\ref{eq:HigherOrderWeakEstimate})
for the first line of (\ref{eq:MAIN_Auxiliary_DECOMP}) and each of
the $\psi_{k}^{(i)}$ and $\varphi_{k}^{(i)}$ separately.

For the Laplacian, inverting $X$, we have the stronger estimate:
\[
2^{-k}\left\Vert P_{k}Q_{\geq k+10}\Delta_{x,\beta}(\chi\phi)\right\Vert _{L_{t,x}^{2}[-1,1]}\lesssim\left\Vert P_{k}X\phi\right\Vert _{L_{t,x}^{2}[-1,1]}+\left\Vert P_{k}\phi\right\Vert _{L_{t,x}^{2}[-1,1]},
\]
that follows immediately by discarding, via Plancherel in $L_{t,x}^{2}$,
the Fourier multiplier $2^{-k}X^{-1}\Delta_{x,\beta}\widetilde{P}_{k}Q_{\geq k+10}$
having symbol bounded uniformly in $k\in\mathbb{Z}$, which leads
to an acceptable contribution.

For the second term on the RHS of (\ref{eq:MAIN_Auxiliary_DECOMP})
we immediately have:
\[
2^{-k}\left\Vert P_{k}Q_{\geq k+10}[\chi\partial_{x_{1}}X\phi]\right\Vert _{L_{t,x}^{2}}\lesssim\left\Vert P_{k}X\phi\right\Vert _{L_{t,x}^{2}[-1,1]},
\]
by the finite band property (\ref{eq:FiniteBandEqual}), which is
acceptable.

Regarding $\psi_{k}^{(1)}$, we remark that it has a paraproduct structure
and so at least one of the factors will be frequency localized to
$\left|\xi\right|\sim2^{k}$, which is favorable for square-summing.
More precisely, discarding $Q_{<k+10}$ before dropping the cut-off
$\chi$, and using Bernstein's inequality (\ref{eq:Bernstein}) to
pass to $L_{t}^{2}(L_{x}^{1})$, it is enough to note that for any
$1\leq p,q,r\leq n$ and any time slice $t\in[-1,1]$:
\begin{align*}
 & \sum_{k\in\mathbb{Z}}\sum_{k'=k+O(1)}\left(\left\Vert (\phi^{p}X\phi^{q}\nabla_{t,x}\phi_{k'}^{r})(t)\right\Vert _{L_{x}^{1}}^{2}+\left\Vert (\phi^{p}\nabla_{t,x}\phi^{q}X\phi_{k'}^{r})(t)\right\Vert _{L_{x}^{1}}^{2}\right.\\
 & \left.+\left\Vert (P_{k'}[\phi^{p}\nabla_{t,x}\phi^{q}]X\phi^{r})(t)\right\Vert _{L_{x}^{1}}^{2}+\left\Vert (P_{k'}[\phi^{p}X\phi^{q}]\nabla_{t,x}\phi^{r})(t)\right\Vert _{L_{x}^{1}}^{2}\right)\\
 & \lesssim\left\Vert \nabla_{t,x}\phi(t)\right\Vert _{L_{x}^{2}}^{2}\left\Vert X\phi(t)\right\Vert _{L_{x}^{2}}^{2},
\end{align*}
by Cauchy-Schwarz. Upon integrating in time, this is an acceptable
contribution by the energy bound (\ref{eq:EnergyBounAssumtion}).

For the expression $\psi_{k}^{(2)}$, it is already convenient to
proceed via a duality argument:
\begin{align*}
\sum_{k\in\mathbb{Z}}2^{-2k}\left\Vert Q_{<k+10}\psi_{k}^{(2)}\right\Vert _{L_{t,x}^{2}}^{2} & \lesssim\sum_{k\in\mathbb{Z}}\left\Vert \psi_{k}^{(2)}\right\Vert _{L_{t,x}^{1}}2^{-k}\left\Vert Q_{<k+10}\psi_{k}^{(2)}\right\Vert _{L_{t}^{\infty}(L_{x}^{2})}\\
 & \lesssim\left(\sum_{k\in\mathbb{Z}}2^{k}\left\Vert \psi_{k}^{(2)}\right\Vert _{L_{t,x}^{1}}\right)\left(\sup_{k\in\mathbb{Z}}\left\Vert \psi_{k}^{(2)}\right\Vert _{L_{t,x}^{1}}\right),
\end{align*}
where we used Bernstein (\ref{eq:Bernstein}) for the first factor,
and for the second one we proceeded as for the frequency space-like
term (\ref{eq:LowModulationsWeak}), using time frequency localization
to estimate it via the Fourier inversion formula:
\[
2^{-k}\left\Vert Q_{<k+10}\psi_{k}^{(2)}\right\Vert _{L_{t}^{\infty}(L_{x}^{2})}\lesssim\left\Vert \psi_{k}^{(2)}\right\Vert _{L_{t}^{1}(L_{x}^{2})}.
\]
The first factor is universally bounded for us, as for any $1\leq p,q,r\leq n$:
\[
\sum_{k\in\mathbb{Z}}2^{k}\left\Vert P_{k}[(\partial_{t}\chi)(\phi^{p}\nabla_{t,x}\phi^{q})\phi_{>k+10}^{r}]\right\Vert _{L_{t,x}^{1}}\lesssim\left\Vert \partial_{t}\chi\right\Vert _{L_{t}^{1}(L_{x}^{\infty})}\left\Vert \nabla_{t,x}\phi\right\Vert _{L_{t}^{\infty}(L_{x}^{2})[-1,1]}^{2},
\]
which follows directly from the analogous treatment of high-high interactions
in the proof of Proposition \ref{prop:Compensation-estimate.}. On
the other hand, the second factor is controlled via:
\[
\left\Vert P_{k}[(\partial_{t}\chi)(\phi^{p}\nabla_{t,x}\phi^{q})\phi_{>k+10}^{r}]\right\Vert _{L_{t,x}^{1}}\lesssim\left\Vert \partial_{t}\chi\right\Vert _{L_{t}^{1}(L_{x}^{\infty})}\left\Vert \nabla_{t,x}\phi\right\Vert _{L_{t}^{\infty}(L_{x}^{2})[-1,1]}\left\Vert \phi-c\right\Vert _{L_{t}^{\infty}(L_{x}^{2})[-1,1]},
\]
which yields an acceptable contribution to the non-linear part of
(\ref{eq:HigherOrderWeakEstimate}).

Regarding $\psi_{k}^{(3)}$, it is a linear combination of:
\begin{align*}
 & \sum_{k\in\mathbb{Z}}2^{-2k}\left\Vert Q_{<k+10}P_{k}\nabla_{t,x}(\chi\Omega_{X}\phi_{>k+10})\right\Vert _{L_{t,x}^{2}}^{2}\\
 & \lesssim\left\Vert \sum_{k\in\mathbb{Z}}\sum_{k_{1},k_{2}\geq k+O(1):\left|k_{1}-k_{2}\right|\leq O(1)}2^{-(k_{2}-k)}\left\Vert \chi\Omega_{X,k_{1}}(t)\right\Vert _{L_{x}^{2}}\left\Vert \nabla_{x}\phi_{k_{2}}(t)\right\Vert _{L_{x}^{2}}\right\Vert _{L_{t}^{2}}^{2},
\end{align*}
where we discarded via Plancherel in $L_{t,x}^{2}$ the Fourier multiplier
$2^{-k}\nabla_{t,x}Q_{<k+10}\widetilde{P}_{k}$ having bounded symbol,
passed from $\ell^{2}$ to $\ell^{1}$ summation in $k$ after commuting
time integration with the discrete sum $\sum_{k}$, and applied Bernstein
(\ref{eq:Bernstein}) with Cauchy-Schwarz. This contribution is directly
seen to be bounded by $O(\left\Vert X\phi\right\Vert _{L_{t,x}^{2}[-1,1]}^{2}\left\Vert \nabla_{t,x}\phi\right\Vert _{L_{t}^{\infty}(L_{x}^{2})[-1,1]}^{2})$
as required.

The terms $\psi_{k}^{(4)}$, $\varphi_{k}^{(1)}$ and $\varphi_{k}^{(2)}$
are similar and require a duality argument relying heavily on their
compensated structure to obtain estimate (\ref{eq:HigherOrderWeakEstimate})
at $\ell^{2}$ modulation. 

First for $\psi_{k}^{(4)}$, using the self-adjointness of $Q_{<k+10}$
and then commuting $\sum_{k}$ with time integration, we have:
\begin{align*}
 & \sum_{k\in\mathbb{Z}}2^{-2k}\left\Vert Q_{<k+10}\psi_{k}^{(4)}\right\Vert _{L_{t,x}^{2}}^{2}\\
 & \lesssim\left\Vert \sum_{k\in\mathbb{Z}}2^{-k}\left\Vert (Q_{<k+10}^{2}\psi_{k}^{(4)})(t)\right\Vert _{L_{x}^{2}}\right\Vert _{L_{t}^{2}}\cdot\left\Vert \sup_{k\in\mathbb{Z}}2^{-k}\left\Vert \psi_{k}^{(4)}(t)\right\Vert _{L_{x}^{2}}\right\Vert _{L_{t}^{2}[-1,1]}.
\end{align*}
For the first factor we claim that it is universally bounded due to
its compensated structure. Indeed, passing to the Hardy space on each
time slice via the embedding (\ref{eq:HardyBesovEmbedding}), we estimate
it by:
\[
\left\Vert (\sum_{k\in\mathbb{Z}}|Q_{<k+10}^{2}\psi_{k}^{(4)}|^{2})^{\frac{1}{2}}\right\Vert _{L_{t}^{2}(L_{x}^{1})}\lesssim\left\Vert (\sum_{k\in\mathbb{Z}}|2^{k}P_{k}[\chi\Omega_{x_{1}}\phi_{>k+10}]|^{2})^{\frac{1}{2}}\right\Vert _{L_{t}^{2}(L_{x}^{1})},
\]
where we relied on the Calderón-Zygmund theory for the Littlewood-Paley
square function and the vector valued operator $(2^{-k}XQ_{<k+10}^{2}\widetilde{P}_{k})_{k\in\mathbb{Z}}$,
precomposing with the space-time LP-projections to $\left|\tau\right|+\left|\xi\right|\sim2^{k}$
as necessary. From there, proceeding as previously, we immediately
bound the latter by $O(\left\Vert \nabla_{x}\phi\right\Vert _{L_{t}^{\infty}(L_{x}^{2})[-1,1]}^{2})$
as required.

The set-up is similar for $\varphi_{k}^{(1)}$ and $\varphi_{k}^{(2)}$.
Here however, being at high modulations, we start by inverting the
time-like vector $X$ for one of the factors. Then, using the skew-adjointness
of $2^{k}X^{-1}Q_{\geq k+10}$, but proceeding identically to the
above otherwise, we obtain:
\begin{align*}
 & \sum_{k\in\mathbb{Z}}2^{-2k}\int\int(Q_{\geq k+10}\varphi_{k}^{(i)})(\frac{X}{X}Q_{\geq k+10}\varphi_{k}^{(i)})dxdt\\
 & \lesssim\left\Vert \sum_{k\in\mathbb{Z}}2^{-k}\left\Vert (\frac{2^{k}}{X}Q_{\geq k+10}^{2}\varphi_{k}^{(i)})(t)\right\Vert _{L_{x}^{2}}\right\Vert _{L_{t}^{2}}\cdot\left\Vert \sup_{k\in\mathbb{Z}}2^{-2k}\left\Vert X\varphi_{k}^{(i)}(t)\right\Vert _{L_{x}^{2}}\right\Vert _{L_{t}^{2}[-1,1]}\\
 & \lesssim\sup_{j\geq10}\left\Vert (\sum_{k\in\mathbb{Z}}|\frac{2^{k+j}}{X}Q_{k+j}\widetilde{Q}_{k+j}\varphi_{k}^{(i)}|^{2})^{\frac{1}{2}}\right\Vert _{L_{t}^{2}(L_{x}^{1})}\cdot\left\Vert \sup_{k\in\mathbb{Z}}2^{-2k}\left\Vert X\varphi_{k}^{(i)}(t)\right\Vert _{L_{x}^{2}}\right\Vert _{L_{t}^{2}[-1,1]}\\
 & \lesssim\left\Vert (\sum_{k\in\mathbb{Z}}|\varphi_{k}^{(i)}|^{2})^{\frac{1}{2}}\right\Vert _{L_{t}^{2}(L_{x}^{1})}\cdot\left\Vert \sup_{k\in\mathbb{Z}}2^{-2k}\left\Vert X\varphi_{k}^{(i)}(t)\right\Vert _{L_{x}^{2}}\right\Vert _{L_{t}^{2}[-1,1]},
\end{align*}
where $\widetilde{Q}_{k+j}=Q_{k+j-1\leq\cdot\leq k+j+1}$ is the slightly
enlarged modulation projection, and we relied as previously on Calderón-Zygmund
theory to discard the vector valued operator $(2^{k+j}X^{-1}Q_{k+j}\widetilde{Q}_{k+j}\widetilde{P}_{k})_{k\in\mathbb{Z}}$,
precomposing with the space-time LP-projections to $\left|\tau\right|+\left|\xi\right|\sim2^{k+j}$
as necessary, for any integer $j\geq10$. 

From there, we note that the first factor is bounded by $O(\left\Vert \nabla_{x}\phi\right\Vert _{L_{t}^{\infty}(L_{x}^{2})[-1,1]}^{2})$
as required. This follows essentially from the arguments used to treat
the high-high interactions and the paraproducts, for $\varphi_{k}^{(1)}$
and $\varphi_{k}^{(2)}$ respectively, in the proof of Proposition
\ref{prop:Compensation-estimate.} that we shall not reproduce here.

Given this, to prove estimate (\ref{eq:HigherOrderWeakEstimate})
for the terms $\psi_{k}^{(4)}$, $\varphi_{k}^{(1)}$ and $\varphi_{k}^{(2)}$,
it is enough by (\ref{eq:FiniteBandEqual}) and (\ref{eq:Bernstein})
to establish the following couple of weak estimates:
\begin{align}
2^{-k}\left\Vert XP_{k}[(\chi\phi^{p}\nabla_{x}\phi^{q})(P_{\leq k+10}\nabla_{x}\phi^{r})](t)\right\Vert _{L_{x}^{1}} & \lesssim\left\Vert X\phi(t)\right\Vert _{L_{x}^{2}}+\left\Vert (\phi-c)(t)\right\Vert _{L_{x}^{2}},\label{eq:WEAK_ONE}\\
2^{-k}\left\Vert XP_{k}[\chi\phi^{p}(\nabla_{x}\phi^{q})(\phi_{>k+10}^{r})](t)\right\Vert _{L_{x}^{2}} & \lesssim\left\Vert X\phi(t)\right\Vert _{L_{x}^{2}}+\left\Vert (\phi-c)(t)\right\Vert _{L_{x}^{2}},\label{eq:WEAK_TWO}
\end{align}
for any $1\leq p,q,r\leq n$ and any time slice $t\in[-1,1]$. 

Consider (\ref{eq:WEAK_ONE}). For convenience, let us suppress the
time $t$ from the notation. Moving $X$ inside the bracket, we first
differentiate the time cut-off getting by Cauchy-Schwarz:
\[
2^{-k}\left\Vert (\partial_{t}\chi)(\phi^{p}\nabla_{x}\phi^{q})(P_{\leq k+10}\nabla_{x}\phi^{r})]\right\Vert _{L_{x}^{1}}\lesssim\left\Vert \phi\right\Vert _{L_{x}^{\infty}}\left\Vert \nabla_{x}\phi\right\Vert _{L_{x}^{2}}\left\Vert \phi-c\right\Vert _{L_{x}^{2}},
\]
where we relied on the finite band property (\ref{eq:FiniteBandLess})
for $\phi^{r}$, which is a permissible bound for (\ref{eq:WEAK_ONE}). 

Next, if $X$ falls on $\phi^{p}$, then we have:
\[
2^{-k}\left\Vert \chi X\phi^{p}\nabla_{x}\phi^{q}(P_{\leq k+10}\nabla_{x}\phi^{r})]\right\Vert _{L_{x}^{1}}\lesssim\left\Vert X\phi^{p}\right\Vert _{L_{x}^{2}}\left\Vert \chi\nabla_{x}\phi^{q}\right\Vert _{L_{x}^{2}}\left\Vert \phi^{r}\right\Vert _{L_{x}^{\infty}},
\]
with again the finite band property (\ref{eq:FiniteBandLess}) applied
to $\phi^{r}$, but this time in $L_{x}^{\infty}$, and this is an
acceptable bound.

When $X$ falls on $\nabla_{x}\phi^{q}$, we shall first insert the
projection $P_{\leq k+O(1)}$ in front of $\phi^{p}X\nabla_{x}\phi^{q}$,
which is possible by the localization of $\nabla_{x}\phi_{\leq k+10}^{r}$,
and untangle the high-high interactions:
\begin{align*}
P_{\leq k+O(1)}(\phi^{p}X\nabla_{x}\phi^{q})= & \, P_{\leq k+O(1)}[\phi_{\leq k+O(1)}^{p}X\nabla_{x}\phi_{\leq k+O(1)}^{q}\\
 & +\sum_{k_{1},k_{2}\geq k+O(1):\left|k_{1}-k_{2}\right|\leq O(1)}\phi_{k_{1}}^{p}X\nabla_{x}\phi_{k_{2}}^{q}].
\end{align*}
Given this decomposition, we have for the low frequency interactions:
\begin{align*}
 & 2^{-k}\left\Vert \chi\phi_{\leq k+O(1)}^{p}(X\nabla_{x}\phi_{\leq k+O(1)}^{q})(\nabla_{x}\phi_{\leq k+10}^{r})\right\Vert _{L_{x}^{1}}\\
 & \lesssim\left\Vert \phi^{p}\right\Vert _{L_{x}^{\infty}}\left\Vert X\phi^{q}\right\Vert _{L_{x}^{2}}\left\Vert \nabla_{x}\phi^{r}\right\Vert _{L_{x}^{2}},
\end{align*}
where we used the finite band property (\ref{eq:FiniteBandLess})
for $\phi^{q}$, and this is acceptable. For the high-high frequency
interactions:
\begin{align*}
 & \sum_{k_{1},k_{2}\geq k+O(1):\left|k_{1}-k_{2}\right|\leq O(1)}2^{-k}\left\Vert \chi\phi_{k_{1}}^{p}(X\nabla_{x}\phi_{k_{2}}^{q})(\nabla_{x}\phi_{\leq k+10}^{r})\right\Vert _{L_{x}^{1}}\\
 & \lesssim\left\Vert \phi^{r}\right\Vert _{L_{x}^{\infty}}\sum_{k_{1},k_{2}\geq k+O(1):\left|k_{1}-k_{2}\right|\leq O(1)}\left\Vert \nabla_{x}\phi_{k_{1}}^{p}\right\Vert _{L_{x}^{2}}\left\Vert X\phi_{k_{2}}^{q}\right\Vert _{L_{x}^{2}},
\end{align*}
where we have used the finite band property (\ref{eq:FiniteBandLess})
for $\phi^{r}$ in $L_{x}^{\infty}$, and transferred the spatial
gradient from $\phi^{q}$ to $\phi^{p}$ by relying on (\ref{eq:FiniteBandEqual})
this time and the fact that $\left|k_{1}-k_{2}\right|\leq O(1)$.
This control is acceptable applying the discrete Cauchy-Schwarz inequality
in $k_{1}=k_{2}+O(1)$.

The last case we need to consider, in order to finish with (\ref{eq:WEAK_ONE}),
is when $X$ falls on $\phi^{r}$. This follows however at once, applying
(\ref{eq:FiniteBandLess}) to the latter:
\[
2^{-k}\left\Vert \chi\phi^{p}(\nabla_{x}\phi^{q})(P_{\leq k+10}\nabla_{x}X\phi^{r})]\right\Vert _{L_{t}^{2}(L_{x}^{1})}\lesssim\left\Vert \phi^{p}\right\Vert _{L_{t,x}^{\infty}}\left\Vert \nabla_{x}\phi^{q}\right\Vert _{L_{t}^{\infty}(L_{x}^{2})[-1,1]}\left\Vert X\phi^{r}\right\Vert _{L_{t,x}^{2}[-1,1]},
\]
which is certainly acceptable and gives (\ref{eq:WEAK_ONE}).

The estimate (\ref{eq:WEAK_TWO}) is very much similar to (\ref{eq:WEAK_ONE}).
As previously, we move $X$ into the bracket, first estimating the
term when the derivative falls on the time cut-off, passing initially
to $L_{x}^{1}$ via Bernstein's inequality (\ref{eq:Bernstein}):
\[
\left\Vert (\partial_{t}\chi)(\phi^{p}\nabla_{x}\phi^{q})(\phi_{>k+10}^{r})\right\Vert _{L_{x}^{1}}\lesssim\left\Vert \phi\right\Vert _{L_{x}^{\infty}}\left\Vert \nabla_{x}\phi\right\Vert _{L_{x}^{2}}\left\Vert \phi-c\right\Vert _{L_{x}^{2}},
\]
simply noting that $P_{>k+10}\phi=P_{>k+10}(\phi-c)$ and then discarding
the LP-projection. When $X$ differentiates $\phi^{p}$, we pass again
to $L_{x}^{1}$, and then immediately get:
\[
\left\Vert \chi(X\phi^{p})(\nabla_{x}\phi^{q})(\phi_{>k+10}^{r})\right\Vert _{L_{x}^{1}}\lesssim\left\Vert X\phi^{p}\right\Vert _{L_{x}^{2}}\left\Vert \nabla_{x}\phi^{q}\right\Vert _{L_{x}^{2}}\left\Vert \phi^{r}\right\Vert _{L_{x}^{\infty}}.
\]
Both estimates are acceptable for (\ref{eq:WEAK_TWO}).

We consider now the term with $X$ falling on $\phi^{q}$, and untangling
the high-high interactions in the product we should regroup together
$\phi^{p}$ and $\phi^{r}$, obtaining: 
\begin{align*}
P_{k}[(\phi^{p}X\nabla_{x}\phi^{q})(\phi_{>k+10}^{r})]= & \, P_{k}[P_{\leq k+O(1)}(\phi^{p}\phi_{>k+10}^{r})X\nabla_{x}\phi_{\leq k+O(1)}^{q}\\
 & \sum_{k_{1},k_{2}\geq k+O(1):\left|k_{1}-k_{2}\right|\leq O(1)}P_{k_{1}}(\phi^{p}\phi_{>k+10}^{r})X\nabla_{x}\phi_{k_{2}}^{q}].
\end{align*}
Now, given this decomposition, we control the first term directly
by applying the finite band property (\ref{eq:FiniteBandLess}) to
$\phi^{q}$ :
\[
2^{-k}\left\Vert \chi P_{\leq k+O(1)}(\phi^{p}\phi_{>k+10}^{r})X\nabla_{x}\phi_{\leq k+O(1)}^{q}\right\Vert _{L_{x}^{2}}\lesssim\left\Vert \phi^{p}\phi_{>k+10}^{r}\right\Vert _{L_{x}^{\infty}}\left\Vert X\phi^{q}\right\Vert _{L_{x}^{2}},
\]
which is acceptable by the boundedness of wave maps. For the high-high
interactions we proceed as for (\ref{eq:WEAK_ONE}) above, passing
initially to $L_{x}^{1}$ via Bernstein's inequality (\ref{eq:Bernstein})
and transferring the spatial gradient $\nabla_{x}$ from $\phi^{q}$
to $\phi^{p}\phi_{>k+10}^{r}$ via the finite band property (\ref{eq:FiniteBandEqual}),
which gives:
\begin{align*}
 & \sum_{k_{1},k_{2}\geq k+O(1):\left|k_{1}-k_{2}\right|\leq O(1)}\left\Vert \chi P_{k_{1}}(\phi^{p}\phi_{>k+10}^{r})X\nabla_{x}\phi_{k_{2}}^{q}\right\Vert _{L_{x}^{1}}\\
 & \lesssim\sum_{k_{1},k_{2}\geq k+O(1):\left|k_{1}-k_{2}\right|\leq O(1)}\left\Vert P_{k_{1}}\nabla_{x}(\phi^{p}\phi_{>k+10}^{r})\right\Vert _{L_{x}^{2}}\left\Vert X\phi_{k_{2}}^{q}\right\Vert _{L_{x}^{2}},
\end{align*}
and using the discrete Cauchy-Schwarz, we can bound this via:
\[
(\left\Vert \nabla_{x}\phi^{p}\right\Vert _{L_{x}^{2}}\left\Vert \phi^{r}\right\Vert _{L_{x}^{\infty}}+\left\Vert \phi^{p}\right\Vert _{L_{x}^{\infty}}\left\Vert \nabla_{x}\phi^{r}\right\Vert _{L_{x}^{2}})\left\Vert X\phi^{q}\right\Vert _{L_{x}^{2}},
\]
which is certainly acceptable. 

Lastly, if $X$ differentiates $\phi^{r}$, we pass to $L_{x}^{1}$
and this immediately yields the desired control:
\[
\left\Vert \chi\phi^{p}(\nabla_{x}\phi^{q})(P_{>k+10}X\phi^{r})\right\Vert _{L_{x}^{1}}\lesssim\left\Vert \phi^{p}\right\Vert _{L_{x}^{\infty}}\left\Vert \nabla_{x}\phi^{q}\right\Vert _{L_{x}^{2}}\left\Vert X\phi^{r}\right\Vert _{L_{x}^{2}},
\]
hence we have (\ref{eq:WEAK_TWO}).

Lemma \ref{lem:HigherOrderTimeLike} is proved.
\end{proof}

\section{Bubbling analysis\label{sec:Bubbling-analysis}}

In this section we prove our main Theorem \ref{thm:Main}. We start
by recording, in the lemma just below, some of the important properties
of the wave map $\phi$, we were considering in the statement of the
threshold Theorem \ref{thm:ThresholdConjecture}, at the final rescaling
obtained by Sterbenz and Tataru in section 6.6 of \citep{TataruSterbenzWaveReg}.
\begin{lem}
\label{lem:FinalRescaling}\emph{(Sterbenz and Tataru \citep{TataruSterbenzWaveReg})}.
The maps $\{\phi_{\nu}\}_{\nu\in\mathbb{N}}$ from Theorem \ref{thm:ThresholdConjecture}
represent a sequence of smooth wave maps of bounded energy on increasingly
large domains of the forward light cone $C$:
\begin{equation}
\phi_{\nu}:C_{[\varsigma_{\nu},\varsigma_{\nu}^{-1}]}\longrightarrow\mathbb{S}^{n-1},\,\,\,\mathcal{E}_{S_{t}}[\phi_{\nu}]\leq\mathcal{E}\,\,\,\forall t\in[\varsigma_{\nu},\varsigma_{\nu}^{-1}],\label{eq:EnergyBound}
\end{equation}
where $\varsigma_{\nu}\downarrow0$ as $\nu\rightarrow\infty$, with
the following properties:

$\bullet$ There exists a sequence $\epsilon_{\nu}\downarrow0$, with
$\epsilon_{\nu}^{\frac{1}{2}}\ll\varsigma_{\nu}$, such that:
\begin{equation}
\mathcal{F}_{[\varsigma_{\nu},\varsigma_{\nu}^{-1}]}[\phi_{\nu}]<\epsilon_{\nu}^{\frac{1}{2}}\mathcal{E};\label{eq:Extension&Flux0}
\end{equation}

$\bullet$ A decay to the self-similar mode holds:
\begin{equation}
\int\int_{C_{[\varsigma_{\nu},\varsigma_{\nu}^{-1}]}^{\epsilon_{\nu}^{\frac{1}{2}}}}\frac{1}{\rho}\left|\partial_{\rho}\phi_{\nu}\right|^{2}dxdt\lesssim\left|\log\epsilon_{\nu}\right|^{-\frac{1}{2}}\mathcal{E},\label{eq:AsymptoticSelfSim}
\end{equation}
where $\rho=(t^{2}-r^{2})^{\frac{1}{2}}$ and $\partial_{\rho}=\frac{1}{\rho}(t\partial_{t}+r\partial_{r})$
is the scaling vector field which we recall is uniformly time-like
$\mu(\partial_{\rho},\partial_{\rho})=-1$;

$\bullet$ There is a uniform amount of energy $\mathcal{E}_{c}>0$
getting concentrated by the maps $\phi_{\nu}$ in the interior of
the light cone:
\begin{equation}
\frac{1}{2}\int_{\left|x\right|<\gamma_{c}t_{0}}\left|\nabla_{t,x}\phi_{\nu}(t_{0})\right|^{2}dx\geq\mathcal{E}_{c}\,\,\,\forall t_{0}\in[\varsigma_{\nu},\varsigma_{\nu}^{-1}],\label{eq:TimeLikeEnergyConcentration}
\end{equation}
for some $0<\gamma_{c}<1$.
\end{lem}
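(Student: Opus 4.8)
The plan is to assemble the three bullet points from the estimates established, for a general closed target, in Section~6 of \citep{TataruSterbenzWaveReg}; accordingly I would keep the argument at the level of indicating which construction there supplies each piece and how the pigeonholing that produces the auxiliary sequences $\varsigma_{\nu}$ and $\epsilon_{\nu}$ is organized. Throughout I argue by contradiction assuming that $\phi$ is singular at $(0,0)$, the non-scattering case as $t\to+\infty$ being entirely analogous after the obvious change of the time variable.

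First I would recall the final rescaling itself. By the monotonicity of $t\mapsto\mathcal{E}_{S_{t}}[\phi]$, the global bound (\ref{eq:GlobalEnergyE}) and the energy--flux identity (\ref{eq:Energy-FluxIdentity}), the flux $\mathcal{F}_{[t_{0},t_{1}]}[\phi]$ tends to $0$ as $t_{0},t_{1}\downarrow 0$. Following Section~6.6 of \citep{TataruSterbenzWaveReg} one then selects a sequence of scales $\lambda_{\nu}^{0}\downarrow 0$ together with auxiliary sequences $\varsigma_{\nu}\downarrow 0$ and $\epsilon_{\nu}\downarrow 0$ with $\epsilon_{\nu}^{1/2}\ll\varsigma_{\nu}$, so that after putting $\phi_{\nu}(\cdot):=\phi(\lambda_{\nu}^{0}\,\cdot)$ the flux estimate (\ref{eq:Extension&Flux0}) holds on $C_{[\varsigma_{\nu},\varsigma_{\nu}^{-1}]}$; the energy bound in (\ref{eq:EnergyBound}) is then immediate from (\ref{eq:GlobalEnergyE}) and the scale invariance of the energy in dimension $2+1$.

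Next, for the decay to the self-similar mode (\ref{eq:AsymptoticSelfSim}), the idea is a Morawetz-type identity adapted to the self-similar foliation of $C^{0}$ by the hyperboloids $\{(t^{2}-r^{2})^{1/2}=\rho\}$: contracting the stress--energy tensor (\ref{eq:StressEnergyTensor}) with a suitable multiplier built from the uniformly time-like scaling field $\partial_{\rho}$ and integrating (\ref{eq:StressTensorDiv0}) with Stokes' theorem over the truncated cone $C_{[\varsigma_{\nu},\varsigma_{\nu}^{-1}]}^{\epsilon_{\nu}^{1/2}}$, one bounds the bulk term $\int\!\!\int\rho^{-1}|\partial_{\rho}\phi_{\nu}|^{2}$ by the two spatial-cap contributions, which are $O(\mathcal{E})$, plus the null-boundary contribution, which is dominated by the flux and hence controlled by (\ref{eq:Extension&Flux0}); a pigeonhole over the $\sim|\log\epsilon_{\nu}|$ dyadic $\rho$-shells then upgrades this to the quantitative decay (\ref{eq:AsymptoticSelfSim}), exactly as in the corresponding lemma of \citep{TataruSterbenzWaveReg}.

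Finally, the uniform interior energy concentration (\ref{eq:TimeLikeEnergyConcentration}) is the genuinely deep point, and I expect it to be the main obstacle to carry out in detail, so rather than reprove it I would cite it directly. Here the energy-dispersion theory enters: since $\phi$ is singular at the origin, the contrapositive of Theorem~\ref{thm:EnergyDispersedWM} forces its energy dispersion $\sup_{k}\|P_{k}\phi\|_{L_{t,x}^{\infty}}$ to stay $\geq\epsilon(\mathcal{E})$ on the time intervals approaching $t=0$, so after the final rescaling the maps $\phi_{\nu}$ carry at least $\epsilon(\mathcal{E})$ of energy dispersion on $C_{[\varsigma_{\nu},\varsigma_{\nu}^{-1}]}$; one then invokes the mechanism of \citep{TataruSterbenzWaveReg} that converts a non-trivial amount of energy dispersion---given the self-similar decay (\ref{eq:AsymptoticSelfSim}) just obtained, which removes the self-similar part---into a non-trivial amount of energy $\mathcal{E}_{c}>0$ held in the inner cone $\{|x|<\gamma_{c}t_{0}\}$, uniformly in $t_{0}\in[\varsigma_{\nu},\varsigma_{\nu}^{-1}]$ and $\nu$. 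Morally, below this threshold the small-flux, energy-non-concentrated, asymptotically self-similar regime would force regularity (resp.\ scattering), contradicting singularity at the origin; this is precisely the heart of the Sterbenz--Tataru resolution of the threshold conjecture.
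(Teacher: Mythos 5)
Your overall route coincides with the paper's: Lemma \ref{lem:FinalRescaling} is not reproved here either, but assembled by citation from Section 6 of Sterbenz--Tataru \citep{TataruSterbenzWaveReg}, and your first two bullets (flux smallness via the energy--flux identity, and a Morawetz-type estimate along the hyperboloid foliation upgraded by pigeonholing) are the intended ingredients. However, two points in the way you glue the pieces together do not reflect the actual construction, and as written they would not deliver the uniformity in $t_{0}$ claimed in (\ref{eq:TimeLikeEnergyConcentration}) nor the growing cones in (\ref{eq:EnergyBound}).

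First, the interior time-like energy is not produced ``given the self-similar decay'': in \citep{TataruSterbenzWaveReg} the energy-dispersion concentration points $(t_{\nu},x_{\nu})$ yield, via Sections 6.3--6.4 there (the mechanism recorded in Lemma \ref{lem:NoTimeLikeEnergyImplyDispersive}, which relies on flux smallness and the weighted null-energy bound (\ref{eq:WeightedControl}), not on (\ref{eq:AsymptoticSelfSim})), a definite amount of time-like energy only on the single slices $S_{t_{\nu}}$; the uniformity over a whole time range then comes from a separate backwards-in-time propagation of time-like energy (Lemma 3.4 of \citep{TataruSterbenzWaveReg}), which spreads it over $[\epsilon_{\nu}^{1/2}t_{\nu},\epsilon_{\nu}^{1/4}t_{\nu}]$. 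Your sketch attributes the uniformity directly to the dispersion-to-energy conversion and reverses the logical order (the self-similar decay is found afterwards, inside the propagated range), omitting the propagation step altogether. Second, the pigeonhole for (\ref{eq:AsymptoticSelfSim}) must be carried out over blocks of $N_{\nu}=\exp(\sqrt{\left|\log\epsilon_{\nu}\right|})$ consecutive dyadic $\rho$-shells inside $[\epsilon_{\nu}^{1/2}t_{\nu},\epsilon_{\nu}^{1/4}t_{\nu}]$: this is what produces a window $[\tau_{\nu},N_{\nu}\tau_{\nu}]$ of growing aspect ratio on which both the decay and the already-propagated energy concentration hold, and which after rescaling permits $\varsigma_{\nu}\downarrow0$. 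Pigeonholing over individual dyadic shells, as you write, would only give the improved decay on a fixed-aspect window of times, and the rescaled maps would not live on increasingly large cones $C_{[\varsigma_{\nu},\varsigma_{\nu}^{-1}]}$.
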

Let us write here a few lines of comments regarding the above lemma,
referring the reader to \citep{TataruSterbenzWaveReg} for more details.
Given a sequence of concentration points $(t_{\nu},x_{\nu})$ for
the energy dispersion norm:
\[
2^{-k_{\nu}}\left|P_{k_{\nu}}\nabla_{t,x}\phi(t_{\nu},x_{\nu})\right|>\epsilon(\mathbb{S}^{n-1},\mathcal{E}),
\]
with $t_{\nu}\rightarrow0$ in the case of a finite time blow-up,
or $t_{\nu}\rightarrow+\infty$ in a non-scattering scenario, the
sequence $\epsilon_{\nu}\downarrow0$ is chosen such that:
\[
\mathcal{F}_{[\epsilon_{\nu}t_{\nu},t_{\nu}]}[\phi]<\epsilon_{\nu}^{\frac{1}{2}}\mathcal{E}.
\]
In \citep{TataruSterbenzWaveReg}, sections 6.3 and 6.4, the authors
use the above lower bound to prove that there is a non-trivial amount
of time-like energy concentrating on the time slice $S_{t_{\nu}}$.
As we shall later rely on those results in Section \ref{sub:Dispersive-property},
we gathered them in Lemma \ref{lem:NoTimeLikeEnergyImplyDispersive}
here. From there, a weighted energy estimate (see Lemma 3.4 in \citep{TataruSterbenzWaveReg})
propagates this energy backwards in time, leading to (\ref{eq:TimeLikeEnergyConcentration})
for any $t\in[\epsilon_{\nu}^{1/2}t_{\nu},\epsilon_{\nu}^{1/4}t_{\nu}]$. 

In parallel to this, a Morawetz type estimate (see Lemma 3.3 in \citep{TataruSterbenzWaveReg})
and the pigeonhole principle enable Sterbenz and Tataru to find a
sequence of time intervals $[\tau_{\nu},N_{\nu}\tau_{\nu}]\subset[\epsilon_{\nu}^{1/2},\epsilon_{\nu}^{1/4}]$,
with $N_{\nu}=\exp(\sqrt{\left|\log\epsilon_{\nu}\right|})$, such
that the following decay estimate holds:
\[
\int\int_{C_{[\tau_{\nu},N_{\nu}\tau_{\nu}]}^{\epsilon_{\nu}}}\frac{1}{\rho}\left|\partial_{\rho}[\phi(t_{\nu}t,t_{\nu}x)]\right|^{2}dxdt\lesssim\left|\log\epsilon_{\nu}\right|^{-\frac{1}{2}}\mathcal{E},
\]
see section 6.6 in \citep{TataruSterbenzWaveReg}. Then for the final
rescaling, the authors in \citep{TataruSterbenzWaveReg} choose $t_{\nu}\tau_{\nu}$
for the scales $\lambda_{\nu}^{0}$ (or $\lambda_{\nu}^{\infty}$),
obtaining a sequence of wave maps $\phi(\lambda_{\nu}^{0}\cdot)$
with the desired properties on the growing cones $C_{[1,N_{\nu}]}$.
In our case, it will be more convenient (for notational purposes mainly,
as to respect the CMC foliation in Section \ref{sub:Concentration-compactness}
below), to asymptotically cover all of forward light cone $C^{0}$,
so we should simply fix any:
\[
t_{\nu}\tau_{\nu}\ll\lambda_{\nu}^{0},\lambda_{\nu}^{\infty}\ll N_{\nu}t_{\nu}\tau_{\nu},
\]
and choose then $\varsigma_{\nu}\downarrow0$ decaying slowly enough,
for Lemma \ref{lem:FinalRescaling} to hold.

Finally, we bring reader's attention here to our convention that,
in any of the results stated in this last section, we assume (\ref{eq:EnergyBound})-(\ref{eq:TimeLikeEnergyConcentration})
holding without mentioning it. In fact, one might directly consider
those as the assumptions under which claims of Theorem \ref{thm:Main}
are made.

\subsection{Blow-up analysis for asymptotically self-similar sequences of wave
maps.\label{sub:Concentration-compactness}}

We start the proof of Theorem \ref{thm:Main} with a study of the
energy concentration sets. Our approach here will be close in spirit
to the work of Freire, Müller and Struwe \citep{FreiMulStruCompact}.
We will rely on a monotonicity lemma for asymptotically self-similar
wave maps, see Lemma \ref{lem:MonotonicityLemma} below, which is
a rough analogue of part (ii) from Lemma 1.7 in Lin's work \citep{Lin},
but mainly parallels the computations in the proof of Morawetz type
estimates from section 3 of \citep{TataruSterbenzWaveReg}. Note that
we do not use here the fact that our target manifold is a sphere.

It will be convenient to use hyperbolic coordinates, also known as
CMC foliation of the (forward) light cone $C^{0}$, where we recall
that $C^{0}$ denotes the open interior of the forward light cone,
$C^{0}=C\setminus(\partial C\cup\left\{ (0,0)\right\} )$. Those are
defined by:
\[
t=\rho\,\mathrm{cosh}(y),\,\,\, r=\rho\,\mathrm{sinh}(y)\,\,\,\mathrm{and}\,\,\,\theta.
\]
Associated to those coordinates, we recall the expression for the
volume element:
\[
dV:=rdtdrd\theta=\rho^{2}\mathrm{sinh}(y)d\rho dyd\theta,
\]
and for the hyperbolic planes $\mathbb{H}_{\rho_{0}}^{2}=\left\{ \rho=\rho_{0}\right\} $
the area element:
\[
dA_{\rho_{0}}:=\rho_{0}^{2}\,\mathrm{sinh}(y)dyd\theta,
\]
with respect to the Minkowski metric $\mu$ on $\mathbb{R}^{2+1}$.
These formulae will be useful below applying Stokes' theorem in the
hyperbolic annulus $\left\{ \rho_{1}\leq\rho\leq\rho_{2}\right\} $.
Let us also record here that, using the identities:
\[
\partial_{t}=\frac{t}{\rho}\partial_{\rho}-\frac{r}{\rho^{2}}\partial_{y},\,\,\,\partial_{r}=\frac{t}{\rho^{2}}\partial_{y}-\frac{r}{\rho}\partial_{\rho},
\]
one computes, for a smooth map $\phi$ into $\mathbb{S}^{n-1}$:
\begin{align}
\partial^{\gamma}\phi^{\dagger}\partial_{\gamma}\phi & =-\left|\partial_{t}\phi\right|^{2}+\left|\partial_{r}\phi\right|^{2}+\frac{1}{r^{2}}\left|\partial_{\theta}\phi\right|^{2}\nonumber \\
 & =-\left|\partial_{\rho}\phi\right|^{2}+\frac{1}{\rho^{2}}\left|\nabla_{\mathbb{H}^{2}}\phi\right|^{2},\label{eq:DensityHyperCoord}
\end{align}
where $\nabla_{\mathbb{H}^{2}}$ denotes the gradient on the unit
hyperboloid $\mathbb{H}^{2}:=\mathbb{H}_{1}^{2}$:
\[
\left|\nabla_{\mathbb{H}^{2}}\phi\right|^{2}=\left|\partial_{y}\phi\right|^{2}+\frac{1}{\mathrm{sinh}^{2}(y)}\left|\partial_{\theta}\phi\right|^{2}.
\]

For every given $\rho_{0}>0$, let us define the Radon measures:
\[
\sigma_{\nu,\rho_{0}}:=\left(\left|\partial_{\rho}\phi_{\nu}\right|^{2}+\frac{1}{\rho^{2}}\left|\nabla_{\mathbb{H}^{2}}\phi_{\nu}\right|^{2}\right)dA_{\rho_{0}}\in\mathcal{R}(\mathbb{H}_{\rho_{0}}^{2}).
\]
We can naturally view them as measures on the unit hyperbolic plane
$\mathbb{H}^{2}$ since for any given test function $\varphi$ on
$\mathbb{H}^{2}$, that we should view as a function $\varphi(y,\theta)$
independent of $\rho$ on the whole of the light cone $C^{0}$, we
have:
\[
\int\varphi d\sigma_{\nu,\rho_{0}}=\int_{\mathbb{H}^{2}}\left(\left|\partial_{\rho}\phi_{\nu}(\rho_{0})\right|^{2}+\frac{1}{\rho_{0}^{2}}\left|\nabla_{\mathbb{H}^{2}}\phi_{\nu}(\rho_{0})\right|^{2}\right)\varphi(y,\theta)\rho_{0}^{2}\,\mathrm{sinh}(y)dyd\theta.
\]

Using the decay (\ref{eq:AsymptoticSelfSim}) to a self-similar mode,
we can establish the following asymptotic monotonicity property for
the family $\left\{ \sigma_{\nu,\rho}\right\} _{\rho>0}\subset\mathcal{R}(\mathbb{H}^{2})$. 
\begin{lem}
\label{lem:MonotonicityLemma} For every pair $\rho_{2}>\rho_{1}>0$
and every $\lambda>0$, we have the decay:
\begin{equation}
\int_{\rho_{1}}^{\rho_{2}}\left(\int\varphi d\sigma_{\nu,\rho_{0}}\right)d\rho_{0}-\int_{\rho_{1}+\lambda}^{\rho_{2}+\lambda}\left(\int\varphi d\sigma_{\nu,\rho_{0}}\right)d\rho_{0}\longrightarrow0,\label{eq:MonotonicityFormula-1}
\end{equation}
holding as $\nu\rightarrow+\infty$ for any test function $\varphi\in C_{0}^{\infty}(\mathbb{H}^{2})$.\end{lem}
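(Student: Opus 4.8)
The plan is to contract the divergence-free stress--energy tensor $T[\phi_\nu]$ (see (\ref{eq:StressEnergyTensor})--(\ref{eq:StressTensorDiv0})) against the vector field $X:=\varphi\,\partial_\rho$, where $\varphi=\varphi(y,\theta)$ is viewed as $\rho$-independent on $C^0$, and to control the resulting bulk term by the self-similar decay (\ref{eq:AsymptoticSelfSim}); this parallels the Morawetz-type computations in Section~3 of \citep{TataruSterbenzWaveReg}. Set $J^\alpha:=T^{\alpha\beta}[\phi_\nu]X_\beta$. Since $T$ is symmetric and divergence free, $\nabla_\alpha J^\alpha=\tfrac12\,T^{\alpha\beta}\,{}^{(X)}\pi_{\alpha\beta}$ with ${}^{(X)}\pi:=\mathcal{L}_X\mu$. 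In hyperbolic coordinates $\mu=-d\rho\otimes d\rho+\rho^2 g_{\mathbb{H}^2}$, and writing $X=(\varphi/\rho)\,S$ with $S=\rho\,\partial_\rho$ the conformally Killing scaling field, a short computation gives
\[
{}^{(X)}\pi = 2\rho\varphi\,g_{\mathbb{H}^2}-d\rho\otimes d\varphi-d\varphi\otimes d\rho .
\]
Contracting this against $T[\phi_\nu]$ and using the density formula (\ref{eq:DensityHyperCoord}), the terms proportional to $|\nabla_{\mathbb{H}^2}\phi_\nu|^2$ cancel exactly, leaving
\[
\tfrac12\,T^{\alpha\beta}\,{}^{(X)}\pi_{\alpha\beta} = \frac{\varphi}{\rho}\,|\partial_\rho\phi_\nu|^2+\frac{1}{\rho^2}\,\partial_\rho\phi_\nu^{\dagger}\bigl(\nabla_{\mathbb{H}^2}\phi_\nu\cdot\nabla_{\mathbb{H}^2}\varphi\bigr),
\]
where $\nabla_{\mathbb{H}^2}\phi_\nu\cdot\nabla_{\mathbb{H}^2}\varphi$ denotes the $\mathbb{H}^2$-inner product of the gradients.

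Next I would apply Stokes' theorem to $J$ on the hyperbolic annulus $\{\rho_a\le\rho\le\rho_b\}$, using $dV=\rho^2\sinh(y)\,d\rho\,dy\,d\theta$ and $dA_{\rho_0}=\rho_0^2\sinh(y)\,dy\,d\theta$. Because $\varphi$ has compact support in $\mathbb{H}^2$ the lateral boundary $\partial C$ is never met, so the only boundary contributions come from the spacelike faces $\mathbb{H}_{\rho_a}^2$, $\mathbb{H}_{\rho_b}^2$, whose timelike unit normal is $\partial_\rho$; on $\{\rho=\rho_0\}$ one has $J^\alpha n_\alpha=\varphi\,T^{\rho\rho}=\tfrac12\varphi\bigl(|\partial_\rho\phi_\nu|^2+\rho_0^{-2}|\nabla_{\mathbb{H}^2}\phi_\nu|^2\bigr)$, so these faces assemble precisely into $\tfrac12\bigl(\int\varphi\,d\sigma_{\nu,\rho_b}-\int\varphi\,d\sigma_{\nu,\rho_a}\bigr)$ (up to an overall sign from the orientation convention, which plays no role below). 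Writing $\Sigma_\nu(\rho):=\int\varphi\,d\sigma_{\nu,\rho}$, this yields $\Sigma_\nu'(\rho)=\int_{\mathbb{H}^2}\bigl(2\rho^{-1}\varphi|\partial_\rho\phi_\nu|^2+2\rho^{-2}\partial_\rho\phi_\nu^{\dagger}(\nabla_{\mathbb{H}^2}\phi_\nu\cdot\nabla_{\mathbb{H}^2}\varphi)\bigr)\rho^2\sinh(y)\,dy\,d\theta$, hence
\[
\int_{\rho_1}^{\rho_2}\Sigma_\nu\,d\rho_0-\int_{\rho_1+\lambda}^{\rho_2+\lambda}\Sigma_\nu\,d\rho_0 = -\int_{\rho_1}^{\rho_2}\!\int_{\rho_0}^{\rho_0+\lambda}\Sigma_\nu'(\rho)\,d\rho\,d\rho_0 ,
\]
and by Fubini the modulus of the left-hand side is $\lesssim_\varphi\lambda\int_{\{\rho_1\le\rho\le\rho_2+\lambda\}}\bigl(\rho^{-1}|\partial_\rho\phi_\nu|^2+\rho^{-2}|\partial_\rho\phi_\nu|\,|\nabla_{\mathbb{H}^2}\phi_\nu|\bigr)\,dV$, the integration restricted to $\mathrm{supp}\,\varphi$.

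It then remains to let $\nu\to\infty$. The set $\{\rho_1\le\rho\le\rho_2+\lambda\}\cap\mathrm{supp}\,\varphi$ is a fixed compact subset of $C^0$ (compact support in $y$ keeps it away from $\partial C$, and $\rho\ge\rho_1>0$ keeps it away from the origin), so for $\nu$ large it is contained in $C^{\epsilon_\nu^{1/2}}_{[\varsigma_\nu,\varsigma_\nu^{-1}]}$. Since $dV=dt\,dx$, the decay estimate (\ref{eq:AsymptoticSelfSim}) gives $\int_{\text{this set}}\rho^{-1}|\partial_\rho\phi_\nu|^2\,dV\lesssim|\log\epsilon_\nu|^{-1/2}\mathcal{E}\to0$. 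For the cross term Cauchy--Schwarz gives
\[
\int\rho^{-2}|\partial_\rho\phi_\nu|\,|\nabla_{\mathbb{H}^2}\phi_\nu|\,dV \lesssim_\varphi \Bigl(\int\rho^{-1}|\partial_\rho\phi_\nu|^2\,dV\Bigr)^{1/2}\Bigl(\int|\nabla_{\mathbb{H}^2}\phi_\nu|^2\,dV\Bigr)^{1/2},
\]
whose first factor tends to $0$; the second is uniformly bounded, since on this compact, bounded-time set $|\nabla_{\mathbb{H}^2}\phi_\nu|^2\lesssim|\nabla_{t,x}\phi_\nu|^2$, while $\int|\nabla_{t,x}\phi_\nu|^2\,dt\,dx$ over a slab $[t_{\min},t_{\max}]\times\mathbb{R}^2$ is $\lesssim\mathcal{E}$ by the slicewise bound (\ref{eq:EnergyBound}). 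Thus the right-hand side tends to $0$ and (\ref{eq:MonotonicityFormula-1}) follows.

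I expect the one genuinely delicate point to be the computation of the deformation tensor ${}^{(X)}\pi$ and, above all, the exact cancellation of the angular-energy contributions $|\nabla_{\mathbb{H}^2}\phi_\nu|^2$ in $T^{\alpha\beta}\,{}^{(X)}\pi_{\alpha\beta}$: it is precisely this cancellation that collapses the entire bulk term onto the self-similar quantity $\rho^{-1}|\partial_\rho\phi_\nu|^2$ (plus an innocuous cross term estimated against it), which is the only quantity for which (\ref{eq:AsymptoticSelfSim}) supplies smallness. Without it one would retain an uncontrolled $\rho^{-3}|\nabla_{\mathbb{H}^2}\phi_\nu|^2$ term and the argument would fail.
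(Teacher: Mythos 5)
Your proposal is correct and follows essentially the same route as the paper's proof: contracting $T[\phi_{\nu}]$ with $\varphi\partial_{\rho}$, applying Stokes' theorem on hyperbolic annuli so that the boundary faces assemble into $\tfrac12\int\varphi\,d\sigma_{\nu,\rho}$, and controlling the bulk $\frac{\varphi}{\rho}|\partial_{\rho}\phi_{\nu}|^{2}+\partial_{\rho}\phi_{\nu}^{\dagger}\partial_{\alpha}\phi_{\nu}\partial^{\alpha}\varphi$ via (\ref{eq:AsymptoticSelfSim}), Cauchy--Schwarz and the energy bound. The only difference is presentational: you compute the bulk through the deformation tensor $\mathcal{L}_{X}\mu$ (correctly, including the cancellation of the $|\nabla_{\mathbb{H}^{2}}\phi_{\nu}|^{2}$ terms), whereas the paper evaluates $T_{\alpha\beta}\partial^{\alpha}\psi^{\beta}$ directly.
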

\begin{proof}
Given a continuously differentiable vector field $\psi=\psi^{\beta}\partial_{\beta}$
compactly supported in $(y,\theta)$, contracting the stress-energy
tensor $T[\phi_{\nu}]$ with $\psi$, we obtain the associated Noether
current:
\[
^{(\psi)}P_{\alpha}=T_{\alpha\beta}[\phi_{\nu}]\psi^{\beta}.
\]
Hence, if we set:
\[
D_{\left\{ \rho'\leq\rho\leq\rho''\right\} }(\psi):=\int_{\left\{ \rho'\leq\rho\leq\rho''\right\} }\partial^{\alpha}\left(^{(\psi)}P_{\alpha}\right)dV=\int_{\left\{ \rho'\leq\rho\leq\rho''\right\} }T_{\alpha\beta}[\phi_{\nu}]\partial^{\alpha}\psi^{\beta}dV,
\]
where we relied on the conservation law (\ref{eq:StressTensorDiv0}),
and:
\[
B_{\tilde{\rho}}(\psi):=\int_{\left\{ \rho=\tilde{\rho}\right\} }{}^{(\psi)}P(\partial_{\rho})dA_{\tilde{\rho}}=\int_{\left\{ \rho=\tilde{\rho}\right\} }T_{\alpha\beta}[\phi_{\nu}]\frac{x^{\alpha}}{\tilde{\rho}}\psi^{\beta}dA_{\tilde{\rho}},
\]
where our convention follows $x^{0}:=t$ and $x_{0}=-t$, so that
$x^{\alpha}=\mu^{\alpha\gamma}x_{\gamma}$, applying Stokes' theorem
over the region $\left\{ \rho_{0}\leq\rho\leq\rho_{0}+\lambda\right\} $
leads to the identity:
\begin{equation}
D_{\left\{ \rho_{0}\leq\rho\leq\rho_{0}+\lambda\right\} }(\psi)=B_{\rho_{0}}(\psi)-B_{\rho_{0}+\lambda}(\psi).\label{eq:StokesLemma}
\end{equation}

Taking $\psi=\varphi(y,\theta)\partial_{\rho}$, we compute using
the expression (\ref{eq:StressEnergyTensor}) for $T_{\alpha\beta}[\phi_{\nu}]$:
\[
D_{\left\{ \rho_{0}\leq\rho\leq\rho_{0}+\lambda\right\} }(\psi)=\int_{\left\{ \rho_{0}\leq\rho\leq\rho_{0}+\lambda\right\} }\left(\frac{1}{\rho}\left|\partial_{\rho}\phi_{\nu}\right|^{2}\varphi+\partial_{\rho}\phi_{\nu}^{\dagger}\partial_{\alpha}\phi_{\nu}\partial^{\alpha}\varphi\right)dV,
\]
and for the boundary terms:
\begin{align*}
B_{\tilde{\rho}}(\psi) & =\int_{\left\{ \rho=\tilde{\rho}\right\} }\left(\left|\tilde{\rho}\partial_{\rho}\phi_{\nu}\right|^{2}+\tilde{\rho}^{2}\frac{1}{2}\partial^{\gamma}\phi_{\nu}^{\dagger}\partial_{\gamma}\phi_{\nu}\right)\frac{\varphi}{\tilde{\rho}^{2}}dA_{\tilde{\rho}}\\
 & =\frac{1}{2}\int\varphi d\sigma_{\nu,\tilde{\rho}},
\end{align*}
where to pass to the second line we have used the identity (\ref{eq:DensityHyperCoord}).
Therefore, plugging the above back into (\ref{eq:StokesLemma}) we
obtain:
\begin{align*}
\int\varphi d\sigma_{\nu,\rho_{0}}-\int\varphi d\sigma_{\nu,\rho_{0}+\lambda}= & \,2\int_{\left\{ \rho_{0}\leq\rho\leq\rho_{0}+\lambda\right\} }\left(\frac{1}{\rho}\left|\partial_{\rho}\phi_{\nu}\right|^{2}\varphi+\partial_{\rho}\phi_{\nu}^{\dagger}\partial_{\alpha}\phi_{\nu}\partial^{\alpha}\varphi\right)dV.
\end{align*}
Integrating over $\rho_{0}\in[\rho_{1},\rho_{2}]$ and using Cauchy-Schwarz
for the second term on RHS above, appealing to the decay (\ref{eq:AsymptoticSelfSim})
and the global energy bound (\ref{eq:EnergyBound}), we obtain (\ref{eq:MonotonicityFormula-1}).
Hence Lemma \ref{lem:MonotonicityLemma} is proved.
\end{proof}
From now on we restrict ourselves to the time interval $1\leq t\leq2$.
We will study there the sets in space-time where our wave maps concentrate
a non-trivial amount of energy as in the work of Freire, Müller and
Struwe \citep{FreiMulStruCompact}, where some general statements
about the structure of energy concentration loci can be found (for
instance, it is shown in Proposition 4.1 and Theorem B.1 of \citep{FreiMulStruCompact}
that, upon passing to a suitable subsequence, the concentration set
of an energy threshold will be contained in a finite union of Lipschitz
curves). Our assumptions however enable us to go beyond \citep{FreiMulStruCompact}
via more elementary arguments and prove that picking a suitable subsequence
will lead to an energy concentration set which is in fact given by
a finite collection of time-like geodesics, relying on Lemmata \ref{prop:Simple-compactness-result.}
and \ref{lem:MonotonicityLemma}. 

To use the latter, we remark that for a fixed open domain $U$ with
closure $\overline{U}\subset C_{[\frac{1}{2},3]}^{0}$, we have:
\begin{equation}
\frac{1}{C}\left|\nabla_{t,x}\phi_{\nu}\right|^{2}\leq\left|\partial_{\rho}\phi_{\nu}\right|^{2}+\frac{1}{\rho^{2}}\left|\nabla_{\mathbb{H}^{2}}\phi_{\nu}\right|^{2}\leq C\left|\nabla_{t,x}\phi_{\nu}\right|^{2}\,\,\,\mathrm{on}\,\,\, U,\label{eq:EquivalenceControl}
\end{equation}
with $C:=C(\mathrm{dist}(U,\partial C_{[\frac{1}{2},3]}))$, and this
will enable us to transfer control back and forward between the Radon
measures $\sigma_{\nu,\rho}$ and the energy densities $\left|\nabla_{t,x}\phi_{\nu}\right|^{2}dxdt$
of which we want to study the concentration sets (with the small energy
compactness Lemma \ref{prop:Simple-compactness-result.} enabling
us to obtain some uniformity in time).
\begin{lem}
\label{lem:ConcentrationCompactness}There exists a subsequence of
$\left\{ \phi_{\nu}\right\} _{\nu\in\mathbb{N}}$ restricting to which,
without changing notation, we can find a finite collection of time-like
geodesics $\varrho_{1},\ldots,\varrho_{I}$ passing through the origin
in Minkowski space such that defining the energy concentration set
by:
\[
\Sigma:=\left\{ (t,x)\in C_{[1,2]}^{0}\,:\,\liminf_{\nu\rightarrow\infty}\mathcal{E}_{B_{r}(x)}[\phi_{\nu}](t)>\epsilon_{s}\,\,\,\forall r>0\right\} ,
\]
we have:
\[
\Sigma=C_{[1,2]}^{0}\cap\bigcup_{i=1}^{I}\varrho_{i},
\]
and away from $\Sigma$, there exist a wave map $\phi$ satisfying:
\[
\partial_{\rho}\phi=0\,\,\,\mathrm{\mathit{on}}\,\,\, C_{[1,2]}^{0}\setminus\Sigma\,\,\,\mathit{\mathit{with}}\,\,\,\phi\in(H_{t,x}^{\frac{3}{2}-\epsilon})_{loc}\left(C_{[1,2]}^{0}\setminus\Sigma\right),
\]
for any $0<\epsilon<\frac{1}{2}$, of finite energy on $C_{[1,2]}^{0}$,
$\mathcal{E}_{S_{t}^{0}}[\phi]\leq\mathcal{E}$ $\forall t\in[1,2]$,
such that:
\begin{equation}
\phi_{\nu}\longrightarrow\phi\,\,\,\mathrm{\mathit{on}}\,\,\,\left(C_{t}^{0}(H_{x}^{1})\cap C_{t}^{1}(L_{x}^{2})\right)_{loc}\left(C_{[1,2]}^{0}\setminus\Sigma\right),\label{eq:THING2}
\end{equation}
as dictated by Lemma \ref{prop:Simple-compactness-result.}.\end{lem}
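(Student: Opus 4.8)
The plan is to run a concentration--compactness argument on a fixed hyperbolic slice, use the asymptotic monotonicity of Lemma~\ref{lem:MonotonicityLemma} to transport energy smallness along the scaling flow, and then feed this into the small-energy compactness Lemma~\ref{prop:Simple-compactness-result.} with the time-like field $\partial_\rho$.

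\emph{Step 1 (a $\rho$-invariant defect measure).} First I would package the densities $\sigma_{\nu,\rho_0}$ into the single locally finite measure $\tilde\sigma_\nu:=\sigma_{\nu,\rho_0}\otimes d\rho_0$ on $\mathbb{H}^2\times(0,\infty)$. By (\ref{eq:EnergyBound}), (\ref{eq:Extension&Flux0}) and (\ref{eq:EquivalenceControl}) these are uniformly locally bounded, so after passing to a subsequence $\tilde\sigma_\nu\rightharpoonup\tilde\sigma$ weak-$*$, and Lemma~\ref{lem:MonotonicityLemma} forces $\tilde\sigma$ to be invariant under the translations $\rho_0\mapsto\rho_0+\lambda$; hence $\tilde\sigma=\sigma_\infty\otimes d\rho_0$ for a fixed finite $\sigma_\infty\in\mathcal{R}(\mathbb{H}^2)$. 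Since an $\epsilon_s$-drop of local energy is all that Lemma~\ref{prop:Simple-compactness-result.} needs, there is a gap: any atom of $\sigma_\infty$ carries mass $\gtrsim_{\mathcal{E}}\epsilon_s$ (else strong convergence holds there and the atom has mass $0$), so there are finitely many of them, $p_1,\dots,p_I\in\mathbb{H}^2$ with $I\lesssim_{\mathcal{E}}1$, while the remaining part of $\sigma_\infty$ has vanishing mass on small balls and so obstructs nothing. To each $p_i=(y_i,\theta_i)$ (necessarily $y_i<\infty$, i.e.\ strictly inside the cone) I attach the ray $\varrho_i:=\{\rho(\cosh y_i,\sinh y_i(\cos\theta_i,\sin\theta_i)):\rho>0\}$, a time-like geodesic through the origin; nontriviality ($I\ge1$) follows from the uniform interior energy (\ref{eq:TimeLikeEnergyConcentration}) together with the fact (Proposition~\ref{prop:SelfSimilar}) that a finite-energy self-similar wave map is constant.

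\emph{Step 2 (strong convergence off the rays).} Given $(t_0,x_0)\in C^0_{[1,2]}\setminus\cup_i\varrho_i$, its $\mathbb{H}^2$-coordinate $p_0$ avoids every $p_i$, so I can pick a hyperbolic ball $B_\delta(p_0)$ disjoint from the atoms with $\sigma_\infty(\overline{B_\delta(p_0)})<\epsilon_s/C$; then $\tilde\sigma_\nu\rightharpoonup\sigma_\infty\otimes d\rho_0$ makes the $\rho_0$-average of $\sigma_{\nu,\rho_0}(\overline{B_\delta(p_0)})$ small, while the Stokes identity (\ref{eq:StokesLemma}) from the proof of Lemma~\ref{lem:MonotonicityLemma} together with the decay (\ref{eq:AsymptoticSelfSim}) shows $\rho_0\mapsto\sigma_{\nu,\rho_0}(\varphi)$ varies by a quantity tending to $0$ with $\nu$ over bounded windows, so the smallness holds at every $\rho_0$. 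Via (\ref{eq:EquivalenceControl}) this is exactly a uniform (in the scaling parameter) small-energy estimate on a tube around the ray through $p_0$; since $\partial_\rho$ is smooth and uniformly time-like on $C^0$ with $\|\partial_\rho\phi_\nu\|_{L^2_{t,x}}\to0$ there by (\ref{eq:AsymptoticSelfSim}), Lemma~\ref{prop:Simple-compactness-result.}, applied in coordinates on $C^0$ adapted to the CMC foliation and then diagonalised over a countable exhaustion of $C^0_{[1,2]}\setminus\cup_i\varrho_i$, yields the convergence (\ref{eq:THING2}) to a wave map $\phi$ with $\partial_\rho\phi=0$ and $\phi\in(H^{3/2-\epsilon}_{t,x})_{loc}$ off the rays. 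By homogeneity $\phi$ reduces there to a finite-energy harmonic map on $\mathbb{H}^2$ minus the $p_i$, which extends across the $p_i$ by the Sacks--Uhlenbeck removable singularity theorem, and Fatou on the slices $S^0_t$ gives $\mathcal{E}_{S^0_t}[\phi]\le\mathcal{E}$.

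\emph{Step 3 (identifying $\Sigma$), and the main difficulty.} If $(t_0,x_0)\notin\cup_i\varrho_i$, Step~2 together with $\nabla\phi\in L^q_{loc}$ for some $q>2$ (Sobolev, from $\phi\in H^{3/2-\epsilon}_{loc}$) gives $\liminf_\nu\mathcal{E}_{B_r(x_0)}[\phi_\nu](t_0)=\mathcal{E}_{B_r(x_0)}[\phi](t_0)$, which tends to $0$ as $r\downarrow0$, so $(t_0,x_0)\notin\Sigma$. Conversely, if $(t_0,\varrho_i(t_0))\notin\Sigma$ for some $t_0\in[1,2]$, choose $r_0$ with $\liminf_\nu\mathcal{E}_{B_{r_0}(\varrho_i(t_0))}[\phi_\nu](t_0)\le\epsilon_s$; the local energy inequality (finite speed of propagation) upgrades this to a uniform-in-time small-energy bound on a space-time diamond about $(t_0,\varrho_i(t_0))$, Lemma~\ref{prop:Simple-compactness-result.} gives strong convergence there, hence the defect measure vanishes near $p_i$ on the hyperbolic slice through that point, and by the monotonicity of Step~1 it then vanishes near $p_i$ on every slice, contradicting $\sigma_\infty(\{p_i\})\gtrsim\epsilon_s$; thus $\Sigma=C^0_{[1,2]}\cap\cup_i\varrho_i$. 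The genuinely delicate point throughout is the passage between the (mutually transverse) time slices $S_t$ and the hyperbolic slices $\mathbb{H}^2_\rho$: turning the $\rho$-averaged, $\rho$-monotone information of Lemma~\ref{lem:MonotonicityLemma} into honest pointwise small-energy hypotheses for the compactness lemma requires the $\partial_\rho$-decay (\ref{eq:AsymptoticSelfSim}) both to control the $\rho$-variation of $\sigma_{\nu,\rho}(\varphi)$ and to verify the time-like hypothesis, and it is essential to argue in CMC-adapted coordinates, in which the rays $\varrho_i$ are stationary and domain-of-dependence arguments apply cleanly, rather than in the flat $(t,x)$ coordinates.
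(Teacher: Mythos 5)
Your architecture is the same machinery as the paper's (the asymptotic monotonicity of Lemma \ref{lem:MonotonicityLemma} plus the small-energy compactness Lemma \ref{prop:Simple-compactness-result.} with $X=\partial_\rho$), repackaged in tangent-measure style: you take a defect measure $\sigma_\infty$ on $\mathbb{H}^2$, whereas the paper anchors on the weak-$*$ limit $\iota$ of $\left|\nabla_{t,x}\phi_\nu(1)\right|^2dx$ on the single slice $S_1^0$ and defines the rays through the finitely many $\epsilon_s$-concentration points of $\iota$. The genuine gap is in your Step 1: you assert that $\sigma_\infty$ is a \emph{finite} measure and deduce $I\lesssim_{\mathcal{E}}1$ from the $\epsilon_s$ mass gap for its atoms. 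The bounds you invoke only give \emph{local} finiteness: the comparison (\ref{eq:EquivalenceControl}) between the hyperboloidal density $\left|\partial_\rho\phi_\nu\right|^2+\rho^{-2}\left|\nabla_{\mathbb{H}^2}\phi_\nu\right|^2$ and $\left|\nabla_{t,x}\phi_\nu\right|^2$ degenerates at the null boundary (indeed $\partial_\rho=\frac12(e^{y}L+e^{-y}\underline{L})$, so the hyperboloidal density weights the good derivative $L\phi_\nu$ by $e^{2y}$), and the energy bound (\ref{eq:EnergyBound}) only controls an exponentially weighted version of $\sigma_{\nu,\rho}(\mathbb{H}^2)$, not its full mass. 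Hence $\sigma_\infty$ may have infinite mass near the ideal boundary of $\mathbb{H}^2$ (this is precisely the null-concentration regime the paper cannot rule out), and a mass gap of $\epsilon_s$ per atom then only yields finitely many atoms in each compact subset: a priori infinitely many atoms could accumulate at the boundary, i.e.\ infinitely many rays $\varrho_i$ approaching $\partial C$, which would invalidate the conclusion as stated (and, downstream, $\delta_0=\frac{1}{10}\mathrm{dist}(\cup_i\varrho_i,\partial C_{[1,2]})$ in Theorem \ref{thm:Main} could be zero).

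The paper avoids this entirely by counting at $t=1$: each concentration point costs $\epsilon_s$ of the measure $\iota$, whose total mass is at most $2\mathcal{E}$, so finiteness is free; smallness is then spread from $t=1$ along the scaling flow via the averaged monotonicity (\ref{eq:MonotonicityFormula-1}) and converted into the sup-in-time small-energy hypothesis of Lemma \ref{prop:Simple-compactness-result.} using the reverse inequality in (\ref{eq:EquivalenceControl}), a pigeonhole, and the energy--flux identity (\ref{eq:Energy-FluxIdentity}) --- a conversion you also gloss over when you say the hyperboloidal tube estimate ``is exactly'' the hypothesis of the compactness lemma. Your scheme is repairable on its own terms: a disjoint-balls argument on a fixed slice gives $\#(\Sigma\cap S_t)\leq\mathcal{E}/\epsilon_s$, and your reverse-inclusion argument (an atom at $p_i$ forces $\varrho_i\cap C_{[1,2]}^{0}\subset\Sigma$) does not use finiteness of the atom set, so combining the two bounds $I$; but as written the finiteness of $\sigma_\infty$ and of $I$ is unjustified, and it is the one point where your route genuinely diverges from, and falls short of, the paper's.
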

\begin{proof}
In view of the asymptotic monotonicity provided by Lemma \ref{lem:MonotonicityLemma},
let us denote for a set $U\subset S_{t=1}$ the cone over $U$ by:
\[
C(U):=\left\{ \lambda(t,x)\,:\,\lambda>0,\, x\in U\,\,\,\mathrm{at}\,\,\, t=1\right\} ,
\]
and by $C_{I}(U):=C(U)\cap C_{I}$ the corresponding truncation to
a time interval $I$.

Considering the time slice $S_{1}^{0}$, given the global energy bound
(\ref{eq:EnergyBound}) we can pass to a subsequence for $\left\{ \phi_{\nu}\right\} _{\nu\in\mathbb{N}}$,
without changing notation, such that for some Radon measure $\iota\in\mathcal{R}(S_{1}^{0})$
we have:
\begin{equation}
\left|\nabla_{t,x}\phi_{\nu}(1)\right|^{2}dx\rightharpoonup\iota\,\,\,\mathrm{in}\,\,\,\mathcal{R}(S_{1}^{0}),\label{eq:Time1Concentration}
\end{equation}
from where we also see that there exist only finitely many points
$\left\{ x_{i}\right\} _{i=1}^{I}\subset S_{1}^{0}$ such that:
\begin{equation}
\left\{ x_{i}\right\} _{i=1}^{I}=\left\{ x\in B_{1}\,:\,\lim_{\nu\rightarrow\infty}\mathcal{E}_{B_{r}(x)}[\phi_{\nu}](1)>\epsilon_{s}\,\,\,\forall r>0\right\} ,\label{eq:RaysDefinition}
\end{equation}
and we set $\varrho_{i}:=C(\left\{ x_{i}\right\} )$. 

Let us start by showing that:
\begin{equation}
\Sigma\subset C_{[1,2]}^{0}\cap\bigcup_{i=1}^{I}\varrho_{i},\label{eq:Thing1half}
\end{equation}
obtaining on the way claim (\ref{eq:THING2}). Fix any point $x_{0}\in S_{1}^{0}\setminus\Sigma$,
then there exists a radius $r_{1}=r_{1}(x_{0})>0$ such that for all
$\nu\in\mathbb{N}$: 
\[
\mathcal{E}_{B_{r_{1}}(x_{0})}[\phi_{\nu}]\leq\epsilon_{s},
\]
hence by the energy-flux identity (\ref{eq:Energy-FluxIdentity}),
shrinking $r_{1}$ to $r_{2}>0$ as necessary, we obtain that:
\[
\sup_{t\in[1-3r_{2},1+3r_{2}]}\mathcal{E}_{B_{3r_{2}}(x_{0})}[\phi_{\nu}](t)\leq\epsilon_{s}.
\]
By the decay assumption (\ref{eq:AsymptoticSelfSim}), we can apply
the compactness Lemma \ref{prop:Simple-compactness-result.} obtaining
that on a subsequence $\left\{ \phi_{\nu'}\right\} _{\nu'\in\mathbb{N}}$
we have convergence in $C_{t}^{0}(H_{x}^{1})\cap C_{t}^{1}(L_{x}^{2})$
to a wave map $\phi$ in $[1-r_{2},1+r_{2}]\times B_{r_{2}}(x_{0})$,
satisfying $\partial_{\rho}\phi=0$ and having regularity as dictated
by (\ref{eq:LimitRegularity}) there.

Hence, given any positive constant $\eta>0$ there exist a radius
$r_{\eta}>0$ such that: 

\[
\sup_{\nu'\in\mathbb{N}}\sup_{t\in[1-r_{\eta},1+r_{\eta}]}\mathcal{E}_{C(B_{r_{\eta}}(x_{0}))}[\phi_{\nu'}](t)\leq\eta.
\]
Therefore, using (\ref{eq:EquivalenceControl}) we get for any test
function $\varphi(y,\theta)$ on the hyperboloid $\mathbb{H}^{2}$,
having support in $C(B_{r_{\eta}}(x_{0}))\cap\mathbb{H}^{2}$ and
satisfying $0\leq\varphi\leq1$, the bound: 
\[
\sup_{\nu'\in\mathbb{N}}\frac{1}{\rho_{2}-\rho_{1}}\int_{\rho_{1}}^{\rho_{2}}\left(\int\varphi d\sigma_{\nu',\rho}\right)d\rho\lesssim\eta,
\]
for some suitably chosen $0<\rho_{1}<\rho_{2}$. The implicit constant
here does not depend on the parameter $\eta$, and in fact depends
only on the distance of the point $(1,x_{0})$ to the null boundary.

Recalling Lemma \ref{lem:MonotonicityLemma}, we obtain by (\ref{eq:MonotonicityFormula-1})
for every fixed $\lambda>0$ the estimate:
\[
\limsup_{\nu'\rightarrow\infty}\frac{1}{\rho_{2}-\rho_{1}}\int_{\rho_{1}+\lambda}^{\rho_{2}+\lambda}\left(\int\varphi d\sigma_{\nu',\rho}\right)d\rho\lesssim\eta.
\]
Given this, shrinking $r_{2}$ to $r_{3}=r_{3}(x_{0},\eta)>0$ and
picking a suitable cut-off function $\varphi$ on $\mathbb{H}^{2}$
as necessary, we can rely on the other inequality in (\ref{eq:EquivalenceControl})
this time and the energy-flux identity (\ref{eq:Energy-FluxIdentity})
to find, arguing via the pigeonhole principle, a finite cover of:
\[
C_{[1,2]}(B_{r_{3}}(x_{0}))\subset\bigcup_{j=1}^{N}[t_{j}-s_{j},t_{j}+s_{j}]\times B_{s_{j}}(y_{j})
\]
with $N=N(x_{0},\eta)\in\mathbb{N}$ satisfying:
\[
\bigcup_{j=1}^{N}[t_{j}-3s_{j},t_{j}+3s_{j}]\times B_{3s_{j}}(y_{j})\subset C_{[\frac{1}{2},3]}(B_{r_{3}}(x_{0})),
\]
and such that:
\[
\limsup_{\nu'\rightarrow\infty}\sup_{t\in[t_{j}-3s_{j},t_{j}+3s_{j}]}\mathcal{E}_{B_{3s_{j}}(y_{j})}[\phi_{\nu'}](t)\lesssim\eta,\,\,\, j=1,\ldots,N,
\]
where the implicit constant is independent of $\eta$. Hence, choosing
$\eta>0$ small enough we can claim:
\[
\limsup_{\nu'\rightarrow\infty}\sup_{t\in[t_{j}-3s_{j},t_{j}+3s_{j}]}\mathcal{E}_{B_{3s_{j}}(y_{j})}[\phi_{\nu'}](t)\leq\frac{1}{2}\epsilon_{s},\,\,\, j=1,\ldots,N,
\]
with $N=N(x_{0})$ and $r_{3}=r_{3}(x_{0})$ now.

Proceeding this way for a countable dense set of points $x_{0}\in S_{1}^{0}\setminus\Sigma$,
we obtain ultimately a countable cover of $C_{[1,2]}^{0}\setminus\cup_{i}\varrho_{i}$
that we can use together with the compactness Lemma \ref{prop:Simple-compactness-result.}
to construct a subsequence for $\left\{ \phi_{\nu}\right\} _{\nu\in\mathbb{N}}$
via the diagonal process, to which we restrict ourselves without changing
notation this time, such that (\ref{eq:THING2}) hold for a wave map
$\phi\in(H_{t,x}^{3/2-\epsilon})_{loc}(C_{[1,2]}^{0}\setminus\cup_{i}\varrho_{i})$
with $\partial_{\rho}\phi=0$. By construction, it can be seen immediately
that the obtained map $\phi$ has energy bounded by $\mathcal{E}$
and we note the argument also yields (\ref{eq:Thing1half}) as desired.

To finish the proof of the lemma, we need to get the reverse inclusion
to (\ref{eq:Thing1half}). This follows however from a simple argument
by contradiction: suppose that there exists a point $(s_{i},y_{i})\in\varrho_{i}$
which is not contained in $\Sigma$. We can then run the above proof
with $(s_{i},y_{i})$ instead of $(1,x_{0})$ and obtain that the
full ray $\varrho_{i}$ is not contained in $\Sigma$, but that contradicts
the definition of $x_{i}$ from (\ref{eq:RaysDefinition}). Lemma
\ref{lem:ConcentrationCompactness} is therefore proved.
\end{proof}
To close the proof of the first part of Theorem \ref{thm:Main} it
is enough now to prove that the wave map $\phi$ obtained above must
in fact be constant. For this point, we will rely on a folklore fact
that finite energy self-similar wave maps do not exist in dimension
$2+1$ which we state in Proposition \ref{prop:SelfSimilar} below.
A self-contained proof of this proposition can be found in the work
of Sterbenz and Tataru \citep{TataruSterbenzWaveReg} (see section
4 there). 
\begin{prop}
\label{prop:SelfSimilar}Let $\phi$ be a smooth wave map in the interior
of the forward light cone $C^{0}$, having finite energy, $\mathcal{E}_{S_{t}^{0}}[\phi]\lesssim1$
$\forall t>0$, and satisfying the self-similarity condition $\partial_{\rho}\phi=0$.
Then $\phi$ must be constant.
\end{prop}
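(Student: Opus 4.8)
The plan is to pass to hyperbolic (CMC) coordinates, reduce the statement to a Liouville-type assertion for harmonic maps $\mathbb{H}^{2}\to\mathbb{S}^{n-1}$, and extract from the finite-energy hypothesis exactly the \emph{weighted} Dirichlet bound that forces such a map to be constant. \emph{Step 1 (reduction).} In the coordinates $(\rho,y,\theta)$ the Minkowski metric on $C^{0}$ reads $\mu=-d\rho^{\otimes2}+\rho^{2}g_{\mathbb{H}^{2}}$, so $\square=-\partial_{\rho}^{2}-\tfrac{2}{\rho}\partial_{\rho}+\rho^{-2}\Delta_{\mathbb{H}^{2}}$. Since $\partial_{\rho}\phi=0$, $\phi$ descends to a smooth map $\psi:=\phi|_{\mathbb{H}^{2}}:\mathbb{H}^{2}\to\mathbb{S}^{n-1}$, and by (\ref{eq:DensityHyperCoord}) the equation (\ref{eq:WMeq}) becomes $\Delta_{\mathbb{H}^{2}}\psi=-|\nabla_{\mathbb{H}^{2}}\psi|^{2}\psi$; thus $\psi$ is a smooth harmonic map from the hyperbolic plane into the sphere.

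\emph{Step 2 (finite energy as a weighted bound).} Contracting the stress--energy tensor (\ref{eq:StressEnergyTensor}) with the Killing field $\partial_{t}$ and applying Stokes' theorem exactly as in the proof of Lemma \ref{lem:MonotonicityLemma}, but over the \emph{compact} truncated region $\Omega_{\rho_{0}}:=\{\rho\geq\rho_{0}\}\cap\{t'\leq t_{0}\}\Subset C^{0}$ — whose boundary is a spatial disc $\Sigma_{1}\subset S_{t_{0}}$ together with a cap $\Sigma_{2}$ of the hyperboloid $\{\rho=\rho_{0}\}$ — and using $\partial_{\rho}\phi=0$ to evaluate $T(\partial_{\rho},\partial_{t})=\tfrac{t}{2\rho^{3}}|\nabla_{\mathbb{H}^{2}}\psi|^{2}$, one gets
\[
\tfrac12\!\int_{\Sigma_{1}}\!|\nabla_{t,x}\phi|^{2}\,dx=\int_{\Sigma_{2}}\!T(\partial_{\rho},\partial_{t})\,dA_{\rho_{0}}=\tfrac12\!\int_{\{y\leq Y(\rho_{0})\}}\!\cosh y\,|\nabla_{\mathbb{H}^{2}}\psi|^{2}\,dA_{\mathbb{H}^{2}},\qquad Y(\rho_{0})=\operatorname{arccosh}(t_{0}/\rho_{0}).
\]
Letting $\rho_{0}\downarrow0$, so $Y(\rho_{0})\uparrow\infty$, the left side increases to $\mathcal{E}_{S_{t_{0}}^{0}}[\phi]\lesssim1$, which yields the decisive bound $\int_{\mathbb{H}^{2}}\cosh y\,|\nabla_{\mathbb{H}^{2}}\psi|^{2}\,dA_{\mathbb{H}^{2}}<\infty$: the weight $\cosh y$ blows up at the ideal boundary, so this is strictly stronger than finite hyperbolic energy.

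\emph{Step 3 (conclusion).} Transfer $\psi$ to the flat unit disc $\mathbb{D}$, a conformal model of $\mathbb{H}^{2}$ with $\cosh y\sim(1-|z|)^{-1}$ near $\partial\mathbb{D}$; since in two dimensions the energy density times the area element is conformally invariant, Step 2 gives $\int_{\mathbb{D}}\frac{|\nabla\psi|^{2}}{1-|z|}\,dx<\infty$, hence $\int_{0}^{1}\frac{\mathcal{E}(r)}{1-r}\,dr<\infty$ for $\mathcal{E}(r):=\int_{S^{1}}|\nabla\psi(r,\theta)|^{2}\,d\theta$, so $\mathcal{E}(r_{k})\to0$ along some $r_{k}\uparrow1$. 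A Cauchy--Schwarz argument along radii (using $\int_{0}^{1}\mathcal{E}(r)\,dr<\infty$) shows $\psi(r_{k},\cdot)$ is Cauchy in $L^{2}(S^{1})$, and since $\|\partial_{\theta}\psi(r_{k},\cdot)\|_{L^{2}(S^{1})}\to0$ its limit is a constant $c\in\mathbb{S}^{n-1}$, with convergence in $C^{0}(S^{1})$ by Sobolev embedding. For $k$ large $\psi|_{\partial\mathbb{D}_{r_{k}}}$ lies in an arbitrarily small geodesic ball around $c$, so $d_{\mathbb{S}^{n-1}}(\psi,c)^{2}$, being the composition of $\psi$ (harmonic) with a function convex near $c$, is subharmonic on $\mathbb{D}_{r_{k}}$; by the maximum principle $\sup_{\mathbb{D}_{r_{k}}}d(\psi,c)=\sup_{\partial\mathbb{D}_{r_{k}}}d(\psi,c)\to0$, whence $\psi\equiv c$ and $\phi$ is constant.

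\emph{The main obstacle} lies in Step 2: one must carry out the energy--flux identity over regions staying away from both the vertex $(0,0)$ and the null boundary $\partial C$, and identify the limit of the hyperboloid flux as the \emph{weighted} hyperbolic energy; it is precisely this weight, not plain finiteness of energy, that is essential, mirroring the fact that finite-energy harmonic maps from the disc need not be constant whereas those whose gradient has this extra decay at $\partial\mathbb{D}$ must be. (As a byproduct, differentiating the truncated identities of Step 2 in $Y$ gives the conformal balance $\int_{S^{1}}|\partial_{y}\psi|^{2}\,d\theta=\int_{S^{1}}\sinh^{-2}\!y\,|\partial_{\theta}\psi|^{2}\,d\theta$ on every circle — equivalently, vanishing on circles of the holomorphic Hopf differential of $\psi$, which in fact forces it to vanish identically; this is not needed above, but provides an alternative route to the conclusion.)
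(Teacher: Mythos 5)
Your Steps 1 and 2 are sound, and they reproduce the reduction used in the source the paper cites for Proposition \ref{prop:SelfSimilar} (the paper itself gives no proof but refers to Section 4 of \citep{TataruSterbenzWaveReg}): in the CMC foliation the map restricts to a harmonic map $\psi:\mathbb{H}^{2}\rightarrow\mathbb{S}^{n-1}$, the flux of the $\partial_{t}$-current through the hyperboloids $\{\rho=\rho_{0}\}$ is exactly $\frac{1}{2}\int\cosh y\,|\nabla_{\mathbb{H}^{2}}\psi|^{2}dA$ over the corresponding cap, and letting $\rho_{0}\downarrow0$ converts the finite slice energy into the weighted bound. The extraction of circles $r_{k}\uparrow1$ with $\int_{S^{1}}|\nabla\psi(r_{k},\cdot)|^{2}d\theta\rightarrow0$ and the uniform convergence $\psi(r_{k},\cdot)\rightarrow c$ are also fine.

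The gap is the last sentence of Step 3. The composition $d_{\mathbb{S}^{n-1}}(\psi,c)^{2}$ is subharmonic only at those points of $\mathbb{D}_{r_{k}}$ whose image under $\psi$ lies in the region where the squared distance is geodesically convex (a ball of radius less than $\pi/2$ about $c$); you only know that the \emph{boundary circle} $\partial\mathbb{D}_{r_{k}}$ is mapped near $c$, not the interior, so you may not assert subharmonicity on $\mathbb{D}_{r_{k}}$, and the maximum principle is applied to a function not known to be subharmonic. The argument is circular: confining the interior image near $c$ is precisely what has to be proved, and it is not a removable technicality — a M\"obius-rescaled degree-one harmonic sphere $S^{2}\rightarrow S^{2}\subset\mathbb{S}^{n-1}$ restricted to the unit disc maps the boundary circle into an arbitrarily small ball about a point while covering almost all of $S^{2}$, so boundary smallness alone cannot force constancy; ruling out exactly such interior bubbles is the heart of the proposition. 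The standard completion (and the one behind the cited Section 4 of \citep{TataruSterbenzWaveReg}) is a Lemaire-type rigidity argument via the holomorphic Hopf differential $\Phi\,dz^{2}$ and the Pohozaev/boundary identity, not a convexity argument. Your parenthetical aside does not close the gap either: the circle-average balance $\int_{S^{1}}|\partial_{y}\psi|^{2}d\theta=\int_{S^{1}}\sinh^{-2}y\,|\partial_{\theta}\psi|^{2}d\theta$ holds automatically for \emph{every} harmonic map on $\mathbb{H}^{2}$ smooth at the origin (it is the vanishing of the circle averages of $\Phi$, i.e.\ the mean value property), so it cannot "force the Hopf differential to vanish identically". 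What your good circles do give is $\int_{S^{1}}|z^{2}\Phi(r_{k}e^{i\theta})|\,d\theta\lesssim\mathcal{E}(r_{k})\rightarrow0$, whence $\Phi\equiv0$ by the Poisson representation on interior discs; but weak conformality plus finite energy still does not yield constancy without a further step (branched minimal discs are not constant), so the endgame — using the boundary collapse together with $\Phi\equiv0$, or a genuine Lemaire/unique-continuation argument — remains to be supplied.
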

Consider the wave map $\phi$ from Lemma \ref{lem:ConcentrationCompactness}.
By homogeneity, we can extend it to:
\[
\phi:C^{0}\setminus\bigcup_{i=1}^{I}\varrho_{i}\longrightarrow\mathbb{S}^{n-1},
\]
with finite energy $\mathcal{E}_{S_{t}^{0}}[\phi]\leq\mathcal{E}$
$\forall t>0$, locally in $H_{t,x}^{\frac{3}{2}-\epsilon}$ and satisfying
$\partial_{\rho}\phi=0$. Let us note here that we were considering
the unit time interval $[1,2]$ in (\ref{eq:THING2}) just in order
to simplify the task of keeping track of the dependence of implicit
constants. It is easy to see that the arguments above lead to local
convergence of the sequence $\phi_{\nu}$ to the map $\phi$ on all
of $C^{0}\setminus\cup_{i}\varrho_{i}$. This is however a purely
qualitative statement.

Restricting $\phi$ to the unit hyperbolic plane $\mathbb{H}^{2}$
gives rise to a harmonic map of locally finite energy, by (\ref{eq:EquivalenceControl}),
defined away from a finite set of points given by $\mathbb{H}^{2}\cap\bigcup_{i=1}^{I}\varrho_{i}$.
By the regularity theory due to Hélein \citep{Helein}, we obtain
in fact a smooth harmonic map away from the above collection of points.
But then, by the removable singularity theorem of Sacks and Uhlenbeck
\citep{SacksUhlenbeck} we can extend $\phi$ to a smooth harmonic
map on the whole of the hyperbolic plane $\mathbb{H}^{2}$, which
in turn means that, by homogeneity again, we could have extended $\phi$
across the rays $\varrho_{i}$ to a smooth finite energy self-similar
wave map on $C^{0}$. By Proposition \ref{prop:SelfSimilar}, $\phi$
has to be a constant.

The first point of Theorem \ref{thm:Main} is therefore established,
given that $\Sigma$ must be non-trivial by the concentration of time-like
energy assumption (\ref{eq:TimeLikeEnergyConcentration}).

\subsection{Dispersive property for null-concentration.\label{sub:Dispersive-property}}

This short section is devoted to the description of the parts of the
sequence that escape into the null boundary. We proceed first, borrowing
arguments from section 6.1 of \citep{TataruSterbenzWaveReg}, by constructing
extensions for the maps $\phi_{\nu}$ outside the light cone with
asymptotically vanishing energy there (we note that, if considering
the non-scattering problem, those have been already constructed in
section 6.2 of \citep{TataruSterbenzWaveReg}).

Relying on the flux decay estimate (\ref{eq:Extension&Flux0}) and
using the angular part of $\mathcal{F}_{[\varsigma_{\nu},\varsigma_{\nu}^{-1}]}[\phi_{\nu}]$,
see the expression in (\ref{eq:Energy-FluxIdentity}), we can find
by the pigeonhole principle a sequence $\tau_{\nu}\in[2,3]$ such
that:
\[
\int_{\partial S_{\tau_{\nu}}}\left|r^{-1}\partial_{\theta}\phi_{\nu}(\tau_{\nu})\right|^{2}d\theta\lesssim\epsilon_{\nu}^{\frac{1}{2}}.
\]
Hence, as in Remark \ref{Rem:UniformityInTime}, we get that $\phi_{\nu}(\partial S_{\tau_{\nu}})$
is contained in a chart of radius $O(\epsilon_{\nu}^{1/4})$ and so
we can build smooth spatial extensions $\phi'_{\nu}[\tau_{\nu}]\in T(\mathbb{S}^{n-1})$
of $\phi{}_{\nu}[\tau_{\nu}]$, satisfying the energy control:
\[
\mathcal{E}[\phi'_{\nu}](\tau_{\nu})-\mathcal{E}_{S_{\tau_{\nu}}}[\phi{}_{\nu}]\lesssim\epsilon_{\nu}^{\frac{1}{2}}.
\]
We solve then the wave maps equation with initial data $\phi'_{\nu}[\tau_{\nu}]$
backwards in time for $t\in[\varsigma_{\nu},\tau_{\nu}]$. By the
finite speed of propagation property, the solution agrees with $\phi_{\nu}$
on $C_{[\varsigma_{\nu},\tau_{\nu}]}$, hence let us denote it by
$\phi_{\nu}$ (abusing slightly notation). Moreover, relying again
on the assumption (\ref{eq:Extension&Flux0}) and using the conservation
of energy law (\ref{eq:EnergyConservation}) together with the energy-flux
identity (\ref{eq:Energy-FluxIdentity}), we propagate to all of the
time interval $[\varsigma_{\nu},\tau_{\nu}]$ the smallness of the
energy exterior to the light cone:
\[
\sup_{t\in[\varsigma_{\nu},\tau_{\nu}]}\left(\mathcal{E}[\phi{}_{\nu}](t)-\mathcal{E}_{S_{t}}[\phi{}_{\nu}]\right)\lesssim\epsilon_{\nu}^{\frac{1}{2}},
\]
which in particular guarantees smoothness of the extension on all
of $[\varsigma_{\nu},\tau_{\nu}]\times\mathbb{R}^{2}$.

Another consequence of the flux decay estimate (\ref{eq:Extension&Flux0})
that we record here, is the following weighted control:
\begin{equation}
\sup_{t\in[1,2]}\int_{S_{t}}\frac{1}{(t-\left|x\right|+\epsilon_{\nu})^{\frac{1}{2}}}\left(\left|L\phi_{\nu}(t)\right|^{2}+\left|r^{-1}\partial_{\theta}\phi_{\nu}(t)\right|^{2}\right)dx\lesssim1,\label{eq:WeightedControl}
\end{equation}
direct consequence of Lemma 3.2 in \citep{TataruSterbenzWaveReg},
and constitutes an important ingredient in the elimination of sharp
pockets of null energy (see section 6.3 of \citep{TataruSterbenzWaveReg}).

Regarding the interior of the cone, by the previous section we can
pick a monotonically decreasing sequence of scales $\delta_{\nu}\downarrow0$,
starting with $\delta_{0}:=\frac{1}{10}\mathrm{dist}(\cup_{i}\varrho_{i},\partial C_{[1,2]})$,
such that: 
\begin{equation}
\lim_{\nu\rightarrow\infty}\sup_{t_{0}\in[1,2]}\mathcal{E}_{S_{t_{0}}^{\delta_{\nu}}\setminus\cup_{i}B_{\delta_{\nu}}(\varrho_{i}(t_{0}))}[\phi_{\nu}]=0,\label{eq:LowestConcentrationScale}
\end{equation}
which are in some sense the slowest concentration scales, i.e. have
the property that:
\begin{equation}
\phi_{\nu}(t_{0}+\delta_{\nu}t,\varrho_{i}(t_{0})+\delta_{\nu}x)\longrightarrow c_{\phi}\in\mathbb{S}^{n-1}\,\,\,\mathrm{on}\,\,\,([-4,4]\times B_{4})\setminus\varrho_{i},\label{eq:ConstantOutside}
\end{equation}
locally in $C_{t}^{0}(H_{x}^{1})\cap C_{t}^{1}(L_{x}^{2})$, where
the constant $c_{\phi}$ corresponds to the wave map $\phi$ from
(\ref{eq:THING2}), for any given $t_{0}\in(1,2)$ and $i=1,\ldots,I$.
This can be obtained upon taking $\delta_{\nu}$ tending slower to
$0$, which will not break condition (\ref{eq:LowestConcentrationScale}).
Hence, by pigeonholing, we can choose a sequence of radii $\sigma_{\nu}=\sigma_{\nu}(t_{0},i)\in(3,4)$
such that:
\[
\int_{\partial B_{\sigma_{\nu}}}\left|\nabla_{t,x}\left[\phi_{\nu}(t_{0}+\delta_{\nu}t,\varrho_{i}(t_{0})+\delta_{\nu}x)\right]\right|^{2}d\theta\longrightarrow0,
\]
which enables us, as before, to construct extensions into $B_{\sigma_{\nu}}$
that have asymptotically vanishing energy. That is we cut off the
bubbles from the body of the map. More precisely, we choose a sequence
of maps $(\varpi_{i,t_{0},\nu},\partial_{t}\varpi_{i,t_{0},\nu})\in T(\mathbb{S}^{n-1})$
defined on $B_{\sigma_{\nu}}$ such that:
\[
\nabla_{t,x}\left[\phi_{\nu}(t_{0}+\delta_{\nu}\cdot,\varrho_{i}(t_{0})+\delta_{\nu}\cdot)|_{\left\{ t=0\right\} \times B_{4}\setminus B_{\sigma_{\nu}}}+\varpi_{i,t_{0},\nu}(\cdot)\right]\longrightarrow0\,\,\,\mathrm{in}\,\,\, L_{x}^{2}(B_{4}),
\]
and performing this surgery for each $i=1,\ldots,I$, we obtain smooth
maps:
\[
\varpi_{t_{0},\nu}[t_{0}]:=\phi_{\nu}[t_{0}]|_{\mathbb{R}_{x}^{2}\setminus\cup_{i}B_{\delta_{\nu}\sigma_{\nu}}(\varrho_{i}(t_{0}))}+\sum_{i=1}^{I}(\varpi_{i,t_{0},\nu},\frac{1}{\delta_{\nu}}\partial_{t}\varpi_{i,t_{0},\nu})\left(\frac{x-\varrho_{i}(t_{0})}{\delta_{\nu}}\right),
\]
satisfying by construction:
\begin{equation}
\nabla_{t,x}\varpi_{t_{0},\nu}(t_{0})\longrightarrow0\,\,\,\mathrm{in}\,\,\,(L_{x}^{2})_{loc}\left(\mathbb{R}^{2}\setminus\left\{ r=t_{0}\right\} \right).\label{eq:INTERIORDECAY}
\end{equation}

Moreover, fixing $t_{0}\in[1+\delta_{0},2-\delta_{0}]$, we can naturally
view $\varpi_{t_{0},\nu}[t_{0}]$ as defined on the time slice $S_{t_{0}}$,
and solve the wave maps equation with initial data $\varpi_{t_{0},\nu}[t_{0}]$
obtaining a smooth solution on $[t_{0}-\delta_{0},t_{0}+\delta_{0}]$
provided we work with $\nu$ large enough, relying on the finite speed
of propagation property (which tells us that $\varpi_{t_{0},\nu}$
agrees with $\phi_{\nu}$ near and beyond the null boundary, at least
away from $C_{[t_{0}-\delta_{0},t_{0}+\delta_{0}]}^{2\delta_{0}}$),
and the small energy regularity via (\ref{eq:INTERIORDECAY}). The
choice of $\delta_{0}$ is not the most optimal one, but here we are
rather concerned with its independence from $\nu$. It is immediate
then that, 
\[
\nabla_{t,x}\varpi_{t_{0},\nu}\longrightarrow0\,\,\,\mathrm{in}\,\,\, C_{t}^{0}(L_{x}^{2})_{loc}\left(([t_{0}-\delta_{0},t_{0}+\delta_{0}]\times\mathbb{R}^{2})\setminus\partial C_{[t_{0}-\delta_{0},t_{0}+\delta_{0}]}\right),
\]
as desired in Theorem \ref{thm:Main}, and furthermore the weighted
estimate (\ref{eq:WeightedControl}) is inherited by the maps $\varpi_{t_{0},\nu}$:
\begin{equation}
\sup_{t\in[t_{0}-\tau,t_{0}+\tau]}\int_{S_{t}}\frac{1}{(t-\left|x\right|+\epsilon_{\nu})^{\frac{1}{2}}}\left(\left|L\varpi_{t_{0},\nu}(t)\right|^{2}+\left|r^{-1}\partial_{\theta}\varpi_{t_{0},\nu}(t)\right|^{2}\right)dx\lesssim1,\label{eq:NewWeightedControl}
\end{equation}
giving us the possibility to apply the following lemma of Sterbenz
and Tataru from \citep{TataruSterbenzWaveReg} (see sections 6.3 and
6.4 there), to get the energy dispersion norm of $\varpi_{t_{0},\nu}$
asymptotically vanishing and conclude on the second point of Theorem
\ref{thm:Main}.
\begin{lem}
\emph{\label{lem:NoTimeLikeEnergyImplyDispersive}(Sterbenz and Tataru
\citep{TataruSterbenzWaveReg})}. Consider tuples $\{(\varphi_{\nu},\partial_{t}\varphi_{\nu})\}_{\nu\in\mathbb{N}}$
of Schwartz functions on $\mathbb{R}^{2}$ satisfying, for some sequence
$\epsilon_{\nu}\downarrow0$ and a bound $\mathcal{E}>0$:
\[
\left\Vert \nabla_{t,x}\varphi_{\nu}\right\Vert _{L_{x}^{2}}^{2}\lesssim\mathcal{E},\,\,\,\left\Vert \nabla_{t,x}\varphi_{\nu}\right\Vert _{L_{x}^{2}(\mathbb{R}^{2}\setminus B_{1})}^{2}\lesssim\epsilon_{\nu}^{\frac{1}{2}}\mathcal{E},
\]
\[
\int_{B_{1}}\frac{1}{(1-\left|x\right|+\epsilon_{\nu})^{\frac{1}{2}}}\left(\left|L\varphi_{\nu}\right|^{2}+\left|r^{-1}\partial_{\theta}\varphi_{\nu}\right|^{2}\right)dx\lesssim\mathcal{E},
\]
such that for some given $\epsilon>0$:
\[
\sup_{k}\left(2^{-k}\left\Vert P_{k}\nabla_{t,x}\varphi_{\nu}\right\Vert _{L_{x}^{\infty}}\right)>\epsilon.
\]
Then, there exist constants $0<\gamma(\epsilon,\mathcal{E})<1$ and
$\varepsilon(\epsilon,\mathcal{E})>0$ for which:
\[
\int_{B_{\gamma(\epsilon,\mathcal{E})}}\left|\nabla_{t,x}\varphi_{\nu}\right|^{2}dx\geq\varepsilon(\epsilon,\mathcal{E}),\,\,\,\forall\nu\in\mathbb{N}.
\]

\end{lem}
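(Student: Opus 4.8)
The plan is to argue by contradiction, turning the lower bound on the energy dispersion into a concentrated ``pocket'' of energy and then showing that such a pocket cannot be squeezed onto the light cone $\{|x|=1\}$ without violating the weighted bound. Suppose the conclusion fails; passing to a subsequence we may assume $\int_{B_{\gamma_\nu}}|\nabla_{t,x}\varphi_\nu|^2\,dx\to 0$ for some $\gamma_\nu\uparrow 1$. Together with the exterior bound $\|\nabla_{t,x}\varphi_\nu\|_{L^2_x(\mathbb R^2\setminus B_1)}^2\lesssim\epsilon_\nu^{1/2}\mathcal E$, this forces essentially all of the energy into the shrinking annular shell $\Sigma_\nu:=\{\gamma_\nu<|x|\le 1\}$, of width $w_\nu:=1-\gamma_\nu\to 0$; in particular $\nabla_{t,x}\varphi_\nu\to 0$ in $L^2(\mathbb R^2\setminus\Sigma_\nu)$.

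From $\sup_k 2^{-k}\|P_k\nabla_{t,x}\varphi_\nu\|_{L^\infty_x}>\epsilon$ and the Schwartz decay of the Littlewood--Paley kernels, a standard argument (pairing with $P_{k_\nu}$ by Cauchy--Schwarz and discarding the Schwartz tail) produces a point $x_\nu$, an exponent $k_\nu$ and a radius $r_\nu\sim 2^{-k_\nu}$ with $\int_{B(x_\nu,r_\nu)}|\nabla_{t,x}\varphi_\nu|^2\,dx\gtrsim_{\epsilon,\mathcal E}1$, the dispersion being realized at spatial frequency $\sim r_\nu^{-1}$ near $x_\nu$. Since this energy must lie in $\Sigma_\nu$, the pocket sits within $O(r_\nu)+w_\nu$ of $\{|x|=1\}$. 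If the $k_\nu$ remain bounded the pocket is macroscopic and the argument is easy: the weak $L^2$ limit of $\nabla_{t,x}\varphi_\nu$ then vanishes on a fixed ball about $\lim x_\nu$ (by the first paragraph), while its fixed low-frequency piece stays nontrivial by Bernstein and local compactness --- a contradiction. So assume $k_\nu\to\infty$, hence $r_\nu\to 0$.

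Now the weighted control enters. Over $\Sigma_\nu$ the weight satisfies $(1-|x|+\epsilon_\nu)^{-1/2}\gtrsim(w_\nu+\epsilon_\nu)^{-1/2}\to\infty$, so the third hypothesis gives $\int_{\Sigma_\nu}\bigl(|L\varphi_\nu|^2+|r^{-1}\partial_\theta\varphi_\nu|^2\bigr)\,dx\to 0$, and combined with the first paragraph, $L\varphi_\nu\to 0$ and $r^{-1}\partial_\theta\varphi_\nu\to 0$ in $L^2(\mathbb R^2)$. Set $\tilde\varphi_\nu(x):=\varphi_\nu(x_\nu+r_\nu x)$, which preserves the energy in two dimensions, and pass to a weak $L^2$ limit $\nabla_{t,x}\tilde\varphi_\nu\rightharpoonup\nabla_{t,x}\psi$ with $\|\nabla_{t,x}\psi\|_{L^2(\mathbb R^2)}<\infty$. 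Since $x_\nu/|x_\nu|\to e$ for some unit $e$ and since the $L^2$ norms of $L\varphi_\nu$ and $r^{-1}\partial_\theta\varphi_\nu$ are rescaling-invariant, the limit satisfies $\partial_t\psi+e\cdot\nabla_x\psi=0$ and $e^\perp\cdot\nabla_x\psi=0$ a.e.\ on $\mathbb R^2$, so $\psi$ is a function of $x\cdot e$ alone. On the other hand, after rescaling the dispersion is realized at spatial frequency $\sim 1$ near the origin, and a frequency-localized $L^2$-bounded sequence converges locally uniformly, so a fixed low-frequency Littlewood--Paley component of $\nabla_{t,x}\psi$ is nonzero near the origin and $\psi$ is non-constant. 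But a non-constant function of $x\cdot e$ on $\mathbb R^2$ has gradient constant along each line $\{x\cdot e=\mathrm{const}\}$, so $\|\nabla_x\psi\|_{L^2(\mathbb R^2)}=\infty$; this contradiction proves the lemma.

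The delicate point is the third paragraph: making the ``sharp null pocket'' picture precise and carrying it through the rescaling. One must check that the tangential ($L$ and angular) derivatives genuinely degenerate on all of $\Sigma_\nu$, that the rescaled energy stays confined to the rescaled shell so that the weak limit $\psi$ is globally one-dimensional (and not merely so on a ball), and simultaneously that the rescaled dispersion remains at bounded frequency so that $\psi$ is nontrivial. This tension --- the dispersion forces $\psi\ne\mathrm{const}$, while the pocket structure forces $\psi$ to depend on one variable only and hence to have infinite energy --- is exactly Sterbenz and Tataru's elimination of sharp pockets of null energy, and is the substance of the proof; the rest (the concentration-compactness extraction, the passage to the limit) is routine.
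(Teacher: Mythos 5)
The paper itself contains no proof of this lemma: it is imported from Sterbenz and Tataru \citep{TataruSterbenzWaveReg}, Sections 6.3--6.4, whose argument is direct and quantitative --- the pointwise energy-dispersion bound is tested against the Littlewood--Paley kernel, the kernel integral is split between the thin collar around $\{|x|=1\}$ and its complement, and on the collar the weighted hypothesis controls the $L$ and angular components while the thinness of the collar controls the remaining $\underline{L}$ component; this yields explicit constants $\gamma(\epsilon,\mathcal{E})$, $\varepsilon(\epsilon,\mathcal{E})$. Your proof takes a genuinely different, softer route: contradiction, rescaling of an $O(1)$ energy pocket extracted from the dispersion bound, and a rigidity statement for the weak limit (a curl-free $L^{2}(\mathbb{R}^{2})$ field parallel to a fixed direction $e$ and annihilated by $e^{\perp}\cdot\nabla_{x}$ must vanish, while the dispersion pairing --- which after rescaling by $2^{-k_{\nu}}$ is exactly the pairing of the rescaled gradient against a fixed Schwartz kernel --- keeps the limit nontrivial). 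The mechanism is the same elimination of sharp pockets of null energy, and your third paragraph is essentially sound as written: the global $L^{2}$ vanishing of $L\varphi_{\nu}$ and $r^{-1}\partial_{\theta}\varphi_{\nu}$, together with the locally uniform convergence of the radial direction field after rescaling, already makes the limit globally a function of $x\cdot e$, so the extra ``confinement'' checks you list at the end are not actually needed. What the compactness route loses compared with \citep{TataruSterbenzWaveReg} is uniformity: arguing by contradiction along the given sequence yields the conclusion only for $\nu$ large, with constants a priori depending on the sequence rather than only on $(\epsilon,\mathcal{E})$; this weaker, sequence-wise form is nevertheless all that is used when the lemma is applied to the maps $\varpi_{t_{0},\nu}$ in this paper.

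The one concrete gap is the frequency dichotomy in your second paragraph: ``$k_{\nu}$ bounded'' versus ``$k_{\nu}\to+\infty$'' omits the possibility $k_{\nu}\to-\infty$ along a subsequence, and your ``easy case'' argument (evaluating a fixed-frequency projection at a convergent point against a sequence tending weakly to zero) does not apply when the kernel scale degenerates. The repair is short and in fact removes the case analysis altogether: under the contradiction hypothesis, with $\Sigma_{\nu}:=B_{1}\setminus B_{\gamma_{\nu}}$, Bernstein's inequality (\ref{eq:Bernstein}) applied twice (from $L_{x}^{2}$ to $L_{x}^{\infty}$, and from $L_{x}^{1}$ to $L_{x}^{2}$ after splitting $\nabla_{t,x}\varphi_{\nu}$ into its parts on and off $\Sigma_{\nu}$), together with Cauchy--Schwarz on $\Sigma_{\nu}$, gives
\[
2^{-k}\left\Vert P_{k}\nabla_{t,x}\varphi_{\nu}\right\Vert _{L_{x}^{\infty}}\lesssim\left\Vert P_{k}\nabla_{t,x}\varphi_{\nu}\right\Vert _{L_{x}^{2}}\lesssim2^{k}\left\Vert \nabla_{t,x}\varphi_{\nu}\right\Vert _{L^{1}(\Sigma_{\nu})}+\left\Vert \nabla_{t,x}\varphi_{\nu}\right\Vert _{L^{2}(\mathbb{R}^{2}\setminus\Sigma_{\nu})}\lesssim2^{k}(1-\gamma_{\nu})^{\frac{1}{2}}\mathcal{E}^{\frac{1}{2}}+o(1),
\]
so the hypothesis $2^{-k_{\nu}}\Vert P_{k_{\nu}}\nabla_{t,x}\varphi_{\nu}\Vert_{L_{x}^{\infty}}>\epsilon$ forces $2^{k_{\nu}}\gtrsim_{\epsilon,\mathcal{E}}(1-\gamma_{\nu})^{-\frac{1}{2}}\rightarrow\infty$, and you may pass directly to the rescaling step. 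With this patch, and the routine observation that the weak limit of the rescaled spatial gradients is curl-free and hence itself a gradient, your argument is complete.
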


\subsection{Asymptotic decomposition.\label{sub:Asymptotic-decomposition.}}

We have reduced the proof of Theorem \ref{thm:Main} to carrying out
the bubbling analysis for our sequence of wave maps $\left\{ \phi_{\nu}\right\} _{\nu\in\mathbb{N}}$
near the set of time-like energy concentration:
\begin{equation}
(\cup_{i}B_{\delta_{\nu}}(\varrho_{i}))\cap C_{[1,2]}^{0}\subset C^{\delta_{0}},\label{eq:DeltaDistanceFromNull}
\end{equation}
recalling the set-up from Section \ref{sub:Dispersive-property},
where $\delta_{0}>0$ controls the distance to the null boundary $\partial C$
of the light cone, on which dependence of our constants will be considered
universal. The dynamics of the maps $\phi_{\nu}$ near distinct rays
$\varrho_{i}$ are completely disjoint and to get the claimed asymptotic
decomposition from Theorem \ref{thm:Main} we will have to select
the time slices $t_{\nu}$ rather carefully.

To start, in order to obtain from the decay assumption (\ref{eq:AsymptoticSelfSim})
the asymptotic stationarity at all scales for some suitably chosen
time slices, we consider a sequence of positive functions on the time
interval $[1,2]$ defined by: 
\[
\zeta_{\nu}(t):=\int_{S_{t}^{\delta_{0}}}\left|\partial_{\rho}\phi_{\nu}(t)\right|^{2}dx,
\]
so that $\left\Vert \zeta_{\nu}\right\Vert _{L_{t}^{1}[1,2]}\rightarrow0$
by (\ref{eq:AsymptoticSelfSim}). Then, looking at the corresponding
Hardy-Littlewood maximal functions:
\[
\mathcal{M}\zeta_{\nu}(s):=\sup_{r>0}\frac{1}{r}\int_{s-r}^{s+r}\zeta_{\nu}(t)dt,
\]
the well-known maximal inequality of Hardy-Littlewood tells us that
for any $\lambda>0$:
\[
\left|\left\{ \mathcal{M}\zeta_{\nu}>\lambda\right\} \right|\lesssim\frac{1}{\lambda}\left\Vert \zeta_{\nu}\right\Vert _{L_{t}^{1}}.
\]
Therefore taking a sequence $\lambda_{\nu}\sim\left\Vert \zeta_{\nu}\right\Vert _{L_{t}^{1}}^{1/2}\downarrow0$
decaying slowly enough compared to $\left\Vert \zeta_{\nu}\right\Vert _{L_{t}^{1}}$,
we can select a sequence of time slices $\left\{ t_{\nu}\right\} _{\nu\in\mathbb{N}}\subset(1+\delta_{0},2-\delta_{0})$
such that:
\begin{equation}
\mathcal{M}\zeta_{\nu}(t_{\nu})\longrightarrow0.\label{eq:MaxFunSlice}
\end{equation}
We should note here that this will not be quite the final sequence
of time slices we will claim the soliton resolution on as we might
need to perturb it a little at scales $\delta_{\nu}$. 

From there, we have to study for each $i=1,\ldots,I$, a sequence
of wave maps obtained from $\phi_{\nu}$, upon translating by $(t_{\nu},\varrho_{i}(t_{\nu}))$
and rescaling by $\delta_{\nu}$, which gives us by (\ref{eq:ConstantOutside}):
\begin{equation}
\widetilde{\phi}_{i,\nu}(\cdot):=\phi_{\nu}(t_{\nu}+\delta_{\nu}\cdot,\varrho_{i}(t_{\nu})+\delta_{\nu}\cdot)\longrightarrow c_{\phi}\,\,\,\mathrm{on}\,\,\,\left([-4,4]\times B_{4}\right)\setminus\varrho_{i},\label{eq:NewConstantOutside}
\end{equation}
locally in $C_{t}^{0}(H_{x}^{1})\cap C_{t}^{1}(L_{x}^{2})$. Moreover
from (\ref{eq:MaxFunSlice}), denoting by $X_{i}$ the unit constant
time-like vector field pointing in the direction of the line $\varrho_{i}$,
we have:
\begin{equation}
\left\Vert X_{i}\widetilde{\phi}_{i,\nu}\right\Vert _{L_{t,x}^{2}([-4,4]\times B_{4})}\longrightarrow0.\label{eq:ConstantTimeLike}
\end{equation}

Proceeding as in Remark \ref{Rem:UniformityInTime}, we interpolate
smoothly between $\widetilde{\phi}_{i,\nu}[0]$ and the constant initial
data $(c_{\phi},0)\in T(\mathbb{S}^{n-1})$ on $B_{4}\setminus B_{3}$,
replacing the map $\widetilde{\phi}_{i,\nu}$ with a wave map $\phi_{i,\nu}$
agreeing with the latter on $[-\frac{3}{2},\frac{3}{2}]\times B_{3/2}$
and constant outside $B_{6}$ (at most) for $t\in[-\frac{3}{2},\frac{3}{2}]$
by finite speed of propagation. This introduces an error of asymptotically
vanishing energy on this time interval, safely by (\ref{eq:NewConstantOutside}).
In fact, from the construction it is immediate that:
\begin{equation}
\phi_{i,\nu}-c_{\phi}\longrightarrow0\,\,\,\mathrm{in}\,\,\, C_{t}^{0}(L_{x}^{2})[-\frac{3}{2},\frac{3}{2}],\label{eq:StrongConstant}
\end{equation}
which improves to locally in $C_{t}^{0}(H_{x}^{1})\cap C_{t}^{1}(L_{x}^{2})$
away from $\varrho_{i}$, and we still have decay in a time-like direction:
\begin{equation}
\left\Vert X_{i}\phi_{i,\nu}\right\Vert _{L_{t,x}^{2}[-\frac{3}{2},\frac{3}{2}]}\longrightarrow0.\label{eq:LastTimeLikeDecay}
\end{equation}

Let us fix a smooth time cut-off $\chi(t)\in C_{0}^{\infty}(-\frac{3}{2},\frac{3}{2})$,
identically 1 on $[-1,1]$, so that we get now in position to apply
Proposition \ref{prop:Compensation-estimate.}, obtaining from (\ref{eq:CompensatedDecomposition})
the following decomposition:
\[
\chi\nabla_{t,x}\phi_{i,\nu}=\Theta_{i,\nu}+\Xi_{i,\nu},\,\,\,\mathrm{with}:
\]
\begin{equation}
\left\Vert \Theta_{i,\nu}\right\Vert _{L_{t,x}^{2}}\lesssim\left\Vert X_{i}\phi_{i,\nu}\right\Vert _{L_{t,x}^{2}[-\frac{3}{2},\frac{3}{2}]}+\left\Vert \phi_{i,\nu}-c_{\phi}\right\Vert _{L_{t}^{\infty}(L_{x}^{2})[-\frac{3}{2},\frac{3}{2}]},\label{eq:AppliedCompDec}
\end{equation}
\begin{equation}
\sum_{k\in\mathbb{Z}}\left\Vert P_{k}\Xi_{i,\nu}\right\Vert _{L_{t}^{1}(L_{x}^{2})}\lesssim1.\label{eq:AppliedCompDec(ii)}
\end{equation}
Furthermore, applying Lemma \ref{lem:HigherOrderTimeLike}, we get
from (\ref{eq:HigherOrderWeakDecomp}) a decomposition for second
order time-like derivative of $\phi_{i,\nu}$: 
\[
\chi X_{i}^{2}\phi_{i,\nu}=\Gamma_{i,\nu}+\Pi_{i,\nu},
\]
where the first item is a linear combination of:
\[
\sum_{k\in\mathbb{Z}}P_{k}\nabla_{x}[\Omega_{x}^{i,\nu}(P_{>k+10}\phi_{i,\nu})],\,\,\,\sum_{k\in\mathbb{Z}}P_{k}[\Omega_{x}^{i,\nu}(P_{\leq k+10}\nabla_{x}\phi_{i,\nu})],\,\,\,\mathrm{and}\,\,\,\Omega_{t,x}^{i,\nu}\nabla_{t,x}\phi_{i,\nu},
\]
\[
\mathrm{with}\,\,\,\Omega_{\alpha}^{i,\nu}:=\phi_{i,\nu}\partial_{\alpha}\phi_{i,\nu}^{\dagger}-\partial_{\alpha}\phi_{i,\nu}\phi_{i,\nu}^{\dagger},
\]
while the second one satisfies (\ref{eq:HigherOrderWeakEstimate}):
\begin{align}
 & \sum_{k\in\mathbb{Z}}2^{-2k}\left\Vert P_{k}\Pi_{i,\nu}\right\Vert _{L_{t,x}^{2}[-\frac{3}{2},\frac{3}{2}]}^{2}\label{eq:AppliedHighOrderDec}\\
 & \lesssim(1+\left\Vert X_{i}\phi_{i,\nu}\right\Vert _{L_{t,x}^{2}[-\frac{3}{2},\frac{3}{2}]})\left\Vert X_{i}\phi_{i,\nu}\right\Vert _{L_{t,x}^{2}[-\frac{3}{2},\frac{3}{2}]}\nonumber \\
 & \,\,\,\,\,\,\,\,\,+(1+\left\Vert \phi_{i,\nu}-c_{\phi}\right\Vert _{L_{t}^{\infty}(L_{x}^{2})[-\frac{3}{2},\frac{3}{2}]})\left\Vert \phi_{i,\nu}-c_{\phi}\right\Vert _{L_{t}^{\infty}(L_{x}^{2})[-\frac{3}{2},\frac{3}{2}]}.\nonumber 
\end{align}

We note that the implicit constants, including the factors in the
linear combination for $\Gamma_{i,\nu}$, depend only on the energy
bound $\mathcal{E}$ from (\ref{eq:EnergyBounAssumtion}) and the
distance $\delta_{0}$ to the null boundary $\partial C$ from (\ref{eq:DeltaDistanceFromNull}),
hence can be considered universal for the rest of the argument. 

With this understood, we define non-negative functions $\vartheta_{i,\nu}$,
$\xi_{i,\nu}$, $\zeta{}_{i,\nu}$, and $\pi_{i,\nu}$ for $i=1,\ldots,I$
and $t\in[-1,1]$, setting:
\[
\theta_{i,\nu}(t):=\left\Vert \Theta_{i,\nu}(t)\right\Vert _{L_{x}^{2}}^{2}\,\,\,\mathrm{with}\,\,\,\left\Vert \theta_{i,\nu}\right\Vert _{L_{t}^{1}}\longrightarrow0,
\]
\[
\xi_{i,\nu}(t):=\sum_{k\in\mathbb{Z}}\left\Vert P_{k}\Xi_{i,\nu}(t)\right\Vert _{L_{x}^{2}}\,\,\,\mathrm{with}\,\,\,\left\Vert \xi_{i,\nu}\right\Vert _{L_{t}^{1}}=\sum_{k\in\mathbb{Z}}\left\Vert P_{k}\Xi_{i,\nu}(t)\right\Vert _{L_{t}^{1}(L_{x}^{2})}\lesssim1,
\]

\[
\zeta_{i,\nu}(t):=\left\Vert X_{i}\phi_{i,\nu}(t)\right\Vert _{L_{x}^{2}}^{2},\,\,\,\mathrm{so}\,\,\,\mathrm{that}\,\,\,\left\Vert \zeta_{i,\nu}\right\Vert _{L_{t}^{1}}\longrightarrow0,
\]
as well as:
\[
\pi_{i,\nu}(t)=\sum_{k\in\mathbb{Z}}2^{-2k}\left\Vert P_{k}\Pi_{i,\nu}(t)\right\Vert _{L_{x}^{2}}^{2},\,\,\,\mathrm{where}\,\,\,\left\Vert \pi_{i,\nu}\right\Vert _{L_{t}^{1}}=\sum_{k\in\mathbb{Z}}2^{-2k}\left\Vert P_{k}\Pi_{i,\nu}(t)\right\Vert _{L_{t,x}^{2}}^{2}\longrightarrow0,
\]
by (\ref{eq:AppliedCompDec}) and (\ref{eq:AppliedCompDec(ii)}),
(\ref{eq:ConstantTimeLike}) and (\ref{eq:StrongConstant}), and finally
(\ref{eq:AppliedHighOrderDec}).

We will now choose a sequence of time slices where we uniformly control
$\theta_{i,\nu}$ and have all of the other functions above asymptotically
decaying. This will be used to prove decay of the weak Besov norm
$\dot{B}_{\infty}^{1,2}$ on the neck regions, and ultimately get
the energy collapsing there via the control on $\theta_{i,\nu}$.
At the same time, to start this argument, we shall build first the
weak bubble tree decomposition. To do so, one relies on the small
energy compactness result from Lemma \ref{prop:Simple-compactness-result.}
(which, for example, enables one to extract solitons from the standard
concentration-compactness procedure). Hence, for that reason, we will
need to control the maximal function $\mathcal{M}\zeta_{i,\nu}$ corresponding
to $\left\Vert X_{i}\phi_{i,\nu}(t)\right\Vert _{L_{x}^{2}}^{2}$
as well. 

Let us take $\lambda_{\nu}^{\theta_{i}}\sim\left\Vert \theta_{i,\nu}\right\Vert _{L_{t}^{1}}^{1/2}\downarrow0$,
$(\lambda_{\nu}^{\xi_{i}})^{-1}\left\Vert \xi_{i,\nu}\right\Vert _{L_{t}^{1}}<\epsilon$
for some arbitrarily small $\epsilon>0$ to be fixed according to
(\ref{eq:PREGOODSLICES}) below, as well as $\lambda_{\nu}^{\zeta_{i}}\sim\left\Vert \zeta_{i,\nu}\right\Vert _{L_{t}^{1}}^{1/2}\downarrow0$
and $\lambda_{\nu}^{\pi_{i}}\sim\left\Vert \pi_{i,\nu}\right\Vert _{L_{t}^{1}}^{1/2}\downarrow0$.
Hence, applying Chebyshev's inequality and the maximal inequality
of Hardy-Littlewood for $\mathcal{M}\zeta_{i,\nu}$, we get:
\begin{align}
 & \sum_{i=1}^{I}\left(\left|\left\{ \theta_{i,\nu}>\lambda_{\nu}^{\theta_{i}}\right\} \right|+\left|\left\{ \xi_{i,\nu}>\lambda_{\nu}^{\xi_{i}}\right\} \right|+\left|\left\{ \zeta_{i,\nu}>\lambda_{\nu}^{\zeta_{i}}\right\} \right|+\left|\left\{ \pi_{i,\nu}>\lambda_{\nu}^{\pi_{i}}\right\} \right|\right.\label{eq:PREGOODSLICES}\\
 & \,\,\,\,\,\,\,\left.+\left|\left\{ \mathcal{M}\zeta_{i,\nu}>\lambda_{\nu}^{\zeta_{i}}\right\} \right|\right)<\frac{1}{10}.\nonumber 
\end{align}
Therefore, we can choose a sequence of time slices $\left\{ t_{\nu}\right\} _{\nu\in\mathbb{N}}\subset[-\frac{1}{2},\frac{1}{2}]$,
that we may assume simply to be $t_{\nu}=0$ upon translating the
maps $\phi_{i,\nu}$ by $(t_{\nu},\varrho_{i}(t_{\nu}))$ without
changing notation for $\phi_{i,\nu}$ (and working on $[-\frac{1}{2},\frac{1}{2}]\times B_{6}$),
such that for all $i=1,\ldots,I$ we have the following control:
\begin{equation}
\theta_{i,\nu}(0)\longrightarrow0,\,\,\,\xi_{i,\nu}(0)\lesssim1,\,\,\,\zeta_{i,\nu}(0)\longrightarrow0,\,\,\,\pi_{i,\nu}(0)\longrightarrow0,\label{eq:GoodSliceProps}
\end{equation}
\[
\mathrm{and}\,\,\,\mathcal{M}\zeta_{i,\nu}(0)\longrightarrow0.
\]
These are the final time slices that we will consider and obtain the
asymptotic decomposition on, as claimed in our main theorem. We start
doing bubbling analysis on them just below. Here we just add the remark
that, upon working in (\ref{eq:PREGOODSLICES})-(\ref{eq:GoodSliceProps})
with the maximal functions for $\theta_{i,\nu}$, $\xi_{i,\nu}$ and
$\pi_{i,\nu}$ as well, it should be clear by end of the argument
that we can also get the energy collapsing result for almost every
time slice strictly within the lifespan of the fastest concentrating
solitons.

In the following lemma we present a preliminary version of the soliton
decomposition. It is essentially the one that we aim towards from
Theorem \ref{thm:Main}, but it contains errors that we shall call
\emph{necks} - those are wave maps on conformally degenerating annuli
such that once localized in space converge to a constant but when
considered on the whole annulus might carry a priori a non-trivial
amount of energy. Ruling out such a scenario will be the last step
in the proof of the main theorem. 

We note that the proof of this lemma relies on a covering argument
which goes back to at least Ding and Tian \citep{DingTian} and today
is pretty standard in the literature on bubbling analysis of harmonic
maps (and related areas, where some authors refer to as weak bubble
tree convergence). The lemma of course holds for any closed Riemannian
manifold as a target.
\begin{lem}
\label{lem:CoveringLemma}Passing to a subsequence, there exists for
each $i=1,\ldots,I$ a collection of $J_{i}\lesssim_{\mathcal{E}}1$
solitons $\omega_{j,i}$, $j=1,\ldots,J_{i}$, with corresponding
concentration points $a_{i,\nu}^{j}\in B_{1}$ converging to the origin,
and scales $\lambda_{i,\nu}^{j}\downarrow0$ satisfying the orthogonality
relations:
\begin{equation}
\frac{\lambda_{i,\nu}^{j}}{\lambda_{i,\nu}^{j'}}+\frac{\lambda_{i,\nu}^{j'}}{\lambda_{i,\nu}^{j}}+\frac{|a_{i,\nu}^{j}-a_{i,\nu}^{j'}|^{2}}{\lambda_{i,\nu}^{j}\lambda_{i,\nu}^{j'}}\longrightarrow\infty,\label{eq:OrthogonalityRelation}
\end{equation}
as $\nu\rightarrow\infty$ for $j$ and $j'$ distinct, such that:
\[
\phi_{i,\nu}(\lambda_{i,\nu}^{j}t,a_{\nu}^{j}+\lambda_{i,\nu}^{j}x)\longrightarrow\omega_{j,i}(t,x)\,\,\,\mathit{\mathit{on}}\,\,\,\mathbb{R}^{2+1}\setminus\cup_{q}\varrho_{q}^{j,i},
\]
locally in $C_{t}^{0}(H_{x}^{1})\cap C_{t}^{1}(L_{x}^{2})$ for a
collection of at most $J_{i}-1$ time-like geodesics $\varrho_{q}^{i,j}$
with direction $X_{i}$. Moreover, setting for any fixed positive
constant $C>0$:
\[
\lambda_{\mathrm{min},\nu}:=C\cdot\min_{i,j}\left\{ \lambda_{i,\nu}^{j}\right\} ,\,\,\,\nu\in\mathbb{N},
\]
we have the following asymptotic decomposition holding for $t\in[-\lambda_{\mathrm{min},\nu},\lambda_{\mathrm{min},\nu}]$:
\begin{equation}
\phi_{i,\nu}(t,x)-c_{\phi}=\sum_{j=1}^{J_{i}}\left(\omega_{j,i}\left(\frac{t}{\lambda_{i,\nu}^{j}},\frac{x-a_{i,\nu}^{j}}{\lambda_{i,\nu}^{j}}\right)-\omega_{j,i}(\infty)\right)+\mathcal{N}_{i,\nu}(t,x)+o_{L_{t}^{\infty}(\dot{H}_{x}^{1}\times L_{x}^{2})}(1),\label{eq:DecomWithNecks}
\end{equation}
where $\mathcal{N}_{i,\nu}$ stands for the wave map $\phi_{i,\nu}$
restricted to a collection of $K_{i}\lesssim_{\mathcal{E}}1$ sequences
of degenerating annuli:
\begin{equation}
[-\frac{r_{i,\nu}^{k}}{2},\frac{r_{i,\nu}^{k}}{2}]\times\left(B_{R_{i,\nu}^{k}}(x_{i,\nu}^{k})\setminus B_{r_{i,\nu}^{k}}(x_{i,\nu}^{k})\right)\subset[-\frac{1}{2},\frac{1}{2}]\times B_{3}\,\,\,\mathit{with}\,\,\,\lambda_{\mathrm{min},\nu}\ll r_{i,\nu}^{k}\ll R_{i,\nu}^{k},\label{eq:NeckDomains}
\end{equation}
such that we have:
\begin{equation}
\sup_{r_{i,\nu}^{k}\leq r\leq\frac{1}{2}R_{i,\nu}^{k}}\sup_{t\in[-\frac{r}{2},\frac{r}{2}]}\mathcal{E}_{B_{2r}(x_{i,\nu}^{k})\setminus B_{r}(x_{i,\nu}^{k})}[\phi_{i,\nu}](t)\longrightarrow0,\label{eq:NeckProperty}
\end{equation}
holding for each $k=1,\ldots,K_{i}$.\end{lem}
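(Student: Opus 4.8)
We plan to run the standard covering/concentration-compactness argument for bubble-tree convergence, in the spirit of Ding and Tian \citep{DingTian}, but with Lemma \ref{prop:Simple-compactness-result.} playing the role of the usual elliptic $\varepsilon$-regularity, and with the maximal-function bound $\mathcal{M}\zeta_{i,\nu}(0)\to 0$ from (\ref{eq:GoodSliceProps}) guaranteeing that control of the time-like direction $X_i$ survives every rescaling. Fix $i$. Since $\phi_{i,\nu}\to c_\phi$ away from $\varrho_i$ while a definite amount of energy concentrates along $\varrho_i$, a concentration-function argument at the slice $t=0$ produces a fastest scale $\lambda_{i,\nu}^{1}\downarrow 0$ and a center $a_{i,\nu}^{1}\to 0$ such that the rescaled maps $\psi_\nu(t,x):=\phi_{i,\nu}(\lambda_{i,\nu}^{1}t,a_{i,\nu}^{1}+\lambda_{i,\nu}^{1}x)$ carry at least $\epsilon_s$ of energy on $B_1$ at $t=0$ but at most $\epsilon_s$ on every unit ball in a large fixed region; converting this one-time smallness into spacetime smallness by energy monotonicity and the energy--flux identity (\ref{eq:Energy-FluxIdentity}), exactly as in the proof of Lemma \ref{lem:ConcentrationCompactness}, puts $\psi_\nu$ in the hypotheses of Lemma \ref{prop:Simple-compactness-result.} on fixed balls. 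The decisive point is that $\|X_i\psi_\nu\|_{L^2_{t,x}([-R,R]\times B_R)}^2\lesssim R\,\mathcal{M}\zeta_{i,\nu}(0)\to 0$ for each $R$, so on a subsequence (and on an exhaustion of balls, then by diagonalization) we get convergence in $C^0_t(H^1_x)\cap C^1_t(L^2_x)$ to a map $\omega_{1,i}$ with $X_i\omega_{1,i}=0$; composing with a Lorentz transformation carrying $\partial_t$ to $X_i$ and invoking the Sacks--Uhlenbeck removable singularity theorem, as in the Introduction, exhibits $\omega_{1,i}$ as a genuine soliton on $\mathbb{R}^{2+1}$ with $\omega_{1,i}(\infty)$ well-defined, and non-constant because the $\ge\epsilon_s$ lower bound on $B_1$ passes to the limit.

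Next we iterate. Subtracting the first bubble and re-running the concentration-function selection on the remainder yields further bubbles $\omega_{j,i}$ at scales $\lambda_{i,\nu}^{j}$, centers $a_{i,\nu}^{j}$; inside the rescaling defining a given $\omega_{j,i}$ there may be finitely many further concentration events along parallel time-like lines of direction $X_i$ — these are the $\varrho_q^{j,i}$, and $\omega_{j,i}$ is recovered only away from them, so the procedure is a finite tree. The orthogonality relations (\ref{eq:OrthogonalityRelation}) are forced by the selection: scales and centers that failed them would define the same node. Every node carries at least $\epsilon_s$ of energy, so by the global bound the tree has $J_i\lesssim_{\mathcal{E}}1$ nodes and the process stops. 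The usual energy-gap step then shows that, at $t=0$, $\nabla_{t,x}\bigl(\phi_{i,\nu}-c_\phi-\sum_j(\omega_{j,i}((\cdot-a_{i,\nu}^{j})/\lambda_{i,\nu}^{j})-\omega_{j,i}(\infty))\bigr)\to 0$ in $L^2_x$ off a bounded number of conformally degenerating annuli $B_{R_{i,\nu}^{k}}(x_{i,\nu}^{k})\setminus B_{r_{i,\nu}^{k}}(x_{i,\nu}^{k})$ with $\lambda_{\mathrm{min},\nu}\ll r_{i,\nu}^{k}\ll R_{i,\nu}^{k}$, which we declare to be the necks $\mathcal{N}_{i,\nu}$; extending this over $|t|\le\lambda_{\mathrm{min},\nu}$ via the $C^0_t(L^2_x)$ convergence — on which tiny interval every bubble is seen in a bounded time window and every neck annulus is still cut at $|t|\le r_{i,\nu}^{k}/2$ — gives the decomposition (\ref{eq:DecomWithNecks}).

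The heart of the matter, and the step I expect to be the main obstacle, is the neck estimate (\ref{eq:NeckProperty}): ruling out that a non-trivial amount of energy hides in some intermediate annulus at some intermediate time. For a single fixed dyadic scale this is an immediate consequence of the compactness lemma applied to the corresponding rescaling (small energy away from concentration, plus the time-like decay from the maximal bound, force strong convergence to a constant there). Uniformity over the whole continuum $r_{i,\nu}^{k}\le r\le \frac12 R_{i,\nu}^{k}$ and over $|t|\le r/2$ is obtained by contradiction: if along some $r_{i,\nu}^{k}\le \rho_\nu\le \frac12 R_{i,\nu}^{k}$ and $|s_\nu|\le\rho_\nu/2$ the annular energy $\mathcal{E}_{B_{2\rho_\nu}(x_{i,\nu}^{k})\setminus B_{\rho_\nu}(x_{i,\nu}^{k})}[\phi_{i,\nu}](s_\nu)$ stayed $\ge\delta>0$, then rescaling $\phi_{i,\nu}$ at $(s_\nu,\rho_\nu,x_{i,\nu}^{k})$ produces, by Lemma \ref{prop:Simple-compactness-result.} (applied on an exhaustion, peeling off at most finitely many further sub-bubbles), a soliton carrying $\ge\delta$ energy on $B_2\setminus B_1$, hence non-constant — an extra node not accounted for, contradicting the termination of the extraction and the energy budget $\mathcal{E}$. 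The passage from the pointwise-in-time statement to the supremum over $t\in[-r/2,r/2]$ again uses the energy--flux monotonicity (\ref{eq:Energy-FluxIdentity}) together with the uniform-in-time convergence of Lemma \ref{prop:Simple-compactness-result.} and Remark \ref{Rem:UniformityInTime}, exactly as in the elimination of the between-bubbles energy at a single scale. The remaining bookkeeping — choosing which annuli to call necks and checking their conformal degeneration — is routine once this contradiction mechanism is set up.
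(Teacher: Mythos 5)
Your overall architecture matches the paper's: a covering/bubble-tree extraction with Lemma \ref{prop:Simple-compactness-result.} playing the role of $\varepsilon$-regularity, the Hardy--Littlewood maximal bound $\mathcal{M}\zeta_{i,\nu}(0)\to 0$ supplying the rescaling-invariant time-like decay (your computation $\|X_i\psi_\nu\|_{L^2_{t,x}([-R,R]\times B_R)}^2\lesssim R\,\mathcal{M}\zeta_{i,\nu}(0)$ is exactly the estimate (\ref{eq:TimeLikeDecayScaled}) used in the paper), the counting of nodes by $\epsilon_s$-quanta of energy together with asymptotic orthogonality, the Hélein plus Sacks--Uhlenbeck upgrade to genuine solitons, and the neck property obtained by contradiction through the appearance of an extra non-constant bubble.

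However, there is a genuine gap at the step where you pass from convergence on a fixed small time interval around $t=0$ to the claimed convergence on all of $\mathbb{R}^{2+1}\setminus\cup_q\varrho_q^{j,i}$, with the exceptional set consisting of \emph{parallel time-like lines in direction $X_i$}. You justify this by ``energy monotonicity and the energy--flux identity (\ref{eq:Energy-FluxIdentity}), exactly as in the proof of Lemma \ref{lem:ConcentrationCompactness}'', followed by an exhaustion and diagonalization. This does not transfer: the energy--flux identity only propagates smallness over time intervals comparable to the radius of the ball on which smallness is known, and the monotonicity used in Lemma \ref{lem:ConcentrationCompactness} is along the scaling field $\partial_\rho$ with input (\ref{eq:AsymptoticSelfSim}), which is not the relevant direction after rescaling by $\lambda^j_{i,\nu}\downarrow 0$ about a concentration point. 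At rescaled times of order one or larger you do not know a priori where the energy sits, so Lemma \ref{prop:Simple-compactness-result.} (which requires $\le\epsilon_s$ energy uniformly on a spacetime block) cannot yet be applied there, and the assertion that further concentration occurs along $X_i$-parallel lines is precisely what must be proved. The paper closes this by redoing the monotonicity argument in the frame of the Lorentz boost $\Psi$ carrying $\partial_t$ to $X_i$: contracting the stress-energy tensor with $\chi(\bar{x})\partial_{\bar{t}}$ and using the very decay you computed yields the asymptotic monotonicity (\ref{eq:AsymptoticMonotonicitySoliton}), which propagates small-energy control along $X_i$-translates, identifies the concentration set at other times as the lines $\varrho_q^{j,i}$, and supplies the countable cover on which the compactness lemma and the diagonal argument run. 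Your proposal has the correct input but omits this propagation mechanism, so the full-spacetime convergence claim (and hence the identification of the $\varrho_q^{j,i}$, the definition of $\omega_{j,i}(\infty)$, and the validity of (\ref{eq:DecomWithNecks}) on the stated time window) is left unsupported as written; the remaining steps, including the neck contradiction and the passage to the time supremum over $|t|\le r/2$ via (\ref{eq:Energy-FluxIdentity}), are sound.
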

\begin{proof}
Let us fix $i=1,\ldots,I$, and suppress this subscript in the argument
below to lighten the notation. In the same spirit, we also never change
notation here whenever passing to a subsequence for $\{\phi_{\nu}\}_{\nu\in\mathbb{N}}$
while using Lemma \ref{prop:Simple-compactness-result.} as it will
be clear from the construction that we obtain in the end a countable
cover of a suitable neighborhood of $\left\{ t=0\right\} \times B_{3}$
on which we can rely to build via the diagonal process a final subsequence
that satisfies the claims of Lemma \ref{lem:CoveringLemma}.

Pick a sequence of points $a_{\nu}^{1}\in B_{1}$ with radii $\lambda_{\nu}^{1}\downarrow0$
such that:
\begin{equation}
\mathcal{E}_{B_{2\lambda_{\nu}^{1}}(a_{\nu}^{1})}[\phi_{\nu}](0)=\epsilon_{s}.\label{eq:FirstSoliton}
\end{equation}
Note that such a concentration point is guaranteed to exist by the
results of Section \ref{sub:Concentration-compactness} and the compactness
Lemma \ref{prop:Simple-compactness-result.}, and that by (\ref{eq:NewConstantOutside})
any energy concentration point would have to converge to the origin. 

Consider the sequence of balls $B_{2^{k}\lambda_{\nu}^{1}}(a_{\nu}^{1})$
with $k$ a positive integer, and choose the lowest $K_{0}=K_{0}(\{a_{\nu}^{1}\}_{\nu\in\mathbb{N}})\in\mathbb{N}$
such that the functions:
\[
r_{\nu}:\overline{B_{2^{K_{0}-1}}(a_{\nu}^{1})}\longrightarrow\mathbb{R}_{>0}
\]
\begin{equation}
x\longmapsto r_{\nu}(x):=\sup\left\{ r>0\,:\,\mathcal{E}_{B_{r}(x)}[\phi_{\nu}(\lambda_{\nu}^{1}\cdot,a_{\nu}^{1}+\lambda_{\nu}^{1}\cdot)](0)\leq\epsilon_{s}\right\} ,\label{eq:rnuFunctions}
\end{equation}
which are continuous as the wave maps $\phi_{\nu}$ are smooth, admit
a collective positive lower bound $r':=\liminf_{x,\nu}r_{\nu}(x)>0$
(assuming $K_{0}$ exists, the case when it does not is treated later
when we describe convergence to solitons at infinity). As a preliminary
step, relying on (\ref{eq:GoodSliceProps}) and the compactness Lemma
\ref{prop:Simple-compactness-result.}, we can obtain for the rescalings
of the maps $\phi_{\nu}$ at $a_{\nu}^{1}$, upon passing to a subsequence,
that:
\[
\phi_{\nu}(\lambda_{\nu}^{1}t,a_{\nu}^{1}+\lambda_{\nu}^{1}x)\longrightarrow\omega_{1}(t,x)\,\,\,\mathrm{on}\,\,\,[-\frac{r'}{3},\frac{r'}{3}]\times B_{2^{K_{0}-1}},
\]
in $C_{t}^{0}(H_{x}^{1})\cap C_{t}^{1}(L_{x}^{2})$ for some wave
map $\omega_{1}$ with regularity as in (\ref{eq:LimitRegularity})
and satisfying $X\omega_{1}=0$, with $X$ standing for the constant
time-like vector field $X_{i}$ from (\ref{eq:ConstantTimeLike}).
Therefore, the map $\omega_{1}$ is part of a soliton.

The time interval $[-\frac{r'}{3},\frac{r'}{3}]$ for the convergence
above will be improved considerably below by recalling the methods
from Section \ref{sub:Concentration-compactness}, see the proof of
(\ref{eq:ConstructionOfTheSoliton}). Now, we shall proceed instead
describing further $\omega_{1}$ in space. Slightly abusing terminology,
let us refer to $\omega_{1}$ as a soliton already from here, bearing
in mind that we will prove it is one shortly.

By construction, we can find at least one sequence of concentration
points:
\begin{equation}
a_{\nu}^{2}\in B_{2^{K_{0}}\lambda_{\nu}^{1}}(a_{\nu}^{1})\setminus B_{2^{K_{0}-1}\lambda_{\nu}^{1}}(a_{\nu}^{1}),\label{eq:NewSoliton0}
\end{equation}
bubbling off on the top of the soliton $\omega_{1}$ in the sense
that:
\begin{equation}
\mathcal{E}_{B_{2\lambda_{\nu}^{2}}(a_{\nu}^{2})}[\phi_{\nu}](0)=\epsilon_{s},\,\,\,\lambda_{\nu}^{2}\ll\lambda_{\nu}^{1},\label{eq:NewSoliton}
\end{equation}
where it is quite important to note that we have an equality above,
a fact that must hold by the compactness Lemma \ref{prop:Simple-compactness-result.}.

Let us consider a new sequence of concentration points satisfying
(\ref{eq:NewSoliton0}) and (\ref{eq:NewSoliton}) like $\left\{ a_{\nu}^{2}\right\} _{\nu\in\mathbb{N}}$,
in other words forming itself above the scales $\lambda_{\nu}^{1}$
and converging, upon passing to a subsequence, in the closure of $B_{2^{K_{0}}\lambda_{\nu}^{1}}(a_{\nu}^{1})$,
so that it suffices to work in $B_{2^{K_{0}+1}\lambda_{\nu}^{1}}(a_{\nu}^{1})$.
There are of course uncountably many of those, given the existence
of a single one, $\left\{ a_{\nu}^{2}\right\} _{\nu\in\mathbb{N}}$,
but we are going to consider equivalent all those for which the orthogonality
condition (\ref{eq:OrthogonalityRelation}) holds and pick only one
representative per equivalence class. That is, if a sequence $\left\{ a'_{\nu}\right\} _{\nu\in\mathbb{N}}$
satisfies (\ref{eq:NewSoliton0}) and (\ref{eq:NewSoliton}) but in
addition also has $\lambda'_{\nu}\sim\lambda_{\nu}^{2}$ with:
\[
\frac{\left|a_{\nu}^{2}-a'_{\nu}\right|}{\lambda_{\nu}^{2}}\lesssim1,
\]
then one can see that the maps $\phi_{\nu}(\lambda_{\nu}^{2}t,a_{\nu}^{2}+\lambda_{\nu}^{2}x)$
and $\phi_{\nu}(\lambda'_{\nu}t,a'_{\nu}+\lambda'_{\nu}x)$ would
converge on $[-2^{-1},2^{-1}]\times B_{2^{-1}}$, upon passing to
a subsequence directly by Lemma \ref{prop:Simple-compactness-result.},
to the same soliton up to translation that we should denote by $\omega_{2}$
as it was initially obtained from $a_{\nu}^{2}$ once the procedure
we are describing now for the soliton $\omega_{1}$ is completed and
applied to the soliton $\omega_{2}$. Hence the sequence $\left\{ a'_{\nu}\right\} _{\nu\in\mathbb{N}}$
should be discarded keeping $\left\{ a_{\nu}^{2}\right\} _{\nu\in\mathbb{N}}$.

Given the orthogonality relations (\ref{eq:OrthogonalityRelation})
holding between any two sequences of concentration points as above,
we note that we are left with only finitely many possibilities, say
$\{a_{\nu}^{j}\}_{\nu\in\mathbb{N}}$ with $j=2,\ldots,J'$. This
follows from the fact we are considering a sequence of functions $\left\{ \nabla_{t,x}\phi_{\nu}\right\} _{\nu\in\mathbb{N}}\subset L_{x}^{2}$,
bounded by the global energy control assumption (\ref{eq:EnergyBounAssumtion}),
and with $\nabla_{t,x}\phi_{\nu}$ concentrating definite amounts
of its $L_{x}^{2}$ norm, namely $\sqrt{\epsilon_{s}}$, note the
\emph{equality} in (\ref{eq:NewSoliton}), at different frequency
and/or spatial scales so that we can conclude that, since $L_{x}^{2}$
is a Hilbert space, we should have: 
\[
J'\lesssim\frac{1}{\epsilon_{s}}\mathcal{E},
\]
which is a universal bound for us as desired.

The collection $\{a_{\nu}^{j}\}_{\nu\in\mathbb{N}}$, $j=2,\ldots,J'$,
gives rise to solitons $\omega_{j}$, one for each $j$, by the same
procedure as described for $\omega_{1}$ and so from now on we should
be running for each of them the same construction as we are currently
considering for $\omega_{1}$.

From the point of view of $\omega_{1}$, we can subdivide the above
collection of sequences of energy concentration points into disjoint
families by considering the limit points $b_{q}^{1}\in\overline{B_{2^{K_{0}}\lambda_{\nu}^{1}}(a_{\nu}^{1})}\setminus B_{2^{K_{0}-1}\lambda_{\nu}^{1}}(a_{\nu}^{1})$,
indexed by $q=1,\ldots,Q'$ for some integer $Q'\leq J'$, to which
the sequences converge once rescaled by $\lambda_{\nu}^{1}$. So for
any $r>0$ small but fixed, we have by Lemma \ref{prop:Simple-compactness-result.}:
\[
\phi_{\nu}(\lambda_{\nu}^{1}t,a_{\nu}^{1}+\lambda_{\nu}^{1}x)\longrightarrow\omega_{1}(t,x)\,\,\,\mathrm{on}\,\,\,[-\frac{r'}{3},\frac{r'}{3}]\times\left(\overline{B_{2^{K_{0}}}}\setminus\cup_{q}B_{r}(b_{q}^{1})\right),
\]
in $C_{t}^{0}(H_{x}^{1})\cap C_{t}^{1}(L_{x}^{2})$ since the functions
$r_{\nu}$ from (\ref{eq:rnuFunctions}) extended to $\overline{B_{2^{K_{0}}}}\setminus\cup_{q}B_{r}(b_{q}^{1})$
admit a collective lower bound $r':=\liminf_{x,\nu}r_{\nu}(x)>0$
(provided $r>0$ is fixed of course as $r'$ depends on it). Understanding
the behavior of the maps $\phi_{\nu}$ as $r\downarrow0$ is linked
to the convergence of $\phi_{\nu}$ to solitons at the spatial infinity
and this is when the neck domains enter into our picture. We shall
discuss this straight after we finish the construction of the soliton
$\omega_{1}$ (and so for the other ones, $\omega_{j}$ above, in
parallel).

Considering the annuli $\overline{B_{2^{K_{0}+k}\lambda_{\nu}^{1}}(a_{\nu}^{1})}\setminus B_{2^{K_{0}+k-1}\lambda_{\nu}^{1}}(a_{\nu}^{1})$
one after the other and studying as above whether there are new sequences
of concentration points satisfying (\ref{eq:NewSoliton}), upgrading
the collection $\{a_{\nu}^{j}\}_{\nu\in\mathbb{N}}$, $j=2,\ldots,J'$,
accordingly upon checking the orthogonality relation (\ref{eq:OrthogonalityRelation})
holds for each new member (we should not change the notation for the
upgraded version), we must a reach an integer $K_{1}=K_{1}(\{a_{\nu}^{1}\}_{\nu\in\mathbb{N}},\epsilon_{s},\mathcal{E})\in\mathbb{N}$
such that for any $k\geq K_{1}$ the functions $r_{\nu}$ from (\ref{eq:rnuFunctions})
once considered on $\overline{B_{2^{k}}(a_{\nu}^{1})}\setminus B_{2^{k-1}}(a_{\nu}^{1})$
would admit a positive collective lower bound there. Note that this
situation could have occurred without passing by the previous bubbling
analysis induced by the existence of the integer $K_{0}$, e.g. if
we would have picked up the fastest concentrating soliton initially
for $\omega_{1}$.

From there, we let $k\rightarrow\infty$ with $r\downarrow0$ and
fully construct the soliton $\omega_{1}$ in the sense that we claim:
\begin{equation}
\phi_{\nu}(\lambda_{\nu}^{1}t,a_{\nu}^{1}+\lambda_{\nu}^{1}x)\longrightarrow\omega_{1}(t,x)\,\,\,\mathrm{on}\,\,\,\mathbb{R}^{2+1}\setminus\cup_{q}\varrho_{q}^{1},\label{eq:ConstructionOfTheSoliton}
\end{equation}
locally in $C_{t}^{0}(H_{x}^{1})\cap C_{t}^{1}(L_{x}^{2})$ for a
finite collection of geodesics $\varrho_{q}^{1}$, $q=1,\ldots,Q'$,
each passing through the corresponding point $b_{q}^{1}$, all with
direction $X$, and such that $X\omega_{1}=0$ there. To prove (\ref{eq:ConstructionOfTheSoliton}),
we note that by (\ref{eq:GoodSliceProps}), used already above, we
have for any fixed bounded time interval the following decay estimate:
\begin{equation}
\int_{-s}^{s}\int_{\mathbb{R}^{2}}\left|X[\phi_{\nu}(\lambda_{\nu}^{1}t,a_{\nu}^{1}+\lambda_{\nu}^{1}x)]\right|^{2}dxdt=o(s),\label{eq:TimeLikeDecayScaled}
\end{equation}
and so denoting by $\Psi$ the Lorentz boost taking $\partial_{t}$
to $X$, if one considers the foliation induced by $\left\{ \Psi(\{t\}\times\mathbb{R}^{2})\right\} _{t\in\mathbb{R}}$
on the whole of Minkowski space $\mathbb{R}^{2+1}$ instead of the
CMC foliation in the interior of the forward light cone as in Lemmata
\ref{lem:MonotonicityLemma} and \ref{lem:ConcentrationCompactness},
the very same arguments would lead to the convergence claimed in (\ref{eq:ConstructionOfTheSoliton}).
Let us present some details, setting $\varphi_{\nu}=\phi_{\nu}(\lambda_{\nu}^{1}\cdot,a_{\nu}^{1}+\lambda_{\nu}^{1}\cdot)$.

Working on $\Psi^{-1}(\mathbb{R}^{2+1})$ we denote the coordinates
there by $x^{\bar{\alpha}}$, or $(\bar{t},\bar{x}_{1},\bar{x}_{2})$,
and writing $\bar{\varphi}_{\nu}:=\varphi_{\nu}\circ\Psi$ we get
by the Lorentz invariance of smooth wave maps that the associated
stress energy tensor $T_{\bar{\alpha}\bar{\beta}}[\bar{\varphi}_{\nu}]$
enjoys the conservation law $\partial^{\bar{\alpha}}T_{\bar{\alpha}\bar{\beta}}[\bar{\varphi}_{\nu}]=0$.
So, contracting $T[\bar{\varphi}_{\nu}]$ with the vector field $\chi(\bar{x})\partial_{\bar{t}}$,
for some continuously differentiable test function $\chi$ with $\partial_{\bar{t}}\chi=0$,
and integrating the divergence of the Noether current $\partial^{\bar{\alpha}}({}^{(\chi(\bar{x})\partial_{\bar{t}})}P_{\bar{\alpha}})$
over the strip $\bar{t}\in[t,t+\lambda]$ for any $t\in\mathbb{R}$
and positive constant $\lambda>0$ (similar considerations apply when
$\lambda<0$), we get by Stokes' theorem and the mentioned conservation
law:
\[
\int_{\{\bar{t}=t+\lambda\}}\left|\nabla_{\bar{t},\bar{x}}\bar{\varphi}_{\nu}\right|^{2}\chi d\bar{x}-\int_{\{\bar{t}=t\}}\left|\nabla_{\bar{t},\bar{x}}\bar{\varphi}_{\nu}\right|^{2}\chi d\bar{x}
\]
\[
=-2\int_{[t,t+\lambda]\times\mathbb{R}_{\bar{x}}^{2}}\partial_{\bar{t}}\bar{\varphi}_{\nu}^{\dagger}(\partial_{\bar{x}_{1}}\bar{\varphi}_{\nu}\partial_{\bar{x}_{1}}\chi+\partial_{\bar{x}_{2}}\bar{\varphi}_{\nu}\partial_{\bar{x}_{2}}\chi)d\bar{t}d\bar{x}.
\]

Hence, integrating the above identity over $t\in[t_{0},t_{1}]$ for
given $t_{0},t_{1}\in\mathbb{R}$, using the decay (\ref{eq:TimeLikeDecayScaled})
we obtain:
\begin{equation}
\int_{[t_{0},t_{1}]\times\mathbb{R}_{\bar{x}}^{2}}\left|\nabla_{\bar{t},\bar{x}}\bar{\varphi}_{\nu}\right|^{2}\chi d\bar{t}d\bar{x}-\int_{[t_{0}+\lambda,t_{1}+\lambda]\times\mathbb{R}_{\bar{x}}^{2}}\left|\nabla_{\bar{t},\bar{x}}\bar{\varphi}_{\nu}\right|^{2}\chi d\bar{t}d\bar{x}\longrightarrow0,\label{eq:AsymptoticMonotonicitySoliton}
\end{equation}
analogously to (\ref{eq:MonotonicityFormula-1}) from Lemma \ref{lem:MonotonicityLemma}.
To use this asymptotic monotonicity formula to propagate small energy
control, we note that we have $\left|\nabla_{\bar{t},\bar{x}}\bar{\varphi}_{\nu}\right|\sim\left|\nabla_{t,x}\varphi_{\nu}\right|$
with the implicit constant depending only on $X$, which is constant
and fixed. Therefore, proceeding as in Lemma \ref{lem:ConcentrationCompactness},
given any point $y\in\mathbb{R}^{2}\setminus\cup_{q}b_{q}^{1}$ and
a positive constant $\eta>0$, there exists a radius $r_{1}=r_{1}(y,\eta)>0$
such that:
\[
\sup_{\nu\in\mathbb{N}}\sup_{t\in[-r_{1},r_{1}]}\mathcal{E}_{B_{r_{1}}(y)}[\varphi_{\nu}](t)\leq\eta,
\]
which leads to the control:
\[
\sup_{\nu\in\mathbb{N}}\frac{1}{r_{1}}\int_{[-r_{1},r_{1}]\times B_{r_{1}}(y)}\left|\nabla_{t,x}\varphi_{\nu}\right|^{2}dtdx\lesssim\eta,
\]
that in turn gives us, precomposing with $\Psi$ and shrinking suitably
the radius to $r_{1}>r_{2}\gtrsim r_{1}$:
\[
\sup_{\nu\in\mathbb{N}}\frac{1}{r_{2}}\int_{[-r_{2}+\bar{s},r_{2}+\bar{s}]\times B_{r_{2}}(\bar{y})}\left|\nabla_{\bar{t},\bar{x}}\bar{\varphi}_{\nu}\right|^{2}d\bar{t}d\bar{x}\lesssim\eta,
\]
where $(\bar{s},\bar{y}):=\Psi^{-1}(0,y)$. By the decay estimate
(\ref{eq:AsymptoticMonotonicitySoliton}), we get that given any $\lambda\in\mathbb{R}$:
\[
\limsup_{\nu\in\mathbb{N}}\frac{1}{r_{2}}\int_{[-r_{2}+\bar{s}+\lambda,r_{2}+\bar{s}+\lambda]\times B_{r_{2}}(\bar{y})}\left|\nabla_{\bar{t},\bar{x}}\bar{\varphi}_{\nu}\right|^{2}d\bar{t}d\bar{x}\lesssim\eta,
\]
and so going back to $\varphi_{\nu}$ by precomposing with $\Psi^{-1}$,
shrinking further the radius to $r_{2}>r_{3}\gtrsim r_{2}$ we obtain
by the pigeonhole principle, using the energy flux identity (\ref{eq:Energy-FluxIdentity}),
the estimate:
\[
\limsup_{\nu\in\mathbb{N}}\left\Vert \nabla_{t,x}\varphi_{\nu}\right\Vert _{L_{t}^{\infty}(L_{x}^{2})\left(([-r_{3},r_{3}]\times B_{r_{3}}(y))+\lambda X\right)}\lesssim\eta,
\]
for any given $\lambda\in\mathbb{R}$, viewing naturally $X\in\mathbb{R}^{2+1}$.
All the implicit constants above being independent of $\eta$ (and
of $\lambda$, the dependence on which of our construction is hidden
in the limsup), we can choose $\eta$ small enough obtaining the small
energy control for any fixed $\lambda\in\mathbb{R}$:
\begin{equation}
\limsup_{\nu\in\mathbb{N}}\left\Vert \nabla_{t,x}\varphi_{\nu}\right\Vert _{L_{t}^{\infty}(L_{x}^{2})\left(([-r_{3},r_{3}]\times B_{r_{3}}(y))+\lambda X\right)}\leq\frac{1}{2}\epsilon_{s},\label{eq:SmallEnergyControlSoliton}
\end{equation}
with the radius $r_{3}=r_{3}(y)$. Therefore, picking suitable collections
of points $y\in\mathbb{R}^{2}\setminus\cup_{q}b_{q}^{1}$ and constants
$\lambda\in\mathbb{R}$, we construct a countable cover of $\mathbb{R}^{2+1}\setminus\cup_{q}\varrho_{q}^{1}$
such that relying on the estimates (\ref{eq:TimeLikeDecayScaled})
and (\ref{eq:SmallEnergyControlSoliton}) we can apply Lemma \ref{prop:Simple-compactness-result.}
to get a subsequence via the diagonal process for which the local
convergence claim (\ref{eq:ConstructionOfTheSoliton}) holds a desired.

Note that by construction $\omega_{1}$ has energy bounded by $\mathcal{E}$,
and so precomposing it with the Lorentz boost $\Psi$ we get a steady
in time finite energy harmonic map from $\mathbb{R}^{2}$ minus a
finite set of points (note that the energy of this harmonic map will
be smaller or equal to $\mathcal{E}[\omega_{1}]$, nothing travels
faster than light!). By the regularity theory of Hélein \citep{Helein}
the latter has to be smooth and by the removable singularity theorem
of Sacks and Uhlenbeck \citep{SacksUhlenbeck}, it extends smoothly
across the singular points. The outcome of this argument is therefore
that $\omega_{1}$ is a smooth finite energy wave map defined on the
whole of $\mathbb{R}^{2+1}$ with $X\omega_{1}=0$, i.e. a genuine
soliton as desired. 

The same holds of course for the solitons $\omega_{j}$, $j=2,\ldots,J'$,
but note that those do not of course constitute all the members of
the decomposition (\ref{eq:DecomWithNecks}) as parts of the maps
$\phi_{\nu}$ can get lost a priori at spatial infinity and in between
the solitons we are considering. We shall address this issue now.

Consider the scales $\left\{ \lambda_{\nu}^{1}\right\} _{\nu\in\mathbb{N}}$
corresponding to the soliton $\omega_{1}$. Fix an arbitrary small
$0<\varepsilon<\epsilon_{s}$, then by the pigeonhole principle there
exist an integer $K(\varepsilon)\geq K_{1}$ such that for any $k\in\mathbb{N}$
fixed: 
\begin{equation}
\mathcal{E}_{B_{2^{K(\varepsilon)+k}\lambda_{\nu}^{1}}(a_{\nu}^{1})\setminus B_{2^{K(\varepsilon)+k-1}\lambda_{\nu}^{1}}(a_{\nu}^{1})}[\phi_{\nu}](0)<\varepsilon,\label{eq:NeckFirstStep}
\end{equation}
for all $\nu$ large enough. Suppose that there exist a sequence of
smallest integer $k_{\nu}(\varepsilon)\geq K(\varepsilon)$, as $\nu$
gets large, such that the above inequality fails:
\[
\mathcal{E}_{B_{2^{k_{\nu}(\varepsilon)+1}\lambda_{\nu}^{1}}(a_{\nu}^{1})\setminus B_{2^{k_{\nu}(\varepsilon)}\lambda_{\nu}^{1}}(a_{\nu}^{1})}[\phi_{\nu}](0)\geq\varepsilon,
\]
and note that by construction we must have $k_{\nu}(\varepsilon)\rightarrow\infty$;
then we have found a new soliton on the top of which our previous
$\omega_{1}$ is concentrating, that we should denote by $\omega_{J'+1}$
so that setting $\lambda_{\nu}^{J'+1}:=2^{k_{\nu}(\varepsilon)-1}\lambda_{\nu}^{1}$
we can apply directly Lemma \ref{prop:Simple-compactness-result.},
by the choice of $k_{\nu}(\varepsilon)$ and (\ref{eq:NeckFirstStep}),
to get:
\[
\phi_{\nu}(\lambda_{\nu}^{J'+1}t,a_{\nu}^{1}+\lambda_{\nu}^{J'+1}x)\longrightarrow\omega_{J'+1}(t,x)\,\,\,\mathrm{in}\,\,\, C_{t}^{0}(H_{x}^{1})\cap C_{t}^{1}(L_{x}^{2})([-\frac{1}{4},\frac{1}{4}]\times(B_{1}\setminus B_{\frac{1}{2}})),
\]
\[
\mathrm{with}\,\,\,\,\mathcal{E}_{B_{4}\setminus B_{2}}[\phi_{\nu}(\lambda_{\nu}^{J'+1}\cdot,a_{\nu}^{1}+\lambda_{\nu}^{J'+1}\cdot)](0)\geq\varepsilon,
\]
and the analysis we carried for $\omega_{1}$ so far should also be
applied to $\omega_{J'+1}$ now. 

It should be clear that if no $k_{\nu}(\varepsilon)$ as above exist,
i.e. (\ref{eq:NeckFirstStep}) is not violated for any $k\in\mathbb{N}$
for $\nu$ large, then choosing $0<\varepsilon<\epsilon_{s}$ small
enough initially, by equality in (\ref{eq:FirstSoliton}) we must
have been working with $\omega_{1}$ and there should exist then a
sequence of integers $k'_{\nu}$ such that $2^{k'_{\nu}}\lambda_{\nu}^{1}\sim1$
and (\ref{eq:NeckFirstStep}) holding for any $k=1,\ldots,k'_{\nu}-K(\varepsilon)$,
with any $0<\varepsilon'<\varepsilon$ for larger $k\geq k'_{\nu}-K(\varepsilon)$
by (\ref{eq:NewConstantOutside}) as $\nu\rightarrow\infty$. The
map $\omega_{J'+1}$ would be standing for the constant $c_{\phi}$
in this case. 

For the other solitons $\omega_{j}$, with $j\geq2$, $k_{\nu}(\varepsilon)$
must exist and we could of course end up with $\omega_{1}$, or also
a constant (to which some authors refer to as a \emph{ghost bubble},
i.e. a soliton on the top of which two or more non-constant solitons
are concentrating but itself is constant) in which case we obviously
do not consider this as a new soliton. This brings us to the final
steps in the proof of Lemma \ref{lem:CoveringLemma}. 

In fact, in the above construction the constant $\varepsilon>0$ could
be arbitrarily small but was initially fixed and we would like now
to let it degenerate to $0$. We claim that in fact we can put ourselves
in a situation when for any smaller $0<\varepsilon'<\varepsilon$
the choice of the integers $k_{\nu}(\varepsilon')\in\mathbb{N}$ is
uniform in the sense that there exist positive integers $L(\varepsilon')\in\mathbb{N}$
independent of $\nu$ such that $k_{\nu}(\varepsilon')=k_{\nu}(\varepsilon)-L(\varepsilon')$,
that is:
\begin{equation}
\sup_{K(\varepsilon')\leq k\leq k_{\nu}(\varepsilon)-L(\varepsilon')}\mathcal{E}_{B_{2^{k+1}\lambda_{\nu}^{1}}(a_{\nu}^{1})\setminus B_{2^{k}\lambda_{\nu}^{1}}(a_{\nu}^{1})}[\phi_{\nu}](0)<\varepsilon',\label{eq:FirstNeck}
\end{equation}
for $\nu$ large enough. If this were to fail for some $\varepsilon'>0$,
we could find a sequence of scales, that we denote by $\lambda_{\nu}^{J'+2}$,
such that:
\begin{equation}
\mathcal{E}_{B_{2\lambda_{\nu}^{J'+2}}(a_{\nu}^{1})\setminus B_{\lambda_{\nu}^{J'+2}}(a_{\nu}^{1})}[\phi_{\nu}](0)>\varepsilon'\,\,\,\mathrm{and}\,\,\,\lambda_{\nu}^{J'+1}\ll\lambda_{\nu}^{J'+2}\ll\lambda_{\nu}^{1},\label{eq:BREAKNecCK}
\end{equation}
and that would give rise to new non-constant solitons at scale $\lambda_{\nu}^{J'+2}$
or above, in which case we have to redefine $\varepsilon$ as $\varepsilon'$.
Note that we can have only finitely many non-constant solitons forming
by the global energy bound (\ref{eq:EnergyBound}) since those cannot
have arbitrary small energy as this is not possible for harmonic 2-spheres,
and by (\ref{eq:BREAKNecCK}) they are asymptotically orthogonal in
$\dot{H}_{x}^{1}\times L_{x}^{2}$. Hence our procedure, applied to
every single soliton we have found so far, detects all of the solitons
in the claimed decomposition (\ref{eq:DecomWithNecks}) and we are
just left to characterize the regions in-between the domains of convergence
to solitons as neck regions, but this can be obtained directly from
(\ref{eq:FirstNeck}) as follows.

Upon changing notation, by the above remarks we can assume that (\ref{eq:FirstNeck})
holds. Now, we simply choose sequences $0<r_{\nu}^{1}\leq R_{\nu}^{1}$
tending to $0$ slowly enough so that for any $\varepsilon>0$ small
enough:
\[
\mathcal{E}_{B_{r_{\nu}^{1}}(a_{\nu}^{1})\setminus B_{2^{K(\varepsilon)}\lambda_{\nu}^{1}}(a_{\nu}^{1})}[\phi_{\nu}](0)\longrightarrow\mathcal{E}_{\mathbb{R}^{2}\setminus B_{2^{K(\varepsilon)}}(0)}[\omega_{1}](0)\,\,\,\mathrm{and}
\]
\[
\mathcal{E}_{B_{\lambda_{\nu}^{J'+1}}(a_{\nu}^{1})\setminus B_{R_{\nu}^{1}}(a_{\nu}^{1})}[\phi_{\nu}](0)\longrightarrow\mathcal{E}_{B_{1}\setminus\left\{ 0\right\} }[\omega_{J'+1}](0),
\]
then by (\ref{eq:FirstNeck}) there exits a sequence $\varepsilon_{\nu}^{1}=\varepsilon_{\nu}^{1}(r_{\nu}^{1},R_{\nu}^{1})\downarrow0$
such that:
\[
\sup_{r_{\nu}^{1}\leq r\leq\frac{1}{2}R_{\nu}^{1}}\sup_{t\in[-\frac{r}{2},\frac{r}{2}]}\mathcal{E}_{B_{2r}(a_{\nu}^{1})\setminus B_{r}(a_{\nu}^{1})}[\phi_{\nu}](t)<\varepsilon_{\nu}^{1}.
\]
If we know a priori that $r_{\nu}^{1}\sim R_{\nu}^{1}$, then we can
immediately absorb this part of the wave map $\phi_{\nu}$ into the
error term $o_{L_{t}^{\infty}(\dot{H}_{x}^{1}\times L_{x}^{2})}(1)$
in the decomposition (\ref{eq:DecomWithNecks}) and there is no loss
of energy between the considered solitons. Otherwise we should have
$r_{\nu}^{1}\ll R_{\nu}^{1}$, i.e. the annulus is conformally degenerating,
and this is precisely a neck in our terminology, as required. To prove
Theorem \ref{thm:Main} we must show that those terms can also be
absorbed into $o_{\dot{H}_{x}^{1}\times L_{x}^{2}}(1)$ upon picking
a suitable time slice, but that's the next and final step of the whole
argument. So far we have established Lemma \ref{lem:CoveringLemma}.\end{proof}
\begin{rem}
\label{RemShortTimeResolution} We note here that our techniques cannot
say anything more about the decomposition beyond the scales $\left\{ O(\lambda_{\mathrm{min},\nu})\right\} _{\nu\in\mathbb{N}}$
which is a central issue to address if one were to try understanding
the full soliton resolution conjecture. 

Let us also remark that there is also quite some freedom in fixing
the radii $R_{i,\nu}^{k}$ and $r_{i,\nu}^{k}$ defining the neck
domain, as for any positive integer $\ell\in\mathbb{N}$ which can
be arbitrarily large but fixed, we still have: 
\[
\sup_{2^{-\ell}r_{i,\nu}^{k}\leq r\leq2^{\ell}R_{i,\nu}^{k}}\sup_{t\in[-\frac{r}{2},\frac{r}{2}]}\mathcal{E}_{B_{2r}(x_{i,\nu}^{k})\setminus B_{r}(x_{i,\nu}^{k})}[\phi_{i,\nu}](t)\longrightarrow0,
\]
which follows directly from the characterization (\ref{eq:FirstNeck})
in the proof of Lemma \ref{lem:CoveringLemma} above. 
\end{rem}
Our aim now is to show energy collapsing for the necks $\mathcal{N}_{i,\nu}$,
that is a decay to zero for the $L_{x}^{2}$ norm of $\nabla_{t,x}\phi_{\nu}$
as $\nu\rightarrow+\infty$ on the degenerating annuli (\ref{eq:NeckDomains}).
We shall start by obtaining a decay in the weaker Besov $\dot{B}_{\infty}^{1,2}$
norm for $\mathcal{N}_{i,\nu}$, as consequence of the property (\ref{eq:NeckProperty}),
up to an error whose $\dot{H}_{x}^{1}$ norm is controlled by the
$L_{x}^{2}$ norm of $X\phi_{\nu}$ for some time-like vector field
$X$ that we will fix according to (\ref{eq:GoodSliceProps}) later.
This is the content of the following lemma.
\begin{lem}
\label{lem:Besov-control.}Consider a sequence of smooth wave maps
of bounded energy:
\begin{equation}
\phi_{\nu}:[-2^{N_{\nu}+O(1)},2^{N_{\nu}+O(1)}]\times\mathbb{R}^{2}\longrightarrow\mathbb{S}^{n-1},\,\,\,\left\Vert \nabla_{t,x}\phi_{\nu}\right\Vert _{L_{t}^{\infty}(L_{x}^{2})}^{2}\leq\mathcal{E},\label{eq:GlobalEnergyForBesov}
\end{equation}
obtained from Lemma \ref{lem:CoveringLemma} up to translating and
rescaling, where we are given two sequences of positive integers $n_{\nu},\, N_{\nu}\rightarrow+\infty$,
$n_{\nu}\ll N_{\nu}$, such that the neck property holds on $B_{2^{N_{\nu}}}\setminus B_{2^{n_{\nu}}}$:
\begin{equation}
\sup_{n_{\nu}\leq\ell\pm O(1)\leq N_{\nu}}\left\Vert \nabla_{t,x}\phi_{\nu}\right\Vert _{L_{t}^{\infty}(L_{x}^{2})\left([-2^{\ell-1},2^{\ell-1}]\times(B_{2^{\ell+1}}\setminus B_{2^{\ell}})\right)}\longrightarrow0.\label{eq:NeckDecayRate}
\end{equation}
Moreover, we assume the maps are asymptotically steady in the direction
of a constant time-like vector field $X$, standing for one of the
$X_{i}$'s from (\ref{eq:ConstantTimeLike}) which we can take to
be given by (\ref{eq:ExpressionForX-1}):
\begin{equation}
\left\Vert X\phi_{\nu}(0)\right\Vert _{L_{x}^{2}}\longrightarrow0,\label{eq:TimeLikeDecay}
\end{equation}
and the second order time-like derivatives satisfy: 
\begin{align*}
\Pi_{X,\nu}:= & \,\mathrm{sech}^{2}(\zeta)X^{2}\phi_{\nu}-\Omega_{\alpha}^{\nu}\partial^{\alpha}\phi_{\nu}\\
 & +\sum_{k\in\mathbb{Z}}P_{k}\left[\nabla_{x}\cdot(\Omega_{x,\beta}^{\nu}P_{>k+10}\phi_{\nu})+\Omega_{x,\beta}^{\nu}\cdot P_{\leq k+10}\nabla_{x}\phi_{\nu}\right],
\end{align*}
\begin{equation}
\sum_{k\in\mathbb{Z}}2^{-2k}\left\Vert P_{k}\Pi_{X,\nu}(0)\right\Vert _{L_{x}^{2}}^{2}\longrightarrow0,\label{eq:HigherOrderDecay}
\end{equation}
setting $\Omega_{\alpha}^{\nu}:=\phi_{\nu}\partial_{\alpha}\phi_{\nu}^{\dagger}-\partial_{\alpha}\phi_{\nu}\phi_{\nu}^{\dagger}$
and $\Omega_{x,\beta}^{\nu}:=(1-\beta^{2})\Omega_{x_{1}}\partial_{x_{1}}+\Omega_{x_{2}}\partial_{x_{1}}$.
Both assumptions are justified by (\ref{eq:GoodSliceProps}).

Then on the neck region, we can write for the map $\phi_{\nu}$: 
\[
\nabla_{t,x}\phi_{\nu}=\Upsilon_{\nu}\,\,\,\mathit{\mathit{on}}\,\,\,[-1,1]\times(B_{2^{N_{\nu}}}\setminus B_{2^{n_{\nu}}}),
\]
see (\ref{eq:PhysicalLPDecomposition}) in the proof, with $\Upsilon_{\nu}(t)\in C_{0}^{\infty}(B_{2^{N_{\nu}+1}}\setminus B_{2^{n_{\nu}-1}})$
for $t\in[-1,1]$ being of bounded energy $\left\Vert \Upsilon_{\nu}\right\Vert _{L_{t}^{\infty}(L_{x}^{2})[-1,1]}^{2}\lesssim\mathcal{E}$,
and satisfying the following weak decay estimate on $t=0$: 
\[
\sup_{k\in\mathbb{Z}}\left\Vert P_{k}\Upsilon_{\nu}(0)\right\Vert _{L_{x}^{2}}\longrightarrow0.
\]

\end{lem}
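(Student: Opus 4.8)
The goal is to decompose $\nabla_{t,x}\phi_\nu$ on the neck $[-1,1]\times(B_{2^{N_\nu}}\setminus B_{2^{n_\nu}})$ into a piece $\Upsilon_\nu$ that is spatially localized to a slightly enlarged annulus, has bounded energy, and is \emph{weakly} small at $t=0$ in the sense that each Littlewood--Paley piece $\|P_k\Upsilon_\nu(0)\|_{L_x^2}\to 0$ uniformly in $k$. The natural starting point is the wave map equation rewritten via the operator $\Delta_{x,\beta}$ as in \eqref{eq:WaveMapEqBeta}: inverting $\Delta_{x,\beta}$ expresses $\nabla_x\phi_\nu$ (and via \eqref{eq:ExpressionForX-1} also $\partial_t\phi_\nu$) in terms of three kinds of terms — the time-like derivatives $X\phi_\nu$, $X^2\phi_\nu$ (small by \eqref{eq:TimeLikeDecay} and absorbed through \eqref{eq:HigherOrderDecay}), the genuinely quadratic-in-gradient nonlinearity $\Omega_\alpha^\nu\partial^\alpha\phi_\nu$ of div-curl type, and the remaining local nonlinear terms left over from Lemma \ref{lem:HigherOrderTimeLike}. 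So the first step is to write $\nabla_{t,x}\phi_\nu = (\text{time-like part}) + (\text{elliptic/nonlinear part})$ and put everything but the bulk nonlinear terms into the $o(1)$-controlled reservoir using \eqref{eq:TimeLikeDecay}, \eqref{eq:HigherOrderDecay} and the elliptic gain of $\Delta_{x,\beta}^{-1}$ in the frequency-space-like region together with the high-modulation elliptic estimate in \eqref{eq:EllipticX}. I will call the resulting object $\Upsilon_\nu$; cutting off $\Delta_{x,\beta}^{-1}$ with a spatial cutoff supported on $B_{2^{N_\nu+1}}\setminus B_{2^{n_\nu-1}}$ and handling the commutator error — which is harmless because the inverse Laplacian kernel decays and the neck is long — gives the spatial localization claimed, and the energy bound $\|\Upsilon_\nu\|_{L^\infty_t(L^2_x)}^2\lesssim\mathcal E$ is inherited from \eqref{eq:GlobalEnergyForBesov} up to this cutoff.

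\textbf{The core estimate.} What remains, after the above, is to show that the bulk nonlinear terms — schematically $P_k(\chi\,\Omega_\alpha^\nu\partial^\alpha\phi_\nu)$ together with the local paraproduct-type pieces in \eqref{eq:NonLinearBulkOfLemma}, all quadratic in $\nabla_{t,x}\phi_\nu$ and \emph{local in time} — have $\|P_k\Upsilon_\nu(0)\|_{L^2_x}\to 0$ uniformly in $k$. Here I would exploit the neck property \eqref{eq:NeckDecayRate}: it says that on the dyadic shell $B_{2^{\ell+1}}\setminus B_{2^\ell}$ the energy of $\phi_\nu$ tends to zero. The idea is to localize the $k$-frequency piece at $t=0$ spatially: for $|k|$ in the range dictated by the shell that dominates the output frequency $2^k$, use \eqref{eq:NeckDecayRate} to bound the quadratic expression; for very low or very high $k$ compared to the neck scales one uses instead the global energy bound and Bernstein's inequality \eqref{eq:Bernstein} to get a gain of $2^{-\epsilon|k|}$ which beats the diverging frequency window. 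Concretely, since the output is at frequency $\sim 2^k$ and the terms are quadratic, one of the inputs sits essentially at frequency $\gtrsim 2^k$; pairing the $L^2$ of that high-frequency input (which carries a definite power of $2^{-k}$ relative to its $L^2$ gradient norm by the finite band property \eqref{eq:FiniteBandEqual}) against the $L^2$ of the other input, and summing over the localization using the div-curl structure of $\Omega_\alpha^\nu\partial^\alpha\phi_\nu$ via the Littlewood--Paley trichotomy exactly as in the proof of Proposition \ref{prop:Compensation-estimate.}, produces a factor $\sup_\ell \mathcal E_{B_{2^{\ell+1}}\setminus B_{2^\ell}}[\phi_\nu](0)\to 0$ on the relevant range of $k$. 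This is the $\dot B^{-1,2}_\infty$ (equivalently $\dot B^{1,2}_\infty$ for $\phi_\nu$ itself) weak decay the lemma promises.

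\textbf{The main obstacle.} The delicate point is the \emph{uniformity in $k$} over an unboundedly long frequency window, since $n_\nu,N_\nu\to\infty$: naively, summing the dyadic shell contributions would lose a factor $\sim(N_\nu-n_\nu)$, destroying the decay. The resolution is that the quadratic terms only see \emph{one} shell at the output frequency scale (up to the $O(1)$ spreading in the trichotomy and the $P_{\leq k+10}$/$P_{>k+10}$ splittings), so for each fixed $k$ the relevant energy is $\mathcal E$ restricted to $O(1)$ consecutive dyadic shells around radius $\sim 2^{-k}$ in rescaled coordinates — which is exactly the quantity \eqref{eq:NeckDecayRate} controls — with no summation over $\ell$ incurred. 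Making this localization honest requires care with the tails of the Littlewood--Paley and modulation kernels (they are only rapidly decaying, not compactly supported), so the off-diagonal contributions where an input frequency is far from the output must be absorbed into a Schwartz-tail error; this is where the convention of minimal extensions and the disposability statements of Lemma \ref{lem:Dispo} are used. I expect this bookkeeping — cleanly separating the diagonal shell term (small by the neck property) from the frequency-separated and modulation-separated tails (small by Bernstein gains and Schwartz decay) — to be the bulk of the work, the rest being a repackaging of the compensation arguments already carried out in Section \ref{sub:CompensationEstimate}.
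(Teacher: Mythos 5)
There is a genuine gap, and it sits exactly at the point you flag as ``the main obstacle.'' Your resolution --- that ``the quadratic terms only see one shell at the output frequency scale,'' i.e.\ that an output at frequency $2^{k}$ only interacts with the physical annulus of radius $\sim2^{-k}$ --- is not correct: frequency localization carries no spatial localization, so at a fixed output frequency the quadratic expressions receive contributions from \emph{every} dyadic shell of the neck (and from outside it), and the naive summation loses precisely the factor $\sim(N_{\nu}-n_{\nu})$ you identify. The paper's mechanism is different and cannot be skipped: the decomposition of $\Upsilon_{\nu}$ is a \emph{physical-space} partition of unity over shells, see (\ref{eq:PhysicalLPDecomposition}), each shell contributing only frequencies $k\geq-\ell_{\nu}+O(1)$ after a commutator/Poincar\'e step with exponential gains $2^{-|k+\ell_{\nu}|}$; for the quadratic bulk the square of the $L_{x}^{2}$ norm is expanded as a double sum over shells, the support leakage of $P_{k}$ between shells is beaten by kernel decay (the factor $2^{-(\mu_{\nu}-\ell_{\nu})}$ in (\ref{eq:NonLinearDoublePrim})), and then one factor is summed in $\ell_{\nu}$ and bounded \emph{universally} via the div-curl/Hardy-space structure (\ref{eq:L1controlDual}), while only the other factor, taken in $\ell^{\infty}$ over shells, is shown small, (\ref{eq:LastTerms}). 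It is this bilinear $\ell^{1}\times\ell^{\infty}$ pairing, not a one-shell localization, that restores uniformity in $k$; your sketch contains no substitute for it.

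The second missing ingredient is the contribution of the region \emph{inside} $B_{2^{n_{\nu}}}$, where the bubbles sit and the energy is order one, so the neck property (\ref{eq:NeckDecayRate}) gives nothing. Since the factors $\Omega_{x,\beta}^{\nu}$, $P_{>k+10}\phi_{\nu}$, $P_{\leq k+10}\nabla_{x}\phi_{\nu}$ are global in space, the shell-localized quantity in (\ref{eq:LastTerms}) still sees this inner region, and your estimate ``pairing the $L^{2}$ of the high-frequency input against the $L^{2}$ of the other input $\dots$ produces a factor $\sup_{\ell}\mathcal{E}_{B_{2^{\ell+1}}\setminus B_{2^{\ell}}}\rightarrow0$'' tacitly assumes both inputs live on neck shells, which is false. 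The paper needs a separate ingredient here: the $H_{x}^{-1}$ decay of the spatially truncated connection form, (\ref{eq:InnerConnectionDecay}), proved from the strong subcritical convergence (\ref{eq:SuperCriticalDecayBubble}) of the rescaled maps to constants --- information coming from the construction in Lemma \ref{lem:CoveringLemma}, not from the neck hypothesis --- together with a kernel-decay argument for the outer region. (Your choice to define $\Upsilon_{\nu}$ by inverting $\Delta_{x,\beta}$ and cutting off afterwards is also at odds with the claimed identity $\nabla_{t,x}\phi_{\nu}=\Upsilon_{\nu}$ and compact support, which in the paper are automatic because $\Upsilon_{\nu}$ is simply the cut-off gradient; this is fixable, but the two issues above are substantive.)
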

The strategy of our argument is roughly to replace, by using the decay
in the direction of the time-like vector field $X$, the sequence
of wave maps on neck domains under consideration with another one,
differing by an error of vanishing energy and converging locally to
a constant on the neck domain with more regularity than $\dot{H}_{x}^{1}\times L_{x}^{2}$
for $\phi_{\nu}$. However, because we need to obtain estimates that
are uniform in time, working on very short intervals, we should not
rely on the small energy regularity theory from Theorem \ref{thm:Small-data-waves}
and the direct use of Fourier restriction spaces, as in the proof
of the compactness result by Sterbenz and Tataru \citep{TataruSterbenzWaveReg}
(Proposition 5.1 there), but proceed directly via the wave maps equation
(\ref{eq:WMeq}) proving a weak $\dot{B}_{\infty}^{-1,2}$ decay estimate
for its quadratic structure in the gradient at high frequency (without
any null-structure involved, hence having target $\mathbb{S}^{n-1}$
is not specifically necessary for this part of the argument), and
then using Lemma \ref{lem:HigherOrderTimeLike} to control the second
order time-like derivatives (the latter though does involve the conservation
law (\ref{eq:ConservationLaw}) for wave maps into spheres).
\begin{proof}
As usual, having the required control in a time-like direction, it
is enough to consider the spatial gradient only. Now working on the
domain $[-1,1]\times(B_{2^{N_{\nu}}}\setminus B_{2^{n_{\nu}}})$,
we note it being arbitrarily rough in time as $n_{\nu},N_{\nu}\rightarrow+\infty$
degenerates. This is an additional difficulty, to be dealt with in
the present proof, in comparison to the analogous estimate for harmonic
maps, where $\varepsilon$-regularity is used on the domains $[-2^{\ell-1},2^{\ell-1}]\times(B_{2^{\ell+1}}\setminus B_{2^{\ell}})$
instead, see for the example the paper of Lin and Rivière \citep{LiR}
on page 188. 

Before taking the main line of the argument, let us start with some
preliminaries, fixing the decay rates for the assumptions of Lemma
\ref{lem:Besov-control.}, that is sequences $\iota_{\nu}\downarrow0$,
$\sigma_{\nu}\downarrow0$ and $\varepsilon_{\nu}\downarrow0$ for
which:
\begin{equation}
\sum_{k\in\mathbb{Z}}2^{-2k}\left\Vert P_{k}\Pi_{X,\nu}(0)\right\Vert _{L_{x}^{2}}^{2}\leq\iota_{\nu}^{2},\label{eq:HigherOrderDecay2}
\end{equation}
\begin{equation}
\left\Vert X\phi_{\nu}(0)\right\Vert _{L_{x}^{2}}\leq\sigma_{\nu},\label{eq:TimeLikeDecay2}
\end{equation}
\begin{equation}
\sup_{n_{\nu}\leq\ell\pm O(1)\leq N_{\nu}}\left\Vert \nabla_{t,x}\phi_{\nu}\right\Vert _{L_{t}^{\infty}(L_{x}^{2})\left([-2^{\ell-1},2^{\ell-1}]\times(B_{2^{\ell+1}}\setminus B_{2^{\ell}})\right)}\leq\varepsilon_{\nu},\label{eq:LocalEnergyDecay}
\end{equation}
corresponding to (\ref{eq:HigherOrderDecay}), (\ref{eq:TimeLikeDecay})
and (\ref{eq:NeckDecayRate}) respectively. Next, we consider, for
an arbitrary choice of integers $\ell_{\nu}$ between $n_{\nu}$ and
$N_{\nu}$, the sequence of wave maps:
\begin{equation}
\phi_{\nu,\ell_{\nu}}(\cdot):=\phi_{\nu}(2^{\ell_{\nu}}\cdot):[-2^{-4},2^{-4}]\times(B_{2^{3}}\setminus B_{2^{-3}})\longrightarrow\mathbb{S}^{n-1}.\label{eq:PhysicalDiadicMap}
\end{equation}
We build an extension $\psi_{\nu,\ell_{\nu}}$ of $\phi_{\nu,\ell_{\nu}}$,
as in Remark \ref{Rem:UniformityInTime}, by smoothly interpolating
on $(B_{2^{-2}}\setminus B_{2^{-3}})\cup(B_{2^{2}}\setminus B_{2^{2}-1})$
between $\phi_{\nu,\ell_{\nu}}[0]$ and $(c_{\ell_{\nu}},0)\in T(\mathbb{S}^{n-1})$,
for some suitably chosen sequence of constants $c_{\ell_{\nu}}=c_{\ell_{\nu}}(\phi_{\nu,\ell_{\nu}})$,
solving the wave maps equation for $\psi_{\nu,\ell_{\nu}}$ with initial
data of $\psi_{\nu,\ell_{\nu}}[0]$, such that scaling back and setting
$\psi_{\nu}^{\ell_{\nu}}(\cdot):=\psi_{\nu,\ell_{\nu}}(2^{-\ell_{\nu}}\cdot)$,
we have (denoting by $1_{\ell_{\nu}}$ the characteristic function
of $B_{2^{\ell_{\nu}+1}}\setminus B_{2^{\ell_{\nu}-1}}$ over the
time interval $[-2^{\ell_{\nu}-3},2^{\ell_{\nu}-3}]$):
\begin{equation}
\left\Vert \nabla_{t,x}\psi_{\nu}^{\ell_{\nu}}\right\Vert _{L_{t}^{\infty}(L_{x}^{2})}\lesssim\varepsilon_{\nu}\,\,\,\mathrm{and}\,\,\,1_{\ell_{\nu}}\phi_{\nu}=1_{\ell_{\nu}}\psi_{\nu}^{\ell_{\nu}},\label{eq:ExtensionEnergyControl}
\end{equation}
by (\ref{eq:LocalEnergyDecay}) and the finite speed of propagation
property respectively. 

From there, we construct a partition of unity over $[-1,1]\times(B_{2^{N_{\nu}}}\setminus B_{2^{n_{\nu}}})$
paralleling the Littlewood-Paley decomposition in frequency space.
For the spatial directions, we recall the non-negative radial bump
functions $m_{0}$ and $m_{\leq0}$ used in the definition of the
LP-projections $P_{0}$ and $P_{\leq0}$, but which this time, we
will use on the physical space setting: 
\[
\bar{m}_{0}(t,x):=m_{0}(|x|),\,\,\,\bar{m}_{\ell}(t,x):=\bar{m}_{0}(2^{-\ell}t,2^{-\ell}x),
\]
\[
\bar{m}_{\leq0}(t,x):=m_{\leq0}(|x|),\,\,\,\bar{m}_{\leq\ell}(t,x):=\bar{m}_{\leq0}(2^{-\ell}t,2^{-\ell}x).
\]
We get then the following ``physical LP-decomposition'':
\begin{equation}
\Upsilon_{\nu}:=(\bar{m}_{\leq N_{\nu}}-\bar{m}_{\leq n_{\nu}-1})\eta\nabla_{x}\phi_{\nu}=\sum_{\ell_{\nu}=n_{\nu}}^{N_{\nu}}\eta\bar{m}_{\ell_{\nu}}\nabla_{x}\phi_{\nu}.\label{eq:PhysicalLPDecomposition}
\end{equation}
where $\eta(t)$ stands for the rough cut-off to the time interval
$[-1,1]$, and of course it is immediate that $\left\Vert \Upsilon_{\nu}\right\Vert _{L_{t}^{\infty}(L_{x}^{2})[-1,1]}^{2}\lesssim\mathcal{E}$.
Moreover we note that, recalling the extensions (\ref{eq:ExtensionEnergyControl}),
we have $\eta\bar{m}_{\ell_{\nu}}\phi_{\nu}=\eta\bar{m}_{\ell_{\nu}}\psi_{\nu}^{\ell_{\nu}}$. 

Writing $\phi_{\nu}^{c}:=\phi_{\nu}-c_{\ell_{\nu}}$, for an arbitrary
sequence of maps corresponding to (\ref{eq:PhysicalDiadicMap}), and
similarly for $\phi_{\nu,\ell_{\nu}}^{c}$, together with the extensions
$\psi_{\nu}^{\ell_{\nu},c}$ and $\psi_{\nu,\ell_{\nu}}^{c}$ from
(\ref{eq:ExtensionEnergyControl}) which become compactly supported
by construction, we consider the commutator (denoting the cut-off
functions by $\chi_{\ell_{\nu}}:=\eta\bar{m}_{\ell_{\nu}}$): 
\begin{equation}
\chi_{\ell_{\nu}}\nabla_{x}\phi_{\nu}=\nabla_{x}(\chi_{\ell_{\nu}}\phi_{\nu}^{c})-(\nabla_{x}\chi_{\ell_{\nu}})\phi_{\nu}^{c},\label{eq:Commutator1}
\end{equation}
and start by treating the second term, for which we claim:
\begin{equation}
\left\Vert P_{k}[(\nabla_{x}\chi_{\ell_{\nu}})\phi_{\nu}^{c}]\right\Vert _{L_{t}^{\infty}(L_{x}^{2})}\lesssim2^{-\left|k+\ell_{\nu}\right|}\varepsilon_{\nu},\label{eq:Commutator1Control}
\end{equation}
for any $k\in\mathbb{Z}$. To see this, we rescale by $2^{\ell_{\nu}}$.
For high frequency scales $2^{k}\gtrsim1$, we can use the extra regularity,
the spatial derivative falling on the cut-off instead of the map,
available from: 
\begin{align*}
\left\Vert \nabla_{x}[(\nabla_{x}\bar{m}_{0})\phi_{\nu,\ell_{\nu}}^{c}]\right\Vert _{L_{t}^{\infty}(L_{x}^{2})}\lesssim & \left\Vert (\nabla_{x}^{2}\bar{m}_{0})\psi_{\nu,\ell_{\nu}}^{c}\right\Vert _{L_{t}^{\infty}(L_{x}^{2})}+\left\Vert (\nabla_{x}\bar{m}_{0})\nabla_{x}\phi_{\nu,\ell_{\nu}}\right\Vert _{L_{t}^{\infty}(L_{x}^{2})},
\end{align*}
introducing the extensions $\psi_{\nu,\ell_{\nu}}^{c}$, so that applying
Poincaré's inequality in $L_{x}^{2}$ for the first term, given the
spatial localization of $\psi_{\nu,\ell_{\nu}}^{c}$ at any given
time slice in the support of $\eta_{\ell_{\nu}}(\cdot):=\eta(2^{\ell_{\nu}}\cdot)$,
we get by the finite band property (\ref{eq:FiniteBandEqual}) and
the bound (\ref{eq:ExtensionEnergyControl}):
\[
\left\Vert \eta_{\ell_{\nu}}P_{k}[(\nabla_{x}\bar{m}_{0})\psi_{\nu,\ell_{\nu}}^{c}]\right\Vert _{L_{t}^{\infty}(L_{x}^{2})}\lesssim2^{-k}\varepsilon_{\nu},
\]
as desired. For low frequency scales $2^{k}\lesssim1$, by Cauchy-Schwarz
and Poincaré's inequalities, we have:
\[
\left\Vert \eta_{\ell_{\nu}}(\nabla_{x}\bar{m}_{0})\psi_{\nu,\ell_{\nu}}^{c}\right\Vert _{L_{t}^{\infty}(L_{x}^{1})}\lesssim\left\Vert \eta_{\ell_{\nu}}\nabla_{x}\psi_{\nu,\ell_{\nu}}\right\Vert _{L_{t}^{\infty}(L_{x}^{2})},
\]
dropping $\nabla_{x}\bar{m}_{0}$, and so using Bernstein's inequality
(\ref{eq:Bernstein}) we obtain here an exponential gain as well:
\[
\left\Vert \eta_{\ell_{\nu}}P_{k}[(\nabla_{x}\bar{m}_{0})\psi_{\nu,\ell_{\nu}}^{c}]\right\Vert _{L_{t}^{\infty}(L_{x}^{2})}\lesssim2^{k}\varepsilon_{\nu},
\]
by the energy bound (\ref{eq:ExtensionEnergyControl}). Hence, claim
(\ref{eq:Commutator1Control}) follows. 

We remark that, by the same argument, we get also control for the
low frequencies of the first term $\nabla_{x}(\chi_{\ell_{\nu}}\phi_{\nu}^{c})$
in the commutator:
\begin{equation}
\left\Vert P_{k}\nabla_{x}(\chi_{\ell_{\nu}}\phi_{\nu}^{c})\right\Vert _{L_{t}^{\infty}(L_{x}^{2})}\lesssim2^{k+\ell_{\nu}}\varepsilon_{\nu},\,\,\, k\leq-\ell_{\nu}+O(1),\label{eq:LowFrequencies}
\end{equation}
and so it remains to treat now the main terms, that is the LHS above
when $\ell_{\nu}\geq-k$, for which we should rely on the wave maps
equation, the time-like control assumption (\ref{eq:TimeLikeDecay2}),
as well as the favorable decay (\ref{eq:HigherOrderDecay2}) we already
have. 

Recalling the expression for the operator (\ref{eq:SpatialBetaLaplacian}),
we compute then:
\begin{align}
\Delta_{x,\beta}(\chi_{\ell_{\nu}}\phi_{\nu}^{c})= & (\Delta_{x,\beta}\chi_{\ell_{\nu}})\phi_{\nu}^{c}+2(1-\beta^{2})(\partial_{x_{1}}\chi_{\ell_{\nu}})(\partial_{x_{1}}\phi_{\nu})+2(\partial_{x_{2}}\chi_{\ell_{\nu}})(\partial_{x_{2}}\phi_{\nu})\label{eq:TheEquation}\\
 & -2\chi_{\ell_{\nu}}\mathrm{sech}^{2}(\zeta)\mathrm{sinh}(\zeta)\partial_{x_{1}}X\phi_{\nu}\nonumber \\
 & +\chi_{\ell_{\nu}}(\mathrm{sech}^{2}(\zeta)X^{2}\phi_{\nu}-\Omega_{\alpha}^{\nu}\partial^{\alpha}\phi_{\nu}).\nonumber 
\end{align}

Let us treat first the smooth terms on the first line of (\ref{eq:TheEquation}),
of which there are two types, $(\nabla_{x}^{2}\chi_{\ell_{\nu}})\psi_{\nu}^{\ell_{\nu},c}$
and $\nabla_{x}\chi_{\ell_{\nu}}\nabla_{t,x}\psi_{\nu}^{\ell_{\nu}}$,
the cut-off differentiated in a spatial direction, claiming for both
the control:
\begin{equation}
\left\Vert \frac{\nabla_{x}}{\Delta_{x,\beta}}P_{k}[(\nabla_{x}^{2}\chi_{\ell_{\nu}})\psi_{\nu}^{\ell_{\nu},c}+\nabla_{x}\chi_{\ell_{\nu}}\nabla_{t,x}\psi_{\nu}^{\ell_{\nu}}]\right\Vert _{L_{t}^{\infty}(L_{x}^{2})}\lesssim2^{-(k+\ell_{\nu})}\varepsilon_{\nu},\,\,\, k\geq-\ell_{\nu}.\label{eq:Error1}
\end{equation}
To show this, relying on Plancherel in $L_{x}^{2}$, we discard the
Fourier multiplier $2^{k}\nabla_{x}\Delta_{x,\beta}^{-1}\widetilde{P}_{k}$
(where $\widetilde{P}_{k}=P_{k-1\leq\cdot\leq k+1}$), having symbol
bounded uniformly in $k\in\mathbb{Z}$. Rescaling by $2^{\ell_{\nu}}$
we are brought to estimate for $k\geq O(1)$:
\[
2^{-k}\left\Vert \eta_{\ell_{\nu}}[(\nabla_{x}^{2}\bar{m}_{0})\psi_{\nu,\ell_{\nu}}^{c}+\nabla_{x}\bar{m}_{0}\nabla_{t,x}\psi_{\nu,\ell_{\nu}}]\right\Vert _{L_{t}^{\infty}(L_{x}^{2})},
\]
where the second term is directly seen to have the desired control
by (\ref{eq:LocalEnergyDecay}), whereas for the first one, given
the spatial support of the extension $\psi_{\nu,\ell_{\nu}}^{c}$,
we apply Poincaré's inequality in $L_{x}^{2}$ as before, which allows
us to conclude by (\ref{eq:ExtensionEnergyControl}).

The second line of (\ref{eq:TheEquation}) is an error term controlled
thanks to the time-like decay (\ref{eq:TimeLikeDecay2}) we have.
We first write:
\begin{align*}
\chi_{\ell_{\nu}}\nabla_{x}X\phi_{\nu} & =\nabla_{x}(\chi_{\ell_{\nu}}X\phi_{\nu})-(\nabla_{x}\chi_{\ell_{\nu}})X\psi_{\nu}^{\ell_{\nu}},
\end{align*}
and note that the second term here was already treated in (\ref{eq:Error1}),
and so we just need to show: 
\begin{equation}
\left\Vert \frac{\nabla_{x}^{2}}{\Delta_{x,\beta}}P_{k}\sum_{\ell_{\nu}=\max(-k,n_{\nu})}^{N_{\nu}}(\chi_{\ell_{\nu}}X\phi_{\nu})(0)\right\Vert _{L_{x}^{2}}\lesssim\sigma_{\nu},\label{eq:Error2}
\end{equation}
but this follows at once by Plancherel in $L_{x}^{2}$, as the Fourier
multiplier $\nabla_{x}^{2}\Delta_{x,\beta}^{-1}P_{k}$ has a bounded
symbol, dropping the cut-offs and relying on (\ref{eq:TimeLikeDecay2}).

Finally, we shall consider the delicate second order time-like derivatives
and the non-linear terms on the third line of (\ref{eq:TheEquation}).
As was already required for (\ref{eq:Error2}), we restrict ourselves
from now on to work exclusively over the time slice $t=0$. And to
lighten the notation, we shall not mention this explicitly anymore.

Thanks to the assumption (\ref{eq:HigherOrderDecay2}), we have already
partial control on them through $\Pi_{X,\nu}$, which however we need
to localize to the neck region $B_{2^{N_{\nu}+1}}\setminus B_{2^{\max(-k,n_{\nu})-1}}$.
In doing so, we first note that since $\bar{m}_{\leq0}$ was initially
fixed spatially Schwartz, we have:
\[
\left\Vert \nabla_{x}\widetilde{m}_{k,N_{\nu}}\right\Vert _{L_{x}^{2}}\lesssim1,\,\,\,\mathrm{where}\,\,\,\widetilde{m}_{k,N_{\nu}}:=\bar{m}_{\leq N_{\nu}}-\bar{m}_{\leq\max(-k,n_{\nu})-1},
\]
given that the above norm is scale invariant. Hence applying the Littlewood-Paley
trichotomy to $\widetilde{m}_{k,N_{\nu}}\Pi_{X,\nu}$, we get:
\begin{align*}
P_{k}(\widetilde{m}_{k,N_{\nu}}\Pi_{X,\nu})= & \, P_{k}[(P_{\leq k-7}\widetilde{m}_{k,N_{\nu}})(P_{k-3\leq\cdot\leq k+3}\Pi_{X,\nu})\\
 & +(P_{k-3\leq\cdot\leq k+3}\widetilde{m}_{k,N_{\nu}})(P_{\leq k-7}\Pi_{X,\nu})\\
 & +\sum_{k_{1},k_{2}\geq k-6:\left|k_{1}-k_{2}\right|\leq O(1)}(P_{k_{1}}\widetilde{m}_{k,N_{\nu}})(P_{k_{2}}\Pi_{X,\nu})].
\end{align*}

From there, using (\ref{eq:HigherOrderDecay2}), we estimate the low-high
interactions by:
\[
2^{-k}\left\Vert (P_{\leq k-7}\widetilde{m}_{k,N_{\nu}})(P_{k-3\leq\cdot\leq k+3}\Pi_{X,\nu})\right\Vert _{L_{x}^{2}}\lesssim\left\Vert \widetilde{m}_{k,N_{\nu}}\right\Vert _{L_{x}^{\infty}}\iota_{\nu},
\]
the high-low ones by:
\[
2^{-k}\left\Vert (P_{k-3\leq\cdot\leq k+3}\widetilde{m}_{k,N_{\nu}})(\sum_{k_{1}\leq k-7}P_{k_{1}}\Pi_{X,\nu})\right\Vert _{L_{x}^{2}}\lesssim\left\Vert \widetilde{m}_{k,N_{\nu}}\right\Vert _{L_{x}^{\infty}}\sum_{k_{1}\leq k-7}2^{-(k-k_{1})}\iota_{\nu},
\]
whereas for the high-high cascade we have:
\[
\sum_{k_{1},k_{2}\geq k-6:\left|k_{1}-k_{2}\right|\leq O(1)}2^{-k}\left\Vert \eta(P_{k_{1}}\widetilde{m}_{k,N_{\nu}})(P_{k_{2}}\Pi_{X,\nu})\right\Vert _{L_{x}^{2}}\lesssim(\sum_{k_{1}}2^{2k_{1}}\left\Vert P_{k_{1}}\widetilde{m}_{k,N_{\nu}}\right\Vert _{L_{x}^{2}}^{2})^{\frac{1}{2}}\iota_{\nu},
\]
where we have used Bernstein's inequality (\ref{eq:Bernstein}) passing
to $L_{x}^{1}$, and then Cauchy-Schwarz with the fact that $k_{1}=k_{2}+O(1)$.

Putting those estimates together we get the required control for $\widetilde{m}_{k,N_{\nu}}\Pi_{X,\nu}$:
\begin{equation}
\left\Vert \frac{\nabla_{x}}{\Delta_{x,\beta}}P_{k}\sum_{\ell_{\nu}=\max(-k,n_{\nu})}^{N_{\nu}}\chi_{\ell_{\nu}}\Pi_{X,\nu}\right\Vert _{L_{x}^{2}}\lesssim\iota_{\nu},\label{eq:Error3}
\end{equation}
by discarding the multiplier $2^{k}\nabla_{x}\Delta_{x,\beta}^{-1}\widetilde{P}_{k}$
and relying on the bounds for the cut-offs $\widetilde{m}_{k,N_{\nu}}$
discussed above.

We treat now the non-linear bulk left from Lemma \ref{lem:HigherOrderTimeLike},
decomposing it into:
\begin{align*}
B_{1}^{\nu}:= & \sum_{k\in\mathbb{Z}}P_{k}\nabla_{x}\cdot(\Omega_{x,\beta}^{\nu}\phi_{\nu}^{>k+10}),\\
B_{2}^{\nu}:= & \sum_{k\in\mathbb{Z}}P_{k}(\Omega_{x,\beta}^{\nu}\cdot\nabla_{x}\phi_{\nu}^{\leq k+10}),
\end{align*}
introducing the convenient notation $\phi_{\nu}^{k}:=P_{k}\phi_{\nu}$
(also later $\phi_{\nu,\ell_{\nu}}^{k}:=P_{k}\phi_{\nu,\ell_{\nu}}$
for the rescaled maps), etc. We want to treat this term perturbatively,
as in elliptic regularity theory, and so we proceed claiming first
the following $\dot{B}_{\infty}^{-1,2}$ estimate:

\begin{align}
 & \left\Vert \frac{\nabla_{x}}{\Delta_{x,\beta}}P_{k}\sum_{\ell_{\nu}=\max(-k,n_{\nu})}^{N_{\nu}}\chi_{\ell_{\nu}}B_{i}^{\nu}\right\Vert _{L_{x}^{2}}^{2}\label{eq:NonLinearDoublePrim}\\
 & \lesssim\sum_{\jmath\geq0}2^{-\jmath}\sum_{\ell}\left\Vert \chi_{\ell_{\nu}}B_{i}^{\nu}\right\Vert _{L_{t}^{\infty}(L_{x}^{1})}\left\Vert \chi_{\ell_{\nu}+\jmath}B_{i}^{\nu}\right\Vert _{L_{t}^{\infty}(L_{x}^{1})},\nonumber 
\end{align}
where the sums are such that both $\ell_{\nu}$ and $\ell_{\nu}+\jmath$
range between $\max(-k,n_{\nu})$ and $N_{\nu}$. 

Discarding the Fourier multiplier $2^{k}\nabla_{x}\Delta_{x,\beta}^{-1}\widetilde{P}_{k}$
via Plancherel in $L_{x}^{2}$, we note the Littlewood-Paley projection
$P_{k}$ in front of the sum in (\ref{eq:NonLinearDoublePrim}) is
crucial to handle the remaining factor $2^{-k}$. But frequency localization
induces spreading for the physical support by the uncertainty principle.
And so, we are not allowed to use a square-summing trick relying on
the finitely overlapping supports of $\chi_{\ell_{\nu}}B_{i}^{\nu}$.
On the other hand, this leakage is very much controllable given the
fact that $k\geq-\ell_{\nu}+O(1)$, which corresponds to high frequency
here. 

More precisely, let us bound the LHS of (\ref{eq:NonLinearDoublePrim})
via:
\[
2^{-2k}\sum_{\mu_{\nu}\geq\ell_{\nu}}\left|\int_{\mathbb{R}^{2}}\left[P_{k}(\chi_{\ell_{\nu}}B_{i}^{\nu})\right]\left[P_{k}(\chi_{\mu_{\nu}}B_{i}^{\nu})\right]dx\right|,
\]
with both $\ell_{\nu}$ and $\mu_{\nu}$ ranging between $\max(-k,n_{\nu})$
and $N_{\nu}$. By the self-adjointness of $P_{k}$, the summand above
can be estimated by:
\begin{align*}
 & \left\Vert \left[P_{k}^{2}(\chi_{\ell_{\nu}}B_{i}^{\nu})\right]\chi_{\mu_{\nu}}B_{i}^{\nu}\right\Vert _{L_{x}^{1}}\\
 & \leq\left\Vert P_{k}^{2}(\chi_{\ell_{\nu}}B_{i}^{\nu})\right\Vert _{L_{x}^{\infty}\left(\{\left|x\right|\sim2^{\mu_{\nu}}\}\right)}\left\Vert \chi_{\mu_{\nu}}B_{i}^{\nu}\right\Vert _{L_{x}^{1}}.
\end{align*}
Now, looking at the convolution kernel for $P_{k}^{2}$, analogue
to (\ref{eq:ConvolutionLPprojRepresentation}), we can estimate the
first factor on the RHS above by:
\[
\left\Vert P_{k}^{2}(\chi_{\ell_{\nu}}B_{i}^{\nu})\right\Vert _{L_{x}^{\infty}\left(\{\left|x\right|\sim2^{\mu_{\nu}}\}\right)}\lesssim2^{2k}2^{-(\mu_{\nu}-\ell_{\nu})}\left\Vert \chi_{\ell_{\nu}}B_{i}^{\nu}\right\Vert _{L_{x}^{1}},
\]
for $\mu_{\nu}\geq\ell_{\nu}\geq-k$, a refined version of Bernstein's
inequality (\ref{eq:Bernstein}). Hence, this leads us to estimate
the LHS of (\ref{eq:NonLinearDoublePrim}) by:
\[
\sum_{\mu_{\nu}\geq\ell_{\nu}}2^{-(\mu_{\nu}-\ell_{\nu})}\left\Vert \chi_{\ell_{\nu}}B_{i}^{\nu}\right\Vert _{L_{x}^{1}}\left\Vert \chi_{\mu_{\nu}}B_{i}^{\nu}\right\Vert _{L_{x}^{1}},
\]
as required. 

Given (\ref{eq:NonLinearDoublePrim}), we remark that summing one
of the factors we get a universal bound. This follows from the global
energy control (\ref{eq:GlobalEnergyForBesov}) since, by the finitely
overlapping supports of $\chi_{\ell_{\nu}}B_{i}^{\nu}$:
\[
\sum_{\ell_{\nu}}\left\Vert \chi_{\ell_{\nu}}B_{i}^{\nu}\right\Vert _{L_{x}^{1}}\lesssim\left\Vert B_{i}^{\nu}\right\Vert _{L_{x}^{1}},
\]
and in fact we have the stronger control:
\begin{equation}
\sum_{k\in\mathbb{Z}}\left\Vert P_{k}B_{1}^{\nu}\right\Vert _{L_{x}^{1}}+\left\Vert (\sum_{k\in\mathbb{Z}}|P_{k}B_{2}^{\nu}|^{2})^{\frac{1}{2}}\right\Vert _{L_{x}^{1}}\lesssim\mathcal{\mathcal{E}},\label{eq:L1controlDual}
\end{equation}
where for the former we have:
\[
\left\Vert P_{k}B_{1}^{\nu}\right\Vert _{L_{x}^{1}}\lesssim\sum_{k_{1},k_{2}\geq k+5:\left|k_{1}-k_{2}\right|\leq O(1)}2^{-(k_{2}-k)}\left\Vert P_{k_{1}}\Omega_{x,\beta}^{\nu}\right\Vert _{L_{x}^{2}}\left\Vert \nabla_{x}\phi_{\nu}^{k_{2}}\right\Vert _{L_{x}^{2}},
\]
applying initially the finite band property (\ref{eq:FiniteBandEqual}),
and then once again for $\phi_{\nu}^{k_{2}}$, and this can be summed
over $k\in\mathbb{Z}$ using discrete Cauchy-Schwarz in $k_{1}=k_{2}+O(1)$.
Whereas for the latter, we note that by the Littlewood-Paley trichotomy:
\begin{align*}
P_{k}(\Omega_{x,\beta}^{\nu}\cdot\nabla_{x}\phi_{\nu}^{\leq k+10})= & \, P_{k}[P_{\leq k-7}(\Omega_{x,\beta}^{\nu})\cdot\nabla_{x}\phi_{\nu}^{k-3\leq\cdot\leq k+3}\\
 & +P_{k-3\leq\cdot\leq k+3}(\Omega_{x,\beta}^{\nu})\cdot\nabla_{x}\phi_{\nu}^{\leq k-7}\\
 & +\sum_{k_{1},k_{2}\sim k}P_{k_{1}}(\Omega_{x,\beta}^{\nu})\cdot\nabla_{x}\phi_{\nu}^{k_{2}}],
\end{align*}
and so the first two terms correspond to paraproducts, already localized
to $\left|\xi\right|\sim2^{k}$, and therefore their sum in $k\in\mathbb{Z}$
lies in the homogeneous Hardy space $\dot{F}_{2}^{0,1}$ with bound
$O(\mathcal{\mathcal{E}})$, and for the last term the stronger estimate
in $\dot{B}_{1}^{0,1}$ with bound $O(\mathcal{\mathcal{E}})$ as
for $B_{1}^{\nu}$ holds, since the sum under $P_{k}$ is finite and
we can apply the discrete Cauchy-Schwarz inequality.

Hence, rescaling by $2^{\ell_{\nu}}$ and setting $B_{i}^{\nu,\ell_{\nu}}(\cdot)=2^{2\ell_{\nu}}B_{i}^{\nu}(2^{\ell_{\nu}}\cdot)$,
to obtain decay for (\ref{eq:NonLinearDoublePrim}) it suffices to
prove:
\begin{equation}
\sup_{n_{\nu}\leq\ell_{\nu}\leq N_{\nu}}\left\Vert \bar{m}_{0}B_{i}^{\nu,\ell_{\nu}}\right\Vert _{L_{x}^{1}}\leq o(\mathcal{\mathcal{E}}).\label{eq:LastTerms}
\end{equation}
This is direct manifestation of the perturbative nature of quadratic
non-linearities on neck regions, thanks to local energy decay (\ref{eq:LocalEnergyDecay}).
In our case, the argument is however slightly more involved because
our product structure is non-local. This represents however a minor
technicality only, and we shall treat this analogously to the previous
instances of physical support leakage.

Let us introduce two auxiliary parameters. Setting $\Omega_{x,\beta}^{\nu,\ell_{\nu}}(\cdot):=2^{\ell_{\nu}}\Omega_{x,\beta}^{\nu}(2^{\ell_{\nu}}\cdot)$,
by the local energy estimate (\ref{eq:LocalEnergyDecay}), we can
find sequences $\kappa_{\nu}\rightarrow+\infty$ and $\tilde{\varepsilon}_{\nu}\downarrow0$
such that:
\[
\left\Vert \bar{m}_{-10\leq\cdot\leq\kappa_{\nu}}\Omega_{x,\beta}^{\nu,\ell_{\nu}}\right\Vert _{L_{x}^{2}}\leq\tilde{\varepsilon}_{\nu},
\]
where we use the convention $\bar{m}_{k_{1}\leq\cdot\leq k_{2}}:=\bar{m}_{\leq k_{2}}-\bar{m}_{\leq k_{1}-1}$,
and similarly for $\bar{m}_{\geq k_{1}}:=1-\bar{m}_{\leq k_{1}-1}$.
Let us first treat the annulus determined so, and then the outer and
inner regions separately. 

For the annulus we can discard the cut-off $\bar{m}_{0}$. Regarding
$B_{1}^{\nu,\ell_{\nu}}$, we have:
\begin{align*}
 & \sum_{k\in\mathbb{Z}}\left\Vert P_{k}\nabla_{x}\cdot(\bar{m}_{-10\leq\cdot\leq\kappa_{\nu}}\Omega_{x,\beta}^{\nu,\ell_{\nu}}\phi_{\nu,\ell_{\nu}}^{>k+10})\right\Vert _{L_{x}^{1}}\\
 & \lesssim\sum_{k\in\mathbb{Z}}\,\,\,\sum_{k_{1},k_{2}\geq k+5:\left|k_{1}-k_{2}\right|\leq O(1)}2^{-(k_{1}-k)}\left\Vert P_{k_{1}}(\bar{m}_{-10\leq\cdot\leq\kappa_{\nu}}\Omega_{x,\beta}^{\nu,\ell_{\nu}})\right\Vert _{L_{x}^{2}}\left\Vert \nabla_{x}\phi_{\nu}^{k_{2}}\right\Vert _{L_{x}^{2}},
\end{align*}
where we have used the finite band property (\ref{eq:FiniteBandEqual})
as usual, and we control this by $O(\tilde{\varepsilon}_{\nu}\mathcal{\mathcal{E}}^{\frac{1}{2}})$
relying on the discrete Cauchy-Schwarz and $k_{1}=k_{2}+O(1)$, which
is acceptable for (\ref{eq:LastTerms}). For $B_{2}^{\nu,\ell_{\nu}}$,
we use Littlewood-Paley trichotomy as previously to get:
\begin{align*}
 & \left\Vert \sum_{k\in\mathbb{Z}}P_{k}(\bar{m}_{-10\leq\cdot\leq\kappa_{\nu}}\Omega_{x,\beta}^{\nu,\ell_{\nu}}\cdot\nabla_{x}\phi_{\nu,\ell_{\nu}}^{\leq k+10})\right\Vert _{L_{x}^{1}}\\
 & \lesssim\left\Vert \sup_{k_{1}\in\mathbb{Z}}|P_{\leq k_{1}-7}(\bar{m}_{-10\leq\cdot\leq\kappa_{\nu}}\Omega_{x,\beta}^{\nu})|\right\Vert _{L_{x}^{2}}\cdot\left\Vert (\sum_{k_{2}\in\mathbb{Z}}|\nabla_{x}\phi_{\nu,\ell_{\nu}}^{k_{2}-3\leq\cdot\leq k_{2}+3}|^{2})^{\frac{1}{2}}\right\Vert _{L_{x}^{2}}\\
 & +\left\Vert (\sum_{k_{1}\in\mathbb{Z}}|P_{k_{1}-3\leq\cdot\leq k_{1}+3}(\bar{m}_{-10\leq\cdot\leq\kappa_{\nu}}\Omega_{x,\beta}^{\nu})|^{2})^{\frac{1}{2}}\right\Vert _{L_{x}^{2}}\cdot\left\Vert \sup_{k_{2}\in\mathbb{Z}}|\nabla_{x}\phi_{\nu,\ell_{\nu}}^{\leq k_{2}-7}|\right\Vert _{L_{x}^{2}}\\
 & +\sum_{k\in\mathbb{Z}}\sum_{k_{1},k_{2}\sim k}\left\Vert P_{k_{1}}(\bar{m}_{-10\leq\cdot\leq\kappa_{\nu}}\Omega_{x,\beta}^{\nu})\right\Vert _{L_{x}^{2}}\cdot\left\Vert \nabla_{x}\phi_{\nu,\ell_{\nu}}^{k_{2}}\right\Vert _{L_{x}^{2}},
\end{align*}
and relying on the Littlewood-Paley square function estimate for the
first two terms, and simply the discrete Cauchy-Schwarz for the last,
we can bound the above by $O(\tilde{\varepsilon}_{\nu}\mathcal{\mathcal{E}}^{\frac{1}{2}})$
again. Therefore this is permissible contribution to (\ref{eq:LastTerms}). 

Now we treat the error terms. First, let us consider the outer region
defined by the cut-off $\bar{m}_{>\kappa_{\nu}}$. Writing: 
\begin{equation}
\left\Vert \bar{m}_{0}\sum_{k\in\mathbb{Z}}P_{k}B\right\Vert _{L_{x}^{1}}\lesssim\left\Vert \bar{m}_{0}\right\Vert _{L_{x}^{1}}\sum_{k\in\mathbb{Z}}\left\Vert P_{k}B\right\Vert _{L_{x}^{\infty}\left(\{2^{-1}\leq\left|x\right|\leq2\}\right)},\label{eq:AuxiliaryCrap}
\end{equation}
we proceed, first for:
\[
B:=\sum_{k\in\mathbb{Z}}P_{k}\nabla_{x}\cdot(\bar{m}_{>\kappa_{\nu}}\Omega_{x,\beta}^{\nu,\ell_{\nu}}\phi_{\nu,\ell_{\nu}}^{>k+10}),
\]
by considering the convolution kernel for the Fourier multiplier $\nabla_{x}P_{k}P_{k'}$,
with $k=k'+O(1)$, which gives:
\[
\left\Vert P_{k}B\right\Vert _{L_{x}^{\infty}\left(\{2^{-1}\leq\left|x\right|\leq2\}\right)}\lesssim_{N}\frac{2^{3k}}{(1+2^{k}2^{\kappa_{\nu}})^{N}}\left\Vert \bar{m}_{>\kappa_{\nu}}\Omega_{x,\beta}^{\nu,\ell_{\nu}}\phi_{\nu,\ell_{\nu}}^{>k'+10}\right\Vert _{L_{x}^{1}},
\]
for any positive integer $N\in\mathbb{N}$, bearing in mind the physical
support of $\bar{m}_{>\kappa_{\nu}}\Omega_{x,\beta}^{\nu,\ell_{\nu}}\phi_{\nu,\ell_{\nu}}^{>k+10}$.
Using this estimate, for high frequency scales, we choose $N=3$,
getting the following bound for the sum in $k\geq0$ from (\ref{eq:AuxiliaryCrap})
:
\[
2^{-3\kappa_{\nu}}\sum_{k\geq0}2^{-k}\left\Vert \bar{m}_{>\kappa_{\nu}}\Omega_{x,\beta}^{\nu,\ell_{\nu}}\right\Vert _{L_{x}^{2}}\sum_{k_{1}>k+10}2^{-(k_{1}-k)}\left\Vert \nabla_{x}\phi_{\nu,\ell_{\nu}}^{k_{1}}\right\Vert _{L_{x}^{2}},
\]
by the finite band property (\ref{eq:FiniteBandEqual}) for $\phi_{\nu,\ell_{\nu}}$.
This is immediately seen to be $o(\mathcal{E})$ as $\kappa_{\nu}\rightarrow+\infty$,
hence this contribution is acceptable. For the low frequency scales,
if we set $N=1$ above, we have for the sum over $k<0$ in (\ref{eq:AuxiliaryCrap}):
\[
2^{-\kappa_{\nu}}\sum_{k<0}2^{k}\left\Vert \bar{m}_{>\kappa_{\nu}}\Omega_{x,\beta}^{\nu,\ell_{\nu}}\right\Vert _{L_{x}^{2}}\sum_{k_{1}>k+10}2^{-(k_{1}-k)}\left\Vert \nabla_{x}\phi_{\nu,\ell_{\nu}}^{k_{1}}\right\Vert _{L_{x}^{2}}\lesssim o(\mathcal{E}),
\]
as desired, so the contribution of the outer region is controlled
for $B_{1}^{\nu,\ell_{\nu}}$. Regarding $B_{2}^{\nu,\ell_{\nu}}$,
we have to control (\ref{eq:AuxiliaryCrap}) with:
\[
B:=\sum_{k\in\mathbb{Z}}P_{k}(\bar{m}_{>\kappa_{\nu}}\Omega_{x,\beta}^{\nu,\ell_{\nu}}\cdot\nabla_{x}\phi_{\nu,\ell_{\nu}}^{\leq k+10}).
\]
Proceeding similarly to the above, we look at the convolution kernel
of $P_{k}P_{k'}$, with $k=k'+O(1)$, and given the spatial support
of $\bar{m}_{>\kappa_{\nu}}\Omega_{x,\beta}^{\nu,\ell_{\nu}}\phi_{\nu,\ell_{\nu}}^{\leq k+10}$,
we get the analogous estimate for $N\in\mathbb{Z}$:
\[
\left\Vert P_{k}B\right\Vert _{L_{x}^{\infty}\left(\{2^{-1}\leq\left|x\right|\leq2\}\right)}\lesssim_{N}\frac{2^{2k}}{(1+2^{k}2^{\kappa_{\nu}})^{N}}\left\Vert \bar{m}_{>\kappa_{\nu}}\Omega_{x,\beta}^{\nu,\ell_{\nu}}\cdot\nabla_{x}\phi_{\nu,\ell_{\nu}}^{\leq k'+10}\right\Vert _{L_{x}^{1}},
\]
so that choosing $N=3$ when $k\geq0$, and $N=1$ if $k<0$ as previously,
yields the control for (\ref{eq:AuxiliaryCrap}):
\[
2^{-\kappa_{\nu}}(\sum_{k\in\mathbb{Z}}2^{-\left|k\right|})\left\Vert \bar{m}_{>\kappa_{\nu}}\Omega_{x,\beta}^{\nu,\ell_{\nu}}\right\Vert _{L_{x}^{2}}\left\Vert \nabla_{x}\phi_{\nu,\ell_{\nu}}\right\Vert _{L_{x}^{2}}\lesssim o(\mathcal{E}),
\]
as desired, and this completes the treatment of the contribution to
(\ref{eq:LastTerms}) of the outer region.

Finally, we need to study the contribution of the interior region
defined by the support of $\bar{m}_{<-10}$, that we note being at
a definite amount of distance from the support of $\bar{m}_{0}$.
First, we remark that we have:
\begin{equation}
\left\Vert \bar{m}_{<-10}\Omega_{x,\beta}^{\nu,\ell_{\nu}}\right\Vert _{H_{x}^{-1}}\longrightarrow0,\label{eq:InnerConnectionDecay}
\end{equation}
and to see this, we start by getting an extension $\varphi_{\nu}$
of $\phi_{\nu,\ell_{\nu}}|_{B_{1}}$, equal to a suitably chosen constant
$c=c(\{\phi_{\nu,\ell_{\nu}}\}_{\nu\in\mathbb{N}})$, such that by
the construction of the sequence of wave maps and the covering in
Lemma \ref{lem:CoveringLemma}, we have $\varphi_{\nu}^{c}:=\varphi_{\nu}-c$
vanishing strongly in supercritical spaces:
\begin{equation}
\left\Vert \varphi_{\nu}^{c}\right\Vert _{H_{x}^{s}}\longrightarrow0,\,\,\, s<1.\label{eq:SuperCriticalDecayBubble}
\end{equation}
To establish (\ref{eq:InnerConnectionDecay}) it is enough to consider
$\widetilde{\varphi}_{\nu}\nabla_{x}\varphi_{\nu}$, where $\widetilde{\varphi}_{\nu}:=\bar{m}_{<-10}\varphi_{\nu}$.
For low frequencies:
\begin{align*}
\left\Vert P_{\leq0}(\widetilde{\varphi}_{\nu}\nabla_{x}\varphi_{\nu})\right\Vert _{L_{x}^{2}}\lesssim & \left\Vert \widetilde{\varphi}_{\nu}\right\Vert _{L_{x}^{\infty}}\left\Vert P_{\leq O(1)}\varphi_{\nu}^{c}\right\Vert _{L_{x}^{2}}\\
 & +\sum_{k_{1},k_{2}\geq O(1):\left|k_{1}-k_{2}\right|\leq O(1)}\left\Vert \nabla_{x}P_{k_{1}}\widetilde{\varphi}_{\nu}\right\Vert _{L_{x}^{2}}\left\Vert P_{k_{2}}\varphi_{\nu}^{c}\right\Vert _{L_{x}^{2}},
\end{align*}
where for the first term we have used (\ref{eq:FiniteBandLess}) to
discard $\nabla_{x}$, and for the second we passed initially to $L_{x}^{1}$
applying (\ref{eq:Bernstein}), and then transferred $\nabla_{x}$
from $\varphi_{\nu}^{c}$ to $\widetilde{\varphi}_{\nu}$ via (\ref{eq:FiniteBandEqual}).
Both items are acceptable by (\ref{eq:SuperCriticalDecayBubble}).
For high frequencies, we apply precisely the same argument, but with
a slightly more refined Littlewood-Paley trichotomy decomposition:
\begin{align*}
2^{-k}\left\Vert P_{k}(\widetilde{\varphi}_{\nu}\nabla_{x}\varphi_{\nu})\right\Vert _{L_{x}^{2}}\lesssim & \left\Vert P_{\leq k-7}\widetilde{\varphi}_{\nu}\right\Vert _{L_{x}^{\infty}}\left\Vert P_{k-3\leq\cdot\leq k+3}\varphi_{\nu}^{c}\right\Vert _{L_{x}^{2}}\\
 & +\left\Vert P_{k-3\leq\cdot\leq k+3}\nabla_{x}\widetilde{\varphi}_{\nu}\right\Vert _{L_{x}^{2}}\left\Vert P_{\leq k-7}\varphi_{\nu}^{c}\right\Vert _{L_{x}^{2}}\\
 & +2^{-\frac{k}{2}}\sum_{k_{1},k_{2}\geq k-6:\left|k_{1}-k_{2}\right|\leq O(1)}\left\Vert \nabla_{x}P_{k_{1}}\widetilde{\varphi}_{\nu}\right\Vert _{L_{x}^{2}}2^{\frac{k_{2}}{2}}\left\Vert P_{k_{2}}\varphi_{\nu}^{c}\right\Vert _{L_{x}^{2}},
\end{align*}
where for the first term we applied (\ref{eq:FiniteBandLess}) and
for the other two we passed first to $L_{x}^{1}$ via (\ref{eq:FiniteBandEqual}),
then used Cauchy-Schwarz, from where for the second term we used (\ref{eq:FiniteBandLess})
for $P_{\leq k-7}\varphi_{\nu}^{c}$ and (\ref{eq:FiniteBandEqual})
for $\widetilde{\varphi}_{\nu}$ transferring $\nabla_{x}$ from one
to the other, whereas for the third term this transfer of $\nabla_{x}$
happened at once via (\ref{eq:FiniteBandEqual}) since $k_{1}=k_{2}+O(1)$,
and then multiplied $P_{k_{2}}\varphi_{\nu}^{c}$ simply by $2^{-k_{2}/2}2^{k_{2}/2}$
which led to the exponential gain $2^{-k/2}$ in front of the sum
since $k_{2}\geq k+O(1)$. Square-summing the above estimate over
$k>0$, and applying discrete Cauchy-Schwarz for the third item, gives
an acceptable bound by (\ref{eq:SuperCriticalDecayBubble}), therefore
we have claim (\ref{eq:InnerConnectionDecay}).

With this understood, we can control the contribution of the inner
region to (\ref{eq:LastTerms}) for the low frequencies. Given any
positive integer $K>0$, we have regarding $B_{1}^{\nu,\ell_{\nu}}$:
\begin{align*}
 & \sum_{k\leq K}\left\Vert P_{k}\nabla_{x}\cdot(\bar{m}_{<-10}\Omega_{x,\beta}^{\nu,\ell_{\nu}}\phi_{\nu,\ell_{\nu}}^{>k+10})\right\Vert _{L_{x}^{1}}\\
 & \lesssim\sum_{k\leq K}2^{k}\sum_{k_{1},k_{2}\geq k+5:\left|k_{1}-k_{2}\right|\leq O(1)}2^{-k_{2}}\left\Vert P_{k_{1}}(\bar{m}_{<-10}\Omega_{x,\beta}^{\nu,\ell_{\nu}})\right\Vert _{L_{x}^{2}}\left\Vert \nabla_{x}\phi_{\nu}^{k_{2}}\right\Vert _{L_{x}^{2}}\\
 & \lesssim\sum_{k\leq O(1)}\left(\sum_{k+5\leq k_{1},k_{2}\leq O(1):\left|k_{1}-k_{2}\right|\leq O(1)}2^{-(k_{2}-k)}\left\Vert P_{k_{1}}(\bar{m}_{<-10}\Omega_{x,\beta}^{\nu,\ell_{\nu}})\right\Vert _{L_{x}^{2}}\left\Vert \nabla_{x}\phi_{\nu}^{k_{2}}\right\Vert _{L_{x}^{2}}\right)\\
 & +2^{K}\sum_{k_{1},k_{2}\geq O(1):\left|k_{1}-k_{2}\right|\leq O(1)}2^{-k_{1}}\left\Vert P_{k_{1}}(\bar{m}_{<-10}\Omega_{x,\beta}^{\nu,\ell_{\nu}})\right\Vert _{L_{x}^{2}}\left\Vert \nabla_{x}\phi_{\nu}^{k_{2}}\right\Vert _{L_{x}^{2}},
\end{align*}
which is $o(\mathcal{E})$ for the first term and $o_{K}(\mathcal{E})$
for the second by (\ref{eq:InnerConnectionDecay}). Analogously, looking
at $B_{2}^{\nu,\ell_{\nu}}$ we get:

\begin{align*}
 & \left\Vert \sum_{k\leq K}P_{k}(\bar{m}_{<-10}\Omega_{x,\beta}^{\nu,\ell_{\nu}}\cdot\nabla_{x}\phi_{\nu,\ell_{\nu}}^{\leq k+10})\right\Vert _{L_{x}^{1}}\\
 & \lesssim\left\Vert \sum_{k\leq K}P_{k}[P_{\leq K+O(1)}(\bar{m}_{<-10}\Omega_{x,\beta}^{\nu})\cdot\nabla_{x}\phi_{\nu,\ell_{\nu}}^{k-3\leq\cdot\leq k+3}]\right\Vert _{L_{x}^{1}}\\
 & +\left\Vert \sum_{k\leq K}P_{k}[P_{k_{1}-3\leq\cdot\leq k_{1}+3}(\bar{m}_{<-10}\Omega_{x,\beta}^{\nu})\cdot\nabla_{x}\phi_{\nu,\ell_{\nu}}^{\leq k_{2}-7}]\right\Vert _{L_{x}^{1}}\\
 & \lesssim\left\Vert P_{\leq K+O(1)}(\bar{m}_{<-10}\Omega_{x,\beta}^{\nu})\right\Vert _{L_{x}^{2}}\cdot\left\Vert (\sum_{k_{2}\leq K}|\nabla_{x}\phi_{\nu,\ell_{\nu}}^{k_{2}-3\leq\cdot\leq k_{2}+3}|)^{\frac{1}{2}}\right\Vert _{L_{x}^{2}}\\
 & +\left\Vert (\sum_{k_{1}\leq K}|P_{k_{1}-3\leq\cdot\leq k_{1}+3}(\bar{m}_{<-10}\Omega_{x,\beta}^{\nu})|^{2})^{\frac{1}{2}}\right\Vert _{L_{x}^{2}}\cdot\left\Vert \sup_{k_{2}\leq K}|\nabla_{x}\phi_{\nu,\ell_{\nu}}^{\leq k_{2}-7}|\right\Vert _{L_{x}^{2}},
\end{align*}
and this is again controlled by $o_{K}(\mathcal{E})$ via (\ref{eq:InnerConnectionDecay}).
Therefore, for both contributions, we can choose a sequence of integers
$K_{\nu}\rightarrow+\infty$, together with decaying constants $\varsigma_{\nu}\downarrow0$,
such that:
\[
\sum_{k\leq K_{\nu}}\left\Vert P_{k}\nabla_{x}\cdot(\bar{m}_{<-10}\Omega_{x,\beta}^{\nu,\ell_{\nu}}\phi_{\nu,\ell_{\nu}}^{>k+10})\right\Vert _{L_{x}^{1}}+\left\Vert \sum_{k\leq K_{\nu}}P_{k}(\bar{m}_{<-10}\Omega_{x,\beta}^{\nu,\ell_{\nu}}\cdot\nabla_{x}\phi_{\nu,\ell_{\nu}}^{\leq k+10})\right\Vert _{L_{x}^{1}}\leq\varsigma_{\nu},
\]
and this yields the decay of slowly growing frequencies for the inner
region, as desired. Note that the cut-off $\bar{m}_{0}$ has not played
any role in the above argument. However, for the high frequencies
$k>K_{\nu}$, having $\bar{m}_{0}$ will be crucial as we are going
to pass by (\ref{eq:AuxiliaryCrap}) as before, first with:
\[
B:=\sum_{k>K_{\nu}}P_{k}\nabla_{x}\cdot(\bar{m}_{<-10}\Omega_{x,\beta}^{\nu,\ell_{\nu}}\phi_{\nu,\ell_{\nu}}^{>k+10}).
\]
Considering the convolution kernel for $\nabla_{x}P_{k}P_{k'}$, with
$k=k'+O(1)$, as previously, we estimate:
\[
\left\Vert P_{k}B\right\Vert _{L_{x}^{\infty}\left(\{2^{-1}\leq\left|x\right|\leq2\}\right)}\lesssim\frac{2^{3k}}{(1+2^{k})^{3}}\left\Vert \bar{m}_{<-10}\Omega_{x,\beta}^{\nu,\ell_{\nu}}\phi_{\nu,\ell_{\nu}}^{>k'+10}\right\Vert _{L_{x}^{1}},
\]
noting the fixed positive distance of the physical support of $\bar{m}_{<-10}\Omega_{x,\beta}^{\nu,\ell_{\nu}}\phi_{\nu,\ell_{\nu}}^{>k+10}$
to the annulus $\{2^{-1}\leq\left|x\right|\leq2\}$. Using this, we
can bound (\ref{eq:AuxiliaryCrap}) in this case by:
\[
\sum_{k>K_{\nu}}2^{-k}\left\Vert \bar{m}_{<-10}\Omega_{x,\beta}^{\nu,\ell_{\nu}}\right\Vert _{L_{x}^{2}}\sum_{k_{1}>k+10}2^{-(k_{1}-k)}\left\Vert \nabla_{x}\phi_{\nu,\ell_{\nu}}^{k_{1}}\right\Vert _{L_{x}^{2}}\lesssim2^{-K_{\nu}}\mathcal{E},
\]
which is certainly acceptable, given that $K_{\nu}\rightarrow+\infty$.
Finally, the last contribution to treat is when:
\[
B:=\sum_{k>K_{\nu}}P_{k}(\bar{m}_{<-10}\Omega_{x,\beta}^{\nu,\ell_{\nu}}\cdot\nabla_{x}\phi_{\nu,\ell_{\nu}}^{\leq k+10}),
\]
in (\ref{eq:AuxiliaryCrap}), and here we proceed in complete analogy
to the above, getting the following estimate:
\[
\left\Vert P_{k}B\right\Vert _{L_{x}^{\infty}\left(\{2^{-1}\leq\left|x\right|\leq2\}\right)}\lesssim\frac{2^{2k}}{(1+2^{k})^{3}}\left\Vert \bar{m}_{<-10}\Omega_{x,\beta}^{\nu,\ell_{\nu}}\cdot\nabla_{x}\phi_{\nu,\ell_{\nu}}^{\leq k'+10}\right\Vert _{L_{x}^{1}},
\]
by looking at the convolution kernel of $P_{k}P_{k'}$, with $k=k'+O(1)$,
and the location of spatial support of $\bar{m}_{<-10}\Omega_{x,\beta}^{\nu,\ell_{\nu}}\cdot\nabla_{x}\phi_{\nu,\ell_{\nu}}^{\leq k+10}$
with respect to the annulus $\{2^{-1}\leq\left|x\right|\leq2\}$.
This in turn, yields the following control for (\ref{eq:AuxiliaryCrap}):
\[
\sum_{k>K_{\nu}}2^{-k}\left\Vert \bar{m}_{<-10}\Omega_{x,\beta}^{\nu,\ell_{\nu}}\right\Vert _{L_{x}^{2}}\left\Vert \nabla_{x}\phi_{\nu,\ell_{\nu}}\right\Vert _{L_{x}^{2}}\lesssim2^{-K_{\nu}}\mathcal{E},
\]
which, as noted above, is permissible. That concludes the treatment
of the contribution of the inner region, and therefore we have obtained
claim (\ref{eq:LastTerms}).

In the end, going back to the physical Littlewood-Paley decomposition
(\ref{eq:PhysicalLPDecomposition}) and expressing the time derivative
$\partial_{t}$ via $X$ and $\partial_{x_{1}}$ using expression
(\ref{eq:ExpressionForX-1}), we have for any $k\in\mathbb{Z}$: 
\[
\left\Vert P_{k}[(\bar{m}_{\leq N_{\nu}}-\bar{m}_{\leq n_{\nu}-1})\nabla_{t,x}\phi_{\nu}](0)\right\Vert _{L_{x}^{2}}\lesssim\sum_{\ell\in\mathbb{Z}}2^{-\left|k+\ell\right|}\varepsilon_{\nu}+\sigma_{\nu}+\iota_{\nu}+o(\mathcal{E})\longrightarrow0,
\]
where the first sum arises from the low frequencies (\ref{eq:LowFrequencies})
and the regular part involving spatial derivatives falling on the
cut-offs from (\ref{eq:Commutator1Control}) and (\ref{eq:Error1}),
the second term comes from errors having good time-like control (\ref{eq:Error2}),
the third one arise from treating the higher-order time like derivative
in (\ref{eq:Error3}), and finally the last term is due to the perturbative
$\dot{B}_{\infty}^{-1,2}$ estimate of the non-linearity for the wave
maps equation at high frequency (\ref{eq:NonLinearDoublePrim}), combined
with (\ref{eq:L1controlDual}) and (\ref{eq:LastTerms}).

Lemma \ref{lem:Besov-control.} is proved.
\end{proof}
We are now at the concluding stage of the proof of Theorem \ref{thm:Main},
for which, going back to the weak bubble tree decomposition (\ref{eq:DecomWithNecks}),
we must show that the energy of the necks $\mathcal{N}_{i,\nu}$ is
asymptotically vanishing as $\nu\rightarrow+\infty$. Recall that
those are provided with corresponding neck domains, that is the conformally
degeneration annuli from (\ref{eq:NeckDomains}), so that setting:
\[
\phi_{\nu,x_{i,\nu}^{k}}(t,x):=\phi_{i,\nu}(\lambda_{\mathrm{min},\nu}t,x_{i,\nu}^{k}+\lambda_{\mathrm{min},\nu}x),
\]
we can apply Lemma \ref{lem:Besov-control.}, by (\ref{eq:NeckProperty})
and (\ref{eq:GoodSliceProps}), to write:
\[
\nabla_{t,x}\phi_{\nu,x_{i,\nu}^{k}}=\Upsilon_{\nu,x_{i,\nu}^{k}}\,\,\,\mathrm{on}\,\,\,[-1,1]\times(B_{\lambda_{\mathrm{min},\nu}^{-1}R_{i,\nu}^{k}}\setminus B_{\lambda_{\mathrm{min},\nu}^{-1}r_{i,\nu}^{k}}),
\]
where $\Upsilon_{\nu,x_{i,\nu}^{k}}$ is supported on $[-1,1]\times(B_{2\lambda_{\mathrm{min},\nu}^{-1}R_{i,\nu}^{k}}\setminus B_{2^{-1}\lambda_{\mathrm{min},\nu}^{-1}r_{i,\nu}^{k}})$
with
\[
\left\Vert \Upsilon_{\nu,x_{i,\nu}^{k}}\right\Vert _{L_{t}^{\infty}(L_{x}^{2})[-1,1]}\lesssim1,
\]
and satisfying the decay:
\[
\sup_{k\in\mathbb{Z}}\left\Vert P_{k}\Upsilon_{\nu,x_{i,\nu}^{k}}(0)\right\Vert _{L_{x}^{2}}\longrightarrow0.
\]

Recalling (\ref{eq:GoodSliceProps}), we also have: 
\[
\left\Vert \Theta_{\nu,x_{i,\nu}^{k}}(0)\right\Vert _{L_{x}^{2}}\longrightarrow0,
\]
\[
\mathrm{where}\,\,\,\Theta_{\nu,x_{i,\nu}^{k}}(t,x):=\lambda_{\mathrm{min},\nu}\Theta_{i,\nu}(\lambda_{\mathrm{min},\nu}t,x_{i,\nu}^{k}+\lambda_{\mathrm{min},\nu}x).
\]
together with:
\[
\sum_{k\in\mathbb{Z}}\left\Vert P_{k}\Xi_{\nu,x_{i,\nu}^{k}}(0)\right\Vert _{L_{x}^{2}}\lesssim1,
\]
\[
\mathrm{where}\,\,\,\Xi_{\nu,x_{i,\nu}^{k}}(t,x):=\lambda_{\mathrm{min},\nu}\Xi_{i,\nu}(\lambda_{\mathrm{min},\nu}t,x_{i,\nu}^{k}+\lambda_{\mathrm{min},\nu}x).
\]

From there, we can estimate the energy at time $t=0$ on a neck region
by:
\begin{align*}
\left\Vert \nabla_{t,x}\phi_{\nu,x_{i,\nu}^{k}}(0)\right\Vert _{L_{x}^{2}(B_{\lambda_{\mathrm{min},\nu}^{-1}R_{i,\nu}^{k}}\setminus B_{\lambda_{\mathrm{min},\nu}^{-1}r_{i,\nu}^{k}})}^{2}\lesssim & \left|\int_{\mathbb{R}^{2}}\Upsilon_{\nu,x_{i,\nu}^{k}}(0)\Xi_{\nu,x_{i,\nu}^{k}}(0)dx\right|\\
 & +\left|\int_{\mathbb{R}^{2}}\Upsilon_{\nu,x_{i,\nu}^{k}}(0)\Theta_{\nu,x_{i,\nu}^{k}}(0)dx\right|+o(1),
\end{align*}
which we bound by:
\[
(\sup_{k\in\mathbb{Z}}\left\Vert P_{k}\Upsilon_{\nu,x_{i,\nu}^{k}}(0)\right\Vert _{L_{x}^{2}})\sum_{k\in\mathbb{Z}}\left\Vert P_{k}\Xi_{\nu,x_{i,\nu}^{k}}(0)\right\Vert _{L_{x}^{2}}+\left\Vert \Upsilon_{\nu,x_{i,\nu}^{k}}(0)\right\Vert _{L_{x}^{2}}\left\Vert \Theta_{\nu,x_{i,\nu}^{k}}(0)\right\Vert _{L_{x}^{2}}+o(1),
\]
and by the previous estimates this tends to $0$ as $\nu\rightarrow+\infty$.
Theorem \ref{thm:Main} is proved.

\hphantom{}\hphantom{}

\emph{Acknowledgements. }It is a pleasure to thank Dominic Joyce,
Andrew Lawrie, Luc Nguyen, Sung-Jin Oh, Peter Topping and Qian Wang
for very valuable and interesting discussions, as well as encouragements.
This work is part of author's DPhil thesis at the University of Oxford,
kindly supported by an EPSRC Research Studentship.

\hphantom{}\hphantom{}

\end{document}